\newcommand{\circled}[1]{\textcircled{\tiny #1}}
\DeclareMathOperator{\Cl}{Cl}
\DeclareMathOperator{\sdim}{sdim}
\DeclareMathOperator{\chr}{char}
\newcommand{\veeotimes}{\mathbin{\check{\boxtimes}}}
\newcommand{\hattimes}{\mathbin{\hat{\boxtimes}}}
\newcommand{\kar}{\mathrm{kar}}
\newcommand{\add}{\mathrm{add}}
\newcommand{\bone}{\mathbf{1}}
\renewcommand{\Vec}{\mathbf{Vec}}
\newcommand{\QVec}{\mathbf{QVec}}
\newcommand{\Queer}{\mathbf{Queer}}
\newcommand{\SVec}{\mathbf{SVec}}
\newcommand{\SSet}{\mathbf{SSet}}
\newcommand{\ttimes}{\rlap{$\times$}\hbox{$\hskip 1.5pt\times$}}
\newcommand{\fsA}{\mathtt{A}}
\newcommand{\fsB}{\mathtt{B}}
\newcommand{\fsC}{\mathtt{C}}
\newcommand{\fsD}{\mathtt{D}}
\newcommand{\fnC}{\mathtt{c}}
\newcommand{\fnD}{\mathtt{d}}
\def\thickhline{%
  \noalign{\ifnum0=`}\fi\hrule \@height \thickarrayrulewidth \futurelet
   \reserved@a\@xthickhline}
\def\@xthickhline{\ifx\reserved@a\thickhline
               \vskip\doublerulesep
               \vskip-\thickarrayrulewidth
             \fi
      \ifnum0=`{\fi}}
\newlength{\thickarrayrulewidth}
\title{Supersymmetric monoidal categories}
\author{Steven V Sam}
\address{Department of Mathematics, University of California, San Diego, CA}
\email{\href{mailto:ssam@ucsd.edu}{ssam@ucsd.edu}}
\urladdr{\url{http://math.ucsd.edu/~ssam/}}
\thanks{SS was supported by NSF grant DMS-1812462.}
\author{Andrew Snowden}
\address{Department of Mathematics, University of Michigan, Ann Arbor, MI}
\email{\href{mailto:asnowden@umich.edu}{asnowden@umich.edu}}
\urladdr{\url{http://www-personal.umich.edu/~asnowden/}}
\thanks{AS was supported by NSF grant DMS-1453893.}
\subjclass[2020]{%
  18M05, 
  05E10. 
  }
\date{February 13, 2021}
\begin{document}

\begin{abstract} 
We develop the idea of a supersymmetric monoidal supercategory, following ideas of Kapranov. Roughly, this is a monoidal category in which the objects and morphisms are $\bZ/2$-graded, equipped with isomorphisms $X \otimes Y \to Y \otimes X$ of parity $\vert X \vert \vert Y \vert$ on homogeneous objects. There are two fundamental examples: the groupoid of spin-sets, and the category of queer vector spaces equipped with the half tensor product; other important examples can be derived from these (such as the category of linear spin species). There are also two general constructions. The first is the exterior algebra of a supercategory (due to Ganter--Kapranov). The second is a construction we introduce called Clifford eversion. This defines an equivalence between a certain 2-category of supersymmetric monoidal supercategories and a corresponding 2-category of symmetric monoidal supercategories. We use our theory to better understand some aspects of the queer superalgebra, such as certain factors of $\sqrt{2}$ in the theory of Q-symmetric functions and Schur--Sergeev duality.
\end{abstract}

\maketitle
\tableofcontents

\section{Introduction}

The categorical analog of a commutative algebra is a symmetric (or braided) linear monoidal category. These categories have played a fundamental role in category theory and representation theory, among other places. The purpose of this paper is to develop a categorical analog of supercommutative superalgebras, following ideas of Kapranov \cite[\S 3.4]{supersym}; we call these supersymmetric (or superbraided) monoidal supercategories. In the rest of this introduction, we explain the basic idea behind the definition, some of our results and applications, and compare our contributions to the existing discussions of supercategories in the literature.

\subsection{The definition}

Recall that a supercommutative superalgebra is a $\bZ/2$-graded algebra such that $xy=(-1)^{\vert x \vert \vert y \vert} yx$ holds for all homogeneous elements $x$ and $y$. Here we write $\vert x \vert \in \bZ/2$ for the degree of a homogeneous element $x$. Thus we expect that in a superbraided monoidal category, there should be even and odd homogeneous objects, and there should be an isomorphism between $X \otimes Y$ and $(-1)^{\vert X \vert \vert Y \vert}$ ``times'' $Y \otimes X$ for homogeneous objects $X$ and $Y$.

The first puzzle here is what it means to negate an object. This has been well-understood for some time. Let $V$ be a finite dimensional super vector space. (Recall that a super vector space is simply a $\bZ/2$-graded vector space.) The super dimension of $V$ is defined to be $\sdim(V) = \dim(V_0)-\dim(V_1)$. Let $\Pi(V)$ be the super vector space given by $\Pi(V)_i=V_{i+1}$, i.e., $\Pi(V)$ is obtained from $V$ by interchanging the two graded pieces. Then $\sdim(\Pi(V))=-\sdim(V)$. Thus $\Pi(V)$ can be thought of as the negative of $V$.

The identity map $V \to \Pi(V)$ is an odd degree isomorphism. This map characterizes $\Pi(V)$ uniquely up to even isomorphism: if $W$ is any super vector space equipped with an odd degree isomorphism $V \to W$, then there is a unique even isomorphism $W \to \Pi(V)$ making the triangle commute. We can thus speak of $\Pi(V)$ for any object $V$ in a category in which the morphisms are $\bZ/2$-graded. Precisely, we define a \textbf{supercategory} to be a category enriched over super vector spaces, so that all $\Hom$ spaces are super vector spaces. Given an object $V$ in a supercategory, we define $\Pi(V)$ to be an object equipped with an odd degree isomorphism $V \to \Pi(V)$, if such an object and morphism exist. (We assume in what follows that they always do.) We can thus regard $\Pi(V)$ as the negative of $V$.

Following our initial intuition, suppose that we have a $\bZ/2$-supercategory (i.e., one in which the objects are $\bZ/2$-graded) equipped with a monoidal structure $\otimes$. Then we see that a superbraiding should be a natural isomorphism
\begin{displaymath}
\beta_{X,Y} \colon X \otimes Y \to \Pi^{\vert X \vert \vert Y \vert}(Y \otimes X),
\end{displaymath}
defined for all homogeneous objects $X$ and $Y$.

Of course, as with ordinary braidings, our superbraiding should satisfy some version of the hexagon axioms. It turns out that the usual hexagon axioms are inconsistent in this setting due to the signs that arise from composition in supercategories, but one can attempt to resolve the inconsistencies by allowing the hexagons to commute up to scalars. This ends up working if we assume the ground field has an eighth root of unity; however, the scalars that come up are not at all obvious. By working with $\bZ/4$-supercategories (or $\bZ/q$-supercategories for any $q$ divisible by~4), one can get by with much nicer systems of scalars that only involve signs (and thus make sense over general fields). In both cases, we introduce a notion of factor system (Definition~\ref{defn:factors}) to axiomatize the conditions on the scalars that appear in the hexagon axioms. One of the main contributions of this article is identifying these conditions and verifying them in important examples of interest.

We have thus arrived at the main definition: a superbraiding is a natural isomorphism $\beta$ as above such that the hexagon axioms hold up to scalars determined by a choice of factor system. See \S \ref{ss:type1} and \S \ref{ss:type2} for details.

\begin{figure}[!h]
\begin{tabular}{ccc}
\thickhline\\[-11pt]
$0$-category &\;& $1$-category \\[2pt]
\hline\\[-10pt]
Vector space && Supercategory \\
Commutative algebra && Symmetric monoidal supercategory \\
Super vector space && $\bZ/q$-supercategory \\
Commutative superalgebra && Symmetric monoidal $\bZ/q$-supercategory \\
Supercommutative superalgebra && Supersymmetric monoidal $\bZ/q$-supercategory \\[3pt]
\thickhline
\end{tabular}
\caption{Some analogies relevant to this paper. Here $q$ should be even.}
\end{figure}

\subsection{The 2-categorical perspective} \label{ss:intro-2cat}

One can define the notion of supercommutative superalgebra in elementary terms, as we did above. There is also a more abstract approach: one endows the monoidal category of super vector spaces with a new symmetry $\tau$ (namely $x \otimes y \mapsto (-1)^{\vert x \vert \vert y \vert} y \otimes x$), and realizes supercommutative superalgebras as the \emph{commutative} algebra objects in this category. The more abstract approach has several benefits: for example, any properties of commutative algebras in symmetric tensor categories can immediately be applied to supercommutative superalgebras, and it also enables one to easily consider similar structures, such as Lie superalgebras.

In our discussion of superbraided supercategories above, we have taken the elementary approach. We now explain a version of the abstract approach. Let $\sC$ be the $2$-category of $\bZ/q$-supercategories, with $q$ even. This $2$-category has a natural monoidal structure $\boxtimes$, given by a standard product construction for supercategories. Given two $\bZ/q$-supercategories $\cA$ and $\cB$, we define $\rT \colon \cA \boxtimes \cB \to \cB \boxtimes \cA$ to be the equivalence given by $X \boxtimes Y \mapsto \Pi^{\vert X \vert \vert Y \vert}(Y) \boxtimes X$, where $X$ and $Y$ are homogeneous objects of $\cA$ and $\cB$. One should think of the equivalence $\rT$ as analogous to the isomorphism $\tau$ from the previous paragraph. We show that $\rT$ (together with a choice of factor system) defines a symmetry on $\sC$ in \S\ref{s:2cat}. One can then regard supersymmetric monoidal supercategories as the \emph{symmetric} monoidal objects in the symmetric monoidal 2-category $(\sC, \rT)$.

\subsection{Spin-symmetric groups} 

The most fundamental example of a symmetric monoidal category is the groupoid of finite sets under disjoint union. A skeletal version $\cS$ of this category can be constructed explicitly, as follows. There is one object $[n]$ for each natural number $n$. We put $\End_{\cS}([n])=\fS_n$ and $\Hom_{\cS}([n],[m])=\emptyset$ for $n \ne m$. The monoidal operation $\amalg$ is defined on objects by $[n] \amalg [m]=[n+m]$ and on morphisms using the natural homomorphism $\fS_n \times \fS_m \to \fS_{n+m}$. Finally, the symmetry isomorphism $[n] \amalg [m] \to [m] \amalg [n]$ is given by the element $\tau_{n,m} \in \fS_{n+m}$ defined by
\begin{displaymath}
\tau_{n,m}(i) = \begin{cases}
i+m & \text{if $1 \le i \le n$} \\
i-n & \text{if $n+1 \le i \le n+m$} \end{cases}.
\end{displaymath}
The hexagon axioms amount to some identities satisfied by the $\tau_{n,m}$ elements.

The above picture has an analog in the setting of supersymmetric monoidal categories in which the symmetric groups are replaced by spin-symmetric groups. Recall that the $n$th spin-symmetric group $\wt{\fS}_n$ is a certain central extension of $\fS_n$ by $\bZ/2$; see \S \ref{ss:spin}. We define a skeletal category $\tilde{\cS}$ as follows. There is again one object $[n]$ for each natural number $n$. We put $\End_{\tilde{\cS}}([n])=\bk[\tilde{\fS}_n]/(c+1)$, where $c \in \wt{\fS}_n$ is the distinguished central element, and define $\Hom_{\tilde{\cS}}([n],[m])=0$ if $n \ne m$. The monoidal operation $\amalg$ on $\tilde{\cS}$ is defined on objects by $[n] \amalg [m]=[n+m]$, and on morphisms using the natural map $\wt{\fS}_n \times \wt{\fS}_m \to \wt{\fS}_{n+m}$. Finally, the supersymmetry is induced by a particular choice of lift $\tilde{\tau}_{n,m} \in \wt{\fS}_{n+m}$ of the element $\tau_{n,m}$ defined in the previous paragraph. In \S \ref{ss:catS}, we verify that the axioms of a supersymmetric monoidal category hold in this case; this amounts to establishing some identities satisfied by the $\tilde{\tau}_{n,m}$ elements.

The definition and study of the elements $\tilde{\tau}_{n,m}$ (in \S \ref{ss:calculations}), while totally elementary, is one of the most important parts of this paper. For a particular choice of $n$ and $m$, there are two lifts of $\tau_{n,m}$ to $\wt{\fS}_{n+m}$, and neither is preferred. We find, however, that the lifts $\tilde{\tau}_{n,m}$ we define have important compatibility properties as $n$ and $m$ vary; in other words, there is a preferred \emph{system} of lifts (up to the action of a single $\bZ/2$). This strikes us as an important discovery. In fact, as discussed in \S \ref{ss:intro-evert} below, we show that essentially every supersymmetry can, in a sense, be constructed from these elements.

The category $\cS$ is not merely the simplest example of a symmetric monoidal category: it is also universal. Indeed, suppose that $\cC$ is a symmetric monoidal category. Given an object $X$ of $\cC$, the symmetric group $\fS_n$ naturally acts on $X^{\otimes n}$. This construction induces a symmetric monoidal functor $\cS \to \cC$ via $[n] \mapsto X^{\otimes n}$. This observation shows that $(\cS, [1])$ is the universal symmetric monoidal category equipped with an object, in the sense that giving a symmetric monoidal functor $\cS \to \cC$ is equivalent to giving an object of $\cC$.

An analogous result holds for $\tilde{\cS}$. Indeed, if $\cC$ is a supersymmetric monoidal $\bZ$-category and $X$ is a degree~1 object in $\cC$ then we show that the spin-symmetric group $\wt{\fS}_n$ naturally acts on $X^{\otimes n}$. This leads to the following theorem:

\begin{theorem}
$(\tilde{\cS},[1])$ is the universal supersymmetric monoidal $\bZ$-supercategory equipped with an object of degree $1$.
\end{theorem}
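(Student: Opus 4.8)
The plan is to imitate the classical argument that $(\cS,[1])$ is the universal symmetric monoidal category equipped with an object, inserting the spin-symmetric groups and the factor-system scalars wherever parity signs intervene. Let $\cC$ be a supersymmetric monoidal $\bZ$-supercategory and $X$ a degree~$1$ object of $\cC$. First I would construct a supersymmetric monoidal functor $F\colon\tilde\cS\to\cC$ with $F([1])=X$. On objects, set $F([n])=X^{\otimes n}$ with a fixed left-normed bracketing, using the monoidal coherence of $\cC$ to suppress associators. On morphisms, use the action of $\wt\fS_n$ on $X^{\otimes n}$ constructed just before the theorem statement: a simple transposition acts via the evident iterate of $\beta_{X,X}$, corrected by the canonical odd isomorphism $X\otimes X\to\Pi(X\otimes X)$ so that the resulting endomorphism of $X^{\otimes n}$ is even. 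The first point to check is that this representation of $\wt\fS_n$ descends to $\bk[\tilde\fS_n]/(c+1)=\End_{\tilde\cS}([n])$, i.e.\ that the distinguished central element $c$ acts by $-1$; this is exactly where the parity $\vert X\vert\vert X\vert=1$ and the factor-system scalars in the hexagon axioms are used, and it is what forces the spin group rather than $\fS_n$ itself to appear.

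Next I would verify that $F$ is monoidal. On objects this is immediate from the bracketing convention, and on morphisms it amounts to the statement that the homomorphism $\wt\fS_n\times\wt\fS_m\to\wt\fS_{n+m}$ defining $\amalg$ on $\tilde\cS$ is intertwined by $F$ with the external tensor product of the $\wt\fS_n$- and $\wt\fS_m$-actions on $X^{\otimes n}$ and $X^{\otimes m}$; this follows from naturality of $\beta$ and functoriality of $\otimes$. Then I would check that $F$ carries the supersymmetry of $\tilde\cS$ to that of $\cC$: concretely, $F(\tilde\tau_{n,m})$ must equal the iterated superbraiding $X^{\otimes n}\otimes X^{\otimes m}\to\Pi^{nm}(X^{\otimes m}\otimes X^{\otimes n})$. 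This reduces, via the hexagon axioms and the compatibilities among the $\tilde\tau_{n,m}$ established in \S\ref{ss:calculations}, to the case $n=m=1$, where it holds essentially by the definition of $F$ on transpositions; tracking the factor-system scalars on both sides is the bookkeeping cost here.

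Finally, for universality I would show that $F$ is, up to canonical isomorphism, the only supersymmetric monoidal functor sending $[1]$ to $X$. Given another such functor $G$, monoidality forces a natural isomorphism $G([n])\cong G([1])^{\otimes n}\cong X^{\otimes n}$, and compatibility of $G$ with the supersymmetries forces $G(\tilde\tau_{1,1})$ to agree with $\beta_{X,X}$ under these identifications. Since the images of $\tilde\tau_{1,1}$ under the maps $\mathrm{id}\amalg\tilde\tau_{1,1}\amalg\mathrm{id}\colon\wt\fS_2\to\wt\fS_n$ generate $\wt\fS_n$, and since $\tilde\cS$ has no morphisms between distinct objects, $G$ is determined on all of $\tilde\cS$ and agrees with $F$; conversely every object/degree-$1$ input produces such an $F$, yielding the asserted equivalence of data.

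The step I expect to be the main obstacle is the well-definedness of $F$ on morphisms together with the matching of scalars: one must show that the $\wt\fS_n$-representation on $X^{\otimes n}$ genuinely factors through $\bk[\tilde\fS_n]/(c+1)$, and that the iterated superbraidings reproduce the specific lifts $\tilde\tau_{n,m}$ with precisely the factor-system scalars built into $\tilde\cS$, rather than those scalars times some uncontrolled correction. This is where the elementary but delicate identities for the $\tilde\tau_{n,m}$ from \S\ref{ss:calculations}, and the hexagon axioms in their factor-system form, do all the work; once they are in hand, monoidality, supersymmetry, and uniqueness are routine.
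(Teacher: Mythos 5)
Your overall plan is the same as the paper's: use the action of $\wt{\fS}_n$ on $X^{\otimes n}$ to define $\Psi_X\colon\tilde\cS\to\cA$ by $[n]\mapsto X^{\otimes n}$, verify that this is a supersymmetric monoidal superfunctor, and check that $X\mapsto\Psi_X$ is quasi-inverse to evaluation at $[1]$. The paper does exactly this, outsourcing the two nontrivial verifications — that the $\sigma_i$ (built from $\beta_{X,X}$) satisfy the relations of $\wt{\fS}^1_n$ with $c\mapsto -1$, and that the action is functorial in $X$ — to the proposition on actions of spin-symmetric groups and Proposition~\ref{cor:spin-sym-action} just before the theorem. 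Your structure tracks this closely, so there is no meaningful difference of approach.

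One concrete parity slip deserves flagging. You propose to define $F(\tilde s_i)$ by starting from $\beta_{X,X}$ and ``correcting by the canonical odd isomorphism $X\otimes X\to\Pi(X\otimes X)$ so that the resulting endomorphism of $X^{\otimes n}$ is even.'' This is backwards. The element $\tilde s_i\in\End_{\tilde\cS}([n])=\bk[\wt{\fS}_n]/(c+1)$ is \emph{odd} (its degree is the sign of the underlying transposition), so a superfunctor must send it to an odd endomorphism of $X^{\otimes n}$. In the type~IIa framework in which $\tilde\cS$ lives, $\beta_{X,X}$ is already of degree $|X||X|=1$ for a degree-$1$ object $X$, hence $\sigma_i$ is odd with no correction needed; only the sign $(-1)^{ip}$ from the paper's proposition intervenes. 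Had you forced $F(\tilde s_i)$ to be even, $F$ would not even be a superfunctor. Aside from this, the uniqueness argument and the identification of the crux (that $c$ acts by $-1$, and that the iterated $\beta$'s reproduce the lifts $\tilde\tau_{n,m}$ via the identities of \S\ref{ss:calculations}) are on target and match what the paper relies on.
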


Another important example of a supersymmetric category comes from the representation theory of spin-symmetric groups. A {\bf linear spin-species} is a sequence $(V_n)_{n \ge 0}$ where $V_n$ is a representation of $\wt{\fS}_n$ (that interacts with super structures appropriately). The category of such sequences, denoted $\Rep(\wt{\fS}_{\ast})$, admits a monoidal structure defined by inducing from Young subgroups. The $\tilde{\tau}_{n,m}$ elements can be used to construct a supersymmetry on this category. The details are carried out in \S \ref{s:spinrep}.

\subsection{The half tensor product}

The second most fundamental example of a symmetric monoidal category is the category $\Vec$ of vector spaces under tensor product. The category $\SVec$ of super vector spaces is \emph{not} supersymmetric: it is a ``plain'' symmetric monoidal supercategory since the isomorphism $V \otimes W \to W \otimes V$ has even degree. (The same applies to any category where the tensor product and symmetry are inherited from $\SVec$, such as the category of representations of a Lie superalgebra.) However, there is a supersymmetric analog to $\Vec$, that we now discuss.

Let $V$ be a super vector space. A {\bf queer structure} on $V$ is an odd degree automorphism $\nu \colon V \to V$ that squares to the identity. Suppose that $U$ and $V$ are super vector spaces equipped with queer structures $\mu$ and $\nu$. Then $\mu \otimes \nu$ is an even degree automorphism of $U \otimes V$ that squares to $-1$. The {\bf half tensor product} of $U$ and $V$, denoted $2^{-1}(U \otimes V)$, is defined to be the $\zeta_4$-eigenspace of $\mu \otimes \nu$. (Here $\zeta_4=\sqrt{-1}$.) The choice of $\zeta_4$ does not matter much, as $\mu \otimes 1$ (or $1 \otimes \nu$) interchanges the $+\zeta_4$ and $-\zeta_4$ eigenspaces.

This construction appears frequently in the representation theory of superalgebras. Suppose that $A$ is a $\bC$-superalgebra and $M$ is a finite dimensional simple $A$-module. The super version of Schur's lemma states that $\End_A(M)$ is either 1-dimensional (``type M'') or 2-dimensional (``type Q''); in the latter case, the algebra $\End_A(M)$ is generated over $\bC$ by a queer structure. Now suppose that $B$ is a second $\bC$-superalgebra and $N$ is a finite dimensional simple $B$-module. If either $M$ or $N$ has type M then $M \otimes N$ is a simple $A \otimes B$-module. If both $M$ and $N$ have type Q then $2^{-1}(M \otimes N)$ is a simple module, and $M\otimes N \cong 2^{-1}(M \otimes N) \otimes \bC^{1|1}$. (See \cite[\S 3.1.3]{chengwang} for details.)

Returning to the general situation, let $(U,\mu)$ and $(V,\nu)$ be as above. A simple computation shows that the natural symmetry $\tau_{U,V} \colon U \otimes V \to V \otimes U$ carries the $\zeta_4$-eigenspace of $\mu \otimes \nu$ into the $(-\zeta_4)$-eigenspace of $\nu \otimes \mu$. In particular, $\tau_{U,V}$ does \emph{not} map $2^{-1}(U \otimes V)$ into $2^{-1}(V \otimes U)$. However, the composition $(\nu \otimes 1) \circ \tau_{U,V}$ does map $2^{-1}(U \otimes V)$ to $2^{-1}(V \otimes U)$. This is an odd degree isomorphism, which suggests a lurking supersymmetry.

In \S \ref{s:queer}, we define a $\bZ/2$-supercategory $\Queer$ by taking the degree~0 objects to be super vector spaces and the degree~1 objects to be super vector spaces equipped with a queer structure. We define a monoidal operation $\odot$ on this category by using the half tensor product on two degree~1 objects and the ordinary tensor product on other pairs of objects. We show that this does indeed define a monoidal structure; this is not obvious, as the associator maps are non-trivial. We further show that the construction in the previous paragraph (scaled by an appropriate root of unity) defines a supersymmetry on this monoidal structure. Thus $\Queer$ is an example of a supersymmetric monoidal $\bZ/2$-supercategory. We believe this places the half tensor product construction in its proper theoretical framework.

\subsection{Exterior algebras}

Let $\cA$ be a $\bC$-linear category. One can then define the $n$th symmetric power of $\cA$, denoted $\Sym^n(\cA)$, to be the category of $\fS_n$-equivariant objects in the $n$th power of $\cA$ (for an appropriate notion of power). The sum of these categories, denoted $\Sym(\cA)$, is naturally a symmetric monoidal category, and is the universal such category to which $\cA$ maps.

Now suppose that $\cA$ is a supercategory. Ganter--Kapranov \cite{ganter} defined the $n$th exterior power of $\cA$, denoted $\lw^n(\cA)$, to be the category of appropriately $\wt{\fS}_n$-equivariant objects in the $n$th power of $\cA$. We show that the sum of these categories, denoted $\lw(\cA)$, is naturally a supersymmetric monoidal supercategory, and has a universal property similar to the symmetric power. This construction provides a large source of supersymmetric categories. See \S \ref{ss:exterior} for details.

\subsection{Clifford eversion} \label{ss:intro-evert}

Let $\cA$ be a nice $\bZ/q$-supercategory, with $q$ even; ``nice'' means that $\cA$ is additive, Karoubian, and admits a $\Pi$ structure. In \S \ref{s:clifford}, we define a new nice $\bZ/q$-supercategory $\Cl(\cA)$ called the \emph{Clifford eversion} of $\cA$ by considering certain kinds of Clifford modules in $\cA$. (This construction is essentially a generalization of the category $\Queer$ discussed above, see \S \ref{ss:qcom}.) If $\cA$ has a symmetric monoidal structure then we show that $\Cl(\cA)$ naturally has a supersymmetric monoidal structure; this construction again hinges on the $\tilde{\tau}_{n,m}$ elements. Moreover, we prove:

\begin{theorem}
Clifford eversion defines an equivalence between the $2$-category of nice symmetric monoidal $\bZ/q$-supercategories and the $2$-category of nice supersymmetric monoidal $\bZ/q$-supercategories.
\end{theorem}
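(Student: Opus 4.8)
The plan is to exhibit Clifford eversion as, essentially, its own inverse. Write $\Cl$ for the $2$-functor constructed in \S\ref{s:clifford}, from nice symmetric monoidal $\bZ/q$-supercategories to nice supersymmetric ones. First I would check that the identical recipe, run in the opposite direction, defines a $2$-functor $\Cl'$ from nice supersymmetric monoidal $\bZ/q$-supercategories to nice symmetric ones. The underlying additive, Karoubian, $\Pi$-complete category and the monoidal product $\odot$ are built in exactly the same way — from Clifford-module objects in $\cA$ together with the superalgebra isomorphisms $\mathrm{Cliff}_m\otimes\mathrm{Cliff}_n\cong\mathrm{Cliff}_{m+n}$ — so the one genuinely new point is the braiding: one must check that correcting the ambient supersymmetry $\beta_{X,Y}\colon X\otimes Y\to\Pi^{\vert X\vert\vert Y\vert}(Y\otimes X)$ by the appropriate Clifford operator yields an \emph{even} isomorphism satisfying the ordinary, scalar-free hexagon axioms. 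Here the $\Pi$-shift carried by $\beta$ is precisely undone by the $\Pi$-shift introduced when a Clifford-module structure is transported across $\otimes$, and the hexagon identities for the elements $\tilde{\tau}_{n,m}$ convert the supersymmetric factor-system hexagons into genuine ones.

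The heart of the proof is the natural equivalence $\Cl'\,\Cl(\cA)\simeq\cA$ together with its mirror $\Cl\,\Cl'(\cB)\simeq\cB$. By construction $\Cl'\,\Cl$ attaches to a degree-$k$ object of $\cA$ a module structure over a Clifford algebra of even rank (the tensor product of the Clifford algebras attached by $\Cl$ and by $\Cl'$); under our standing hypotheses on $\bk$ such an algebra is a matrix superalgebra, hence Morita trivial in the super sense, and since $\cA$ is additive and Karoubian the category of modules over it in $\cA$ is canonically equivalent to $\cA$. Applying this in each degree gives an equivalence of $\bZ/q$-supercategories $\Cl'\,\Cl(\cA)\simeq\cA$. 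I would then upgrade it to a symmetric monoidal equivalence, using that the isomorphisms $\mathrm{Cliff}_m\otimes\mathrm{Cliff}_n\cong\mathrm{Cliff}_{m+n}$ are coherently associative and unital, and that under them the doubly-everted associator and braiding go over to the original ones — a compatibility which, once unwound, is once more an identity among the $\tilde{\tau}_{n,m}$ of \S\ref{ss:calculations}. Finally one checks that $\Cl'\,\Cl\Rightarrow\mathrm{id}$ and $\Cl\,\Cl'\Rightarrow\mathrm{id}$ are $2$-natural, which is a routine diagram chase.

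With these data in hand the equivalence of $2$-categories is formal: a $2$-functor equipped with a $2$-functor in the other direction and invertible $2$-natural transformations exhibiting both composites as identities is a biequivalence. Hence $\Cl$ is a $2$-equivalence with quasi-inverse $\Cl'$; in particular it is biessentially surjective on objects and induces an equivalence on every $\Hom$-category of $1$- and $2$-morphisms.

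The main obstacle is not this formal wrap-up but the two braiding computations flagged above: that eversion genuinely trades ``even braiding with ordinary hexagons'' for ``odd-parity braiding with factor-system hexagons'', and that iterating it restores the original symmetry exactly, with no leftover scalar or parity twist. Both come down to sign- and scalar-bookkeeping governed entirely by the identities satisfied by the $\tilde{\tau}_{n,m}$ elements — which is exactly why, as the introduction stresses, those elements and their preferred system form the technical core of the paper — and the remaining work is to feed those identities correctly into the hexagon and coherence diagrams for $\Cl$ and $\Cl'$.
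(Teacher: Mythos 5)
The basic strategy—exhibit a quasi-inverse and check the unit and counit are equivalences—is sound, but the specific quasi-inverse you propose does not work over a general base field, and this is not a cosmetic point: it is exactly the issue the paper's argument is built to avoid.

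Your claim is that $\Cl'\circ\Cl$ (which, since $\Cl'$ is defined by the same underlying recipe, is just $\Cl^2$) is naturally equivalent to the identity because a degree-$k$ object acquires a module structure over $\Cl_{2k}$, and ``under our standing hypotheses on $\bk$ such an algebra is a matrix superalgebra.'' This is false for general $\bk$ with $\chr(\bk)\ne 2$. Already $\Cl_2$ fails: its even part is $\bk[\alpha_1\alpha_2]$ with $(\alpha_1\alpha_2)^2=-4$, which is a quadratic field extension of $\bk$ unless $\zeta_4\in\bk$. So over $\bQ$ or $\bR$, $\Cl_2$-modules in $\cA$ do not reduce to $\cA$ itself, and $\Cl^2\not\simeq\id$. (Similarly $\Cl_4\cong H\otimes\End(\bk^{1|1})$ is a matrix superalgebra only when the quaternion algebra splits.) The correct periodicity statement—and the one the paper uses—is Proposition~\ref{prop:evert-fin-order}: $\Cl^p\simeq\id$ where $p$ is chosen so that $\Cl_p$ is a matrix superalgebra, and $p=8$ is the only choice that works for every allowed $\bk$. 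Consequently the paper never constructs a separate $\Cl'$; the quasi-inverse of $\Cl$ on the (super)symmetric $2$-categories is $\Cl^{p-1}$, which is not the same underlying functor as $\Cl$ when $p>2$.

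There is a second, subtler gap even in the case $\zeta_4\in\bk$, $p=2$, where your Morita argument does apply. The factor systems do not return after two eversions: by Proposition~\ref{prop:evert-monoidal}, starting from a symmetry with trivial factor system, $\Cl$ yields a type~II supersymmetry with respect to $\fsD\fsA$ (not $\fsA$), and a second eversion yields a symmetry with respect to $\fsD$ (not the trivial system). So $\Cl^2(\cA)\simeq\cA$ as nice supercategories, but the induced braidings differ by the coboundary $\fsD$, and one needs an explicit conversion step (Remark~\ref{rmk:convert}, using the function $\fnD$ of Example~\ref{ex:cobound2}) before the composite can be identified with the identity symmetric monoidal $2$-functor. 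Your sketch asserts ``iterating it restores the original symmetry exactly, with no leftover scalar or parity twist''—that is precisely what has to be (and is not automatically) true. The paper's route sidesteps both issues at once: it computes how $\Cl$ cycles the whole list of factor systems with period $4$, observes that $\Cl^p\simeq\id$ with $p$ a multiple of that period when $p=8$, and upgrades $\cA\to\Cl^p(\cA)$ to a (super)symmetric monoidal equivalence directly, giving Proposition~\ref{prop:cliff-eversion} and Corollary~\ref{cor:sym-super}.
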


Let $\sC$ be the 2-category of nice $\bZ/q$-supercategories. This category has a natural monoidal structure that admits an obvious symmetry $\Sigma$; it also admits the less obvious symmetry $\rT$ discussed in \S\ref{ss:intro-2cat}. In fact, Clifford eversion induces an equivalence of symmetric monoidal 2-categories $(\sC, \Sigma) \to (\sC, \rT)$. This recovers the above theorem, but is a more fundamental statement. However, we only partially prove this statement since the notion of equivalence of symmetric monoidal 2-categories is rather complicated.

Let $\cA$ be nice symmetric monoidal $\bZ/q$-supercategory with an exact structure, and let $\rK_+(\cA)$ be the quotient of the Grothendieck group by the relations $[\Pi(X)]=[X]$. We show (Proposition~\ref{prop:K-ring}) that there is a natural isomorphism of rings
\begin{displaymath}
\rK_+(\cA)[1/\sqrt{2}] \cong \rK_+(\Cl(\cA))[1/\sqrt{2}].
\end{displaymath}
This helps explain the appearance of certain factors of $\sqrt{2}$ in the theory of Q-symmetric functions.

\subsection{Sergeev--Schur duality}

(We take the base field to be $\bC$ in this section.) Classical Schur--Weyl duality connects the representation theory of the general linear group and the symmetric groups. Let $\Rep^{\pol}(\GL)$ be the category of polynomial representations of the infinite general linear group; this is the tensor category generated by the standard representation $\bV$. The symmetric group $\fS_n$ acts on $\bV^{\otimes n}$ by permuting tensor factors, and so we obtain a functor $\Rep(\fS_n) \to \Rep^{\pol}(\GL)$ by $M \mapsto M \otimes_{\bC[\fS_n]} \bV^{\otimes n}$. Let $\Rep(\fS_{\ast})$ be the category of sequences $(V_n)_{n \ge 0}$ where $V_n$ is a representation of the symmetric group $\fS_n$; such sequences are sometimes called \emph{linear species}. Then one formulation of Schur--Weyl duality states that the above functor extends to an equivalence $\Rep(\fS_{\ast}) \to \Rep^{\pol}(\GL)$. This equivalence respects various additional structures on the two categories. In particular, it is an equivalence of symmetric monoidal categories, where $\Rep^{\pol}(\GL)$ is endowed with the usual tensor product and $\Rep(\fS_{\ast})$ is given the induction tensor product, defined by
\begin{displaymath}
(V \otimes W)_n = \bigoplus_{i+j=n} \Ind_{\fS_i \times \fS_j}^{\fS_n}(V_i \otimes W_j).
\end{displaymath}
This equivalence is extremely useful when studying algebra within the tensor category $\Rep(\fS_{\ast})$, as it provides a bridge to more classical theory.

Sergeev--Schur duality is an analog of Schur--Weyl duality in which the general linear group is replaced by the queer Lie superalgebra. Precisely, let $\bV=\bC^{\infty|\infty}$ and fix an odd-degree isomorphism $\alpha \colon \bV \to \bV$ squaring to~2. The infinite queer Lie superalgebra $\fq$ is defined to be the subalgebra of $\fgl(\bV)$ that preserves $\alpha$. The symmetric group $\fS_n$ acts on $\bV^{\otimes n}$ by permuting tensor factors. However, there are additional natural endomorphisms that commute with $\fq$: namely, we can apply $\alpha$ to any tensor factor. To make this more precise, let $\Cl_n$ denote the $n$th Clifford algebra; this has $n$ anti-commuting generators that each square to~2. Define the Hecke--Clifford algebra $\cH_n$ to be the semi-direct product $\fS_n \ltimes \Cl_n$. Then $\cH_n$ acts on $\bV^{\otimes n}$ and commutes with $\fq$. As in the previous paragraph, we obtain an induced functor $\Rep(\cH_*) \to \Rep^{\pol}(\fq)$. One formulation of Sergeev--Schur duality is that this is an equivalence (and respects the natural symmetric monoidal structures).

There is a close relationship between the algebra $\cH_n$ and the spin-symmetric groups: we have an isomorphism $\cH_n \cong \bk[\wt{\fS}_n]/(c+1) \otimes \Cl_n$. Thus an $\cH_n$-module can be viewed as a Clifford module in the category $\Rep(\wt{\fS}_n)$ of linear spin species. We thus see that $\Rep(\cH_*)$ can be identified with the Clifford eversion of the category $\Rep(\wt{\fS}_{\ast})$ of sequences of $\wt{\fS}_n$-representations. We can thus rephrase the above discussion as follows:

\begin{theorem}
The category $\Rep^{\pol}(\fq)$ is equivalent to the Clifford eversion of the category $\Rep(\wt{\fS}_{\ast})$ of linear spin species.
\end{theorem}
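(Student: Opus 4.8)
The plan is to deduce the theorem by composing two equivalences. The first is (one form of) Sergeev--Schur duality, recalled above: the functor $M \mapsto M \otimes_{\cH_n} \bV^{\otimes n}$ is an equivalence $\Rep(\cH_\ast) \xrightarrow{\sim} \Rep^{\pol}(\fq)$. I take this as given. The second, which carries the real content, is an equivalence $\Cl(\Rep(\wt{\fS}_\ast)) \xrightarrow{\sim} \Rep(\cH_\ast)$ coming from the definition of Clifford eversion in \S\ref{s:clifford} together with the algebra isomorphism $\cH_n \cong \bk[\wt{\fS}_n]/(c+1) \otimes \Cl_n$.

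To construct the second equivalence I would proceed as follows. Recall that $\End_{\tilde{\cS}}([n]) = \bk[\wt{\fS}_n]/(c+1)$, so that an object of $\Rep(\wt{\fS}_\ast)$ amounts, level by level, to a module over these algebras. By construction, $\Cl(\cA)$ is obtained from $\cA$ by adjoining Clifford-module data; unwinding this for $\cA = \Rep(\wt{\fS}_\ast)$---and here the description of $\Cl(\cA)$ as a generalization of $\Queer$ (\S\ref{ss:qcom}), via tensor powers of a single Clifford generator and the multiplicativity $\Cl_i \otimes \Cl_j \cong \Cl_{i+j}$, is the key input---one finds that the part of $\Cl(\Rep(\wt{\fS}_\ast))$ lying over level $n$ is equivalent to the category of $\Cl_n$-modules in $\Rep(\wt{\fS}_n)$, that is, to $(\bk[\wt{\fS}_n]/(c+1) \otimes \Cl_n)$-modules. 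By the algebra isomorphism this is exactly $\Rep(\cH_n)$; assembling over all $n$ gives the equivalence of supercategories $\Cl(\Rep(\wt{\fS}_\ast)) \simeq \Rep(\cH_\ast)$, and composing with Sergeev--Schur duality yields the theorem. If one wants this equivalence to be monoidal---which is what lies behind the Grothendieck-ring statement of Proposition~\ref{prop:K-ring}---one must further check that the induction tensor product on $\Rep(\cH_\ast)$ corresponds to the monoidal structure Clifford eversion puts on $\Cl(\Rep(\wt{\fS}_\ast))$, matching associators and the supersymmetry built from the $\tilde{\tau}_{n,m}$ elements.

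The step I expect to be the main obstacle is precisely the bookkeeping in this second equivalence. Several $\bZ/2$-gradings are in play---the internal super grading on each $\wt{\fS}_n$-representation, the parity of the level $n$, and the Clifford parity introduced by eversion---and one must track how the eversion construction ties them together so that the level-$n$ part carries exactly a $\Cl_n$-action. The Morita behaviour of the Clifford algebras---$\Cl_n$ being Morita-trivial for $n$ even and of ``queer type'' for $n$ odd---is what makes $\Rep(\cH_\ast)$ involve only a single $\bZ/2$'s worth of Clifford data even though $n$ ranges over all naturals, and it is the same phenomenon behind the factors of $\sqrt{2}$ mentioned in the introduction. Matching the monoidal structures compounds this with the sign subtleties intrinsic to composition in supercategories---exactly what the factor systems of this paper are designed to handle---so that, granting the general theory of \S\ref{s:clifford}, what remains is a lengthy but routine comparison of the relevant cocycles and group elements rather than anything conceptually new.
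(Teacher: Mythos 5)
Your proposal matches the paper's proof: the paper establishes the theorem by composing the equivalence $\Rep(\cH_\ast) \simeq \Rep^{\pol}(\fq)$ from Sergeev--Schur duality with an equivalence $\Cl(\Rep(\wt{\fS}_\ast)) \simeq \Rep(\cH_\ast)$ obtained, level by level, from the algebra isomorphism $\cH_n \cong \bk[\wt{\fS}_n]/(c+1) \otimes \Cl_n$, exactly as you propose. You also correctly flag the remaining work for the (super)symmetric monoidal upgrade---the paper's Proposition~\ref{prop:SH-CT} carries this out by comparing the two homomorphisms $\wt{\fS}_{n+m} \to \cH_{n+m}$ from Lemma~\ref{lem:hecke-sym} and Proposition~\ref{prop:spin-clifford}, and the paper adjusts the supersymmetry on $\Rep(\wt{\fS}_\ast)$ by the coboundary factor $\fnD(m,n) = (-1)^{\binom{mn}{2}}\zeta_4^{mn}$ so that the factor systems match on the nose.
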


We show that this equivalence respects natural symmetric monoidal structures on the two categories. We view the above theorem as an elegant expression of Sergeev--Schur duality, as it provides a direct link between the queer algebra and the spin-symmetric groups. See \S \ref{s:sergeev} for details.

The Grothendieck ring of $\Rep^{\pol}(\fq)$ is naturally identified with the ring $\Gamma$ of odd symmetric functions. This ring has a basis consisting of the Schur Q-functions $Q_\lambda$. However, they do not represent classes of simple objects in general: one must adjust them by appropriate powers of $\sqrt{2}$ (which depend on $\lambda$ as well as which category is being used). One of our goals was to better understand the nature of these powers of $\sqrt{2}$, and the effect of Clifford eversion on Grothendieck rings is one piece of that puzzle. Unfortunately, there is one more piece which is not explained by our general framework: some simple modules of $\bk[\wt{\fS}_n]/(c+1)$ carry an action of $\Cl_1$, while others do not, and this also features into the powers of $\sqrt{2}$ that appear.

\subsection{Relation to previous work}

Kapranov proposed a sketch for the definition of a supersymmetric monoidal supercategory in \cite[\S 3.4]{supersym}. A large portion of this article is devoted to providing the details for this sketch.

Versions of (monoidal) supercategories have been considered in \cite{brundan}, \cite{KKT}, \cite{ellis}. However, these sources do not treat the notion of (super)braiding or (super)symmetry for a monoidal supercategory, which as far as we know, is treated in detail for the first time here. The Clifford twist in \cite{KKT} is similar to our notion of Clifford eversion, but is not the same (for instance, the Clifford twist is an involution even if the coefficient field does not contain $\sqrt{-1}$).

\subsection{Open problems}

We mention a few open problems:
\begin{enumerate}
\item In Remark~\ref{rmk:factor}, we observe a connection between even factor systems and degree 3 group cohomology. However, this connection is not complete, and it does not apply to odd factor systems (which is the relevant case for supersymmetries). It would be helpful to have a better understanding of these issues.
\item Is there a conceptual a priori explanation for why the 2-categories of symmetric and supersymmetric monoidal supercategories are equivalent?
\item Is there a ``natural'' construction of the groupoid of spin-sets? That is, is there a natural category where the objects are some kind of ``enhanced'' finite sets whose automorphism groups are spin-symmetric groups? This might clarify a number of our constructions (such as the $\tilde{\tau}_{n,m}$ elements). Is there a non-groupoid version?
\item We have provided explanations for most of the proof of Theorem~\ref{thm:evert-2cat}, but it remains incomplete.
\end{enumerate}

\subsection{Notation}

We fix a field $\bk$ throughout which we assume does not have characteristic~2. One could allow $\bk$ to be a commutative $\bZ[1/2]$-algebra, but we stick to fields for simplicity. We fix a compatible system of primitive 2-power roots of unity $\zeta_{2^k}$ in $\ol{\bk}$; by compatible, we mean that $\zeta_{2^{k+1}}^2 = \zeta_{2^k}$.

In this article, ``superalgebra'' means an algebra with a $\bZ/2$-grading and a ``commutative superalgebra'' refers to a commutative algebra with a $\bZ/2$-grading (which does not affect the definition of commutative). This is sometimes used in the literature to mean a skew-commutative algebra; we reserve the phrase ``supercommutative superalgebra'' for this, i.e., an algebra $A$ with a $\bZ/2$-grading such that $xy = (-1)^{|x| |y|} yx$ for homogeneous elements $x,y \in A$.

\section{Supercategories} \label{s:supercat}

\subsection{Super vector spaces} \label{ss:super-vector-spaces}

A {\bf super vector space} over $\bk$ is a $\bZ/2$-graded $\bk$-vector space. Given a super vector space $V$, we write $V_i$ for the subspace of homogeneous elements of degree $i$. For $v \in V$, we write $\vert v \vert=i$ to indicate that $v$ is a homogeneous element of degree $i$. For $n \in \bZ$, we let $V[n]$ be the super vector space with $V[n]_i=V_{n+i}$.

Let $V$ and $W$ be two super vector spaces and let $f \colon V \to W$ be a linear map. We say that $f$ is {\bf homogeneous} of parity $n$ if $f(V_i) \subseteq W_{i+n}$. We write $\Hom(V,W)$ for the set of all linear maps, and $\Hom(V,W)_i$ for those of parity $i$. Every linear map decomposes uniquely as a sum of even and odd maps, and so we have a decomposition
\begin{displaymath}
\Hom(V,W) = \Hom(V,W)_0 \oplus \Hom(V,W)_1
\end{displaymath}
In this way, we regard $\Hom(V,W)$ itself as a super vector space. We let $\SVec$ be the category of super vector spaces and all linear maps, and $\SVec^{\circ}$ the category of super vector spaces and even linear maps.

The tensor product of two super vector spaces is just the tensor product of the underlying vector spaces, graded in the usual manner. Given homogeneous linear maps $f_1 \colon V_1 \to W_1$ and $f_2 \colon V_2 \to W_2$, we define $f_1 \otimes f_2 \colon V_1 \otimes V_2 \to W_1 \otimes W_2$ to be the linear map given on homogeneous elements by the formula
\begin{displaymath}
(f_1 \otimes f_2)(v_1 \otimes v_2) = (-1)^{\vert v_1 \vert \vert f_2 \vert} f_1(v_1) \otimes f_2(v_2).
\end{displaymath}
We caution that $\otimes$ is \emph{not} a functor $\SVec \times \SVec \to \SVec$; this will be explained in detail in \S \ref{ss:svec-mon}. However, $\otimes$ is a functor $\SVec^{\circ} \times \SVec^{\circ} \to \SVec^{\circ}$, and naturally endows $\SVec^{\circ}$ with the structure of a monoidal category.

Let $V$ and $W$ be super vector spaces. We define isomorphisms $\sigma, \tau \colon V \otimes W \to W \otimes V$ for homogeneous elements $v \in V$ and $w \in W$ as follows:
\begin{displaymath}
\sigma(v \otimes w)=w \otimes v, \qquad
\tau(v \otimes w)=(-1)^{\vert v \vert \vert w \vert} w \otimes v.
\end{displaymath}
Both $\sigma$ and $\tau$ define symmetric structures on $\otimes$. However, the symmetric structure $\sigma$ is uninteresting in that it does not interact with the grading. In what follows, we regard $\SVec^{\circ}$ as a symmetric monoidal category using $\tau$, unless explicitly said otherwise.

\subsection{Supercategories} \label{ss:scat}

A {\bf supercategory} is a category $\cA$ enriched in the category $\SVec^{\circ}$ of super vector spaces. Thus $\cA$ is a category and $\Hom_{\cA}(X,Y)$ has the structure of a super vector space for all objects $X$ and $Y$ such that composition defines an even map
\begin{displaymath}
\Hom_{\cA}(Y,Z) \otimes \Hom_{\cA}(X,Y) \to \Hom_{\cA}(X,Z), \qquad
f \otimes g \mapsto f \circ g.
\end{displaymath}
Given a supercategory $\cA$, we let $\cA^{\circ}$ be the category with the same collection of objects but only the even morphisms; that is, $\Hom_{\cA^{\circ}}(X,Y)=\Hom_{\cA}(X,Y)_0$.

Let $\Lambda$ be an abelian group (or, for now, any set). A {\bf homogeneous $\Lambda$-supercategory} is a collection $\cA=\{\cA_i\}_{i \in \Lambda}$, where each $\cA_i$ is a supercategory. We refer to $\cA_i$ as the category of homogeneous objects of degree $i$. Informally, an inhomogeneous $\Lambda$-supercategory is a supercategory $\cA$ equipped with subcategories $\cA_i$ for $i \in \cA$ such that $\cA$ is a direct sum of the $\cA_i$'s, in some sense. This concept will appear in some examples, but we do not give a rigorous definition to avoid details of what direct sum means here (there are some subtleties if, e.g., $\cA$ is non-additive or if $\Lambda$ is infinite). In the inhomogeneous case, any construction will be determined by what it does on homogeneous objects, so we can typically restrict our attention to them. In what follows, ``$\Lambda$-supercategory'' will by default refer to the homogeneous version, and we almost always take $\Lambda$ to be $\bZ$ or $\bZ/q$.

Suppose that $\cA$ is a $\Lambda$-supercategory and we have a function $f \colon \Lambda \to \Lambda'$. Then we can convert $\cA$ into a $\Lambda'$-supercategory by defining $\cA_i = \coprod_{j \in f^{-1}(i)} \cA_j$ for $i \in \Lambda'$. Here the coproduct is taken in the sense of linear categories: the $\Hom$ space between objects in different factors is~0. In particular, we can convert a $\bZ$-supercategory into a $\bZ/q$-supercategory, or a $\bZ/q$-supercategory into a $\bZ/q'$-supercategory if $q'$ divides $q$.

\begin{example}
Let $G$ be a supergroup over $\bC$ containing a central $\bG_m$. Then $\cA=\Rep(G)$ is an (inhomogeneous) $\bZ$-supercategory, by taking $\cA_i$ to be the subcategory of objects where $\bG_m$ acts with weight $i$.
\end{example}

\subsection{Superfunctors}

Let $\cA$ and $\cB$ be supercategories. A {\bf superfunctor} $F \colon \cA \to \cB$ is a functor (in the usual sense) such that the maps
\begin{displaymath}
F \colon \Hom_{\cA}(X,Y) \to \Hom_{\cB}(F(X), F(Y))
\end{displaymath}
are even maps of super vector spaces. If $\cA$ and $\cB$ are $\Lambda$-supercategories then a {\bf $\Lambda$-superfunctor} is a collection of superfunctors $F=\{F_i \colon \cA_i \to \cB_i \}_{i \in \Lambda}$.

Let $F$ and $G$ be two superfunctors from $\cA$ to $\cB$. A {\bf homogeneous natural transformation} $\eta \colon F \to G$ of parity $p \in \bZ/2$ is a rule that assigns to every object $X$ of $\cA$ a homogeneous morphism $\eta_X \colon F(X) \to G(X)$ of degree $p$ in $\cB$ such that if $f \colon X \to Y$ is a homogeneous morphism in $\cA$ then $G(f) \circ \eta_X = (-1)^{p \vert f \vert} \eta_Y \circ F(f)$. A general natural transformation is a sum of an even and an odd natural transformation. The definition is the same for $\Lambda$-superfunctors: the grading does not affect the definition at all.

\subsection{$\Pi$-structures} \label{ss:pi}

Let $\cA$ be a supercategory. A {\bf $\Pi$-structure} on an object $X$ of $\cA$ is an odd degree isomorphism $\xi_X \colon X \to \Pi(X)$, for some object $\Pi(X)$. A {\bf $\Pi$-structure} on $\cA$ is a choice of $\Pi$-structure on each object of $\cA$. Such a structure need not exist. Note that if $\cA$ is $\Lambda$-graded and $X$ is homogeneous then $\Pi(X)$ is necessarily homogeneous of the same degree.

Suppose we have a $\Pi$-structure on $\cA$. For a homogeneous morphism $f \colon X \to Y$ in $\cA$, define $\Pi(f)$ to be the unique morphism making the following diagram commute up to $(-1)^{\vert f \vert}$
\begin{displaymath}
\xymatrix@C=4em{
X \ar[r]^f \ar[d]_{\xi_X} & Y \ar[d]^{\xi_Y} \\
\Pi(X) \ar@{..>}[r]^{\Pi(f)} & \Pi(Y) }.
\end{displaymath}
By uniqueness, it is clear that $\Pi$ is a superfunctor $\cA \to \cA$, and that $\xi \colon \id_{\cA} \to \Pi$ is an odd natural isomorphism. (If $\cA$ is a $\Lambda$-supercategory then $\Pi$ is a $\Lambda$-superfunctor.) Furthermore, taking $f=\xi_X$, we find $\Pi(\xi_X)=-\xi_{\Pi(X)}$.

For a non-negative integer $n$, we let $\Pi^n$ be the $n$-fold iterate of $\Pi$. For $n < m$, we define $\xi^{n,m}_X \colon \Pi^n(X) \to \Pi^m(X)$ to be the composition $\xi_{\Pi^{m-1}(X)} \circ \cdots \circ \xi_{\Pi^{n+1}(X)} \circ \xi_{\Pi^n(X)}$; for $n > m$, we define $\xi^{n,m}_X=(\xi^{m,n}_X)^{-1}$; and for $n=m$, we let $\xi^{n,m}_X$ be the identity. The map $\xi^{n,m}_X$ is an isomorphism of parity $m-n$. In particular, we see that $\xi^{0,2} \colon \id_{\cA} \to \Pi^2$ is an even degree isomorphism. For $q$ even, we define $\Pi^n$ for $n \in \bZ/q$ to be the colimit of the system
\begin{displaymath}
\xymatrix@C=4em{
\cdots \ar[r] & \Pi^{n-q} \ar[r]^{\xi^{n-q,n}} & \Pi^n \ar[r]^{\xi^{n+q,n}} \ar[r] & \Pi^{n+q} \ar[r] & \cdots. }
\end{displaymath}
This colimit exists since all the maps are even isomorphisms.

Suppose that $(\Pi, \xi)$ and $(\Pi', \xi')$ are two $\Pi$-structures on $\cA$. Define $i_X \colon \Pi(X) \to \Pi'(X)$ to be the morphism $\xi'_X \circ \xi_X^{-1}$. Then $i_X$ is an even degree isomorphism, and defines an even degree isomorphism of functors $i \colon \Pi \to \Pi'$ compatible with $\xi$ and $\xi'$. We thus see that any two $\Pi$-structures are canonically (and, in fact, uniquely) isomorphic by an even degree natural transformation.

\begin{proposition} \label{prop:Pi-functor}
Let $F \colon \cA \to \cB$ be a superfunctor between supercategories with $\Pi$-structures. Then there is a canonical even isomorphism $F \circ \Pi = \Pi \circ F$.
\end{proposition}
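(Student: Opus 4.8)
The plan is to manufacture the isomorphism directly from the two given $\Pi$-structures, in exact analogy with the comparison of two $\Pi$-structures on a single category described just before the statement. Write $(\Pi,\xi)$ for the $\Pi$-structure on $\cA$ and $(\Pi,\xi')$ for the one on $\cB$. For an object $X$ of $\cA$, the morphism $F(\xi_X)\colon F(X)\to F(\Pi X)$ is an odd isomorphism, since $F$ is a superfunctor and $\xi_X$ is odd, and $\xi'_{F(X)}\colon F(X)\to\Pi(F(X))$ is an odd isomorphism; so I would set
\[
\iota_X \;=\; \xi'_{F(X)}\circ F(\xi_X)^{-1}\colon\ F(\Pi X)\longrightarrow \Pi(F(X)),
\]
which is an even isomorphism, being a composite of two odd ones.

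The first step is to check that the assignment $X\mapsto F(\xi_X)$ is an odd natural isomorphism $F\Rightarrow F\circ\Pi$. For homogeneous $f\colon X\to Y$, using that $F$ is an ordinary functor together with the defining relation $\Pi(f)\circ\xi_X=(-1)^{\vert f\vert}\xi_Y\circ f$ and the fact that $\vert F(f)\vert=\vert f\vert$, one gets
\[
F(\Pi f)\circ F(\xi_X)=F\bigl(\Pi(f)\circ\xi_X\bigr)=(-1)^{\vert f\vert}F(\xi_Y\circ f)=(-1)^{\vert f\vert}F(\xi_Y)\circ F(f),
\]
which is precisely the condition defining an odd natural transformation. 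The second step is the observation that the whiskering $X\mapsto\xi'_{F(X)}$ is an odd natural isomorphism $F\Rightarrow\Pi\circ F$, for the same reason: $\xi'$ is such a transformation and $F$ is a functor. Consequently $\iota$, being the composite of $X\mapsto\xi'_{F(X)}$ with the inverse of $X\mapsto F(\xi_X)$, is automatically an even natural isomorphism $F\circ\Pi\Rightarrow\Pi\circ F$. It is canonical in that it is built only from the chosen $\Pi$-structures; concretely, it is the unique even natural isomorphism $\iota$ with $\iota_X\circ F(\xi_X)=\xi'_{F(X)}$ for all $X$. If $\cA$ and $\cB$ are $\Lambda$-supercategories and $F$ is a $\Lambda$-superfunctor, then applying this construction degreewise shows $\iota$ is a $\Lambda$-natural isomorphism.

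There is no genuinely hard step; the only point requiring care is the sign bookkeeping in the two naturality checks (equivalently, one could verify the naturality square for $\iota$ directly by substituting $\Pi(f)=(-1)^{\vert f\vert}\xi_Y\circ f\circ\xi_X^{-1}$ and the analogous formula for $\Pi(F(f))$), together with the routine remark that, since composition in a supercategory is bilinear, it suffices to verify everything on homogeneous morphisms.
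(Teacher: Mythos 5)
Your proof is correct and defines exactly the same isomorphism $\iota_X=\xi'_{F(X)}\circ F(\xi_X)^{-1}$ as the paper; the only difference is cosmetic: you organize the naturality check by factoring $\iota$ as a composite of the two odd natural isomorphisms $F\xi\colon F\Rightarrow F\Pi$ and $\xi'\ast F\colon F\Rightarrow\Pi F$ (noting their parities add), whereas the paper verifies the naturality square for $\iota$ directly, but the underlying sign computation is the same.
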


\begin{proof}
  Let $X$ be an object of $\cA$. Then $\xi_{F(X)} F(\xi_X)^{-1}$ gives an even-degree isomorphism $F(\Pi(X)) \cong \Pi(F(X))$. We now check that this is a natural isomorphism. Let $f \colon X \to Y$ be a homogeneous morphism. Then
  \[
    \xi_Y f = (-1)^{|f|} \Pi(f) \xi_X, \qquad \xi_{F(Y)} F(f) = (-1)^{|f|} \Pi(F(f)) \xi_{F(X)}.
  \]
  We apply $F$ to the first equation and rearrange to get $F(f) F(\xi_X)^{-1} = (-1)^{|f|} F(\xi_Y)^{-1} F(\Pi(f))$. Post-compose this equation with $\xi_{F(Y)}$ and use the second equation to get
  \[
  \xi_{F(Y)} F(\xi_Y)^{-1} F(\Pi(f)) = \Pi(F(f)) \xi_{F(X)} F(\xi_X)^{-1},
\]
which shows that we have defined a natural isomorphism $F \circ \Pi \cong \Pi \circ F$.
\end{proof}

\subsection{Product categories} \label{ss:scat-prod}

Let $\cA$ and $\cB$ be two supercategories. We let $\cA \veeotimes \cB$ be the product category, in the sense of enriched categories. Precisely, the objects of $\cA \veeotimes \cB$ are pairs $(A,B)$ with $A \in \cA$ and $B \in \cB$; we write $A \boxtimes B$ in place of $(A,B)$. The $\Hom$ spaces are defined by
\begin{displaymath}
\Hom_{\cA \veeotimes \cB}(A \boxtimes B, A' \boxtimes B') = \Hom_{\cA}(A,A') \otimes \Hom_{\cB}(B,B').
\end{displaymath}
Given homogeneous morphisms $f \otimes g \colon A \boxtimes B \to A' \boxtimes B'$ and $f' \otimes g' \colon A' \boxtimes B' \to A'' \boxtimes B''$, their composition is defined by
\begin{displaymath}
(f' \otimes g') \circ (f \otimes g) = (-1)^{\vert f \vert \vert g' \vert} (f' \circ f) \otimes (g' \circ g).
\end{displaymath}
The sign here comes from the symmetry of the tensor product in $\SVec$. (To define the product of enriched categories in general, the enriching category needs a symmetric monoidal structure, and here we are using the symmetry $\tau$ on $\SVec^{\circ}$.)

Let $\cA$ and $\cB$ be two homogeneous $\Lambda$-supercategories. We define $\cA \boxtimes \cB$ to be the homogeneous $\Lambda$-supercategory with
\begin{displaymath}
(\cA \boxtimes \cB)_i=\coprod_{j+k=i} \cA_j \veeotimes \cB_k.
\end{displaymath}
We note that $\veeotimes$ does not distribute over direct sums of supercategories (even finite direct sums), and so does not give the expected result on inhomogeneous supercategories; this is why we have used a different symbol in the $\Lambda$-graded case.

If either $\cA$ or $\cB$ admits a $\Pi$-structure then so does $\cA \boxtimes \cB$. Indeed, suppose $\cA$ does, and let $A \boxtimes B$ be a given object of $\cA \boxtimes \cB$. If $\xi \colon A \to \Pi(A)$ is an odd degree isomorphism then $\xi \otimes \id_B \colon A \boxtimes B \to \Pi(A) \boxtimes B$ is an odd degree isomorphism, and so the claim is verified.

\subsection{Grothendieck groups}

Let $\cA$ be a supercategory equipped with an exact structure on $\cA^{\circ}$. We can then consider the Grothendieck group $\rK(\cA^{\circ})$ of $\cA^{\circ}$. For $\eps \in \{+,-\}$, we define $\rK_{\eps}(\cA)$ to be the quotient of $\rK(\cA^{\circ})$ by the relations $[X]=\eps [Y]$ whenever there is an odd degree isomorphism $X \to Y$. If $\cA$ admits a $\Pi$-structure then $\rK_{\eps}(\cA)$ is simply the quotient of $\rK(\cA^{\circ})$ by the relations $[\Pi(X)]=\eps [X]$. If $\cA$ admits a $\Lambda$-grading then $\rK(\cA^{\circ}) = \bigoplus_{i \in \Lambda} \rK(\cA_i^{\circ})$, and so $\rK(\cA^{\circ})$ is canonically $\Lambda$-grading. This induces a $\Lambda$-grading on $\rK_{\eps}(\cA)$.

\section{Monoidal structures} \label{s:monoidal}

\subsection{Monoidal structures} \label{ss:svec-mon}

Let $\cA$ be a $\Lambda$-supercategory. A {\bf monoidal structure} on $\cA$ consists of:
\begin{itemize}
\item a $\Lambda$-superfunctor $\otimes \colon \cA \boxtimes \cA \to \cA$;
\item a unit object $\bone$;
\item an even natural isomorphism $\alpha_{X,Y,Z} \colon X \otimes (Y \otimes Z) \to  (X \otimes Y) \otimes Z$ satisfying the pentagon axiom, i.e., the following diagram is commutative for all objects $A,B,C,D$:
  \[
    \xymatrix{
      & (A \otimes B) \otimes (C \otimes D) \ar[rd]^-{\alpha_{A \otimes B,C,D}} & \\
      A \otimes (B \otimes (C \otimes D)) \ar[d]_-{\id_A \otimes \alpha_{B,C,D}} \ar[ru]^-{\alpha_{A,B,C\otimes D}} & &
      ((A \otimes B) \otimes C) \otimes D \\
      A \otimes ((B \otimes C) \otimes D) \ar[rr]^-{\alpha_{A,B \otimes C,D}} &&
      (A \otimes (B \otimes C)) \otimes D \ar[u]_-{\alpha_{A,B,C} \otimes \id_D}
      }
  \]
\item even natural isomorphisms $\lambda_X \colon \bone \otimes X \to X$ and $\rho_X \colon X \otimes \bone \to X$ satisfying the triangle axiom, i.e., the following diagram is commutative for all objects $A,B$:
  \[
    \xymatrix{
      A \otimes (\bone \otimes B) \ar[rr]^-{\alpha_{A,\bone,B}} \ar[rd]_-{\id_A \otimes \lambda_B} & & (A \otimes \bone) \otimes B \ar[ld]^-{\rho_A \otimes \id_B} \\
      &  A \otimes B
    }
  \]
\end{itemize}
We note that if $A$ and $B$ are homogeneous objects of $\cA$ then $A \otimes B$ is homogeneous and $\vert A \otimes B \vert = \vert A \vert + \vert B \vert$. A {\bf monoidal $\Lambda$-supercategory} is a $\Lambda$-supercategory equipped with a monoidal structure.

\begin{example}
The supercategory $\SVec$ is monoidal under tensor product. (Note here that we are using all of $\SVec$, and not just the subcategory $\SVec^{\circ}$.)
\end{example}

\begin{remark}
  Suppose that $\cA$ is a monoidal supercategory and we have homogeneous morphisms
\begin{displaymath}
f_1 \colon A_1 \to A_2, \quad
f_2 \colon A_2 \to A_3, \quad
f_1' \colon A_1' \to A_2', \quad
f_2' \colon A_2' \to A_3'.
\end{displaymath}
Then the diagram
\begin{displaymath}
\xymatrix{
A_1 \otimes A_1' \ar[rr]^{f_2f_1 \otimes f_2'f_1'} \ar[rd]_{f_1 \otimes f_1'} && A_3 \otimes A_3' \\
& A_2 \otimes A_2' \ar[ru]_{f_2 \otimes f_2'} }
\end{displaymath}
commutes up to $(-1)^{\vert f_1 \vert \vert f_2' \vert}$, that is, we have the identity
\begin{displaymath}
(f_2 \otimes f_2') \circ (f_1 \otimes f_1') = (-1)^{\vert f_1 \vert \vert f_2' \vert} (f_2 f_1 \otimes f_2' f_1').
\end{displaymath}
This follows from the definition of the composition law in the category $\cA \boxtimes \cA$. From this, we see that $\otimes$ does \emph{not} define a functor $\cA \times \cA \to \cA$ (in general). However, it does define a functor $\cA^{\circ} \times \cA^{\circ} \to \cA^{\circ}$, and $\cA^{\circ}$ is naturally a monoidal category.
\end{remark}

Let $\cA$ be a monoidal $\Lambda$-supercategory admitting a $\Pi$-structure. Let $A$ and $B$ be objects of $\cA$. Then we have odd degree isomorphisms $\xi_{A \otimes B} \colon A \otimes B \to \Pi(A \otimes B)$ and $\xi_A \otimes \id_B \colon A \otimes B \to \Pi(A) \otimes B$. We thus have a canonical even degree isomorphism $\Pi(A \otimes B) \to \Pi(A) \otimes B$. Similarly, we have a canonical even degree isomorphism $\Pi(A \otimes B) \to A \otimes \Pi(B)$. We thus see that we have canonical even degree isomorphisms $\Pi(A) \cong \Pi(\bone) \otimes A \cong A \otimes \Pi(\bone)$.

\begin{definition} \label{defn:mon-func}
Let $\cA$ and $\cB$ be monoidal $\Lambda$-supercategories. A {\bf monoidal $\Lambda$-superfunctor} $F \colon \cA \to \cB$ is a $\Lambda$-superfunctor $F$ equipped with even isomorphisms $\Phi_{X,Y} \colon F(X) \otimes F(Y) \to F(X \otimes Y)$ and $\phi \colon F(\bone_{\cA}) \to \bone_{\cB}$ such that the following diagrams commute for all objects $X,Y,Z$ of $\cA$
\[
  \xymatrix@C=4em{
    (F(X) \otimes F(Y)) \otimes F(Z) \ar[r]^-{\Phi_{X,Y} \otimes 1} \ar[d]_-{\alpha_{F(X), F(Y), F(Z)}} & F(X \otimes Y) \otimes F(Z) \ar[r]^-{\Phi_{X \otimes Y,Z}} & F((X\otimes Y) \otimes Z) \ar[d]^-{F(\alpha_{X,Y,Z})} \\
    F(X) \otimes (F(Y) \otimes F(Z)) \ar[r]^-{1 \otimes \Phi_{Y,Z}} & F(X) \otimes F(Y \otimes Z) \ar[r]^-{\Phi_{X,Y\otimes Z}} & F(X \otimes (Y \otimes Z)) }
\]
\[
  \xymatrix{ \bone \otimes F(X) \ar[r]^-{\lambda_{F(X)}} \ar[d]_-{\phi \otimes 1} & F(X) \\ F(\bone) \otimes F(X) \ar[r]^-{\Phi_{\bone, X}} & F(\bone \otimes X) \ar[u]_-{F(\lambda_X)} } \qquad
    \xymatrix{ F(X) \otimes \bone \ar[r]^-{\rho_{F(X)}} \ar[d]_-{1 \otimes \phi} & F(X) \\ F(X) \otimes F(\bone) \ar[r]^-{\Phi_{X,\bone}} & F(X \otimes \bone) \ar[u]_-{F(\rho_X)} }
  \]
  We will generally suppress $\Phi$ and $\phi$ from the notation.
\end{definition}

\begin{definition} \label{defn:mon-trans}
Given two monoidal $\Lambda$-superfunctors $(F,\Phi,\phi)$ and $(G, \Gamma, \gamma)$ between $\Lambda$-supercategories $\cA$ and  $\cB$, a {\bf monoidal natural transformation} $\eta \colon F \to G$ is an even natural transformation $\eta$ such that the following two diagrams commute for all objects $X,Y$ of $\cA$:
\[
  \xymatrix@C=4em{F(X) \otimes F(Y) \ar[r]^-{\eta_X \otimes \eta_Y} \ar[d]_-{\Phi_{X,Y}} & G(X) \otimes G(Y) \ar[d]^-{\Gamma_{X,Y}} \\ F(X \otimes Y) \ar[r]^-{\eta_{X \otimes Y}} & G(X \otimes Y) }, \qquad
  \xymatrix@C=4em{ \bone \ar[r]^-\gamma \ar[d]_-\phi & G(\bone) \\ F(\bone) \ar[ur]_-{\eta_\bone}}.
\]
We thus have a 2-category of monoidal supercategories, and in particular, a notion of monoidal equivalence.
\end{definition}

\subsection{The equivalences $\Sigma$ and $\rT$} \label{ss:sigma-tau}

Let $\cA$ and $\cB$ be $\bZ/q$-graded supercategories, with $q$ even. Define an equivalence
\begin{displaymath}
\Sigma_{\cA,\cB} \colon \cA \boxtimes \cB \to \cB \boxtimes \cA
\end{displaymath}
as follows. On homogeneous objects, we put $\Sigma_{\cA,\cB}(A \boxtimes B)=B \boxtimes A$. Given homogeneous morphisms $f \colon A \to A'$ and $g \colon B \to B'$ between objects of $\cA$ and $\cB$, we define $\Sigma_{\cA,\cB}(f \otimes g)=(-1)^{\vert f \vert \vert g \vert} g \otimes f$.

Now suppose that $\cA$ and $\cB$ admit $\Pi$-structures. We define an equivalence
\begin{displaymath}
\rT_{\cA,\cB} \colon \cA \boxtimes \cB \to \cB \boxtimes \cA
\end{displaymath}
as follows. On homogeneous objects, we put $\rT_{\cA,\cB}(A \boxtimes B)=\Pi^{\vert A \vert \vert B \vert}(B) \boxtimes A$. Given homogeneous morphisms $f \colon A \to A'$ and $g \colon B \to B'$ of homogeneous objects, we put $\rT_{\cA,\cB}(f \otimes g) = (-1)^{\vert f \vert \vert g \vert} \Pi^{\vert A \vert \vert B \vert}(g) \otimes f$.

We regard $\Sigma$ and $\rT$ as analogous to the two symmetries $\sigma$ and $\tau$ we defined on $\SVec$ in \S \ref{ss:super-vector-spaces}. They will give rise to the notion of symmetric and supersymmetric monoidal supercategories, just as $\sigma$ and $\tau$ give rise to the notion of commutative and supercommutative superalgebras.

\subsection{Factor systems} \label{ss:factor}

We next want to define braidings and symmetries, and their super analogs. For this, we will require certain systems of scalars that we now introduce.

\begin{definition} \label{defn:factors}
\addtocounter{equation}{-1}
  \begin{subequations}
Let $q$ be an even integer and let $p \in \bZ/2$. A {\bf B-factor system} of parity $p$ is a pair $\omega=(\omega_1, \omega_2)$ of functions
\begin{displaymath}
\omega_1,\omega_2 \colon (\bZ/q)^3 \to \bk^{\times}
\end{displaymath}
satisfying the following conditions for all $a,b,c,d \in \bZ/q$ (in what follows, the semicolon in, e.g.,  $\omega_1(a; b,c)$, indicates that the $b$ and $c$ parameters play similar roles, while the $a$ parameter is somewhat different):
\begin{align}
\label{eq:condition1}
\omega_1(a; b,c) \omega_1(a; b+c,d) &= \omega_1(a; b,c+d) \omega_1(a; c,d) \\
\label{eq:condition2}
\omega_2(a+b,c; d) \omega_2(a,b; d) &= \omega_2(b,c; d) \omega_2(a,b+c; d) \\
\label{eq:condition3}
\omega_1(b; c,d) \omega_1(a; c,d) \omega_1(a+b; c,d)^{-1} &= (-1)^{abcd p} \omega_2(a,b; c) \omega_2(a,b; d) \omega_2(a,b; c+d)^{-1} \\
\label{eq:condition4}
\omega_1(a; b,c) \omega_1(a; c,b)^{-1} &= \omega_2(b,a; c) \omega_2(a,b; c)^{-1}.
\end{align}
An {\bf S-factor system} of parity $p$ is a triple $\omega=(\omega_1, \omega_2, \omega_{\sharp})$ where $(\omega_1, \omega_2)$ is a B-factor system of parity $p$ and $\omega_{\sharp} \colon (\bZ/q)^2 \to \bk^{\times}$ is a function satisfying the following conditions:
\begin{align}
\label{eq:condition5}
\omega_1(a; b,c) \omega_2(b,c; a) &= \omega_{\sharp}(a,b) \omega_{\sharp}(a,c) \omega_{\sharp}(a,b+c)^{-1} \\
\label{eq:condition6}
\omega_1(c; a,b) \omega_2(a,b; c) &=\omega_{\sharp}(a,c) \omega_{\sharp}(b,c) \omega_{\sharp}(a+b,c)^{-1}.
\end{align}
We say that $\omega$ is {\bf symmetric} if $\omega_{\sharp}(a,b)=\omega_{\sharp}(b,a)$.
\end{subequations}
\end{definition}

\begin{remark}
A few remarks:
\begin{enumerate}
\item The B- and S- prefixes stand for ``braiding'' and ``syllepsis'' (see \S \ref{s:2cat} for more).
\item The parity $p$ of the factor system only intervenes in \eqref{eq:condition3}.
\item Let $\Gamma_p$ be the set of all S-factor systems of parity $p$ for $\bZ/q$. Then $\Gamma=\Gamma_0 \oplus \Gamma_1$ is a $\bZ/2$-graded abelian group. \qedhere
\end{enumerate}
\end{remark}

\begin{example} \label{ex:trivial}
Let $\omega_1$, $\omega_2$, and $\omega_{\sharp}$ be the constant functions taking the value~1. Then $\omega$ is an even S-factor system. We refer to this as the {\bf trivial factor system}.
\end{example}

\begin{example} \label{ex:theta} 
Suppose that $q$ is divisible by $4$. Then $n \mapsto \binom{n}{2}$ is a well-defined function $\bZ/q \to \bZ/2$. Put
\begin{displaymath}
\fsA_1(a; b,c) = 1, \qquad
\fsA_2(a,b; c) = (-1)^{\binom{c}{2} ab}, \qquad
\fsA_{\sharp}(a,b) = (-1)^{\binom{a}{2} \binom{b}{2}}.
\end{displaymath}
Then $\fsA$ is an odd symmetric S-factor system. This is perhaps the most important example for this paper.
\end{example}

\begin{example} \label{ex:varpi}
Suppose that $\zeta_4 \in \bk$. Define
\begin{displaymath}
\fsB_1(a; b, c) = 1, \qquad
\fsB_2(a, b; c) = \begin{cases}
\zeta_4 & \text{if $a$, $b$, and $c$ are all odd} \\
1 & \text{otherwise} \end{cases}.
\end{displaymath}
Then $(\fsB_1, \fsB_2)$ is an odd B-factor system. Now suppose that $\zeta_8 \in \bk$, and put
\begin{displaymath}
\fsB_{\sharp}(a,b) = \begin{cases}
\zeta_8 & \text{if $a$ and $b$ are both odd} \\
1 & \text{otherwise} \end{cases}.
\end{displaymath}
Then $(\fsB_1, \fsB_2, \fsB_{\sharp})$ is an odd symmetric S-factor system. This factor system has the advantage over the $\fsA$ factor system in that it does not require $q$ to be divisible by~4.
\end{example}

\begin{example} \label{ex:eta}
Define
\begin{displaymath}
\fsC_1(r;s,t) = (-1)^{rst}, \qquad \fsC_2(r,s;t)=(-1)^{rst}, \qquad \fsC_{\sharp}(r,s)=1.
\end{displaymath}
Then $\fsC$ is an even symmetric S-factor system. 
\end{example}

\begin{example} \label{ex:kappa}
Define
\begin{displaymath}
\fsD_1(r;s,t) =  (-1)^{rst}, \qquad \fsD_2(r,s;t)=(-1)^{rst}, \qquad \fsD_{\sharp}(r,s)=(-1)^{rs}.
\end{displaymath}
Then $\fsD$ is an even symmetric S-factor system.
\end{example}

\begin{definition} \label{def:cobound}
Let $\phi \colon (\bZ/q)^2 \to \bk^{\times}$ be an arbitrary function. Put
\begin{align*}
\omega_1(r;s,t) &= \phi(r,s+t) \phi(r,s)^{-1} \phi(r,t)^{-1}, \\
\omega_2(r,s;t) &= \phi(r+s,t) \phi(r,s)^{-1} \phi(s,t)^{-1}, \\
\omega_{\sharp}(r,s) &= \phi(r,s) \phi(s,r).
\end{align*}
Then $(\omega_1, \omega_2, \omega_{\sharp})$ is a symmetric even S-factor system, called the {\bf coboundary} of $\phi$. We denote this system by $\partial(\phi)$.
\end{definition}

\begin{example} \label{ex:cobound1}
Suppose $4 \mid q$. Then $\fsC$ is the coboundary of the function $\fnC$ given by
\begin{displaymath}
\fnC(r,s)=(-1)^{\binom{rs}{2}}. \qedhere
\end{displaymath}
\end{example}

\begin{example} \label{ex:cobound2}
Suppose $q$ is an arbitrary even integer and $\zeta_4 \in \bk$. Then $\fsD$ is the coboundary of the function $\fnD$ given by 
\begin{displaymath}
\fnD(x,y) = \begin{cases}
1 & \text{if $x$ or $y$ is even} \\
\zeta_4 & \text{if $x$ and $y$ are odd} \end{cases}. \qedhere
\end{displaymath}
\end{example}

We now give a somewhat more involved example of a coboundary. 

\begin{proposition} \label{prop:type2-ab}
Set $\epsilon(n)=-1$ if $n \equiv 3 \pmod{4}$ and $\epsilon(n)=1$ otherwise. 
\begin{enumerate}[\rm \indent (a)]
\item Suppose that $\zeta_4 \in \bk$ and that $q$ is divisible by~$8$. Define
\begin{displaymath}
\mathtt{a}(n,m)=\epsilon(n)^m \zeta_4^{-\binom{n}{2} m}.
\end{displaymath}
Then $\fsA = \fsB \cdot \partial(\mathtt{a})$ holds as B-factor systems.
\item Suppose that $\zeta_{16} \in \bk$ and that $q$ is divisible by~$16$. Define
\begin{displaymath}
\mathtt{a}'(n,m)=\epsilon(n)^m \zeta_4^{-\binom{n}{2} m} \zeta_{16}^{nm}
\end{displaymath}
Then $\fsA = \fsB \cdot \partial(\mathtt{a}')$ holds as S-factor systems.
\end{enumerate}
\end{proposition}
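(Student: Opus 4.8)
The plan rests on the observation (from the remark following Definition~\ref{defn:factors}) that B- and S-factor systems of a fixed parity form abelian groups under componentwise multiplication, and that $\phi\mapsto\partial(\phi)$ is a homomorphism from the abelian group of functions $\phi\colon(\bZ/q)^2\to\bk^\times$ into this group, landing in the even factor systems (Definition~\ref{def:cobound}). Consequently the asserted equality $\fsA=\fsB\cdot\partial(\mathtt{a})$ in part (a) is equivalent to the two identities $\fsA_i=\fsB_i\cdot\partial(\mathtt{a})_i$ for $i\in\{1,2\}$, and $\fsA=\fsB\cdot\partial(\mathtt{a}')$ in part (b) to those two together with $\fsA_\sharp=\fsB_\sharp\cdot\partial(\mathtt{a}')_\sharp$. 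Before checking anything I would confirm that $\mathtt{a}$ and $\mathtt{a}'$ genuinely descend to functions $(\bZ/q)^2\to\bk^\times$, since this is exactly where the hypotheses enter: $n\mapsto\epsilon(n)$ depends only on $n\bmod 4$, $n\mapsto\binom{n}{2}\bmod 2$ is periodic modulo $4$, and $n\mapsto\binom{n}{2}\bmod 4$ is periodic modulo $8$, so $\mathtt{a}$ makes sense once $8\mid q$ and $\zeta_4\in\bk$, while the extra factor $\zeta_{16}^{nm}$ in $\mathtt{a}'$ forces $16\mid q$ and $\zeta_{16}\in\bk$. (The same periodicities show $\fsA$ and $\fsB$ are well-defined under these hypotheses, so nothing is being swept under the rug.)

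The $\omega_1$ identity is immediate: $\fsA_1=\fsB_1=1$, and $\partial(\mathtt{a})_1(r;s,t)=\mathtt{a}(r,s+t)\mathtt{a}(r,s)^{-1}\mathtt{a}(r,t)^{-1}=1$ because, for fixed $r$, $\mathtt{a}(r,m)=u(r)^m$ with $u(r)=\epsilon(r)\zeta_4^{-\binom{r}{2}}$ is a homomorphism in $m$ (the relation $u(r)^q=1$ follows from $8\mid q$); likewise for $\mathtt{a}'$ with $u'(r)=u(r)\zeta_{16}^r$. The real work is the $\omega_2$ identity, and, for part (b), the $\omega_\sharp$ identity. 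For $\omega_2$ I would expand $\partial(\mathtt{a})_2(r,s;t)=\mathtt{a}(r+s,t)\mathtt{a}(r,s)^{-1}\mathtt{a}(s,t)^{-1}$ using the Vandermonde identities $\binom{r+s}{2}=\binom{r}{2}+rs+\binom{s}{2}$ and $\binom{r+s}{3}=\binom{r}{3}+s\binom{r}{2}+r\binom{s}{2}+\binom{s}{3}$; writing $\epsilon(n)=(-1)^{\binom{n}{3}}$ (immediate from Lucas), the second identity produces the cross term $\epsilon(r+s)=\epsilon(r)\epsilon(s)(-1)^{s\binom{r}{2}+r\binom{s}{2}}$. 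Collecting everything, multiplying by $\fsB_2(r,s;t)=\zeta_4^{\overline{r}\,\overline{s}\,\overline{t}}$ (with $\overline{n}\in\{0,1\}$ the parity of $n$), and comparing with $\fsA_2(r,s;t)=(-1)^{\binom{t}{2}rs}$ reduces the claim to a single congruence among exponents of $\zeta_4$ modulo $4$. The $\omega_\sharp$ identity in part (b) is analogous: one expands $\mathtt{a}'(r,s)\mathtt{a}'(s,r)$ and compares with $\fsA_\sharp(r,s)\fsB_\sharp(r,s)^{-1}=(-1)^{\binom{r}{2}\binom{s}{2}}\zeta_8^{-\overline{r}\,\overline{s}}$, landing in a congruence of $\zeta_{16}$-exponents modulo $16$, which is the step that actually uses $\zeta_{16}\in\bk$ and $16\mid q$.

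I would organize the verification of these congruences by first establishing them as identities of integer-valued expressions in $r,s,t\in\bZ$, where all binomial coefficients are honest integers and Vandermonde holds on the nose, and only afterward reducing modulo $q$; alternatively each congruence reduces to a short finite check on the residues of $r,s,t$ modulo $4$ (resp. modulo $8$ and $16$). Since $\partial$ is a homomorphism, one can also bootstrap part (b) from part (a): given (a), (b) comes down to identities involving $\partial(\mathtt{a}'\mathtt{a}^{-1})$, which concentrates the new input in the higher root of unity. The main obstacle is entirely the $2$-adic bookkeeping — tracking which power-of-two root of unity each factor lives in, which order of $\zeta$ each exponent is well-defined modulo, and assembling the cancellations without a sign error — and in particular seeing cleanly why the $\omega_\sharp$ component demands $16\mid q$ in (b) whereas $8\mid q$ suffices for the B-factor statement in (a). Once the reduction to the integer identities is set up carefully at the outset, what remains is routine.
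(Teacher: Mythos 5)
Your plan mirrors the paper's in its overall structure: decompose the equality component-by-component, dispose of $\omega_1$ by multiplicativity of $\mathtt{a}(r,\cdot)$, bootstrap part~(b) from part~(a) by noting that $\mathtt{a}'/\mathtt{a}$ is bilinear and hence has trivial $\omega_1,\omega_2$ coboundary, and reduce the residue to a finite check. Where you diverge is the $\omega_2$ (and, in (b), $\omega_\sharp$) verification. The paper forms the quotient of the two sides, observes it depends only on $a,b,c$ modulo $4$, and does a direct case split (first on the parity of $c$, then on $a\bmod 4$). You instead observe $\epsilon(n)=(-1)^{\binom{n}{3}}$, apply Vandermonde for $\binom{r+s}{3}$ to obtain the cross term $\epsilon(r+s)=\epsilon(r)\epsilon(s)(-1)^{s\binom{r}{2}+r\binom{s}{2}}$, and thereby collapse the identity algebraically to a single congruence of $\zeta_4$-exponents (resp.\ $\zeta_{16}$-exponents) before doing a bounded check. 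That is a genuinely more systematic route: it eliminates case reasoning about $\epsilon(a+b)$ and concentrates the residual work into one congruence.

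One caution, which you will hit as soon as you run the expansion: you quoted from Definition~\ref{def:cobound} the formula $\partial(\phi)_2(r,s;t)=\phi(r+s,t)\phi(r,s)^{-1}\phi(s,t)^{-1}$. That printed formula appears to contain a typo; the correct version is $\partial(\phi)_2(r,s;t)=\phi(r+s,t)\phi(r,t)^{-1}\phi(s,t)^{-1}$. This is what Remark~\ref{rmk:convert} requires (scaling $\beta$ by $\phi$ changes the (H2) hexagon by $\phi(a+b,c)\phi(a,c)^{-1}\phi(b,c)^{-1}$), it is what Example~\ref{ex:cobound1} needs, and it is what the paper's own proof of the present proposition implicitly uses: its displayed ``second component'' identity is precisely $\fsB_2(a,b;c)\,\mathtt{a}(a+b,c)=\fsA_2(a,b;c)\,\mathtt{a}(a,c)\,\mathtt{a}(b,c)$. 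With the formula as printed, the Vandermonde cancellation you describe does not close -- the $\binom{r+s}{2}$ term leaves a stray $\binom{r}{2}(s-t)$ in the $\zeta_4$-exponent, and the $\epsilon$ factors leave an unwanted $\epsilon(r)^{t-s}$ -- so the ``single congruence'' you would derive is false. After correcting $\partial_2$, the remainder of your outline goes through cleanly and does reduce to the advertised congruence modulo $4$ (resp.\ modulo~$16$).
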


\begin{proof}
(a) The first component of the equation $\fsA=\fsB \cdot \partial(\mathtt{a})$ amounts to the identity
  \[
    \epsilon(a)^{b+c} \zeta_4^{-\binom{a}{2} (b+c)} = \epsilon(a)^b \zeta_4^{-\binom{a}{2} b} \epsilon(a)^c \zeta_4^{-\binom{a}{2} c},
  \]
  which is clear. The second component amounts to
  \[
    \fsB_2(a,b;c) \epsilon(a+b)^c \zeta_4^{-\binom{a+b}{2} c} = (-1)^{\binom{c}{2} ab} \epsilon(b)^c \zeta_4^{-\binom{b}{2} c} \epsilon(a)^c \zeta_4^{-\binom{a}{2} c}.
  \]
The quotient of the left side by the right side is
  \begin{align*}
(-1)^{\binom{c}{2} ab} \fsB_2(a,b;c) \epsilon(a+b)^c \epsilon(a)^{c} \epsilon(b)^{c} \zeta_4^{-abc},
  \end{align*}
  which now only depends on $a,b,c$ modulo $4$. We will show it is 1 in all cases. If $c$ is even, this reduces to $((-1)^{\binom{c}{2}} \zeta_4^{-c})^{ab} = 1$. If $c$ is odd, then this simplifies to $\fsB_2(a,b;1) \epsilon(a+b) \epsilon(a) \epsilon(b) \zeta_4^{-ab}$. If $a=0$, we get $\epsilon(b)^2 = 1$. If $a=1$, we get $\fsB_2(1,b;1) \epsilon(b+1) \epsilon(b) \zeta_4^{-b}$. If $a=2$, we get $\epsilon(b+2) \epsilon(b) (-1)^b$. Finally, if $a=3$, we get $-\fsB_2(1,b;1) \epsilon(b-1) \epsilon(b) \zeta_4^b$. In all cases, we see that the expressions are 1 for all choices of $b$ by inspection.

(b) Since $\mathtt{a}'$ is equal to $\mathtt{a}$ times a quantity that is bilinear, the first two components of $\partial(\mathtt{a}')$ agree with those of $\partial(\mathtt{a})$. In particular, the equation $\fsA=\fsB \cdot \partial(\mathtt{a}')$ holds on the first two components by part~(a). On the $\sharp$ component, it amounts to the following identity
  \[
    \epsilon(b)^a \zeta_4^{-\binom{b}{2}a} \zeta_{16}^{ab} \epsilon(a)^b \zeta_4^{-\binom{a}{2} b} \zeta_{16}^{ab} (-1)^{\binom{a}{2} \binom{b}{2}} = \fsB_{\sharp}(a,b).
  \]
The quotient of the left side by the right side is
  \[
    \epsilon(b)^a \zeta_4^{-\binom{b}{2}a - \binom{a}{2}b} \zeta_{8}^{ab} \epsilon(a)^b (-1)^{\binom{a}{2} \binom{b}{2}} \fsB_{\sharp}(a,b)^{-1}.
  \]
  We will show this is always 1. Suppose that $a=2c$ is even (we will assume that $a,b$ are integers). Then the above simplifies to
  \[
    (-1)^{\binom{b}{2} c} \zeta_4^{-2bc^2 + bc} \zeta_4^{bc} (-1)^{\binom{b}{2} c} = 1.
  \]
  Since the expression is symmetric in $a$ and $b$, it remains to prove that the expression is 1 when $a=2c+1$ and $b=2d+1$ are odd. In that case, it simplifies to
  \begin{align*}
    &\ \quad    \epsilon(b) \zeta_4^{-(2d^2 + d)a} \zeta_4^{-(2c^2+c)b} (-1)^{cd} \zeta_4^{c+d} \zeta_8 \epsilon(a) (-1)^{cd}\zeta_8^{-1}\\
    &= \epsilon(b) (-1)^d \zeta_4^{-ad} (-1)^c \zeta_4^{-bc} \zeta_4^{c+d} \epsilon(a)\\
    &= \epsilon(a) (-1)^c \epsilon(b) (-1)^d = 1. \qedhere
  \end{align*}
\end{proof}

\begin{remark} \label{rmk:factor}
Let $\rC^r(G, A)$ denote the group of $r$-cochains of a group $G$ with values in the abelian group $A$. Note that $\rC^0(G,A)=A$ is the set of functions from a single point to $A$. These groups form a cochain complex that computes the group cohomology of $G$. Let $\rC^{r,s} = \rC^r(\bZ/q, \bk^{\times}) \otimes \rC^s(\bZ/q, \bZ)$. We have a natural map $i \colon \rC^{r,s} \to \rC^{r+s}(\bZ/q, \bk^{\times})$ given on pure tensors by
\begin{displaymath}
i(c \otimes c')(g_1, \ldots, g_{r+s}) = c(g_1, \ldots, g_r)^{c'(g_{r+1}, \ldots, g_{r+s})}.
\end{displaymath}
One easily sees that $i$ is an isomorphism (e.g., by considering cochains supported at a single point). Thus $\rC^{1,2} \oplus \rC^{2,1}$ can be identified with the set of pairs $(\omega_1, \omega_2)$ of functions $(\bZ/q)^3 \to \bk^{\times}$. For such a pair to form an even B-factor system, it must be killed by the differential of the total complex (this accounts for the first three conditions in the definition of factor system). This suggests that even B-factor systems are closely related to $\rH^3(\bZ/q \times \bZ/q, \bk^\times)$, and that there could be a more fundamental definition including $\rC^{0,3}$ and $\rC^{3,0}$. We note that 3-cocycles of $\bZ/q$ can be used to twist the associativity constraint in a monoidal $\bZ/q$-supercategory, which might be relevant. We expect that even S-factor systems are similarly related to third abelian cohomology (as defined in \cite{MacLane}). It is not clear to us exactly how odd factor systems relate to cohomology.
\end{remark}

\subsection{Braidings} \label{ss:braiding}

Let $\cA$ be a monoidal $\bZ/q$-supercategory with $q$ even (we allow $q=0$). Consider an even isomorphism of superfunctors $\beta \colon \otimes \to \otimes \circ \Sigma$. Explicitly, $\beta$ assigns to every pair $(A,B)$ of objects an even isomorphism $\beta_{A,B} \colon A \otimes B \to B \otimes A$ such that if $f \colon A \to A'$ and $g \colon B \to B'$ are homogeneous morphisms then the diagram
\begin{displaymath}
\xymatrix@C=4em{
A \otimes B \ar[d]_{f \otimes g} \ar[r]^{\beta_{A,B}} & B \otimes A \ar[d]^{g \otimes f} \\
A' \otimes B' \ar[r]^{\beta_{A',B'}} & B' \otimes A' }
\end{displaymath}
commutes up to $(-1)^{\vert f \vert \vert g \vert}$. Consider the two hexagons (H1) and (H2):
\begin{displaymath}
\begin{gathered}
\xymatrix@C=3em{
& A \otimes (B \otimes C) \ar[r]^-{\beta_{A,B\otimes C}} & (B \otimes C) \otimes A \ar[dr]^-{\alpha_{B,C,A}^{-1}} \\
(A \otimes B) \otimes C \ar[ur]^-{\alpha_{A,B,C}^{-1}} \ar[dr]_-{\beta_{A,B} \otimes 1_C} & & & B \otimes (C \otimes A)\\
& (B \otimes A) \otimes C \ar[r]_-{\alpha^{-1}_{B,A,C}} & B \otimes (A \otimes C) \ar[ur]_-{1_B \otimes \beta_{A,C}}
}
\end{gathered}
\tag{H1}
\end{displaymath}
\begin{displaymath}
\begin{gathered}
\xymatrix@C=3em{
& (A \otimes B) \otimes C \ar[r]^-{\beta_{A \otimes B,C}} & C \otimes (A \otimes B) \ar[dr]^-{\alpha_{C,A,B}} \\
A \otimes (B \otimes C) \ar[ur]^-{\alpha_{A,B,C}} \ar[dr]_-{1_A \otimes \beta_{B,C}} & & & (C \otimes A) \otimes B\\
& A \otimes (C \otimes B) \ar[r]_-{\alpha_{A,C,B}} & (A \otimes C) \otimes B \ar[ur]_-{\beta_{A,C} \otimes 1_B} 
}.
\end{gathered}
\tag{H2}
\end{displaymath}

\begin{definition} 
We say that $\beta$ is a {\bf braiding} with respect to the even B-factor system $\omega$ if in (H1) the bottom path is equal to $\omega_1(\vert A \vert; \vert B \vert, \vert C \vert)$ times the top path, and in (H2) the bottom path is equal to $\omega_2(\vert A \vert, \vert B \vert; \vert C \vert)$ times the top path, whenever $A$, $B$, and $C$ are homogeneous objects.
\end{definition}

\begin{definition}
We say that $\beta$ is a {\bf symmetry} with respect to the even S-factor system $\omega$ if it is a braiding and further satisfies the condition $\beta_{B,A} \circ \beta_{A,B}=\omega_{\sharp}(\vert A \vert, \vert B \vert) \id_{A \otimes B}$ for all homogeneous objects $A$ and $B$.
\end{definition}

\begin{remark}
By taking $\omega$ to be the trivial factor system (Example~\ref{ex:trivial}), one recovers the usual definition of symmetry: the diagrams (H1) and (H2) commute, and $\beta_{B,A} \beta_{A,B}=\id_{A \otimes B}$.
\end{remark}

\begin{remark} \label{rmk:convert}
Suppose $\beta$ is a symmetric with respect to $\omega$, and let $\phi \colon (\bZ/q)^2 \to \bk^{\times}$ be an arbitrary function. Define $\beta'$ by $\beta'_{X,Y}=\phi(\vert X \vert, \vert Y \vert) \beta_{X,Y}$. Then $\beta'$ is a symmetry with respect to $\omega \cdot \partial \phi$, where $\partial(\phi)$ is as in Definition~\ref{def:cobound}.
\end{remark}

\begin{remark}
The reason that even factor systems are the appropriate systems of scalars will be made clear in \S \ref{ss:explanation}.
\end{remark}

\begin{definition}
Let $\cA$ and $\cB$ be braided monoidal $\bZ/q$-supercategories. A {\bf braided monoidal $\Lambda$-superfunctor} $F \colon \cA \to \cB$ is a monoidal $\Lambda$-superfunctor $(F,\Phi,\phi)$ such that the following diagram commutes for all objects $X,Y$ of $\cA$
\[
  \xymatrix{  F(X) \otimes F(Y) \ar[rr]^-{\beta_{F(X), F(Y)}} \ar[d]_-{\Phi_{X,Y}}& & F(Y) \otimes F(X) \ar[d]^-{\Phi_{Y,X}} \\
    F(X \otimes Y) \ar[rr]^-{F(\beta_{X,Y})} && F(Y \otimes X) }.
\]
A natural transformation of braided monoidal superfunctors is simply one of monoidal superfunctors, i.e., the braiding is not used in the definition.
\end{definition}

\subsection{Type~I superbraidings} \label{ss:type1}

Suppose $\cA$ is a monoidal $\bZ/q$-supercategory, with $q$ even (we allow $q=0$), admitting a $\Pi$-structure. Consider an even isomorphism of functors $\beta \colon \otimes \to \otimes \circ \rT$. Explicitly, $\beta$ assigns to every pair $(A,B)$ of homogeneous objects an even isomorphism $\beta_{A,B} \colon A \otimes B \to \Pi^{\vert A \vert \vert B \vert}(B) \otimes A$ such that if $f \colon A \to A'$ and $g \colon B \to B'$ are nonzero homogeneous morphisms of homogeneous objects then the diagram 
\begin{displaymath}
\xymatrix@C=4em{
  A \otimes B \ar[d]_{f \otimes g} \ar[r]^-{\beta_{A,B}} & \Pi^{\vert A \vert \vert B \vert}(B) \otimes A \ar[d]^-{\Pi^{|A||B|}(g) \otimes f} \\
A' \otimes B' \ar[r]^-{\beta_{A',B'}} & \Pi^{\vert A \vert \vert B \vert}(B') \otimes A' }
\end{displaymath}
commutes up to $(-1)^{\vert f \vert \vert g \vert}$. Note that the existence of nonzero $f,g$ forces $|A|=|A'|$ and $|B|=|B'|$; if $f$ or $g$ is 0, then the power of $\Pi$ may need to change, but the resulting diagram will commute for trivial reasons.

The hexagons (H1) and (H2) admit obvious modifications (H1$'$) and (H2$'$) in this setting, as follows. Let $A$, $B$, and $C$ be homogeneous objects. Then (H1$'$) is the diagram
\begin{displaymath}
\begin{gathered}
\xymatrix@C=.75em{
& A \otimes (B \otimes C) \ar[r]^-{\psi_1} &
(\Pi^{b+c}(B) \otimes C) \otimes A \ar[dr]^-{\alpha^{-1}} \\
(A \otimes B) \otimes C \ar[ur]^-{\alpha^{-1}} \ar[dr]_-{\beta \otimes \id} & & &
\Pi^{b+c}(B) \otimes (C \otimes A) \\
& (\Pi^b(B) \otimes A) \otimes C \ar[r]_-{\alpha^{-1}}
& \Pi^b(B) \otimes (A \otimes C) \ar[ur]_-{\psi_2}
}.
\end{gathered}
\tag{H1$'$}
\end{displaymath}
Here $b=\vert A \vert \vert B \vert$, $c=\vert A \vert \vert C \vert$, $\psi_1$ is the composition
\begin{displaymath}
\xymatrix@C=5em{
A \otimes (B \otimes C) \ar[r]^-{\beta_{A,B \otimes C}} &
\Pi^{b+c}(B \otimes C) \otimes A \ar[r]^-{\sim} &
(\Pi^{b+c}(B) \otimes C) \otimes A },
\end{displaymath}
$\psi_2$ is the composition
\begin{displaymath}
\xymatrix@C=5em{
\Pi^b(B) \otimes (A \otimes C) \ar[r]^-{\id \otimes \beta_{A,C}} &
\Pi^b(B) \otimes (\Pi^c(C) \otimes A) \ar[r]^-{\xi \otimes \xi \otimes \id} &
\Pi^{b+c}(B) \otimes (C \otimes A) },
\end{displaymath}
and we have omitted indices for readability; recall $\xi=\xi^{n,m}$ is the isomorphism $\Pi^n \to \Pi^m$. The diagram (H2$'$) is:
\begin{displaymath}
\begin{gathered}
\xymatrix@C=.75em{
& (A \otimes B) \otimes C \ar[r]^-{\beta} &
\Pi^{a+b}(C) \otimes (A \otimes B) \ar[dr]^-{\alpha} \\
A \otimes (B \otimes C) \ar[ur]^-{\alpha} \ar[dr]_-{\id \otimes \beta} & & &
(\Pi^{a+b}(C) \otimes A) \otimes B \\
& A \otimes (\Pi^b(C) \otimes B) \ar[r]_-{\alpha} &
(A \otimes \Pi^b(C)) \otimes B \ar[ur]_-{\beta \otimes \id}
}
\end{gathered}
\tag{H2$'$}
\end{displaymath}
where $a=\vert A \vert \vert C \vert$ and $b=\vert B \vert \vert C \vert$, and we have again omitted indices.

It is not correct to ask for the above hexagons to commute: in fact, this leads to inconsistencies (see \S \ref{ss:explanation}). Instead, we make the following definition:

\begin{definition}
We say that $\beta$ is a {\bf type~I superbraiding} with respect to the odd B-factor system $\omega$ if in (H1$'$) the bottom path is equal to $\omega_1(\vert A \vert; \vert B \vert, \vert C \vert)$ times the top path, and in (H2$'$) the bottom path is equal to $\omega_2(\vert A \vert, \vert B \vert; \vert C \vert)$ times the top path.
\end{definition}

\begin{definition}
We say that $\beta$ is a {\bf type~I supersymmetry} with respect to the odd S-factor system $\omega$ if it is a type~I superbraiding with respect to $\omega$, and for all homogeneous objects $A$ and $B$, the diagram
\begin{displaymath}
\xymatrix@C=4em{
A \otimes B \ar[r]^{\beta_{A,B}} \ar[rd]_{\omega_{\sharp}(a,b) \xi \otimes \xi} &
\Pi^{ab}(B) \otimes A \ar[d]^{\beta_{\Pi^{ab}(B),A}} \\
& \Pi^{ab}(A) \otimes \Pi^{ab}(B) }
\end{displaymath}
commutes, where $a=\vert A \vert$ and $b=\vert B \vert$.
\end{definition}

\begin{remark}
If $\beta$ satisfies the conditions to be a supersymmetry then the two hexagon axioms are equivalent conditions: starting from either hexagon, invert all of the arrows and relabel the objects to get the other hexagon.
\end{remark}

\begin{example} \label{ex:type1a}
We define a {\bf type~Ia superbraiding} (respectively, {\bf supersymmetry}) to be a type~I superbraiding (respectively, supersymmetry) with respect to the factor system $\fsA$ from Example~\ref{ex:theta}.
\end{example}

\begin{example} \label{ex:type1b}
We define a {\bf type~Ib superbraiding} (respectively, {\bf supersymmetry}) to be a type~I superbraiding (respectively, supersymmetry) with respect to the factor system $\fsB$ from Example~\ref{ex:varpi}.
\end{example}

\begin{remark}
One can convert between type~Ia and type~Ib superbraidings when $\fsB \fsA^{-1}$ is a coboundary, in the sense of Definition~\ref{def:cobound} (see Remark~\ref{rmk:convert}). Proposition~\ref{prop:type2-ab} gives conditions that ensure $\fsB \fsA^{-1}$ is a coboundary.
\end{remark}

\begin{definition}
Let $\cA$ and $\cB$ be monoidal $\bZ/q$-supercategories equipped with type~I superbraidings with respect to the same factor systems. A {\bf type~I superbraided monoidal superfunctor} is a monoidal superfunctor $F \colon \cA \to \cB$ such that the following diagram commutes
\[
  \xymatrix@C=4em{  F(X) \otimes F(Y) \ar[rr]^-{\beta_{F(X), F(Y)}} \ar[d]_-{\Phi_{X,Y}} && \Pi^{|X||Y|} (F(Y) \otimes F(X)) \ar[d]^-{\Pi^{|X||Y|}(\Phi_{Y,X})} \\
    F(X \otimes Y) \ar[r]^-{F(\beta_{X,Y})} & F(\Pi^{|X||Y|}(Y \otimes X)) \ar@{=}[r] & \Pi^{|X||Y|}(F(Y \otimes X))}
\]
where the equality is the canonical isomorphism coming from Proposition~\ref{prop:Pi-functor}. A natural transformation of such functors is simply one of monoidal superfunctors.
\end{definition}

\subsection{Type~II superbraidings} \label{ss:type2}

The definition of type~I superbraiding followed the general procedure of replacing $\Sigma$ by $\rT$, and therefore has some conceptual clarity. However, these superbraidings can be somewhat inconvenient due to the $\Pi$'s that arise; for instance, if $X$ is an odd-degree object then a type~I superbraiding does not induce an endomorphism of $X \otimes X$, but rather a map $X \otimes X \to \Pi(X) \otimes X$. Of course, one can convert an even degree map $X \otimes Y \to \Pi^{\vert X \vert \vert Y \vert}(Y) \otimes X$ into a map $X \otimes Y \to Y \otimes X$ of degree $\vert X \vert \vert Y \vert$. This leads to an alternate formulation of the concept of superbraiding in which there are no $\Pi$'s, but the maps may not be even. We now give the details.

Let $\cA$ be a monoidal $\bZ/q$-supercategory, with $q$ even (we allow $q=0$); we do not require a $\Pi$-structure. Consider an isomorphism $\beta \colon \otimes \to \otimes \circ \Sigma$ such that if $A$ and $B$ are homogeneous then $\beta_{A,B}$ is homogeneous of degree $\vert A \vert \vert B \vert$. Explicitly, $\beta$ assigns to every pair of homogeneous objects $(A,B)$ an isomorphism $\beta_{A,B} \colon A \otimes B \to B \otimes A$ of degree $\vert A \vert \vert B \vert$ such that if $f \colon A \to A'$ and $g \colon B \to B'$ are homogeneous morphisms of homogeneous objects then the diagram
\begin{displaymath}
\xymatrix@C=4em{
A \otimes B \ar[d]_{f \otimes g} \ar[r]^{\beta_{A,B}} & B \otimes A \ar[d]^{g \otimes f} \\
A' \otimes B' \ar[r]^{\beta_{A',B'}} & B' \otimes A' }
\end{displaymath}
commutes up to $(-1)^{\vert A \vert \vert B \vert (\vert f \vert+\vert g \vert)+\vert f \vert \vert g \vert}$. The first part of the sign, namely $(-1)^{\vert A \vert \vert B \vert(\vert f \vert + \vert g \vert)}$, comes from the condition that $\beta$ is a natural transformation, while the second part of the sign, namely $(-1)^{\vert f \vert \vert g \vert}$, comes from the definition of $\Sigma$.

\begin{definition}
We say that $\beta$ is a {\bf type~II superbraiding} with respect to the B-factor system $\omega$ if in (H1) the bottom path is equal to $\omega_1(\vert A \vert; \vert B \vert, \vert C \vert)$ times the top path, and in (H2) the bottom path is equal to $\omega_2(\vert A \vert, \vert B \vert; \vert C \vert)$ times the top path.
\end{definition}

\begin{definition}
We say that $\beta$ is a {\bf type~II supersymmetry} with respect to the S-factor system $\omega$ if it is a type~II superbraiding with respect to $\omega$ and for all homogeneous objects $A$ and $B$ we have $\beta_{B,A} \beta_{A,B}= \omega_{\sharp}(\vert A \vert, \vert B \vert) \cdot \id_{A \otimes B}$
\end{definition}

\begin{remark}
We define type~IIa/IIb superbraidings/supersymmetries analogously to the type~I case (see Examples~\ref{ex:type1a} and~\ref{ex:type1b}).
\end{remark}

\begin{definition}
Let $\cA$ and $\cB$ be monoidal $\bZ/q$-supercategories equipped with type~II superbraidings with respect to the same factor systems. A {\bf type~II superbraided monoidal functor} is a monoidal superfunctor $F \colon \cA \to \cB$ such that the following diagram commutes
\[
  \xymatrix@C=6em{  F(X) \otimes F(Y) \ar[r]^-{\beta_{F(X), F(Y)}} \ar[d]_-{\Phi_{X,Y}} & F(Y) \otimes F(X) \ar[d]^-{\Phi_{Y,X}} \\
    F(X \otimes Y) \ar[r]^-{F(\beta_{X,Y})} & F(Y \otimes X) }
\]
for all objects $X$ and $Y$. A natural transformation of such functors is simply one of monoidal superfunctors.
\end{definition}

\subsection{Equivalence between type~I and type~II superbraidings} \label{ss:equivalence-type}

Fix a monoidal $\bZ/q$-supercategory (with $q$ even) $\cA$ with a $\Pi$-structure. The following proposition shows how to convert between the two types of superbraidings.

\begin{proposition} \label{prop:type12}
Suppose that $\beta$ is a type~II superbraiding with respect to the odd B-factor system $\omega$. For homogeneous objects $A$ and $B$, define $\beta'_{A,B}$ to be the composition
\begin{displaymath}
\xymatrix@C=6em{
A \otimes B \ar[r]^-{\beta_{A,B}} &
B \otimes A \ar[r]^-{\xi^{0,\vert A \vert \vert B \vert}_B \otimes \id_A} &
\Pi^{\vert A \vert \vert B \vert}(B) \otimes A }.
\end{displaymath}
Then $\beta'$ is a type~I superbraiding with respect to the odd B-factor system $\fsC \omega$, where $\fsC$ is as in Example~\ref{ex:eta}. Moreover, this construction defines a bijection between the two types of superbraidings.

Now suppose that $\beta$ is a type~II supersymmetry with respect to the odd S-factor system $\omega$. Then $\beta'$ is a type~I supersymmetry with respect to the odd S-factor system $\fsD \omega$, where $\fsD$ is as in Example~\ref{ex:kappa}. Again, this construction is bijective.
\end{proposition}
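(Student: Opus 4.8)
The plan is to verify directly that $\beta'$ satisfies the three requirements of a type~I superbraiding (resp.\ supersymmetry), and to deduce bijectivity formally. For the latter: the assignment $\beta \mapsto \beta'$ has an evident candidate inverse sending a type~I $\beta'$ to the family $(\xi^{\vert A\vert\vert B\vert,0}_B \otimes \id_A)\circ\beta'_{A,B}$ of degree-$\vert A\vert\vert B\vert$ maps $A\otimes B\to B\otimes A$; since $\xi^{0,n}_B$ and $\xi^{n,0}_B$ are mutually inverse and every Koszul sign $(-1)^{\vert f_1\vert\vert f_2'\vert}$ that arises in the relevant composites of tensor products has $f_2' = \id_A$ (hence equals $1$), these two assignments are mutually inverse. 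Consequently it suffices to prove the following \emph{identities}, valid for an arbitrary $\beta$: (i) the discrepancy between the two paths of (H1$'$) for $\beta'$ equals $(-1)^{\vert A\vert\vert B\vert\vert C\vert}$ times the discrepancy of (H1) for $\beta$, and likewise (H2$'$) versus (H2); (ii) $\beta'_{\Pi^{\vert A\vert\vert B\vert}(B),A}\circ\beta'_{A,B} = (-1)^{\vert A\vert\vert B\vert}\,\omega_\sharp(\vert A\vert,\vert B\vert)\,(\xi^{0,\vert A\vert\vert B\vert}_A\otimes\xi^{0,\vert A\vert\vert B\vert}_B)$ whenever $\beta_{B,A}\circ\beta_{A,B} = \omega_\sharp(\vert A\vert,\vert B\vert)\id_{A\otimes B}$. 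Granting (i), and since $(-1)^{\vert A\vert\vert B\vert\vert C\vert} = \fsC_1(\vert A\vert;\vert B\vert,\vert C\vert) = \fsC_2(\vert A\vert,\vert B\vert;\vert C\vert)$ while $\fsC$ is $2$-torsion, the two mutually inverse assignments carry type~II superbraidings for $\omega$ to type~I superbraidings for $\fsC\omega$ and back; granting (ii), and since $\fsC_1 = \fsD_1$, $\fsC_2 = \fsD_2$ while $\fsD_\sharp(\vert A\vert,\vert B\vert) = (-1)^{\vert A\vert\vert B\vert}$, they carry type~II supersymmetries for the S-factor system $\omega$ to type~I supersymmetries for $\fsD\omega$ and back. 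This is precisely the statement of the proposition.

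The preliminary checks are routine. Evenness of $\beta'_{A,B}$ holds because $\beta_{A,B}$ and $\xi^{0,\vert A\vert\vert B\vert}_B\otimes\id_A$ each have parity $\vert A\vert\vert B\vert$. For the type~I naturality square (which must commute up to $(-1)^{\vert f\vert\vert g\vert}$), I would post-compose the type~II naturality square for $\beta$ (commuting up to $(-1)^{\vert A\vert\vert B\vert(\vert f\vert+\vert g\vert)+\vert f\vert\vert g\vert}$) with the appropriate $\xi$'s and use the identity $\Pi^n(g)\circ\xi^{0,n}_B = (-1)^{n\vert g\vert}\xi^{0,n}_{B'}\circ g$ (the $n$-fold iterate of $\xi_Y f=(-1)^{\vert f\vert}\Pi(f)\xi_X$, with $n=\vert A\vert\vert B\vert$); this exactly cancels the spurious factor $(-1)^{\vert A\vert\vert B\vert(\vert f\vert+\vert g\vert)}$. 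The case in which $f$ or $g$ vanishes is vacuous. Identity (ii) is a one-line computation of the same flavor: naturality of the type~II $\beta$ against $\xi^{0,\vert A\vert\vert B\vert}_B\colon B\to\Pi^{\vert A\vert\vert B\vert}(B)$ contributes a sign $(-1)^{\vert A\vert\vert B\vert}$, after which the surviving $\xi$'s combine (with trivial Koszul signs) into $\xi^{0,\vert A\vert\vert B\vert}_A\otimes\xi^{0,\vert A\vert\vert B\vert}_B$.

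The heart of the matter is identity (i); I would treat (H1$'$) in full and observe that (H2$'$) is parallel (indeed milder, since there no power of $\Pi$ has to be split across a tensor product). Write $b=\vert A\vert\vert B\vert$ and $c=\vert A\vert\vert C\vert$. I would first simplify the long edges $\psi_1,\psi_2$ of (H1$'$) when the superbraiding is $\beta'$. The canonical isomorphism $\Pi^{b+c}(B\otimes C)\otimes A\to(\Pi^{b+c}(B)\otimes C)\otimes A$ appearing in $\psi_1$ is assembled from the maps $\xi_{B\otimes C}$ and $\xi_B\otimes\id_C$ out of which the canonical even isomorphism $\Pi(X\otimes Y)\cong\Pi(X)\otimes Y$ is built, so it telescopes against the $\xi^{0,b+c}_{B\otimes C}$ concealed inside $\beta'_{A,B\otimes C}$, leaving $\psi_1 = ((\xi^{0,b+c}_B\otimes\id_C)\otimes\id_A)\circ\beta_{A,B\otimes C}$; similarly the map $\xi\otimes\xi\otimes\id$ inside $\psi_2$ cancels the $\xi^{0,c}_C$ inside $\beta'_{A,C}$, leaving $\psi_2 = (\xi^{b,b+c}_B\otimes\id_{C\otimes A})\circ(\id_{\Pi^b(B)}\otimes\beta_{A,C})$. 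With these simplifications, each path of (H1$'$) becomes the corresponding path of (H1) for $\beta$ equipped with a single $\xi$-decoration in the $B$-slot; sliding that decoration through the even (hence sign-free) associators and past $\beta_{A,C}$ produces Koszul signs, and a count of these (together with any contribution of $\Pi(\xi_X)=-\xi_{\Pi(X)}$ through the higher-$\Pi$ canonical isomorphisms) leaves the residue $(-1)^{bc} = (-1)^{\vert A\vert\vert B\vert\vert C\vert}$, nontrivial only when $\vert A\vert=\vert B\vert=\vert C\vert=1$. The analogous count for (H2$'$) gives the same residue, producing the factor $\fsC_2(\vert A\vert,\vert B\vert;\vert C\vert)\omega_2(\vert A\vert,\vert B\vert;\vert C\vert)$ there.

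The main obstacle is exactly this sign bookkeeping: one has to pin down the precise form of the canonical isomorphisms $\Pi^n(X\otimes Y)\cong\Pi^n(X)\otimes Y$ that are implicit in $\psi_1$ and $\psi_2$, and then thread together the two sign conventions — the Koszul rule $(f_2\otimes f_2')\circ(f_1\otimes f_1')=(-1)^{\vert f_1\vert\vert f_2'\vert}(f_2f_1\otimes f_2'f_1')$ for composing tensor products, and the rule $\Pi(\xi_X)=-\xi_{\Pi(X)}$ — with enough care to be sure that everything cancels except the claimed $(-1)^{\vert A\vert\vert B\vert\vert C\vert}$. Once that is verified, the remainder of the argument (naturality, the involutivity diagram, and bijectivity) is purely formal.
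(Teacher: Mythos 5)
Your proposal is correct and follows essentially the same route as the paper: directly verify the three axioms for $\beta'$ by threading $\xi$-decorations through the hexagons and tracking the resulting Koszul and $\Pi(\xi_X)=-\xi_{\Pi(X)}$ signs, with the residue $(-1)^{\vert A\vert\vert B\vert\vert C\vert}$ supplying the $\fsC$ factor and the parallel one-line naturality computation supplying $\fsD_\sharp(a,b)=(-1)^{ab}$. Your framing of bijectivity --- that the key sign identities hold for \emph{arbitrary} $\beta$, hence transport both directions simultaneously --- is a slightly cleaner packaging than the paper's closing remark that ``the diagrams above can be used to show'' the converse, but the underlying content and level of rigor (a sketched sign-chase rather than a fully written-out one) are the same.
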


We note that $\fsC=\fsD$ as B-factor systems, and so one could use $\fsD$ in the first part of the proposition as well.

\begin{proof}
First we check that $\beta'$ is a natural transformation, i.e., that the following square
\[
\xymatrix@C=4em{  A \otimes B \ar[r]^-{\beta'_{A,B}} \ar[d]_-{f \otimes g} & \Pi^{|A||B|}(B) \otimes A \ar[d]^-{\Pi^{|A||B|}(g) \otimes f} \\
  A' \otimes B' \ar[r]^-{\beta'_{A',B'}} & \Pi^{|A||B|}(B') \otimes A' }
\]
commutes up to $(-1)^{|f||g|}$ for all nonzero homogeneous morphisms $f \colon A \to A'$ and $g \colon B \to B'$ between homogeneous objects. By definition, this expands to the diagram
\[
\xymatrix@C=4em{  A \otimes B \ar[r]^-{\beta_{A,B}} \ar[d]_-{f \otimes g} & B \otimes A \ar[d]^{g \otimes f} \ar[r]^-{\xi \otimes 1} & \Pi^{|A||B|}(B) \otimes A \ar[d]^-{\Pi^{|A||B|}(g) \otimes f} \\
  A' \otimes B' \ar[r]^-{\beta_{A',B'}} & B' \otimes A' \ar[r]^-{\xi \otimes 1} & \Pi^{|A||B|}(B') \otimes A' }
\]
Since $\beta$ is a type II superbraiding, the left square commutes up to $(-1)^{|A||B|(|f|+|g|) + |f||g|}$. By definition of how $\Pi$ acts on morphisms, the right square commutes up to $(-1)^{|A||B|(|f|+|g|)}$, and so the original square commutes up to $(-1)^{|f||g|}$, as desired.

Now we check that the hexagon axiom (H1$'$) holds. Consider the following diagram (for simplicity we ignore the associators)
\[
  \xymatrix{ A \otimes B \otimes C \ar[r]^-{\omega_1 \cdot \beta_{A,B \otimes C}} \ar[d]_-{\beta_{A,B} \otimes 1} & B \otimes C \otimes A \ar[r]^-{\xi \otimes 1} \ar[rdd]_-{\xi \otimes \xi \otimes 1} \ar[dd]_-{\xi \otimes 1 \otimes 1} & \Pi^{|A||B|+|A||C|}(B \otimes C) \otimes A \ar[d]^-{\cong} \\
    B \otimes A \otimes C \ar[d]_-{\xi \otimes 1 \otimes 1} \ar[ur]_-{1 \otimes \beta_{A,C}} & & \Pi^{|A||B|+|A||C|}(B) \otimes C \otimes A \\
  \Pi^{|A||B|}(B) \otimes A \otimes C \ar[r]^{1 \otimes \beta_{A,C}} & \Pi^{|A||B|}(B) \otimes C \otimes A \ar[r]^-{1 \otimes \xi \otimes 1} & \Pi^{|A||B|}(B) \otimes \Pi^{|A||C|}(C) \otimes A \ar[u]_-{\cong}}
\]
where the $\omega_1$ in the top left arrow is $\omega_1(\vert A \vert; \vert B \vert, \vert C \vert)$. On the left side, the upper triangle commutes by the first hexagon axiom for $\beta$. The bottom quadrilateral commutes up to $(-1)^{|A||B||C|}$. On the right side, the bottom triangle commutes up to $(-1)^{|A||B||C|}$ by the sign rule when we compose the bottom path. The top triangle commutes: we can take $\Pi(X \otimes Y)=\Pi(X) \otimes Y$, and $\xi \colon X \otimes Y \to \Pi(X \otimes Y)$ to be $\xi_X \otimes \id_Y$, and we use this convention for the $\Pi(B \otimes C)$ above. Then the top right vertical map is the identity map, the bottom right vertical map is $\xi \otimes \xi \otimes 1$. Hence we see that (H1$'$) holds for $\beta'$ by following the outer paths.

Finally, we check that the hexagon axiom (H2$'$) holds for $\beta'$ by considering the following diagram (again we ignore associators for simplicity)
\[
  \xymatrix@C=4em{
    A \otimes B \otimes C \ar[r]^-{\omega_2 \cdot \beta_{A \otimes B, C}} \ar[d]_-{1 \otimes \beta_{B,C}} & C \otimes A \otimes B \ar[r]^-{\xi \otimes 1 \otimes 1} \ar[dr]^-{\xi \otimes 1\otimes 1} & \Pi^{|A||C|+|B||C|}(C) \otimes A \otimes B \\
    A \otimes C \otimes B \ar[r]^-{1\otimes \xi \otimes 1} \ar[ur]^-{\beta_{A,C} \otimes 1} & A \otimes \Pi^{|B||C|}(C) \otimes B \ar[r] & \Pi^{|B||C|}(C) \otimes A \otimes B \ar[u]^-{\xi \otimes 1 \otimes 1}
  }
\]
where the $\delta$ in the top left arrow is $\omega_2(\vert A \vert, \vert B \vert; \vert C \vert)$, and the bottom right map is $\beta_{A,\Pi^{|B||C|}(C)} \otimes 1$. The left triangle commutes by the second hexagon axiom for $\beta$. The middle quadrilateral commutes up to $(-1)^{|A||B||C|}$ and the right triangle commutes. Hence by considering the outer paths, we see that $\beta'$ satisfies the second hexagon axiom.

Finally, the diagrams above can be used to show that $\beta$ is a type~II superbraiding if $\beta'$ is a type~I superbraiding and hence the stated construction gives a bijection between type~I and type~II superbraidings.

Now we handle the last statement, and suppose that $\beta$ is a type~I supersymmetry. Consider the following diagram:
\[
  \xymatrix@C=4em{
    A \otimes B \ar[r]^-{\beta_{A,B}} \ar[dr]_-{\omega_{\sharp}(|A|,|B|)} &
    B \otimes A \ar[d]^-{\beta_{B,A}} \ar[r]^-{\xi \otimes 1} &
    \Pi^{|A||B|}(B) \otimes A \ar[d]^-{\beta_{\Pi(B),A}} \\
    & A \otimes B \ar[r]^-{1 \otimes \xi} \ar[dr]_-{\xi \otimes \xi} &
    A \otimes \Pi^{|A||B|}(B) \ar[d]^-{\xi \otimes 1} \\
    & & \Pi^{|A||B|}(A) \otimes \Pi^{|A||B|}(B)
    }
  \]
  The upper left triangle commutes since $\beta$ is a type II supersymmetry, and the bottom triangle commutes by definition of composition. The square commutes up to $(-1)^{|A|||B|}$ since $\beta$ is a type II superbraiding, so we conclude that $\beta'$ is a superbraiding with respect to $\fsD \omega$.
\end{proof}

\subsection{Explanation of factor systems} \label{ss:explanation}

We now explain the rationale behind the definition of B-factor systems. We work with type~II superbraidings here, but this discussion can be converted to type~I superbraidings or ordinary braidings as well. Let $\cA$ be a monoidal $\bZ/q$-supercategory with $q$ even. We assume, for simplicity, that the monoidal structure is strict. Suppose that we have an isomorphism $\beta_{X,Y} \colon \otimes \to \otimes \circ \Sigma$ such that $\beta_{X,Y}$ is homogeneous of degree $\vert X \vert \vert Y \vert$ on homogeneous objects, and that (H1) and (H2) commute up to scalars. Precisely, for homogeneous objects $A$, $B$, and $C$ of degrees $a$, $b$, and $c$, we suppose that the diagrams
\begin{displaymath}
\xymatrix{
A \otimes B \otimes C \ar[rr]^{\omega_1(a; b, c) \cdot \beta_{A,B \otimes C}} \ar[rd]_{\beta_{A,B} \otimes \id_C} && B \otimes C \otimes A \\
& B \otimes A \otimes C \ar[ru]_{\id_B \otimes \beta_{A,C}} }
\end{displaymath}
\begin{displaymath}
\xymatrix{
A \otimes B \otimes C \ar[rr]^{\omega_2(a, b; c) \cdot \beta_{A \otimes B, C}} \ar[rd]_{\id_A \otimes \beta_{B,C}} && C \otimes A \otimes B \\
& A \otimes C \otimes B \ar[ru]_{\beta_{A,C} \otimes \id_B} }
\end{displaymath}
commute, where $\omega_1$ and $\omega_2$ are non-zero scalars. Note that these triangles are the hexagon axioms since we have assumed that the associators are the identity.

\begin{subequations}
Let $A$, $B$, $C$, and $D$ be four homogeneous objects of degrees $a$, $b$, $c$, and $d$. Consider the tetrahedron
\begin{equation} \label{eq:diagram1}
\begin{gathered}
\xymatrix@R=16pt{
& ABCD \ar[ld] \ar[rd] \ar[dd] \\
BACD \ar[rd] \ar@{..>}[rr] && BCDA \\
& BCAD  \ar[ru] }
\end{gathered}
\end{equation}
Here we have omitted the $\otimes$ symbols, and each arrow is the unique $\beta$ map that makes sense. This tetrahedron has four faces that each commute up to an $\omega_1$-factor. This yields the identity \eqref{eq:condition1}.

Similarly, we have the commuting tetrahedron
\begin{equation} \label{eq:diagram2}
\begin{gathered}
\xymatrix@R=16pt{
& ABCD \ar[ld] \ar[rd] \ar[dd] \\
ABDC \ar[rd] \ar@{..>}[rr] && DABC \\
& ADBC  \ar[ru] }
\end{gathered}
\end{equation}
that yields the identity \eqref{eq:condition2}.

Now consider the following diagram:
\begin{equation} \label{eq:diagram3}
\begin{gathered}
\xymatrix@R=16pt{
ABCD \ar[rr] \ar[ddd] \ar[rdd] \ar[rdddd] &&
CDAB \\ \\
& ACDB \ar@{..>}[rd] \ar[ruu] \\
ACBD \ar@{..>}[ru] \ar[rd] && CADB \ar[uuu] \\
& CABD \ar[ru] \ar[ruuuu] }
\end{gathered}
\end{equation}
This diagram has six triangles and one square (composed of the bottom four vertices). If $a$, $b$, $c$, and $d$ are all odd then both $\beta_{A,C}$ and $\beta_{B,D}$ are odd, and the square anti-commutes; this is the one place where the specifics of supercategories intervene. Otherwise, at least one of $\beta_{A,C}$ and $\beta_{B,D}$ are even, and the square commutes. Thus, in general, the square commutes up to $(-1)^{abcd}$, and we obtain \eqref{eq:condition3} with $p=1$. (In the case of ordinary braidings, this sign would not be present, and so we would obtain \eqref{eq:condition3} with $p=0$.)

Finally, consider the following diagram:
\begin{equation} \label{eq:diagram4}
\begin{gathered}
\xymatrix@C=4em{
& BAC \ar[r] \ar[rrd] & BCA \ar[rd] \\
ABC \ar[ru] \ar[rd] \ar[rru] \ar[rrd] &&& CBA \\
& ACB \ar[r] \ar[rru] & CAB \ar[ru] }
\end{gathered}
\end{equation}
This diagram has four triangles, two that commute up to $\omega_1$-factors and two that commute up to $\omega_2$-factors. It also has two squares that commute by the naturality of $\beta$. For example, one of the squares is
\begin{displaymath}
\xymatrix@C=4em{
ABC \ar[r]^{\beta_{A,BC}} \ar[d]_{\id \otimes \beta_{B,C}} & BCA \ar[d]^{\beta_{B,C} \otimes \id} \\
ACB \ar[r]^{\beta_{A,CB}} & CBA }
\end{displaymath}
and this commutes up to $(-1)^{a(b+c)bc}=1$. This diagram yields \eqref{eq:condition4}.
\end{subequations}

We thus see that the four equations in the definition of a B-factor system are necessary for consistency of the hexagon axioms. Of course, one might wonder if there are additional equations that are needed that we have overlooked. In fact, this is not the case: this is best explained by the 2-categorical considerations in \S \ref{s:2cat}. (The discussion there will also explain the equations for S-factor systems.)

\subsection{Grothendieck groups}

Let $\cA$ be a monoidal $\bZ/q$-supercategory with $q$ even. Suppose that $\cA^{\circ}$ has an exact structure and $\otimes$ is exact. Then $\otimes$ induces a multiplication on $\rK(\cA^{\circ})$ and $\rK_{\pm}(\cA)$ and gives them each the structure of a superring. If the monoidal structure admits a braiding then all three of these superrings are commutative. If the monoidal structure admits a superbraiding (of any kind) then $\rK_+(\cA)$ is commutative while $\rK_-(\cA)$ is supercommutative. Thus, via $\rK_-$, we can view supersymmetric monoidal supercategories as a categorification of the notion of a supercommutative superalgebra.

\section{Some 2-category theory} \label{s:2cat}

In this section, we explain the 2-categorical underpinnings for supersymmetries and superbraidings. We do not intend for this discussion to be perfectly rigorous, and we allow ourselves to take some shortcuts and make some simplifications. We have mostly followed \cite{crans} for the theory of monoidal 2-categories, and refer there for more details; see also \cite{baez} and \cite{kapranov}.

\subsection{Monoidal 2-categories}

Let $(\sC, \otimes)$ be a semistrict monoidal 2-category. We refer to \cite[\S 2.1]{crans} for the precise definition, and make a few remarks here. We require composition of 1-morphisms in $\sC$ to be strictly associative (this is part of what ``$2$-category'' means). We also require $\otimes$ to be strictly associative (this is part of what semistrict means). We ignore the unit of the monoidal structure in our discussion. An important piece of the monoidal structure is that the square
\begin{displaymath}
\xymatrix@C=4em{
A \otimes B \ar[r]^{\id_A \otimes g} \ar[d]_{f \otimes \id_B} & A \otimes B' \ar[d]^{f \otimes \id_{B'}} \\
A' \otimes B \ar[r]^{\id_{A'} \otimes g} & A' \otimes B' }
\end{displaymath}
commutes up to a given 2-isomorphism $i_{f,g}$; precisely,
\begin{displaymath}
i_{f,g} \colon (f \otimes \id_{B'}) \circ (\id_A \otimes g) \xrightarrow{\sim} (\id_{A'} \otimes g) \circ (f \otimes \id_B).
\end{displaymath}
In what follows, we omit tensor symbols (i.e., we write $AB$ in place of $A \otimes B$), and simply write $A$ for the identity 1-morphism on an object $A$ of $\sC$.

\subsection{Braidings on monoidal 2-categories}

A \textbf{braiding} on $\sC$ consists of a pseudonatural equivalence $R_{A,B} \colon AB \to BA$ together with 2-isomorphisms
\begin{displaymath}
\rho_{A \vert B, C} \colon (BR_{A,C}) \circ (R_{A,B}C) \to R_{A,BC}, \qquad \rho_{A,B \vert C} \colon (R_{A,C}B) \circ (AR_{B,C}) \to R_{AB, C}.
\end{displaymath}
satisfying the five axioms below (we have inverted the convention from \cite[\S 2.2]{crans} using $\rho$ in place of $\rho^{-1}$). Note that the above two 2-isomorphisms make the diagrams
\begin{displaymath}
\xymatrix{
ABC \ar[rr]^{R_{A,BC}} \ar[rd]_{R_{A,B}C} && BCA \\
& BAC \ar[ru]_{BR_{A,C}}
}
\qquad
\xymatrix{
ABC \ar[rr]^{R_{AB,C}} \ar[rd]_{AR_{B,C}} && CAB \\
& ACB \ar[ru]_{R_{A,C}B}
}
\end{displaymath}
commute up to isomorphism, and are essentially the hexagon axioms. (Remember that the associators are identity morphisms, and can thus be omitted.) We recall that if $f \colon A \to A'$ and $g \colon B \to B'$ are 1-morphisms, then there is a given 2-isomorphism
\begin{displaymath}
R_{f,g} \colon (gf)R_{A,B} \xrightarrow{\sim} R_{A',B'} (fg);
\end{displaymath}
this is part of the data of the pseudonatural equivalence $R$. This isomorphism makes the square
\begin{displaymath}
\xymatrix@C=4em{
AB \ar[r]^{R_{A,B}} \ar[d]_{fg} & BA \ar[d]^{gf} \\
A'B' \ar[r]^{R_{A',B'}} & B'A' }
\end{displaymath}
commute up to isomorphism.

\textit{Axiom 1.}
The tetrahedron in \eqref{eq:diagram1} commutes for all objects $A$, $B$, $C$, and $D$, where the edges are the obvious 1-morphisms built from $R$ and the faces are the 2-morphisms provided by $\rho$. Here ``commutes'' is taken in the 2-categorical sense. Explicitly, this means that the diagram
\begin{displaymath}
\xymatrix@C=5em{
(BCR_{A,D})(BR_{A,C}D)(R_{A,B}CD) \ar[r]^-{B\rho_{A \vert C,D} \circ 1} \ar[d]_{1 \circ (\rho_{A \vert B,C}D)} &
(BR_{A,CD})(R_{A,B}CD) \ar[d]^{\rho_{A \vert B, CD}} \\
(BCR_{A,D})(R_{A,BC}D) \ar[r]^-{\rho_{A \vert BC, D}} &
R_{A,BCD} }
\end{displaymath}
commutes.

\textit{Axiom 2.}
The tetrahedron in \eqref{eq:diagram2} commutes. This means that the diagram
\begin{displaymath}
\xymatrix@C=5em{
(R_{A,D}BC)(AR_{B,D}C)(ABR_{C,D}) \ar[r]^-{(\rho_{A,B\vert D}C) \circ 1} \ar[d]_{1 \circ (A \rho_{B,C \vert D})} &
(R_{AB,D}C)(ABR_{C,D}) \ar[d]^{\rho_{AB,C \vert D}} \\
(R_{A,D}BC)(AR_{BC,D}) \ar[r]^-{\rho_{A,BC \vert D}} &
R_{ABC,D} }
\end{displaymath}
commutes.

\textit{Axiom 3.}
The diagram in \eqref{eq:diagram3} commutes; the 2-isomorphism in the square is given by $i$. We write this out explicitly using three diagrams. First, we require that the diagram
\begin{displaymath}
\xymatrix@C=5em{
(CR_{A,D}B)(R_{A,C}DB)(ACR_{B,D})(AR_{B,C}D) \ar[d]_{\rho_{A\vert C,D}B \circ 1 \circ 1} \ar[r]^-{1 \circ 1 \circ A \rho_{B \vert C,D}} \ar@{..>}[rd]^{\phi} &
(CR_{A,D}B)(R_{A,C}DB)(AR_{B,CD}) \ar[d]^{\rho_{A\vert C,D}B \circ 1}\\
(R_{A,CD}B)(ACR_{B,D})(AR_{B,C}D) \ar[r]^-{1 \circ A \rho_{B \vert C,D}} &
(R_{A,CD}B)(AR_{B,CD}) 
}
\end{displaymath}
commutes, and define $\phi$ to be the indicated arrow. Second, we require that the diagram
\begin{displaymath}
  \xymatrix@C=5em{
    (CR_{A,D}B)(CAR_{B,D})(R_{A,C}BD)(AR_{B,C}D) \ar[d]_{1 \circ 1 \circ \rho_{A,B\vert C}D} \ar[r]^-{C \rho_{A,B \vert D} \circ 1 \circ 1} \ar@{..>}[rd]^{\psi} &
(CR_{AB,D})(R_{A,C}BD)(AR_{B,C}D) \ar[d]^{1 \circ \rho_{A,B \vert C} D} \\
(CR_{A,D}B)(CAR_{B,D})(R_{AB,C}D) \ar[r]^-{C\rho_{A,B\vert D} \circ 1} &
(CR_{AB,D})(R_{AB,C}D) }
\end{displaymath}
commutes, and again define $\psi$ to be the indicated arrow. Finally, we require that the diagram
\begin{displaymath}
\tiny
\xymatrix{
(CR_{A,D}B)(R_{A,C}DB)(ACR_{B,D})(AR_{B,C}D) \ar[d]_{\phi} \ar[rr]^{1 \circ i \circ 1} &&
(CR_{A,D}B)(CAR_{B,D})(R_{A,C}BD)(AR_{B,C}D) \ar[d]^{\psi} \\
(R_{A,CD}B)(AR_{B,CD}) \ar[r]^{\rho_{A,B\vert CD}} & R_{AB,CD} &
(CR_{AB,D})(R_{AB,C}D) \ar[l]_{\rho_{AB\vert C,D}} }
\end{displaymath}
commutes, where $i=i_{f,g}$ with $f=R_{A,C}$ and $g=R_{B,D}$.

\textit{Axiom 4.}
The diagram in \eqref{eq:diagram4} commutes; the 2-isomorphisms in the four triangles come from $\rho$, while the 2-isomorphisms in the two squares come from $R$. Explicitly, this means that the diagram
\begin{displaymath}
\xymatrix@C=4em{
(R_{BA,C})(R_{A,B}C) \ar[d]_{R_{R_{A,B},\id_C}} &
(R_{B,C}A)(BR_{A,C})(R_{A,B}C) \ar[l]_-{\rho_{B,A \vert C} \circ 1} \ar[r]^-{1 \circ \rho_{A \vert B,C}} &
(R_{B,C}A)(R_{A,BC}) \ar[d]^{R_{\id_A,R_{B,C}}} \\
(CR_{A,B})(R_{AB,C}) \ar[r]^-{1 \circ \rho^{-1}_{A,B \vert C}} &
(CR_{A,B})(R_{A,C}B)(AR_{B,C}) &
(R_{A,CB})(AR_{B,C}) \ar[l]_-{\rho^{-1}_{A \vert C,B} \circ 1} }
\end{displaymath}
commutes. 

\textit{Axiom 5.}
A number of conditions regarding the unit object; see \cite[\S 2.2]{crans}.

\subsection{Symmetric and sylleptic structures}

There are two ``levels'' of commutativity for monoidal 1-categories: braided and symmetric. For monoidal 2-categories, there are three levels: braided, sylleptic, and symmetric. We now recall what the latter two mean.

Fix a braiding $R$ on $\sC$. A \textbf{syllepsis} is a 2-isomorphism (we have inverted the convention from \cite[\S 4]{crans}, using $v$ in place of $v^{-1}$) 
\begin{displaymath}
v_{A,B} \colon R_{B,A} \circ R_{A,B} \to \id_{A \otimes B}
\end{displaymath}
such that the following diagrams commute (as before, the dotted arrows denote the definition of the common composition in the first and third diagrams).
\begin{displaymath}
\xymatrix@C=4em{
(R_{B,A}C)(BR_{C,A})(BR_{A,C})(R_{A,B}C) \ar[rr]^-{1 \circ 1 \circ \rho_{A|B,C}} \ar[d]_-{\rho_{B,C|A} \circ 1 \circ 1} \ar@{..>}[drr]^-\phi && (R_{B,A}C)(BR_{C,A})(R_{A,BC}) \ar[d]^-{\rho_{B,C|A} \circ 1} \\
(R_{BC,A})(BR_{A,C})(R_{A,B}C) \ar[rr]^-{1 \circ \rho_{A|B,C}} && (R_{BC,A})(R_{A,BC}) },
\end{displaymath}
\begin{displaymath}
\xymatrix@C=4em{
(R_{B,A}C)(BR_{C,A})(BR_{A,C})(R_{A,B}C) \ar[r]^-{\phi} \ar[d]_-{1 \circ Bv_{A,C} \circ 1} & (R_{BC,A})(R_{A,BC}) \ar[d]^-{v_{A,BC}} \\
(R_{B,A}C)(R_{A,B}C) \ar[r]^-{v_{A,B}C} & \id_{ABC} },
\end{displaymath}
\begin{displaymath}
\xymatrix@C=4em{
(AR_{C,B})(R_{C,A}B)(R_{A,C}B)(AR_{B,C}) \ar[rr]^-{1 \circ 1 \circ \rho_{A,B|C}} \ar[d]_-{\rho_{C|A,B} \circ 1 \circ 1} \ar@{..>}[drr]^-\psi && (AR_{C,B})(R_{C,A}B)(R_{AB,C}) \ar[d]^-{\rho_{C|A,B} \circ 1} \\
(R_{C,AB})(R_{A,C}B)(AR_{B,C}) \ar[rr]^-{1 \circ \rho_{A,B|C}} && (R_{C,AB})(R_{AB,C}) },
\end{displaymath}
\begin{displaymath}
\xymatrix@C=4em{
(AR_{C,B})(R_{C,A}B)(R_{A,C}B)(AR_{B,C}) \ar[r]^-{\psi} \ar[d]_-{1 \circ v_{A,C}B \circ 1} & (R_{C,AB})(R_{AB,C}) \ar[d]^-{v_{AB,C}} \\
(AR_{C,B})(AR_{B,C}) \ar[r]^-{Av_{B,C}} & \id_{ABC} }.
\end{displaymath}
There are also conditions involving the unit object that we omit (see \cite[\S 4]{crans}). A syllepsis is a \textbf{symmetry} if $1 \circ v_{A,B} = v_{B,A} \circ 1$ agree as isomorphisms
\begin{displaymath}
R_{A,B}R_{B,A}R_{A,B} \to R_{A,B}.
\end{displaymath}

\subsection{The 2-category of supercategories}

Fix an even positive integer $q$. For the remainder of this section, we let $\sC$ be the 2-category whose objects are $\bZ/q$-supercategories equipped with a $\Pi$-functor. The 1-morphisms of $\sC$ are superfunctors, and the 2-morphisms are even natural transformations. This is a 2-category in the sense that we have been using it, as composition of 1-morphisms in $\sC$ is strictly associative. In this section, we denote supercategories by $A, B, \ldots$, and objects of supercategories by $a,b,\ldots$, to maintain consistency with the rest of this section.

Let $A$ and $B$ be objects of $\sC$. We let $AB=A \otimes B$ be the supercategory $A \boxtimes B$ defined in \S \ref{ss:scat-prod}. For objects $a \in A$ and $b \in B$, we write $a \otimes b$, or simply $ab$, instead of $a \boxtimes b$ in this section. We equip $A \otimes B$ with the $\Pi$-structure $\Pi(a \otimes b)=\Pi(a) \otimes b$. Thus $A \otimes B$ is again an object of $\sC$. This defines a monoidal structure on $\sC$. However, it is \emph{not} semistrict, since this product is not strictly associative. We will ignore this detail, and simply treat $\otimes$ as if it were semistrict. This is the main reason we disclaimed at the start of this section that our discussion would not be entirely rigorous.

In the remainder of this section, we investigate how $\Sigma$ and $\rT$ (as defined in \S \ref{ss:sigma-tau}) define braidings (or syllepses or symmetries) on $\sC$. We focus on the $\rT$ case since it is most relevant to this paper, but the $\Sigma$ case is very similar. To maintain consistency with the rest of this section, we write $R$ in place of $\rT$. Thus, for objects $A$ and $B$ of $\sC$, we have the equivalence $R_{A,B} \colon A \otimes B \to B \otimes A$ defined on objects by $R_{A,B}(a \otimes b)=\Pi^{\vert a \vert \vert b \vert}(b) \otimes a$, and on morphisms by $R_{A,B}(f \otimes g)=(-1)^{\vert f \vert \vert g \vert} \Pi^{\vert a \vert \vert b \vert}(g) \otimes f$.

\subsection{Braiding} \label{ss:scat-braiding}

To give $R$ the structure of a braiding, we first must define the $\rho$ isomorphisms. Fix functions $\omega_1,\omega_2 \colon (\bZ/q)^3 \to \bk^{\times}$. Let $A$, $B$, and $C$ be objects of $\sC$, and let $a$, $b$, and $c$ be homogeneous objects of $A$, $B$, and $C$. We have
\begin{displaymath}
((BR_{A,C})(R_{A,B}C))(a \otimes b \otimes c)=\Pi^{\vert a \vert \vert b \vert}(b) \otimes \Pi^{\vert a \vert \vert c \vert}(c) \otimes a
\end{displaymath}
and
\begin{displaymath}
R_{A,BC}(a \otimes b \otimes c) = \Pi^{\vert a \vert (\vert b \vert+\vert c \vert)}(b) \otimes c \otimes a.
\end{displaymath}
We define
\begin{displaymath}
\rho_{A \vert B,C} \colon \Pi^{\vert a \vert \vert b \vert}(b) \otimes \Pi^{\vert a \vert \vert c \vert}(c) \otimes a \to \Pi^{\vert a \vert(\vert b \vert+\vert c \vert)}(b) \otimes c \otimes a
\end{displaymath}
by
\begin{displaymath}
\rho_{A \vert B,C} = \omega_1(a; b, c) \cdot \xi \otimes \xi \otimes 1,
\end{displaymath}
where we write $\omega_1(a; b, c)$ in place of $\omega_1(\vert a \vert; \vert b \vert, \vert c \vert)$. Here $\xi^{n,m} \colon \Pi^n \to \Pi^m$ is the isomorphism defined in \S \ref{ss:pi}, and we omit the indices to $\xi$ when they are clear. Similarly, we have
\begin{displaymath}
((R_{A,C}B)(AR_{B,C}))(a \otimes b \otimes c) = \Pi^{\vert a \vert \vert c \vert}(\Pi^{\vert b \vert \vert c \vert}(c)) \otimes a \otimes b
\end{displaymath}
and
\begin{displaymath}
R_{AB, C}(a \otimes b \otimes c) = \Pi^{\vert c \vert(\vert a \vert+\vert b \vert)}(c) \otimes a \otimes b.
\end{displaymath}
We define $\rho_{A,B \vert C}$ to be $\omega_2(a,b; c)$ times the identity map; note that $\Pi^{\vert a \vert \vert c \vert} \circ \Pi^{\vert b \vert \vert c \vert} = \Pi^{\vert c \vert (\vert a \vert+\vert b \vert)}$. We have thus defined the necessary data for a braiding on $\sC$. We now examine the axioms.

\textit{Axiom 1.}
The diagram becomes
\[
  \xymatrix@C=8em{\Pi^{|a||b|}(b) \Pi^{|a||c|}(c) \Pi^{|a||d|}(d) a \ar[r]^-{\omega_1(a; c,d) 1 \otimes  \xi \otimes \xi \otimes 1} \ar[d]_-{\omega_1(a; b,c) \xi \otimes \xi \otimes 1 \otimes 1} &
    \Pi^{|a||b|}(b) \Pi^{|a|(|c|+|d|)}(c)da \ar[d]^-{\omega_1(a; b,cd) \xi \otimes \xi \otimes 1 \otimes 1} \\
  \Pi^{|a|(|b|+|c|)}(b)c \Pi^{|a||d|}(d) a \ar[r]^-{\omega_1(a; bc,d) \xi \otimes 1 \otimes \xi \otimes 1} & \Pi^{|a|(|b|+|c|+|d|)}(b)cda }.
\]
The top composition is $\omega_1(a; b,cd) \omega_1(a; c,d) \xi \otimes \xi \otimes \xi \otimes 1$ while the bottom composition is $\omega_1(a;b,c) \omega_1(a;bc,d) \xi \otimes \xi \otimes \xi \otimes 1$. Hence the diagram commutes if and only if
\[
\omega_1(a;b,c) \omega_1(a;bc,d) = \omega_1(a;b,cd) \omega_1(a;c,d)
\]
which is equivalent to \eqref{eq:condition1} (note we have multiplicative notation as opposed to additive notation here since we are using the objects rather than their degrees as input).

\textit{Axiom 2.}
The diagram becomes
\[
  \xymatrix@C=8em{ \Pi^{|a||d|}(\Pi^{|b||d|}(\Pi^{|c||d|}(d))) abc \ar[r]^-{\omega_2(a,b;d)} \ar[d]_-{\omega_2(b,c;d)} & \Pi^{(|a|+|b|)|d|}(\Pi^{|c||d|}(d))abc \ar[d]^-{\omega_2(ab,c;d)} \\
  \Pi^{|a||d|}(\Pi^{(|b|+|c|)|d|}(d))abc \ar[r]^-{\omega_2(a,bc;d)} & \Pi^{(|a|+|b|+|c|)|d|}(d)abc}
\]
where each map is a scalar times the identity. Thus, commutativity of this diagram is equivalent to the identity
\[
  \omega_2(ab,c;d) \omega_2(a,b;d) = \omega_2(b,c;d) \omega_2(a,bc;d)
\]
which is \eqref{eq:condition2}.

\textit{Axiom 3.}
The first diagram comes out to be the following:
\begin{displaymath}
\xymatrix@C=4em{
\Pi^{\vert a \vert \vert c \vert}(\Pi^{\vert b \vert \vert c \vert}(c)) \Pi^{\vert a \vert \vert d \vert}(\Pi^{\vert b \vert \vert d \vert}(d)) ab \ar[r]^{\circled{1}} \ar[d]_{\circled{2}} \ar@{..>}[rd]^{\phi} &
\Pi^{\vert a \vert \vert c \vert}(\Pi^{\vert b \vert(\vert c \vert + \vert d \vert)}(c)) \Pi^{\vert a \vert \vert d \vert}(d) ab \ar[d]^{\circled{3}} \\
\Pi^{\vert a \vert(\vert c \vert+\vert d \vert)}(\Pi^{\vert b \vert \vert c \vert}(c)) \Pi^{\vert b \vert \vert d \vert}(d) ab \ar[r]^{\circled{4}} &
\Pi^{\vert a \vert(\vert c \vert+\vert d \vert)}(\Pi^{\vert b \vert(\vert c \vert+\vert d \vert)}(c)) dab }.
\end{displaymath}
The four numbered maps are given by
\begin{align*}
  \circled{1} &= (-1)^{\vert a \vert \vert b \vert \vert d \vert(|c|+1)} \omega_1(b; c,d) \xi \otimes \xi \otimes 1 \otimes 1\\
\circled{2} &= \omega_1(a; c, d) \xi \otimes \xi \otimes 1 \otimes 1 \\
\circled{3} &= \omega_1(a; c,d) \xi \otimes \xi \otimes 1 \otimes 1 \\
  \circled{4} &= (-1)^{\vert a \vert\vert b \vert \vert d \vert(|c|+1)} \omega_1(b; c, d)  \xi \otimes \xi \otimes 1 \otimes 1 
\end{align*}
To compute the signs, we use the identities $\Pi^k(\xi^{n,m})=(-1)^{k(n+m)} \xi^{n+k,m+k}$ and $\xi^{n,m}_{\Pi^k}=\xi^{n+k,m+k}$. We thus see that the square commutes, and
\begin{displaymath}
\phi = (-1)^{\vert a \vert \vert b \vert \vert c \vert \vert d \vert} \omega_1(b; c,d) \omega_1(a; c,d) \xi \otimes \xi \otimes 1 \otimes 1.
\end{displaymath}
Note that one has to apply the sign rule when computing the compositions, and this cancels the $(-1)^{\vert a \vert \vert b \vert \vert d \vert}$ part of the sign.
The second diagram is the following:
\begin{displaymath}
\xymatrix@C=4em{
\Pi^{\vert a \vert \vert c \vert}(\Pi^{\vert b \vert \vert c \vert}(c)) \Pi^{\vert a \vert \vert d \vert}(\Pi^{\vert b \vert \vert d \vert}(d)) ab \ar[d]_-{\omega_2(a,b;c)} \ar[r]^-{\omega_2(a,b;d)} \ar@{..>}[rd]^{\psi} &
\Pi^{\vert a \vert \vert c \vert}(\Pi^{\vert b \vert \vert c \vert}(c)) \Pi^{\vert d \vert(\vert a \vert+\vert b \vert)}(d)ab \ar[d]^-{\omega_2(a,b;c)} \\
\Pi^{\vert c \vert(\vert a \vert+\vert b \vert)}(c) \Pi^{\vert a \vert \vert d \vert}(\Pi^{\vert b \vert \vert d \vert}(d)) ab \ar[r]^-{\omega_2(a,b;d)} &
\Pi^{\vert c \vert(\vert a \vert+\vert b \vert)}(c) \Pi^{\vert d \vert(\vert a \vert+\vert b \vert)}(d) ab }
\end{displaymath}
All maps are scalar multiples of the identity, so the diagram commutes and
\begin{displaymath}
\psi=\omega_2(a,b; d) \omega_2(a,b; c) \cdot \id.
\end{displaymath}
Finally, we have the third diagram. It is
\begin{displaymath}
\tiny
\xymatrix{
\Pi^{\vert a \vert \vert c \vert}(\Pi^{\vert b \vert \vert c \vert}(c)) \Pi^{\vert a \vert \vert d \vert}(\Pi^{\vert b \vert \vert d \vert}(d)) ab \ar[d]_{\phi} \ar[rr]^{\id} &&
\Pi^{\vert a \vert \vert c \vert}(\Pi^{\vert b \vert \vert c \vert}(c)) \Pi^{\vert a \vert \vert d \vert}(\Pi^{\vert b \vert \vert d \vert}(d)) ab \ar[d]^{\psi} \\
\Pi^{\vert a \vert(\vert c \vert+\vert d \vert)}(\Pi^{\vert b \vert(\vert c \vert+\vert d \vert)}(c)) dab \ar[rd]^{\omega_2(a,b; cd) \id} &&
\Pi^{\vert c \vert(\vert a \vert+\vert b \vert)}(c) \Pi^{\vert d \vert(\vert a \vert+\vert b \vert)}(d) ab \ar[ld]_{\omega_1(ab; c,d) \xi  \xi  1  1} \\
& \Pi^{(\vert a \vert+\vert b \vert)(\vert c \vert+\vert d \vert)}(c)dab }
\end{displaymath}
(We have omitted $\otimes$ symbols in the bottom right map.) This diagram commutes if and only if \eqref{eq:condition3} holds.

\textit{Axiom 4.}
The diagram becomes
\[
  \xymatrix@C=3em{
    \Pi^{|c|(|a|+|b|)}(c) \Pi^{|a||b|}(b) a \ar@{=}[d] & \Pi^{|b||c|}(\Pi^{|a||c|}(c)) \Pi^{|a||b|}(b) a \ar[l]_-{\omega_2(b,a;c)} \ar[r] & \Pi^{|b||c|}(c) \Pi^{|a|(|b|+|c|)}(b) a \ar[d]^-{(-1)^{|a||b||c|} \xi_c \otimes \xi \otimes 1} \\
    \Pi^{(|a|+|b|)|c|}(c) \Pi^{|a||b|}(b) a \ar[r]^{\omega_2(a,b;c)^{-1}} & \Pi^{|a||c|}(\Pi^{|b||c|}(c)) \Pi^{|a||b|}(b) a &
    \Pi^{|a|(|b|+|c|)}(\Pi^{|b||c|}(c)) b a \ar[l] 
  }
\]
where the top right map is $(-1)^{|a||c|(|b|+1)} \omega_1(a;b,c) \xi_c \otimes \xi \otimes 1$ and the bottom right map is $(-1)^{|a||b|(|c|+1)} \omega_1(a;c,b)^{-1} \xi^{-1}_c \otimes \xi^{-1} \otimes 1$. Taking into account signs from composing tensor products of maps,  the right side composition is $\omega_1(a;b,c) \omega_1(a;c,b)^{-1}$ times the identity, and hence the commutativity of this diagram is equivalent to \eqref{eq:condition4}.

We thus see that $R$ and $\rho$ define a braiding if and only if $(\omega_1, \omega_2)$ is a B-factor system. In particular, we obtain braidings using the systems defined in Examples~\ref{ex:type1a} and~\ref{ex:type1b}.

\subsection{Symmetry} \label{ss:scat-sym}

Now we attempt to define a syllepsis on $R$. Fix a function $\omega_{\sharp} \colon (\bZ/q)^2 \to \bk^{\times}$. We have
\begin{displaymath}
R_{B,A}(R_{A,B}(a \otimes b)) = \Pi^{|a||b|}(a) \otimes \Pi^{|a||b|}(b).
\end{displaymath}
We define $v_{A,B} \colon R_{B,A} R_{A,B} \to \id$ to be the isomorphism $\omega_{\sharp}(a,b) \xi \otimes \xi$ on an object $a \otimes b$. We check the commutativity of the four diagrams in order. The first is
\begin{displaymath}
\xymatrix@C=6em{
\Pi^{|a||b|}(\Pi^{(|a||c|)}(a)) \Pi^{|a||b|}(b) \Pi^{|a||c|}(c) \ar[r]^-{\omega_1(a; b,c) 1 \otimes \xi \otimes \xi} \ar[d]_-{\omega_2(b,c; a) \id} \ar@{..>}[dr]^-\phi &
\Pi^{|a||b|}(\Pi^{|a||c|}(a)) \Pi^{|a|(|b|+|c|)}(b) c \ar[d]^-{\omega_2(b,c; a) \id} \\
\Pi^{|a|(|b|+|c|)}(a) \Pi^{|a||b|}(b) \Pi^{|a||c|}(c) \ar[r]^-{\omega_1(a; b,c) 1 \otimes \xi \otimes \xi} &
\Pi^{|a|(|b|+|c|)}(a) \Pi^{|a|(|b|+|c|)}(b)c }.
\end{displaymath}
This formally commutes, and so
\begin{displaymath}
\phi = \omega_1(a; b,c) \omega_2(b,c; a) 1 \otimes \xi \otimes \xi.
\end{displaymath}
The second diagram is
\begin{displaymath}
\xymatrix@C=5em{
\Pi^{|a||b|}(\Pi^{|a||c|}(a)) \Pi^{|a||b|}(b) \Pi^{|a||c|}(c) \ar[r]^-{\phi} \ar[d]_-{(-1)^{|a||b||c|} \omega_{\sharp}(a,c) \xi \otimes 1 \otimes \xi} &
\Pi^{|a|(|b|+|c|)}(a) \Pi^{|a|(|b|+|c|)}(b)c \ar[d]^-{\omega_{\sharp}(a,bc)\xi \otimes \xi \otimes 1} \\
\Pi^{|a||b|}(a) \Pi^{|a||b|}(b) c \ar[r]^-{\omega_{\sharp}(a,b) \xi \otimes \xi \otimes 1} &
abc }.
\end{displaymath}
This diagram commutes if and only if
\begin{displaymath}
\omega_1(a; b,c) \omega_2(b,c; a) \omega_{\sharp}(a,bc) = \omega_{\sharp}(a,b) \omega_{\sharp}(a,c),
\end{displaymath}
which is equivalent to \eqref{eq:condition5}. The third diagram is
\begin{displaymath}
\xymatrix@C=5em{
\Pi^{|a||c|}(a) \Pi^{|b||c|}(b) \Pi^{|a||c|}(\Pi^{|b||c|}(c)) \ar[r]^-{\omega_2(a,b; c) \id} \ar[d]_-{\omega_1(c; a,b) \xi \otimes \xi \otimes 1} \ar@{..>}[dr]^-\psi &
\Pi^{|a||c|}(a) \Pi^{|b||c|}(b) \Pi^{(|a|+|b|)|c|}(c)  \ar[d]^-{\omega_1(c; a,b) \xi \otimes \xi \otimes 1} \\
\Pi^{(|a|+|b|)|c|}(a) b \Pi^{|a||c|}(\Pi^{|b||c|}(c)) \ar[r]^-{\omega_2(a,b; c) \id} &
\Pi^{(|a|+|b|)|c|}(a) b \Pi^{(|a|+|b|)|c|}(c) }.
\end{displaymath}
This formally commutes, and we find that 
\begin{displaymath}
\psi = \omega_1(c; a,b) \omega_2(a,b; c) \xi \otimes \xi \otimes 1.
\end{displaymath}
The fourth diagram is
\begin{displaymath}
\xymatrix@C=5em{
\Pi^{|a||c|}(a) \Pi^{|b||c|}(b) \Pi^{|a||c|}(\Pi^{|b||c|}(c)) \ar[r]^-{\psi} \ar[d]_-{\omega_{\sharp}(a,c) \xi \otimes 1 \otimes \xi} &
\Pi^{(|a|+|b|)|c|}(a) b \Pi^{(|a|+|b|)|c|}(c)  \ar[d]^-{\omega_{\sharp}(ab,c) \xi \otimes 1 \otimes \xi} \\
a \Pi^{|b||c|}(b) \Pi^{|b||c|}(c) \ar[r]^-{\omega_{\sharp}(b,c) 1 \otimes \xi \otimes \xi} &
abc }.
\end{displaymath}
This diagram commutes if and only if
\begin{displaymath}
\omega_1(c; a,b) \omega_2(a,b; c) \omega_{\sharp}(ab,c)=\omega_{\sharp}(a,c) \omega_{\sharp}(b,c),
\end{displaymath}
which is equivalent to \eqref{eq:condition6}. Finally, the two isomorphisms
\begin{displaymath}
(R_{A,B}R_{B,A}R_{A,B})(ab)=\Pi^{2\vert a \vert \vert b \vert}(b) \Pi^{\vert a \vert \vert b}(a) \to \Pi^{\vert a \vert \vert b \vert}(b) a = R_{A,B}(ab)
\end{displaymath}
are
\begin{displaymath}
1 \circ v_{A,B} = \omega_{\sharp}(a,b) \xi \otimes \xi, \qquad
v_{B,A} \circ 1 = \omega_{\sharp}(b,a) \xi \otimes \xi.
\end{displaymath}
These agree if and only if $\omega_{\sharp}(a,b)=\omega_{\sharp}(b,a)$, i.e., $\gamma$ is symmetric. We thus see that $v$ defines a syllepsis if and only if $(\omega_1, \omega_2, \omega_{\sharp})$ is an S-factor system, and it defines a symmetry if and only if additionally $\omega_{\sharp}$ is symmetric.

We thus obtain symmetries from the systems in Examples~\ref{ex:type1a} and~\ref{ex:type1b}.

\begin{remark}
A variant of the above discussion shows that even factor systems can be used to endow $\Sigma$ with a braided, sylleptic, or symmetric structure. 
\end{remark}

\section{Spin symmetric groups} \label{s:spin}

\subsection{S-sets and s-groups}

An {\bf s-set} is a triple $(X, \vert \cdot \vert, c)$ where $X$ is a set, $\vert \cdot \vert \colon X \to \bZ/2$ is a function (the degree function), and $c$ is an involution of $X$ such that $\vert cx \vert=\vert x \vert$ for all $x \in X$. We write $X_i$ for the set of elements of parity $i$.

Let $X$ and $Y$ be s-sets. A {\bf homogeneous map} $X \to Y$ of degree $n$ is a function that maps $X_i$ into $Y_{i+n}$ and is compatible with $c$. We let $\Hom(X,Y)_i$ denote the set of maps of degree $i$, and let $\Hom(X,Y)=\Hom(X,Y)_0 \amalg \Hom(X,Y)_1$, which is naturally an s-set. We thus have a category $\SSet$ of s-sets. We let $\SSet^{\circ}$ be the subcategory where the morphisms are even (degree~0).

Let $X$ and $Y$ be s-sets. We define the {\bf product} of $X$ and $Y$, denoted $X \ttimes Y$, as follows. The underlying set is the quotient of the usual cartesian product $X \times Y$ by the involution $c \times c$. That is, elements of $X \ttimes Y$ are represented by ordered pairs $(x, y)$, and the pairs $(x, y)$ and $(cx, cy)$ represent the same element. The degree function is given by $\vert (x,y) \vert=\vert x \vert+\vert y \vert$. The involution $c$ is given by $c(x,y)=(cx,y)=(x,cy)$. We define an isomorphism $X \ttimes Y \to Y \ttimes X$ by $(x, y) \mapsto c^{\vert x \vert \vert y \vert} (y, x)$. The product $\ttimes$ gives $\SSet^{\circ}$ the structure of a symmetric monoidal category.

An {\bf s-group} is a group object in the monoidal category $\SSet^{\circ}$. Explicitly, an s-group is a triple $(G, \vert \cdot \vert, c)$ where $G$ is a group, $\vert \cdot \vert \colon G \to \bZ/2$ is a group homomorphism, and $c \in G$ is a central element satisfying $c^2=1$ and $\vert c \vert=0$. A {\bf homomorphism} of s-groups $f \colon G \to H$ is a group homomorphism such that $f(c)=c$ and $\vert f(g) \vert=\vert g \vert$ for all $g \in G$.

If $G$ and $H$ are s-groups then $G \ttimes H$ is naturally an s-group, with multiplication defined by $(g,h)(g',h')=c^{\vert h \vert \vert g' \vert} (gg', hh')$. Two elements $g$ and $h$ of an s-group $G$ are said to {\bf supercommute} if $gh=c^{\vert h \vert \vert g \vert} hg$. Similarly, two subsets $A$ and $B$ of $G$ are said to supercommute if every element of $A$ supercommutes with every element of $B$. If $H_1$ and $H_2$ are supercommuting s-subgroups of $G$ then there is a homomorphism $H_1 \ttimes H_2 \to G$ given by $(h_1, h_2) \mapsto h_1 h_2$.

\begin{example}
Let $X$ be an object in a supercategory, and let $\Aut^h(X)$ be the group of all homogeneous automorphisms of $X$. Then $\Aut^h(X)$ is an s-group, taking $c=-1$ and the degree map to be the usual degree map. For a super vector space $V$, we write $\GL^h(V)$ in place of $\Aut^h(V)$.
\end{example}

\subsection{Spin symmetric groups} \label{ss:spin}

Recall that the braid group $\fB_n$ is the group generated by elements $s_1, \ldots, s_{n-1}$ modulo the following two types of relations:
\begin{itemize}
\item the braid relations $s_is_{i+1}s_i=s_{i+1}s_is_{i+1}$ for $1 \le i \le n-2$; and
\item the commutativity relations $s_is_j=s_js_i$ for $\vert i-j \vert \ge 2$.
\end{itemize}
The symmetric group $\fS_n$ is isomorphic to the quotient of $\fB_n$ by the relations $s_i^2=1$; the isomorphism takes $s_i$ to the transposition $(i\;\;i+1)$.

For $\delta \in \bZ/2$, we define $\wt{\fB}^{\delta}_n$ to be the group generated by elements $\tilde{s}_1, \ldots, \tilde{s}_{n-1}, c$ modulo the following relations:
\begin{itemize}
\item the element $c$ is central and $c^2=1$;
\item the braid relations $\tilde{s}_i\tilde{s}_{i+1}\tilde{s}_i=\tilde{s}_{i+1}\tilde{s}_i\tilde{s}_{i+1}$ for $1 \le i \le n-2$; and
\item the (super)commutativity relations $\tilde{s}_i\tilde{s}_j=c^{\delta} \tilde{s}_j\tilde{s}_i$ for $\vert i-j \vert \ge 2$.
\end{itemize}
For $\epsilon \in \bZ/2$, define $\wt{\fS}^{(\delta,\epsilon)}_n$ to be the quotient of $\wt{\fB}^{\delta}_n$ by the relations $\tilde{s}_i^2=c^{\epsilon}$. It is well-known that $c \ne 1$ in $\wt{\fS}^{(\delta,\epsilon)}_n$ (for $\delta=1$ and $\epsilon=0$, this follows from Proposition~\ref{prop:spin-clifford}, for instance), and thus $c \ne 1$ in $\wt{\fB}^{\delta}_n$ as well. We therefore have central extensions
\begin{displaymath}
1 \to \langle c \rangle \to \wt{\fB}^{\delta}_n \to \fB_n \to 1.
\end{displaymath}
and
\begin{displaymath}
1 \to \langle c \rangle \to \wt{\fS}^{(\delta,\epsilon)}_n \to \fS_n \to 1
\end{displaymath}
where $\langle c \rangle$ is cyclic of order two.

We define the degree function on $\wt{\fS}^{(\delta,\epsilon)}_n$ to be the composition $\wt{\fS}^{(\delta,\epsilon)}_n \to \fS_n \stackrel{\ell}{\to} \bZ/2$, where $\ell$ is the length modulo~2 of a permutation, i.e., $0$ for even permutations and 1 for odd permutations. In this way, $\wt{\fS}^{(\delta,\epsilon)}_n$ has the structure of an s-group. We let $\wt{\fS}_n=\wt{\fS}^{(1,0)}_n$, and refer to this as the {\bf spin-symmetric group}.

\begin{remark}
For $n=0,1$, the central extension $\wt{\fS}^{(\delta,\epsilon)}_n$ is trivial for all $\epsilon$ and $\delta$, while for $n=2,3$ it is independent of $\delta$. For $n \ge 4$, the four central extensions are non-isomorphic, and the group $\rH^2(\fS_n,\bZ/2)$ of central extensions is isomorphic to $(\bZ/2)^2$ via $(\delta,\epsilon)$. 

  The calculation of $\rH^2(\fS_n, \bZ/2)$ can be done as follows. First, $\rH_1(\fS_n, \bZ)$ is the abelianization of $\fS_n$, which is just $\bZ/2$ for $n \ge 2$ and trivial otherwise. Next, $\rH_2(\fS_n, \bZ)$ is dual to $\rH^2(\fS_n, \bC^\times)$, which is $\bZ/2$ for $n \ge 4$ and trivial otherwise \cite[Theorems 2.7, 2.9]{HH}. Our earlier claim then follows from the universal coefficients theorem.
\end{remark}

\subsection{The $\tilde{\tau}$ elements} \label{ss:calculations}

Let $\tau_{n,m} \in \fS_{n+m}$ be the permutation defined by
\begin{displaymath}
\tau_{n,m}(i) = \begin{cases}
i+m & \text{if $1 \le i \le n$} \\
i-n & \text{if $n+1 \le i \le n+m$} \end{cases}.
\end{displaymath}
In a sense, $\tau_{n,m}$ induces the natural bijection of sets $[n] \amalg [m] \to [m] \amalg [n]$. Let $\sigma_{n,i} \in \fS_{n+m}$ be the $(n+1)$-cycle $(i\;\;i+1\;\;\cdots\;\;i+n)$. Then we have
\begin{displaymath}
\sigma_{n,i}=s_i s_{i+1} \cdots s_{i+n-1}, \qquad \tau_{n,m}=\sigma_{n,m} \cdots \sigma_{n,1}.
\end{displaymath}
We define
\begin{displaymath}
\tilde{\sigma}_{n,i}=\tilde{s}_i \tilde{s}_{i+1} \cdots \tilde{s}_{i+n-1}, \qquad \tilde{\tau}_{n,m}=\tilde{\sigma}_{n,m} \cdots \tilde{\sigma}_{n,1},
\end{displaymath}
as elements of $\wt{\fS}_{n+m}$. The $\tilde{\tau}_{n,m}$ elements are very important, and are essentially responsible for all examples of supersymmetric monoidal structures that we write down.

\begin{proposition} \label{prop:tau-symmetric}
We have $\tilde{\tau}_{n,m} \tilde{\tau}_{m,n}=c^N$ with $N=\binom{n}{2} \binom{m}{2}$.
\end{proposition}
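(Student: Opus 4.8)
The plan is to derive an explicit ``column-reading'' expression for $\tilde{\tau}_{n,m}$ and then recognize it as $c^{\binom n2\binom m2}\tilde{\tau}_{m,n}^{-1}$. First observe that, since $\tau_{n,m}\tau_{m,n}=1$ in $\fS_{n+m}$, the element $\tilde{\tau}_{n,m}\tilde{\tau}_{m,n}$ projects to the identity and hence lies in $\langle c\rangle$, so it equals $c^N$ for a well-defined $N\in\bZ/2$; the whole issue is to pin down $N$. I would work inside $\wt{\fS}_{n+m}$ using only the relations $\tilde{s}_i^2=1$, the braid relations, and $\tilde{s}_i\tilde{s}_j=c\,\tilde{s}_j\tilde{s}_i$ for $|i-j|\ge 2$, together with the shift homomorphism $(\cdot)^{[1]}\colon\wt{\fS}_N\to\wt{\fS}_{N+1}$ given by $\tilde{s}_i\mapsto\tilde{s}_{i+1}$, $c\mapsto c$ (well defined as it respects every defining relation); write $(\cdot)^{[k]}$ for its $k$-fold iterate.

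The key step is the recursion
\[
\tilde{\tau}_{n,m}=c^{(n-1)\binom m2}\,\tilde{\tau}_{1,m}\cdot(\tilde{\tau}_{n-1,m})^{[1]}.
\]
To prove it, factor $\tilde{\sigma}_{n,i}=\tilde{s}_i\cdot(\tilde{s}_{i+1}\cdots\tilde{s}_{i+n-1})=\tilde{s}_i\cdot(\tilde{\sigma}_{n-1,i})^{[1]}$, so that
\[
\tilde{\tau}_{n,m}=\tilde{s}_m B_m\,\tilde{s}_{m-1}B_{m-1}\cdots\tilde{s}_1 B_1,\qquad B_i:=(\tilde{\sigma}_{n-1,i})^{[1]}=\tilde{s}_{i+1}\cdots\tilde{s}_{i+n-1},
\]
and then slide each $\tilde{s}_{m-k}$ (for $k\ge 1$) leftward past the blocks $B_m,\dots,B_{m-k+1}$ that currently sit in front of it. Every generator occurring in those blocks has index $\ge m-k+2$, hence is at distance $\ge 2$ from $m-k$, so each commutation past it costs exactly one factor of $c$ and no braid move is needed; thus moving $\tilde{s}_{m-k}$ past $k$ blocks of length $n-1$ costs $c^{k(n-1)}$, and $\sum_{k=1}^{m-1}k(n-1)=(n-1)\binom m2$. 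The rearranged word is $(\tilde{s}_m\cdots\tilde{s}_1)(B_m\cdots B_1)=\tilde{\tau}_{1,m}\cdot(\tilde{\tau}_{n-1,m})^{[1]}$, which proves the recursion. This bookkeeping of $c$-factors is the one genuinely delicate point; the rest is formal.

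Now iterate on $n$ (base case $\tilde{\tau}_{0,m}=1$): feeding the inductive hypothesis $\tilde{\tau}_{n-1,m}=c^{\binom{n-1}2\binom m2}\prod_{k=0}^{n-2}(\tilde{\tau}_{1,m})^{[k]}$ through $(\cdot)^{[1]}$ and substituting, then using Pascal's identity $(n-1)+\binom{n-1}2=\binom n2$ to add exponents, gives
\[
\tilde{\tau}_{n,m}=c^{\binom n2\binom m2}\prod_{k=0}^{n-1}(\tilde{\tau}_{1,m})^{[k]}.
\]
Finally, since $\tilde{s}_j^{-1}=\tilde{s}_j$ (because $\tilde{s}_j^2=c^0=1$ in $\wt{\fS}_N=\wt{\fS}_N^{(1,0)}$), we have $(\tilde{\tau}_{1,m})^{[k]}=\tilde{s}_{m+k}\tilde{s}_{m+k-1}\cdots\tilde{s}_{k+1}=\tilde{\sigma}_{m,k+1}^{-1}$, hence
\[
\prod_{k=0}^{n-1}(\tilde{\tau}_{1,m})^{[k]}=\tilde{\sigma}_{m,1}^{-1}\cdots\tilde{\sigma}_{m,n}^{-1}=(\tilde{\sigma}_{m,n}\cdots\tilde{\sigma}_{m,1})^{-1}=\tilde{\tau}_{m,n}^{-1}.
\]
Combining the last two displays yields $\tilde{\tau}_{n,m}=c^{\binom n2\binom m2}\tilde{\tau}_{m,n}^{-1}$, i.e. $\tilde{\tau}_{n,m}\tilde{\tau}_{m,n}=c^N$ with $N=\binom n2\binom m2$. (An alternative route would be to push everything through the Clifford-algebra representation $\tilde{s}_i\mapsto\frac{1}{\sqrt 2}(e_i-e_{i+1})$, $c\mapsto-1$ and compute the image of $\tilde{\tau}_{n,m}\tilde{\tau}_{m,n}$ there, but the direct computation above looks cleaner.)
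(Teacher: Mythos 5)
Your proof is correct, and it takes a genuinely different route from the paper. The paper's proof proceeds by a ``cancellation--commutation'' identity
\[
\tilde{\sigma}_{i,k}\,\tilde{\sigma}_{j,i+k-1}=c^{(i-1)(j-1)}\,\tilde{\sigma}_{j-1,i+k}\,\tilde{\sigma}_{i-1,k},
\]
obtained by cancelling $\tilde{s}_{i+k-1}^2=1$ at the junction and supercommuting the two remaining blocks; this is then iterated to collapse the product $\tilde{\tau}_{n,m}\tilde{\tau}_{m,n}$ directly to a power of $c$. You instead never touch the product itself: you factor $\tilde{\sigma}_{n,i}=\tilde{s}_i\cdot(\tilde{\sigma}_{n-1,i})^{[1]}$, slide the leading simple reflections to the front to establish the recursion $\tilde{\tau}_{n,m}=c^{(n-1)\binom m2}\tilde{\tau}_{1,m}(\tilde{\tau}_{n-1,m})^{[1]}$, and iterate to get the closed form
\[
\tilde{\tau}_{n,m}=c^{\binom n2\binom m2}\prod_{k=0}^{n-1}(\tilde{\tau}_{1,m})^{[k]}=c^{\binom n2\binom m2}\,\tilde{\tau}_{m,n}^{-1}.
\]
Both are elementary manipulations inside $\wt{\fS}_{n+m}$ relying on the same relations, but your version produces a canonical ``one simple reflection at a time'' expression for $\tilde{\tau}_{n,m}$ as a byproduct, which is a slightly stronger statement than the symmetry relation alone and could conceivably be reused elsewhere; the paper's version is shorter because it manipulates $\tilde{\tau}_{n,m}\tilde{\tau}_{m,n}$ in place without ever rewriting $\tilde{\tau}_{n,m}$ itself. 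One small remark: since the argument uses $\tilde{s}_i^2=1$, it is specific to $\wt{\fS}_n=\wt{\fS}_n^{(1,0)}$, but that is the only case the proposition is about, so this is fine.
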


\begin{proof}
For any $i,j,k$, we have
\begin{align*}
  \tilde{\sigma}_{i,k} \tilde{\sigma}_{j,i+k-1}   &= (\tilde{s}_k \tilde{s}_{k+1} \cdots \tilde{s}_{i+k-1}) (\tilde{s}_{i+k-1} \tilde{s}_{i+k} \cdots \tilde{s}_{i+j+k-2})\\
  &= c^{(i-1)(j-1)} \tilde{\sigma}_{j-1,i+k} \tilde{\sigma}_{i-1,k} 
\end{align*}
where we have canceled the $\tilde{s}_{i+k-1}$ and commuted the resulting $\tilde{\sigma}_{i-1,k}$ past $\tilde{\sigma}_{j-1,i+k}$. By applying this relation successively, this implies that
\[
  \tilde{\sigma}_{i,k} \tilde{\sigma}_{j,i+k-1} \tilde{\sigma}_{j,i+k-2} \cdots \tilde{\sigma}_{j,i} = c^{\binom{i}{2} (j-1)} \tilde{\sigma}_{j-1,i+k} \tilde{\sigma}_{j-1,i+k-1} \cdots \tilde{\sigma}_{j-1,i+1}.
\]
Finally, we successively use this relation to get
\begin{align*}
  \tilde{\tau}_{n,m} \tilde{\tau}_{m,n} &= (\tilde{\sigma}_{n,m} \tilde{\sigma}_{n,m-1} \cdots \tilde{\sigma}_{n,1})( \tilde{\sigma}_{m,n} \tilde{\sigma}_{m,n-1} \cdots \tilde{\sigma}_{m,1})\\
                                        &= c^{(m-1) \binom{n}{2}} (\tilde{\sigma}_{n,m} \tilde{\sigma}_{n,m-1} \cdots \tilde{\sigma}_{n,2})( \tilde{\sigma}_{m-1,n+1} \tilde{\sigma}_{m-1,n} \cdots \tilde{\sigma}_{m-1,1})\\
                                        & \qquad \vdots\\
                                          &= c^{\binom{m}{2} \binom{n}{2} }. \qedhere
\end{align*}
\end{proof}

There is a natural injective homomorphism of s-groups
\begin{align} \label{eqn:spin-emb}
j_{n,m} \colon \wt{\fS}_n \ttimes \wt{\fS}_m \to \wt{\fS}_{n+m}
\end{align}
defined by $(\tilde{s}_i, 1) \mapsto \tilde{s}_i$ and $(1, \tilde{s}_i) \mapsto \tilde{s}_{n+i}$. (To see that this map is injective, note that it is injective modulo $\langle c \rangle$, and so the kernel is contained in $\langle c \rangle$, but $c$ itself is not in the kernel.)

\begin{proposition} \label{prop:spin-conj}
For $g \in \wt{\fS}_n$ and $h \in \wt{\fS}_m$, we have
\begin{displaymath}
\tilde{\tau}_{n,m} j_{n,m}(g,h) \tilde{\tau}_{n,m}^{-1} = c^{nm|g| + nm|h| + |g||h|} j_{m,n}(h,g).
\end{displaymath}
\end{proposition}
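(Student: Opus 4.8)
The plan is to observe that both sides of the identity are group homomorphisms in the pair $(g,h)$, reduce to checking on generators, and then prove two explicit conjugation relations for $\tilde{\tau}_{n,m}$. The left-hand side $(g,h) \mapsto \tilde{\tau}_{n,m}\, j_{n,m}(g,h)\, \tilde{\tau}_{n,m}^{-1}$ is a homomorphism $\wt{\fS}_n \ttimes \wt{\fS}_m \to \wt{\fS}_{n+m}$, being $j_{n,m}$ followed by an inner automorphism. The right-hand side $\Psi(g,h) = c^{nm|g|+nm|h|+|g||h|}\, j_{m,n}(h,g)$ is also a homomorphism: using that the degree map is a homomorphism, that $j_{m,n}$ is one, the multiplication rule $(a,b)(a',b') = c^{|b||a'|}(aa',bb')$ in the product s-group, and $c^2=1$, the discrepancy between $\Psi((g,h)(g',h'))$ and $\Psi(g,h)\Psi(g',h')$ collapses to $c^{2|g'||h|} = 1$; the cross term $c^{|g||h|}$ in the exponent is exactly what is needed to absorb the twists. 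Since $\wt{\fS}_n \ttimes \wt{\fS}_m$ is generated by $(c,1)$, by $(\tilde{s}_i,1)$ with $1 \le i \le n-1$, and by $(1,\tilde{s}_j)$ with $1 \le j \le m-1$, it suffices to verify the identity on these. The element $(c,1)$ gives $c$ on both sides. Using $j_{n,m}(\tilde{s}_i,1)=\tilde{s}_i$, $j_{n,m}(1,\tilde{s}_j)=\tilde{s}_{n+j}$, $j_{m,n}(1,\tilde{s}_i)=\tilde{s}_{m+i}$, and $j_{m,n}(\tilde{s}_j,1)=\tilde{s}_j$, the two families of generators reduce the claim to
\begin{align*}
\tilde{\tau}_{n,m}\, \tilde{s}_i\, \tilde{\tau}_{n,m}^{-1} &= c^{nm}\, \tilde{s}_{m+i} \qquad (1 \le i \le n-1), \\
\tilde{\tau}_{n,m}\, \tilde{s}_{n+j}\, \tilde{\tau}_{n,m}^{-1} &= c^{nm}\, \tilde{s}_{j} \qquad (1 \le j \le m-1).
\end{align*}

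For the first relation, write $\tilde{\tau}_{n,m} = \tilde{\sigma}_{n,m} \tilde{\sigma}_{n,m-1} \cdots \tilde{\sigma}_{n,1}$ and commute $\tilde{s}_i$ to the left through one factor $\tilde{\sigma}_{n,k} = \tilde{s}_k \tilde{s}_{k+1}\cdots \tilde{s}_{k+n-1}$ at a time. The basic step is that, for $k \le l \le k+n-2$,
\[
\tilde{\sigma}_{n,k}\, \tilde{s}_l = c^{n}\, \tilde{s}_{l+1}\, \tilde{\sigma}_{n,k},
\]
obtained by moving $\tilde{s}_l$ past $\tilde{s}_{l+2},\dots,\tilde{s}_{k+n-1}$ (picking up $c^{k+n-l-2}$ from the relation $\tilde{s}_a\tilde{s}_b = c\,\tilde{s}_b\tilde{s}_a$ for $|a-b|\ge 2$), applying the braid relation $\tilde{s}_l\tilde{s}_{l+1}\tilde{s}_l = \tilde{s}_{l+1}\tilde{s}_l\tilde{s}_{l+1}$, and then moving the leading $\tilde{s}_{l+1}$ past $\tilde{s}_k,\dots,\tilde{s}_{l-1}$ (picking up $c^{l-k}$), for a total factor $c^{(k+n-l-2)+(l-k)} = c^{n-2} = c^n$. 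Starting from $\tilde{s}_i$ with $1 \le i \le n-1$ and applying this for $k=1,2,\dots,m$ in turn, at stage $k$ the migrating generator is $\tilde{s}_{i+k-1}$, whose index lies in $[k,k+n-2]$ precisely because $1 \le i \le n-1$, so the basic step applies uniformly; after all $m$ stages one obtains $\tilde{\tau}_{n,m}\tilde{s}_i = c^{nm}\tilde{s}_{i+m}\tilde{\tau}_{n,m}$, which is the first relation.

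For the second relation I would avoid repeating the migration (which now involves the ``boundary'' generator $\tilde{s}_n$ and is less uniform) and instead deduce it from the first. Applying the first relation with $n$ and $m$ interchanged gives $\tilde{\tau}_{m,n}\tilde{s}_j\tilde{\tau}_{m,n}^{-1} = c^{nm}\tilde{s}_{n+j}$ for $1 \le j \le m-1$, i.e.\ $\tilde{\tau}_{m,n}^{-1}\tilde{s}_{n+j}\tilde{\tau}_{m,n} = c^{nm}\tilde{s}_j$. By Proposition~\ref{prop:tau-symmetric}, $\tilde{\tau}_{n,m}\tilde{\tau}_{m,n} = c^{\binom{n}{2}\binom{m}{2}}$, so $\tilde{\tau}_{m,n}^{-1} = c^{\binom{n}{2}\binom{m}{2}}\tilde{\tau}_{n,m}$; since $c$ is central and squares to $1$, conjugation by $\tilde{\tau}_{m,n}^{-1}$ equals conjugation by $\tilde{\tau}_{n,m}$, and the second relation follows.

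The main obstacle is the bookkeeping in the migration argument: one must correctly tally the powers of $c$ over all $m$ stages and confirm that the migrating generator never leaves the range where the uniform commutation formula holds — which is exactly where the hypothesis $1 \le i \le n-1$ is used. Everything else is either formal (the two homomorphism reductions) or a mechanical application of the braid and supercommutativity relations.
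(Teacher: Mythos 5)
Your proof is correct, and it takes a somewhat different route than the paper's. The paper first handles the case $m=1$ (so $\tilde{\tau}_{n,1}=\tilde{s}_1\cdots\tilde{s}_n$) by a direct computation, deduces the general $m$ by (implicit) iteration, and then combines the $h=1$ and $g=1$ cases via the product structure of $j_{m,n}$. You instead reduce to generators of $\wt{\fS}_n \ttimes \wt{\fS}_m$ after explicitly checking that the right-hand side $\Psi(g,h)=c^{nm|g|+nm|h|+|g||h|}j_{m,n}(h,g)$ is a homomorphism (a point the paper handles implicitly by splitting $(g,h)=(g,1)(1,h)$), and you prove the first conjugation relation $\tilde{\tau}_{n,m}\tilde{s}_i\tilde{\tau}_{n,m}^{-1}=c^{nm}\tilde{s}_{m+i}$ by a uniform ``migration'' lemma $\tilde{\sigma}_{n,k}\tilde{s}_l=c^n\tilde{s}_{l+1}\tilde{\sigma}_{n,k}$ applied over all $m$ stages. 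Your derivation of the second relation from the first by interchanging $n$ and $m$ and invoking Proposition~\ref{prop:tau-symmetric} is a clean shortcut that the paper does not use (the paper redoes the argument symmetrically). Both approaches are valid; yours is a bit more explicit about the homomorphism reduction, while the paper's iterated $m=1$ argument is shorter on the page but terser in justification. The bookkeeping in your migration step is correct, including the boundary cases $l=k$ and $l=k+n-2$ and the verification that the migrating index $i+k-1$ stays in the legal range $[k,k+n-2]$ exactly when $1\le i\le n-1$.
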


\begin{proof}
  First consider the case $h=1$, $m=1$, and $g = \tilde{s}_i$ where $1 \le i \le n-1$. Then $\tilde{\tau}_{n,1} = \tilde{\sigma}_{n,1} = \tilde{s}_1 \tilde{s}_2 \cdots \tilde{s}_n$ and hence
  \begin{align*}
    \tilde{\tau}_{n,1} \tilde{s}_i \tilde{\tau}_{n,1}^{-1} &= \tilde{s}_1 \tilde{s}_2 \cdots \tilde{s}_n \tilde{s}_i \tilde{s}_n \tilde{s}_{n-1} \cdots \tilde{s}_1\\
                                                           &= c^{n-i-1} \tilde{s}_1 \tilde{s}_2 \cdots (\tilde{s}_{i+1} \tilde{s}_i \tilde{s}_{i+1}) \cdots \tilde{s}_1\\
    &= c^{n-2} \tilde{s}_{i+1} = c^n \tilde{s}_{i+1}
  \end{align*}
  where in the second equality, we used the commuting relations $\tilde{s}_j \tilde{s}_i = c \tilde{s}_i \tilde{s_j}$ for $j=n,n-1,\dots,i+2$, and in the third equality, we used the braid relation $\tilde{s}_{i+1} \tilde{s}_i \tilde{s}_{i+1} = \tilde{s}_i\tilde{s}_{i+1} \tilde{s}_i$ and then again the commuting relations for $j=i-1,i-2,\dots,1$. Since conjugation is a group homomorphism, it follows immediately that $\tilde{\tau}_{n,1}j_{n,m}(g,1)\tilde{\tau}^{-1}_{n,1} = c^{n |g|} j_{m,n}(1,g)$ for any $g \in \wt{\fS}_n$ and hence $\tilde{\tau}_{n,m} j_{n,m}(g,1) \tilde{\tau}^{-1}_{n,m} = c^{nm|g|} j_{m,n}(1,g)$. Similarly, $\tilde{\tau}_{n,m}j_{n,m}(1,h)\tilde{\tau}^{-1}_{n,m} = c^{nm |h|} j_{m,n}(h,1)$. Finally, we have
\begin{align*}
\tilde{\tau}_{n,m}j_{n,m}(g,h)\tilde{\tau}^{-1}_{n,m}
&= \tilde{\tau}_{n,m}j_{n,m}(g,1) j_{n,m}(1,h)\tilde{\tau}^{-1}_{n,m} \\
&= c^{nm|g|+nm|h|} j_{m,n}(1,g) j_{m,n}(h,1) \\
&= c^{nm|g|+nm|h|+|g||h|} j_{m,n}(h,g). \qedhere
\end{align*}
\end{proof}

\begin{proposition} \label{prop:tauj}
For any $m,n,p$, we have $j_{n,m+p}(1,\tilde{\tau}_{m,p}) j_{n+m,p}(\tilde{\tau}_{m,n},1)=\tilde{\tau}_{m,n+p}$.
\end{proposition}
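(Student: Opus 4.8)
The plan is to prove this as a direct computation at the level of words in the generators $\tilde{s}_i$, using only the definitions of $\tilde{\sigma}$ and $\tilde{\tau}$ together with the fact that $j_{n,m+p}$ and $j_{n+m,p}$ are homomorphisms of s-groups. No braid or commutation relations should be needed.

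First I would recall the definition $\tilde{\tau}_{a,b} = \tilde{\sigma}_{a,b}\tilde{\sigma}_{a,b-1}\cdots\tilde{\sigma}_{a,1}$, where $\tilde{\sigma}_{a,i} = \tilde{s}_i\tilde{s}_{i+1}\cdots\tilde{s}_{i+a-1}$; in particular $\tilde{\tau}_{m,n+p} = \tilde{\sigma}_{m,n+p}\tilde{\sigma}_{m,n+p-1}\cdots\tilde{\sigma}_{m,1}$, with the second index ranging over all integers from $n+p$ down to $1$. Then I would compute the two factors on the left side separately. Since $j_{n+m,p}$ sends $(\tilde{s}_i,1)\mapsto\tilde{s}_i$ and is a homomorphism, it carries the word $\tilde{\tau}_{m,n} = \tilde{\sigma}_{m,n}\cdots\tilde{\sigma}_{m,1}$ to the identical word in $\wt{\fS}_{n+m+p}$, so $j_{n+m,p}(\tilde{\tau}_{m,n},1) = \tilde{\sigma}_{m,n}\tilde{\sigma}_{m,n-1}\cdots\tilde{\sigma}_{m,1}$. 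Since $j_{n,m+p}$ sends $(1,\tilde{s}_i)\mapsto\tilde{s}_{n+i}$ and is a homomorphism, it carries each $\tilde{\sigma}_{m,i}$ to $\tilde{s}_{n+i}\cdots\tilde{s}_{n+i+m-1} = \tilde{\sigma}_{m,n+i}$, so $j_{n,m+p}(1,\tilde{\tau}_{m,p}) = \tilde{\sigma}_{m,n+p}\tilde{\sigma}_{m,n+p-1}\cdots\tilde{\sigma}_{m,n+1}$. Here I would note that for elements supported in a single factor one has $(1,x)(1,y)=(1,xy)$ and $(x,1)(y,1)=(xy,1)$ in the product s-group, so the twisting in $\ttimes$ is irrelevant and it is legitimate to apply $j$ term by term.

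Multiplying the two factors then simply concatenates the words: the first contributes $\tilde{\sigma}_{m,i}$ for $i=n+p,n+p-1,\dots,n+1$ and the second contributes $\tilde{\sigma}_{m,i}$ for $i=n,n-1,\dots,1$, so the product is $\tilde{\sigma}_{m,n+p}\tilde{\sigma}_{m,n+p-1}\cdots\tilde{\sigma}_{m,1}=\tilde{\tau}_{m,n+p}$, as claimed. I would also spot-check the degenerate cases $n=0$ and $p=0$ (using $\tilde{\tau}_{m,0}=1$) to make sure the conventions line up.

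I do not expect a genuine obstacle: unlike Propositions~\ref{prop:tau-symmetric} and~\ref{prop:spin-conj}, this identity holds verbatim for the defining words, with no cancellation of $\tilde{s}_i$'s and hence no accumulated factors of $c$. The only point requiring care is bookkeeping of the index ranges: the ranges $\{n+1,\dots,n+p\}$ (coming from $j_{n,m+p}$) and $\{1,\dots,n\}$ (coming from $j_{n+m,p}$) must fit together into the full range $\{1,\dots,n+p\}$ with neither overlap nor gap, which is precisely why the statement uses the particular subscripts $j_{n,m+p}$ and $j_{n+m,p}$.
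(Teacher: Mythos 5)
Your proof is correct and follows essentially the same route as the paper's: expand both $\tilde{\tau}$ factors into their defining products of $\tilde{\sigma}$'s, push $j_{n,m+p}$ and $j_{n+m,p}$ through term by term (using $j_{n,m+p}(1,\tilde{\sigma}_{m,i}) = \tilde{\sigma}_{m,n+i}$ and $j_{n+m,p}(\tilde{\sigma}_{m,i},1)=\tilde{\sigma}_{m,i}$), and observe that the resulting words concatenate without overlap or gap to give $\tilde{\tau}_{m,n+p}$. Your added remark that the $\ttimes$-twist is invisible for elements supported in one factor is a sensible sanity check, though the paper takes this for granted.
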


\begin{proof}
By definition, we have $j_{n,m+p}(1,\tilde{\sigma}_{m,i}) = \tilde{\sigma}_{m,n+i}$, so 
\begin{align*}
j_{n,m+p}(1,\tilde{\tau}_{m,p}) j_{n+m,p}(\tilde{\tau}_{m,n},1)
&= j_{n,m+p}(1, \tilde{\sigma}_{m,p} \cdots \tilde{\sigma}_{m,1}) j_{n+m,p}(\tilde{\sigma}_{m,n} \cdots \wt{\sigma}_{m,1}, 1) \\
&= \tilde{\sigma}_{m,n+p} \cdots \tilde{\sigma}_{m,n+1} \tilde{\sigma}_{m,n} \cdots \tilde{\sigma}_{m,1}
=\tilde{\tau}_{m,n+p}. \qedhere
\end{align*}
\end{proof}

\subsection{The category $\tilde{\cS}$} \label{ss:catS}

We define a homogeneous $\bZ$-supercategory $\tilde{\cS}$ as follows. There is a single homogeneous object $[n]$ of degree $n$, for each $n \ge 0$, and no homogeneous objects of negative degree. We put $\End_{\tilde{\cS}}([n])=\bk[\wt{\fS}_n]/(c+1)$ and $\Hom_{\tilde{\cS}}([n], [m])=0$ for $n \ne m$. We now define a monoidal structure $\otimes$ on $\tilde{\cS}$. On objects, we define $[n] \otimes [m]=[n+m]$. We define
\begin{displaymath}
\otimes \colon \End_{\tilde{\cS}}([n]) \otimes \End_{\tilde{\cS}}([m]) \to \End_{\tilde{\cS}}([n+m])
\end{displaymath}
to be the linear map induced by $j_{n,m}$. The unit object is $[0]$. The monoidal structure is strict (e.g., the associator maps are the identities). One readily verifies that this satisfies the requisite axioms for a monoidal $\bZ$-supercategory.

For $n,m \in \bN$, define $\beta_{n,m} \colon [n] \otimes [m] \to [m] \otimes [n]$ to be $\tilde{\tau}_{n,m}$.

\begin{proposition}
$\beta$ defines a type~IIa supersymmetry on $\tilde{\cS}$.
\end{proposition}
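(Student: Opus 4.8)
The plan is to verify directly the four requirements for $\beta_{n,m}=\tilde{\tau}_{n,m}$ to be a type~IIa supersymmetry on $\tilde{\cS}$: (a) $\beta$ is a natural isomorphism $\otimes\to\otimes\circ\Sigma$ with $\beta_{[n],[m]}$ homogeneous of degree $nm$; (b) the hexagon (H1) holds with factor $\fsA_1$; (c) the hexagon (H2) holds with factor $\fsA_2$; and (d) $\beta_{[m],[n]}\circ\beta_{[n],[m]}=\fsA_{\sharp}(n,m)\,\id$, where $\fsA$ is the factor system of Example~\ref{ex:theta}. Because the monoidal structure on $\tilde{\cS}$ is strict and the only morphisms are the endomorphisms $\bk[\wt{\fS}_n]/(c+1)$, each of these conditions collapses (using the strict forms of the hexagons in \S\ref{ss:explanation}) to a concrete identity among the elements $\tilde{\tau}_{n,m}$ and the injections $j_{n,m}$, precisely the identities prepared in Propositions~\ref{prop:tau-symmetric}, \ref{prop:spin-conj}, and~\ref{prop:tauj}. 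Throughout I will use that $\beta_{n,m}\otimes\id_{[p]}=j_{n+m,p}(\tilde{\tau}_{n,m},1)$ and $\id_{[m]}\otimes\beta_{n,p}=j_{m,n+p}(1,\tilde{\tau}_{n,p})$ (and their evident analogues), and that composition of such morphisms in $\tilde{\cS}$ is simply multiplication in the group algebra: the signs of \S\ref{ss:scat-prod} and of $\Sigma$ live in $\tilde{\cS}\boxtimes\tilde{\cS}$, not in $\tilde{\cS}$, and so do not intervene once the morphisms have been pushed into $\tilde{\cS}$.

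First I would dispatch (a), (b), (d). For (a): $\tilde{\tau}_{n,m}$ is a unit, so $\beta_{n,m}$ is an isomorphism, and its degree is the length of $\tau_{n,m}$ modulo~$2$, which is $nm\bmod 2=|[n]|\,|[m]|$, as required for a type~II superbraiding. Naturality need only be checked on endomorphisms, hence by bilinearity on $f=\tilde{f}\in\wt{\fS}_n$, $g=\tilde{g}\in\wt{\fS}_m$, where commutativity of the naturality square up to $(-1)^{nm(|\tilde{f}|+|\tilde{g}|)+|\tilde{f}||\tilde{g}|}$ becomes
\[
\tilde{\tau}_{n,m}\,j_{n,m}(\tilde{f},\tilde{g})\,\tilde{\tau}_{n,m}^{-1}=(-1)^{nm|\tilde{f}|+nm|\tilde{g}|+|\tilde{f}||\tilde{g}|}\,j_{m,n}(\tilde{g},\tilde{f}),
\]
which is exactly Proposition~\ref{prop:spin-conj} once $c$ is set to $-1$. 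For (b), the strict triangle (H1) reads $(\id_{[m]}\otimes\beta_{n,p})\circ(\beta_{n,m}\otimes\id_{[p]})=\fsA_1(n;m,p)\,\beta_{n,m+p}$, i.e. $j_{m,n+p}(1,\tilde{\tau}_{n,p})\cdot j_{n+m,p}(\tilde{\tau}_{n,m},1)=\fsA_1(n;m,p)\,\tilde{\tau}_{n,m+p}$; Proposition~\ref{prop:tauj}, with its $(n,m,p)$ taken to be our $(m,n,p)$, evaluates the left side to $\tilde{\tau}_{n,m+p}$, so the factor is $1=\fsA_1(n;m,p)$. For (d), Proposition~\ref{prop:tau-symmetric} gives $\beta_{[m],[n]}\circ\beta_{[n],[m]}=\tilde{\tau}_{m,n}\tilde{\tau}_{n,m}=c^{\binom{m}{2}\binom{n}{2}}=(-1)^{\binom{n}{2}\binom{m}{2}}\,\id=\fsA_{\sharp}(n,m)\,\id$.

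The hard part is (c). The strict triangle (H2) reads $(\beta_{n,p}\otimes\id_{[m]})\circ(\id_{[n]}\otimes\beta_{m,p})=\fsA_2(n,m;p)\,\beta_{n+m,p}$, i.e.
\[
j_{n+p,m}(\tilde{\tau}_{n,p},1)\cdot j_{n,m+p}(1,\tilde{\tau}_{m,p})=\fsA_2(n,m;p)\,\tilde{\tau}_{n+m,p}.
\]
This does not fall out of a single proposition; my plan is to massage the left side into the right using Propositions~\ref{prop:tau-symmetric} and~\ref{prop:tauj} in turn. Using $\tilde{\tau}_{n,p}=c^{\binom{n}{2}\binom{p}{2}}\tilde{\tau}_{p,n}^{-1}$ and $\tilde{\tau}_{m,p}=c^{\binom{m}{2}\binom{p}{2}}\tilde{\tau}_{p,m}^{-1}$ (Proposition~\ref{prop:tau-symmetric}), and that $j_{\bullet,\bullet}(-,1)$ and $j_{\bullet,\bullet}(1,-)$ are group homomorphisms while $c$ is central, the left side becomes
\[
c^{(\binom{n}{2}+\binom{m}{2})\binom{p}{2}}\,\bigl(j_{n,m+p}(1,\tilde{\tau}_{p,m})\cdot j_{n+p,m}(\tilde{\tau}_{p,n},1)\bigr)^{-1};
\]
the bracketed product equals $\tilde{\tau}_{p,n+m}$ by Proposition~\ref{prop:tauj} after relabeling, and one further application of Proposition~\ref{prop:tau-symmetric} rewrites $\tilde{\tau}_{p,n+m}^{-1}=c^{\binom{n+m}{2}\binom{p}{2}}\tilde{\tau}_{n+m,p}$. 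Collecting the exponents of $c$, the factor is $c^{(\binom{n}{2}+\binom{m}{2}+\binom{n+m}{2})\binom{p}{2}}$, and the elementary identity $\binom{n}{2}+\binom{m}{2}-\binom{n+m}{2}=-nm$ shows $\binom{n}{2}+\binom{m}{2}+\binom{n+m}{2}\equiv nm\pmod 2$, so the factor equals $(-1)^{\binom{p}{2}nm}=\fsA_2(n,m;p)$, as needed. Having verified (a)--(d), $\beta$ is a type~II supersymmetry with respect to $\fsA$, which is exactly the assertion. The only genuine obstacle is the bookkeeping in step~(c): one must track carefully which of the several injections $j_{\bullet,\bullet}$ is in play (equivalently, which block of generators $\tilde{s}_i$ each factor occupies) when matching the left side of (H2) against the two propositions, and be attentive to the reversal of order when inverting a product; everything else is a direct citation of the three preceding propositions together with the trivial binomial congruence.
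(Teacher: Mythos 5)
Your proof is correct and, except for the last step, matches the paper's: you cite Propositions~\ref{prop:spin-conj}, \ref{prop:tauj}, and \ref{prop:tau-symmetric} for naturality, (H1), and the symmetry condition exactly as the paper does, and your reduction of each axiom to a group-element identity in $\wt{\fS}_{n+m+p}$ is the paper's reduction. The one place you diverge is (H2): the paper disposes of it in half a sentence by observing that (H1) together with the symmetry condition already implies (H2) (the remark following the definition of type~I supersymmetry, which the paper invokes without proof beyond a one-line sketch), whereas you verify (H2) directly --- using Proposition~\ref{prop:tau-symmetric} to replace $\tilde{\tau}_{n,p}$ and $\tilde{\tau}_{m,p}$ by inverses of their mirrors, Proposition~\ref{prop:tauj} (after the relabeling $(n,m,p)\mapsto(n,p,m)$) to collapse the resulting inverted product to $\tilde{\tau}_{p,n+m}^{-1}$, one more application of Proposition~\ref{prop:tau-symmetric}, and the congruence $\binom{n}{2}+\binom{m}{2}+\binom{n+m}{2}\equiv nm\pmod{2}$ to match the accumulated power of $c$ with $\fsA_2(n,m;p)$. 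Your computation is correct and self-contained, and it has the merit of making explicit exactly how the factor $(-1)^{\binom{p}{2}nm}$ emerges from the $\tilde{\tau}$ identities; the paper's route is shorter but leans on a remark that is only sketched. You are also right that the signs coming from $\Sigma$ and from composition in $\tilde{\cS}\boxtimes\tilde{\cS}$ are absorbed once morphisms are pushed into $\tilde{\cS}$, which is why the hexagon identities reduce to plain group-algebra equalities.
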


\begin{proof}
Proposition~\ref{prop:spin-conj} exactly shows that $\beta$ is a natural transformation (see the first paragraph of \S \ref{ss:type2}). Proposition~\ref{prop:tauj} exactly shows that the hexagon (H1) commutes. Proposition~\ref{prop:tau-symmetric} shows that $\beta$ is symmetric, i.e., $\beta_{m,n}\beta_{n,m}=(-1)^k$ with $k=\binom{n}{2} \binom{m}{2}$, which also implies the second hexagon axiom. This completes the proof.
\end{proof}

Now suppose that $\zeta_{16} \in \bk$. For $n \in \bZ$, define $\epsilon(n)=-1$ if $n \equiv 3 \pmod{4}$ and $\epsilon(n)=1$ otherwise. Define $\beta'_{n,m} \colon [n+m] \to [n+m]$ to be the map $\epsilon(n)^m \zeta_4^{-\binom{n}{2} m} \zeta_{16}^{nm} \tilde{\tau}_{n,m}$.

\begin{proposition}
$\beta'$ defines a type~IIb supersymmetry on $\tilde{\cS}$.
\end{proposition}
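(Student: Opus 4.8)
The plan is to deduce the statement from the type~IIa case (the preceding proposition) by the rescaling mechanism behind Remark~\ref{rmk:convert}, using that $\fsA$ and $\fsB$ differ by the coboundary identified in Proposition~\ref{prop:type2-ab}(b). Observe first that $\beta'_{n,m} = \mathtt{a}'(n,m)\,\beta_{n,m}$, where $\beta_{n,m} = \tilde{\tau}_{n,m}$ is the type~IIa supersymmetry and $\mathtt{a}'$ is the function from Proposition~\ref{prop:type2-ab}(b). Since $\mathtt{a}'(n,m)$ is a (central, even) scalar, multiplying $\beta$ by it changes neither the degree $nm$ of $\beta_{n,m}$ nor its being a type~II natural transformation; so $\beta'$ is automatically a type~II natural transformation of the required kind, and only the factor system it satisfies needs to be checked.

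To do that concretely, I would work directly with the building blocks used in the type~IIa proof. In $\tilde{\cS}$ the associators are identities, so both paths of (H1) are elements of $\bk[\wt{\fS}_{n+m+p}]/(c+1)$ that coincide for $\beta$ by Proposition~\ref{prop:tauj}; rescaling by $\mathtt{a}'$ multiplies the top path by $\mathtt{a}'(n,m+p)$ and the bottom path by $\mathtt{a}'(n,m)\mathtt{a}'(n,p)$, so (H1) holds for $\beta'$ up to the scalar $\mathtt{a}'(n,m)\mathtt{a}'(n,p)\mathtt{a}'(n,m+p)^{-1}$, which is $1 = \fsB_1(n;m,p)$ because $\mathtt{a}'(n,\cdot)$ is a homomorphism in its second argument (each of $\epsilon(n)^{\,\cdot}$, $\zeta_4^{-\binom{n}{2}\cdot}$, $\zeta_{16}^{n\cdot}$ is). Next, Proposition~\ref{prop:tau-symmetric} together with $c=-1$ gives $\beta'_{m,n}\beta'_{n,m} = \mathtt{a}'(n,m)\,\mathtt{a}'(m,n)\,(-1)^{\binom{n}{2}\binom{m}{2}}\cdot\id$, and the identity $\mathtt{a}'(n,m)\mathtt{a}'(m,n)(-1)^{\binom{n}{2}\binom{m}{2}} = \fsB_\sharp(n,m)$ is precisely the $\sharp$-component computation (an elementary check on parities) carried out in the proof of Proposition~\ref{prop:type2-ab}(b). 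Finally (H2) for $\beta'$, with scalar $\fsB_2(n,m;p)$, follows formally from (H1) and the symmetry condition exactly as in the type~IIa proof: invert all arrows in (H1) and relabel, using that $(\fsB_1,\fsB_2,\fsB_\sharp)$ is a genuine S-factor system (Example~\ref{ex:varpi}) to see the resulting scalar is $\fsB_2$. Hence $\beta'$ is a type~IIb supersymmetry.

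More conceptually, the whole argument is simply: $\beta'$ is the rescaling of the type~IIa supersymmetry $\beta$ by $\mathtt{a}'$, so by the type~II analogue of Remark~\ref{rmk:convert} it is a type~II supersymmetry with respect to $\fsA\cdot\partial(\mathtt{a}')^{\pm1}$, which equals $\fsB$ by Proposition~\ref{prop:type2-ab}(b). The one place requiring real care is the bookkeeping of this last coboundary: one must keep straight whether rescaling by $\mathtt{a}'$ twists the factor system by $\partial(\mathtt{a}')$ or by its inverse, and correspondingly whether the controlling identity is read as $\fsA = \fsB\cdot\partial(\mathtt{a}')$ or $\fsB = \fsA\cdot\partial(\mathtt{a}')$; the sign rule for composition in a supercategory is exactly what makes this step delicate rather than routine, and Proposition~\ref{prop:type2-ab}(b) is the ingredient that pins it down. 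The direct verification via Propositions~\ref{prop:tauj} and~\ref{prop:tau-symmetric} sketched above avoids this issue by computing every scalar on the nose, and so is the route I would actually write out.
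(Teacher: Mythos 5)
Your proof is correct and takes the same approach as the paper, whose entire proof is the one-line citation of Proposition~\ref{prop:type2-ab}. Your caution about the coboundary orientation is well placed: the formal statement $\fsA = \fsB\cdot\partial(\mathtt{a}')$ read against Definition~\ref{def:cobound} and Remark~\ref{rmk:convert} hides some sign/index subtleties (for instance, the $\omega_2$ formula in Definition~\ref{def:cobound} as printed does not actually satisfy \eqref{eq:condition2}, and the orientation on the $\sharp$-component differs from that on the $\omega_1,\omega_2$-components), but the component-by-component identities actually verified inside the proof of Proposition~\ref{prop:type2-ab}(b) are precisely the ones your direct check via Propositions~\ref{prop:tauj} and~\ref{prop:tau-symmetric} uses, so either route closes the argument.
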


\begin{proof}
This follows from Proposition~\ref{prop:type2-ab}.
\end{proof}

Let $q$ be a positive integer. We can then regard $\tilde{\cS}$ as $\bZ/q$-graded, and the monoidal structure carries over. If $q$ is divisible by~4 then $\beta$ defines a type~IIa supersymmetry, while if $q$ is any even integer then $\beta'$ defines a type~IIb supersymmetry (the key point here is that the scalar $\beta'_{n,m} \beta'_{m,n}$ only depends on the parities of $n$ and $m$; this is the purpose of the somewhat complicated scalar factor in the definition of $\beta'$). 

\begin{remark}
In the category $\tilde{\cS}$, the object $[n]$ admits an odd degree automorphism if and only if $n \ge 2$. Thus $\tilde{\cS}$ does not admit a $\Pi$-structure, since the objects $[0]$ and $[1]$ do not.
\end{remark}

\begin{remark}
In our discussion of supersymmetric monoidal categories, we have worked exclusively with linear categories since the examples we care about are linear. However, this is not necessary. One can define a ``non-linear supercategory'' to be a category enriched in $\SSet^{\circ}$, and then go on to define non-linear analogs of many of the other notions we have discussed. The category $\tilde{\cS}$ defined above is in fact the linearization of what we would call the groupoid of spin-sets, which is likely an object of fundamental importance.
\end{remark}

\subsection{Actions of spin-symmetric groups} \label{ss:actions}

Let $\cA$ be a type~IIa supersymmetric monoidal $\bZ/q$-supercategory with $4 \mid q$. In what follows, we assume that $\cA$ is strict (i.e., the associativity isomorphisms are identity maps) for simplicity. (In fact, this comes at no loss of generality due to a version of Mac Lane's coherence theorem in this setting; see the discussion in \cite{brundan} following Definition~1.4.) Let $X$ be a homogeneous object of degree $p \in \bZ/q$ and fix a positive integer $n$. For $1 \le i \le n-1$, define $\sigma_i \colon X^{\otimes n} \to X^{\otimes n}$ to be the map
\begin{displaymath}
X^{\otimes n} =
X^{\otimes (i-1)} \otimes X^{\otimes 2} \otimes X^{\otimes (n-i-1)} \xrightarrow{\id \otimes \beta_{X,X} \otimes \id}
X^{\otimes (i-1)} \otimes X^{\otimes 2} \otimes X^{\otimes (n-i-1)} =
X^{\otimes n}
\end{displaymath}
Note that $\sigma_i$ has degree $p$ (since $p^2=p$ modulo~2). We then have:

\begin{proposition}
The automorphisms $\sigma$ satisfy the following two relations:
\begin{enumerate}[\rm \indent (a)]
\item $\sigma_i \sigma_j=(-1)^p \sigma_j \sigma_i$ for $\vert i-j \vert>1$.
\item $\sigma_i \sigma_{i+1} \sigma_i=(-1)^p \sigma_{i+1} \sigma_i \sigma_{i+1}$.
\item $\sigma_i^2 = (-1)^{\binom{p}{2}}$.
\end{enumerate}
\end{proposition}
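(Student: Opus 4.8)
All three relations are ``local'': each involves at most three consecutive tensor factors, so after writing $X^{\otimes n}$ as a tensor product in which the relevant factors form one block and the rest form inert identity blocks, the statements reduce to identities about $\beta_{X,X}$ inside $X^{\otimes 2}$ or $X^{\otimes 3}$. Part (a) is purely a sign computation. Since $|i-j|>1$ the supports $\{i,i+1\}$ and $\{j,j+1\}$ are disjoint, so (taking $i<j$, which loses no generality) I would write $X^{\otimes n}=P\otimes Q$ with $P$ the product of the first $j-1$ factors and $Q$ the product of the rest; then $\sigma_i=\sigma_i^P\otimes\id_Q$ and $\sigma_j=\id_P\otimes\sigma_j^Q$ with $\sigma_i^P,\sigma_j^Q$ homogeneous of degree $p$ (as $p^2\equiv p\bmod 2$). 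The sign rule for composing tensor products of morphisms in a monoidal supercategory (the Remark in \S\ref{ss:svec-mon}) gives $\sigma_i\circ\sigma_j=\sigma_i^P\otimes\sigma_j^Q$ with no sign, while $\sigma_j\circ\sigma_i=(-1)^{p\cdot p}\,\sigma_i^P\otimes\sigma_j^Q$, and $(-1)^{p^2}=(-1)^p$ gives (a).

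\textbf{Part (c) and the reduction of (b).} For (c), write $X^{\otimes n}=L\otimes(X\otimes X)\otimes R$ so that $\sigma_i=\id_L\otimes\beta_{X,X}\otimes\id_R$; the middle factor is the only (possibly) odd one, so composing $\sigma_i$ with itself introduces no sign and $\sigma_i^2=\id_L\otimes(\beta_{X,X}\circ\beta_{X,X})\otimes\id_R$. The defining identity of a type~II supersymmetry gives $\beta_{X,X}\circ\beta_{X,X}=\omega_\sharp(p,p)\,\id_{X\otimes X}$, and for the factor system $\fsA$ of Example~\ref{ex:theta} one has $\fsA_\sharp(p,p)=(-1)^{\binom p2\binom p2}=(-1)^{\binom p2}$, which is (c). For (b) I would similarly write $X^{\otimes n}=L\otimes X^{\otimes 3}\otimes R$, reducing the claim to the identity $u\circ w\circ u=(-1)^p\,w\circ u\circ w$ inside $X^{\otimes 3}$, where $u=\beta_{X,X}\otimes\id_X$ and $w=\id_X\otimes\beta_{X,X}$ (the passage from $X^{\otimes n}$ to $X^{\otimes 3}$ again produces no sign since the odd factors are confined to the middle block).

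\textbf{Part (b), the main point.} To prove the three-fold identity I would use the two hexagon axioms for the type~II superbraiding. In the strict setting these read $w\circ u=\omega_1(p;p,p)\,\beta_{X,X\otimes X}$ (from (H1)) and $u\circ w=\omega_2(p,p;p)\,\beta_{X\otimes X,X}$ (from (H2)). Combining these with the naturality of $\beta$ applied to the morphism $\beta_{X,X}\colon X\otimes X\to X\otimes X$ --- here one must compute the naturality sign of \S\ref{ss:type2}, noting that $|X|\cdot|X\otimes X|\equiv 2p^2\equiv 0$ --- rewrites both $u\circ w\circ u$ and $w\circ u\circ w$ in terms of a single composite, and the scalar relating them is then read off from the explicit values $\fsA_1\equiv 1$ and $\fsA_2(p,p;p)=(-1)^{\binom p2 p}$ together with the supersymmetry relation used in (c). This is exactly the super-analogue of the classical derivation of the braid (Yang--Baxter) relation from the hexagon axioms; the only new ingredient is the collection of signs produced by composing odd-degree morphisms and by the sign appearing in the composition law of the product category. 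The hard part will be precisely this sign bookkeeping: keeping straight which factors are odd, which naturality/interchange signs occur, and how the nonconstant component $\fsA_2$ enters. Once the case $X^{\otimes 3}$ is settled in the strict category, the general statement follows at once, and strictness itself is harmless by the coherence theorem invoked before the proposition.
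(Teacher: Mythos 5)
Parts (a) and (c) are correct and follow the paper's own argument. For part (b), your strategy (reduce to $n=3$, combine a hexagon axiom with the naturality of $\beta$) is also essentially the paper's; but the sign bookkeeping you defer will not produce the claimed factor $(-1)^p$, and this is a real issue, not just a detail. As you already observe, the naturality sign equals $(-1)^{|X||X\otimes X|(|\id_X|+|\beta_{X,X}|)+|\id_X||\beta_{X,X}|}=(-1)^{2p^3}=1$, so naturality gives, for instance, $u\circ\beta_{X,X\otimes X}=\beta_{X,X\otimes X}\circ w$ with no sign. Substituting the first hexagon identity $\beta_{X,X\otimes X}=\omega_1(p;p,p)^{-1}\,w\circ u$ into both sides cancels $\omega_1$, yielding $u\circ w\circ u=w\circ u\circ w$ on the nose; using the second hexagon and $\fsA_2(p,p;p)$ instead, $\omega_2$ cancels in exactly the same way. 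The supersymmetry relation from (c) never enters, since no $\sigma_i^2$ appears. Carrying your plan to completion therefore gives $\sigma_i\sigma_{i+1}\sigma_i=\sigma_{i+1}\sigma_i\sigma_{i+1}$, \emph{without} the $(-1)^p$.

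This discrepancy points to a sign error in the paper's statement (and proof) of (b). A direct check makes this vivid: take $\cA=\tilde{\cS}$ with $X=[1]$, so $p=1$; then $\sigma_i=\tilde{s}_i$ in $\End([n])=\bk[\wt{\fS}_n]/(c+1)$, and the braid relation $\tilde{s}_1\tilde{s}_2\tilde{s}_1=\tilde{s}_2\tilde{s}_1\tilde{s}_2$ holds with no sign, whereas (b) as displayed would force $\tilde{s}_1\tilde{s}_2\tilde{s}_1=-\tilde{s}_1\tilde{s}_2\tilde{s}_1=0$, which is false. (The paper's own proof of (b) makes the same slip: it asserts that the naturality square commutes up to $(-1)^p$, which would be the result of substituting $|X|=p$ in place of $|X\otimes X|=2p$ in the naturality sign.) So the genuine gap in your proposal is that the sign bookkeeping you flag as ``the hard part'' would not confirm the statement as written; completing it honestly should lead you to $\sigma_i\sigma_{i+1}\sigma_i=\sigma_{i+1}\sigma_i\sigma_{i+1}$, and to a corresponding correction in the paragraph after the proposition (where the assignment should simply be $\tilde{s}_i\mapsto\sigma_i$).
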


\begin{proof}
Relation (a) follows immediately from how composition works in product categories and the fact that each $\sigma_i$ has degree $p$. We now verify (b). It suffices to treat the case $n=3$. We must show that the following diagram commutes up to $(-1)^p$:
\begin{displaymath}
\xymatrix@C=4em{
  X \otimes X \otimes X \ar[d]_{\sigma_1} \ar[r]^{\sigma_2} & X \otimes X \otimes X \ar[r]^{\sigma_1} & X \otimes X \otimes X \ar[d]^{\sigma_2}\\
  X \otimes X \otimes X \ar[r]^{\sigma_2} & X \otimes X \otimes X \ar[r]^{\sigma_1} & X \otimes X \otimes X 
}
\end{displaymath}
By the first hexagon axiom, $\sigma_1 \sigma_2=\beta_{X \otimes X, X}$. It thus suffices to show that the following diagram commutes up to $(-1)^p$:
\begin{displaymath}
\xymatrix@C=5em{
  X \otimes X \otimes X \ar[d]_{\beta_{X,X} \otimes \id_X} \ar[r]^{\beta_{X \otimes X, X}} & X \otimes X \otimes X \ar[d]^{\id_X \otimes \beta_{X,X}} \\
  X \otimes X \otimes X \ar[r]^{\beta_{X \otimes X, X}} & X \otimes X \otimes X }
\end{displaymath}
This follows from the definition of $\beta$ (see \S \ref{ss:type2}), and the fact that $\vert X \vert=\vert \beta_{X,X} \vert=p$ and $\vert \id_X \vert=0$. Finally, (c) follows from the definition of supersymmetry.
\end{proof}

We introduce the following notation:
\begin{displaymath}
\wt{\fS}^{0}_n = \wt{\fS}^{(0,0)}_n, \quad
\wt{\fS}^{1}_n = \wt{\fS}^{(1,0)}_n, \quad
\wt{\fS}^{2}_n = \wt{\fS}^{(0,1)}_n, \quad
\wt{\fS}^{3}_n = \wt{\fS}^{(1,1)}_n.
\end{displaymath}
The index $p$ in $\wt{\fS}^p_n$ is regarded as an element of $\bZ/4$. The above proposition shows that $\wt{\fS}^p_n$ acts on $X^{\otimes n}$ if we let the generator $\tilde{s}_i$ act by $(-1)^{ip}$ and $c$ act by $(-1)^p$. In fact:

\begin{proposition} \label{cor:spin-sym-action}
The group $\wt{\fS}^p_n$ naturally acts on the functor $\cA_p \to \cA_{pn}$ given by $X \mapsto X^{\otimes n}$.
\end{proposition}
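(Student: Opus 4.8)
The plan is to promote the object-wise action to an action on the functor. The preceding proposition already verifies, for a fixed homogeneous object $X$ of degree $p$, that the operators $\sigma_1,\dots,\sigma_{n-1}$ on $X^{\otimes n}$ — after rescaling $\tilde s_i$ to $(-1)^{ip}\sigma_i$ and letting $c$ act by the scalar recorded just before the statement — satisfy the defining relations of $\wt{\fS}^p_n$; thus for each $X$ we have an action of $\wt{\fS}^p_n$ on $X^{\otimes n}$. To say that $\wt{\fS}^p_n$ acts on the functor $F\colon\cA_p\to\cA_{pn}$, $X\mapsto X^{\otimes n}$, amounts to a homomorphism from $\wt{\fS}^p_n$ to the group of homogeneous natural automorphisms of $F$ sending $\tilde s_i$ to the transformation with component $(-1)^{ip}\sigma_i$ at $X$ (and $c$ to the corresponding central scalar). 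Since vertical composition of natural transformations is computed componentwise with no extra sign, every relation among the $\sigma_i$ established at each $X$ holds verbatim among the corresponding natural transformations; hence the only new content is that each $\sigma_i$ is genuinely natural in $X$.

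First I would reduce this naturality statement to the case $n=2$ with the single transformation $\beta_{X,X}$. Grouping $X^{\otimes n}=X^{\otimes(i-1)}\otimes(X\otimes X)\otimes X^{\otimes(n-i-1)}$, one has $\sigma_i^{(X)}=\id\otimes\beta_{X,X}\otimes\id$ and $f^{\otimes n}=f^{\otimes(i-1)}\otimes f^{\otimes 2}\otimes f^{\otimes(n-i-1)}$ for a morphism $f\colon X\to Y$; by the sign rule for composing tensor products of morphisms (the Remark in \S\ref{ss:svec-mon}), the naturality square for $\sigma_i$ on $X^{\otimes n}$ decomposes into the naturality square for $\beta$ on the two middle factors, tensored with trivial (identity) squares on the remaining factors. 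That middle square is exactly the defining naturality condition for the superbraiding $\beta$ in \S\ref{ss:type2}, specialized to $A=B=X$, $A'=B'=Y$ and both side-morphisms equal to $f$; it commutes up to the sign $(-1)^{|X||X|(|f|+|f|)+|f||f|}=(-1)^{|f|}$.

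The main obstacle is the sign bookkeeping: one must check that this sign, together with the signs introduced by the interchange when tensoring with the identity morphisms on the outer factors, matches the sign $(-1)^{p|f^{\otimes n}|}$ required of a homogeneous natural transformation of parity $p$ (using $p^2\equiv p\pmod 2$). The cleanest route is to note that for \emph{even} $f$ all signs in sight are trivial: the naturality of $\beta$ gives $f^{\otimes 2}\circ\beta_{X,X}=\beta_{Y,Y}\circ f^{\otimes 2}$ on the nose, and no sign is produced in the interchange because $\beta_{X,X}$ is the only possibly odd map present, so the naturality square commutes strictly — and this is all that is used in the applications (e.g.\ the functor on $\cA_p^\circ$ appearing in Schur--Sergeev duality). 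For general $f$ one carries the parities through the same diagram. With the group relations transported from the preceding proposition, the corollary then follows at once.
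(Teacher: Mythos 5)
Your plan follows the paper's: the group relations transfer pointwise from the preceding proposition, so the actual content is the naturality of each $\sigma_i$ in $X$, and you reduce this, as the paper does, to the naturality of the superbraiding $\beta$ combined with the interchange sign rule via the decomposition $X^{\otimes n}=X^{\otimes(i-1)}\otimes X^{\otimes 2}\otimes X^{\otimes(n-i-1)}$.

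There are, however, two gaps in the execution. First, the target naturality sign is misstated: by the definition of a homogeneous natural transformation of parity $p$ (Superfunctors subsection of \S\ref{s:supercat}), one needs $F(f)\circ\eta_X=(-1)^{p|f|}\eta_Y\circ F(f)$, where $|f|$ is the parity of $f$ as a morphism in the \emph{source} category $\cA_p$, not $(-1)^{p|f^{\otimes n}|}$ as you write; since $|f^{\otimes n}|=n|f|$, your sign differs from the correct one by $(-1)^{p(n-1)|f|}$, which is nontrivial for odd $f$. Second, and more substantively, you dispose of the odd-$f$ case with ``one carries the parities through the same diagram,'' but this sign chase is precisely the computational content of the proof: commuting $\beta_{Y,Y}$ past the left block $f^{\otimes(i-1)}$ contributes $(-1)^{(i-1)p|f|}$, commuting $\beta_{X,X}$ past the right block $f^{\otimes(n-i-1)}$ contributes $(-1)^{(n-i-1)p|f|}$, and combining these with the $(-1)^{|f|}$ coming from the naturality of $\beta$ and verifying the resulting sign is exactly what the paper's proof does explicitly. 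Your remark that the square commutes on the nose for even $f$ is correct and does cover what the downstream applications use (restriction to $\cA_p^\circ$), but this establishes a strictly weaker statement than the one asserted, so the proposal as written is incomplete.
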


\begin{proof}
We just need to verify naturality. Given a homogeneous morphism $f \colon X \to Y$ between homogeneous objects, we get a morphism $f^{\otimes n} \colon X^{\otimes n} \to Y^{\otimes n}$, and we check that it supercommutes with the action of $\sigma_i$. We have
  \begin{align*}
    \sigma_i f^{\otimes n} &= (1 \otimes \beta_{Y,Y} \otimes 1) (f^{\otimes i} \otimes f^{\otimes 2} \otimes f^{\otimes (i-2)})\\
                           &= (-1)^{i|f||Y|} (f^{\otimes i} \otimes \beta_{Y,Y} f^{\otimes 2} \otimes f^{\otimes (i-2)})\\
                           &= (-1)^{i|f||Y| + |f|}(f^{\otimes i} \otimes f^{\otimes 2} \beta_{X,X} \otimes f^{\otimes (i-2)})\\
                           &= (-1)^{|f|}(f^{\otimes i} \otimes f^{\otimes 2} \otimes f^{\otimes (i-2)})(1 \otimes \beta_{X,X} \otimes 1)\\
    &= (-1)^{|f|} f^{\otimes n} \sigma_i. \qedhere
\end{align*}
\end{proof}

\begin{remark}
There are similar results to the above in the superbraiding case and the type~IIb case.
\end{remark}

\subsection{Universality of $\tilde{\cS}$}

Fix an integer $q$ divisible by~4 (we allow $q=0$). In this section, we regard the category $\tilde{\cS}$ as a type~IIa supersymmetric monoidal $\bZ/q$-supercategory via the supersymmetry $\beta$ defined in \S \ref{ss:catS}. The following theorem shows that $\tilde{\cS}$ is the universal supersymmetric monoidal $\bZ/q$-supercategory equipped with an object of degree~1:

\begin{theorem} \label{thm:Suniv}
Let $\cA$ be a type~IIa supersymmetric monoidal $\bZ/q$-supercategory. Let $\cB$ be the category of supersymmetric monoidal $\bZ/q$-superfunctors $\tilde{\cS} \to \cA$. Then the functor $\Phi \colon \cB \to \cA^{\circ}_1$ given by $\Phi(F)=F([1])$ is an equivalence of $\bk$-linear categories.
\end{theorem}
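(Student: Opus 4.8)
The plan is to construct an explicit quasi-inverse $\Psi\colon\cA^{\circ}_1\to\cB$ to the evaluation functor $\Phi$. Throughout I would assume (by the coherence theorem cited in \S\ref{ss:actions}) that $\cA$ is strict, so that the canonical maps $X^{\otimes n}\otimes X^{\otimes m}\cong X^{\otimes(n+m)}$ are identities; this keeps the structure maps of all monoidal superfunctors in play below equal to identities and unit constraints. Since $[n]=[1]^{\otimes n}$ in $\tilde{\cS}$ and $\tilde{\cS}$ is generated under $\otimes$ by $[1]$, the whole point is that a supersymmetric monoidal superfunctor out of $\tilde{\cS}$ is forced to send $[1]$ to some degree-$1$ object and $[n]$ to its $n$-th tensor power, with the $\End_{\tilde{\cS}}([n])=\bk[\wt{\fS}_n]/(c+1)$-action going over to the canonical $\wt{\fS}_n$-action of Proposition~\ref{cor:spin-sym-action}.

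Concretely, given a homogeneous object $X$ of degree $1$, Proposition~\ref{cor:spin-sym-action} (with $p=1$, using $4\mid q$) gives a natural action of $\wt{\fS}^1_n=\wt{\fS}_n$ on $X^{\otimes n}$ in which $c$ acts by $-1$; hence it factors through $\End_{\tilde{\cS}}([n])$. I would define $F_X\colon\tilde{\cS}\to\cA$ by $F_X([n])=X^{\otimes n}$ and by this action on endomorphism algebras, with monoidal structure maps the identities and unit maps the canonical ones. That $F_X$ is a superfunctor is immediate ($\tilde{s}_i$ has degree $1$ and $\sigma_i$ has degree $p=1$, so parity is preserved), and the compatibility of the monoidal structure with composition reduces on the generators to the identities $j_{n,m}(\tilde{s}_i,1)=\tilde{s}_i$ and $j_{n,m}(1,\tilde{s}_j)=\tilde{s}_{n+j}$, which is exactly how endomorphisms of the two tensor factors of $X^{\otimes n}\otimes X^{\otimes m}$ combine after the identification with $X^{\otimes(n+m)}$. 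For morphisms, an even $f\colon X\to Y$ gives $f^{\otimes n}\colon X^{\otimes n}\to Y^{\otimes n}$, which by Proposition~\ref{cor:spin-sym-action} commutes with the $\sigma_i$ (the sign $(-1)^{|f|}$ being trivial), so $(f^{\otimes n})_n$ is a monoidal natural transformation $F_X\to F_Y$; this gives $\Psi$ on morphisms, functorially.

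The substantive step is checking that $F_X$ is actually \emph{supersymmetric}, i.e.\ that $F_X(\beta_{[n],[m]})=\beta_{X^{\otimes n},X^{\otimes m}}$. Writing $\tilde{\tau}_{n,m}=\tilde{\sigma}_{n,m}\cdots\tilde{\sigma}_{n,1}$ with $\tilde{\sigma}_{n,i}=\tilde{s}_i\cdots\tilde{s}_{i+n-1}$, the left-hand side is the corresponding product of the $\sigma$-maps (with the signs $(-1)^{i}$ and $c\mapsto-1$ supplied by the action); on the right-hand side, iterating the first hexagon axiom (H1) for $\beta$ in $\cA$ expresses $\beta_{X^{\otimes n},X^{\otimes m}}$ as the same product of maps of the form $\id^{\otimes k}\otimes\beta_{X,X}\otimes\id^{\otimes l}$, and since $\beta_{[1],[1]}=\tilde{\tau}_{1,1}=\tilde{s}_1$ maps to $\beta_{X,X}$, the two products coincide once the scalar factors are matched. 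This matching is where the type~IIa normalization is essential: the $\fsA$-factor system scalars and the $\binom{n}{2}$, $(-1)^i$ exponents built into $\tilde{\tau}_{n,m}$ are exactly designed so that this works. Alternatively, one can phrase this conceptually by noting that Propositions~\ref{prop:tauj}, \ref{prop:spin-conj}, \ref{prop:tau-symmetric} show the $\tilde{\tau}_{n,m}$ satisfy precisely the (type~IIa) hexagon and symmetry identities, which then transport through $F_X$. I expect this sign/scalar reconciliation to be the main obstacle; everything else is formal.

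Finally I would verify the two composites are identities up to natural isomorphism. One direction is immediate: $\Phi(\Psi(X))=F_X([1])=X^{\otimes1}=X$ via the unit constraint, naturally in $X$. For $\Psi\circ\Phi\cong\id_{\cB}$, given $F\in\cB$ with $X=F([1])$, the monoidal structure isomorphisms of $F$ furnish canonical even isomorphisms $\eta_{[n]}\colon F([n])=F([1]^{\otimes n})\xrightarrow{\sim}F([1])^{\otimes n}=F_X([n])$; because $\wt{\fS}_n$ is generated by $\tilde{s}_1,\dots,\tilde{s}_{n-1}$ and $\tilde{s}_i$ acts on $[n]=[1]^{\otimes n}$ as $\id^{\otimes(i-1)}\otimes\beta_{[1],[1]}\otimes\id^{\otimes(n-i-1)}$ (since $\beta_{[1],[1]}=\tilde{s}_1$ is placed in slots $i,i+1$ by $j$), and $F$ preserves both $\otimes$ and $\beta$, the $\eta_{[n]}$ intertwine $F(\tilde{s}_i)$ with $\sigma_i=F_X(\tilde{s}_i)$, so $\eta$ is a monoidal natural isomorphism $F\to F_X$; naturality of $\eta$ in $F$ is formal. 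The same forcing argument shows that a monoidal natural transformation in $\cB$ is determined by its component at $[1]$ and that every even morphism $F([1])\to G([1])$ arises, giving full faithfulness of $\Phi$; together with essential surjectivity (the construction of $F_X$) this proves $\Phi$ is an equivalence of $\bk$-linear categories.
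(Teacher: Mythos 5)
Your proof follows essentially the same approach as the paper: the paper also defines a quasi‑inverse $\Psi \colon \cA^{\circ}_1 \to \cB$ by $X \mapsto \Psi_X$ with $\Psi_X([n]) = X^{\otimes n}$, invokes the $\wt{\fS}_n$-action from Proposition~\ref{cor:spin-sym-action}, and asserts the rest is "easily seen." Your write-up is more explicit about what actually needs checking — that $F_X$ respects the supersymmetry, i.e.\ $F_X(\tilde{\tau}_{n,m})=\beta_{X^{\otimes n},X^{\otimes m}}$, and that the structure maps of an arbitrary $F\in\cB$ force it to be isomorphic to $F_{F([1])}$ — and correctly identifies that the real content is the scalar/sign matching between the paper's explicit $\tilde{\tau}_{n,m}$ identities (Propositions~\ref{prop:tauj}, \ref{prop:spin-conj}, \ref{prop:tau-symmetric}) and the hexagon/naturality constraints for $\beta$ in $\cA$. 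One small internal tension in your text: you say both that the action supplies signs $(-1)^{i}$ on $\tilde{s}_i$ and that $\tilde{s}_1$ maps to $\beta_{X,X}$; with the convention $\tilde{s}_i\mapsto(-1)^{ip}\sigma_i$ one would get $\tilde{s}_1\mapsto-\beta_{X,X}$ for $p=1$. The consistent normalization needed for $F_X$ to preserve $\beta$ on generators is $\tilde{s}_i\mapsto\sigma_i$ (equivalently $a_1=1$ in the geometric sequence $a_{i+1}=(-1)^p a_i$), which is forced by the requirement $F_X(\beta_{[1],[1]})=\beta_{X,X}$; this choice also satisfies all three relations of Proposition~\ref{cor:spin-sym-action}. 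With that convention fixed, the rest of your argument goes through.
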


\begin{proof}
Suppose that $X$ is a degree~1 object of $\cA$. We have seen that $\wt{\fS}_n$ naturally acts on $X^{\otimes n}$, and so we can build a functor $\Psi_X \colon \tilde{\cS} \to \cA$ by $\Psi_X([n])=X^{\otimes n}$, which is easily seen to be a supersymmetric monoidal $\bZ/q$-functor. The assignment $X \mapsto \Psi_X$ is functorial by Corollary~\ref{cor:spin-sym-action}. The resulting functor $\Psi \colon \cA^{\circ}_1 \to \cB$ is easily seen to be quasi-inverse to $\Phi$.
\end{proof}

\begin{remark}
Given $p \in \bZ/q$, one can build a supersymmetric monoidal $\bZ/q$-supercategory $\tilde{\cS}^p$ using the $\wt{\fS}^p$ groups, specializing to $\tilde{\cS}$ when $p=1$. The category $\tilde{\cS}^p$ is the universal supersymmetric monoidal $\bZ/q$-category with an object of degree $p$.
\end{remark}

\subsection{Type IIa$'$ supersymmetries} \label{ss:abcoh}

Let $\cA$ be a type~IIa monoidal $\bZ/4$-supercategory. We have seen that $\fS^p_n$ acts on $X^{\otimes n}$ when $X$ is homogeneous of degree $p \in \bZ/4$. This may seem strange since it identifies $\rH^2(\fS_n, \bZ/2)$ with $\bZ/4$ (as sets). In fact, this is due to a choice we made, namely, the choice of factor system.

To make this clear, define a type~IIa$'$ supersymmetry to be a type~II supersymmetry with respect to $\fsA \fsC \fsD$, where $\fsA$, $\fsC$, and $\fsD$ are as in Examples~\ref{ex:theta}, \ref{ex:eta}, and \ref{ex:kappa}. Suppose $\cA$ is a type~IIa$'$ supersymmetric monoidal $\bZ/4$-category and $X$ is an object of degree $p \in \bZ/4$. Then one can show that $\wt{\fS}^{-p}_n$ acts on $X^{\otimes n}$.

We note that if $\zeta_4 \in \bk$ then one can pass between type~IIa and type~IIa$'$ superbraidings: indeed, if $\beta$ is a type~IIa superbraiding then putting $\beta'_{X,Y}=\zeta_4^{\vert X \vert \vert Y \vert} \beta_{X,Y}$, one finds that $\beta'$ is a type~IIa$'$ superbraiding. (The $\zeta_4$ factor in this expression realizes $\fsC \fsD$ as a coboundary.)

\section{Linear spin-species} \label{s:spinrep}

\subsection{S-representations} \label{ss:trirep}

Let $G$ be an s-group. An {\bf s-representation} of $G$ on a superspace $V$ is a homomorphism of s-groups $\rho \colon G \to \GL^h(V)$. More explicitly, an s-representation $\rho$ associates to each $g \in G$ an automorphism $\rho(g)$ of $V$ such that
\begin{enumerate}
\item $\rho(c)=-1$,
\item $\rho(g)$ is homogeneous of degree $\deg(g)$, and 
\item $\rho(gh)=\rho(g) \rho(h)$.
\end{enumerate}
Note that an s-representation is simply a representation of $G$ on $V$ in the usual sense satisfying certain conditions. A {\bf morphism} of s-representations $f \colon V \to W$ of parity $p$ is a map of super vector spaces of parity $p$ such that $f(gv)=(-1)^{p \vert g \vert} gf(v)$ for all $v \in V$. We thus have an $\SVec$-category $\Rep(G)$ of representations of $G$, which is abelian and admits a $\Pi$-structure (as we see below).

Let $G$ and $H$ be s-groups and let $V$ and $W$ be s-representations of them. We define an s-representation of $G \ttimes H$ on $V \otimes W$ by the formula
\begin{displaymath}
(g, h)(v \otimes w)=(-1)^{\vert v \vert \vert h \vert} gv \otimes hw.
\end{displaymath}
Let $j \colon G \ttimes H \to H \ttimes G$ be the usual isomorphism of s-groups, and regard $W \otimes V$ as an s-representation of $G \ttimes H$ via $j$; explicitly,
\begin{displaymath}
(g,h)(w \otimes v)=(-1)^{\vert g \vert \vert h \vert+\vert w \vert \vert g \vert} hw \otimes gv.
\end{displaymath}
Then the usual isomorphism $\tau \colon V \otimes W \to W \otimes V$ (as defined in \S \ref{ss:super-vector-spaces}) is an isomorphism of $G \ttimes H$ s-representations.

\begin{remark}
  Note that when $V$ and $W$ are two s-representations of $G$ there is no natural s-representation of $G$ on $V \otimes W$, due to the lack of a diagonal morphism $G \to G \ttimes G$. However, one can regard $V \otimes W$ as a representation of the ordinary group $G$ (or even $G/\langle c \rangle$) by $g (v \otimes w) = gv \otimes gw$. Note that there is no sign here, even if $g$ and $v$ are both of odd degree. This action of $G/\langle c \rangle$ is by even transformations.
\end{remark}

\subsection{$\Pi$-structures}

Let $V$ be a super vector space, and let $\pi \in \bk[1]$ be the element $1 \in \bk$. There are (at least) three natural choices of $\Pi$-structure on $V$:
\begin{itemize}
\item Let $\Pi_n(V)=V[1]$ and let $\xi_n \colon V \to \Pi_n(V)$ be the map $v \mapsto v$.
\item Let $\Pi_\ell(V)=\bk[1] \otimes V$, and let $\xi_\ell \colon V \to \Pi_\ell(V)$ be the map $v \mapsto \pi \otimes v$.
\item Let $\Pi_r(V)=V \otimes \bk[1]$, and let $\xi_r \colon V \to \Pi_r(V)$ be the map $v \mapsto (-1)^{\vert v \vert} v \otimes \pi$.
\end{itemize}
We refer to $\Pi_{\ell}$ and $\Pi_r$ as the ``left'' and ``right'' $\Pi$-structures, and $\Pi_n$ as the ``neutral'' $\Pi$-structure (since it does not favor left or right). The neutral $\Pi$-structure may seem to be the most natural choice, but there is a practical issue with it: if $v$ is an element of $V$ then it is also an element of $V[1]$, and thus the symbol $v$ becomes ambiguous. The reason for the sign in the right $\Pi$-structure is so that the following diagram commutes:
\begin{displaymath}
\xymatrix{
V \ar@{=}[r] \ar[d]_{\xi_r} & V \otimes \bk \ar[d]^{\id \otimes \xi_r} \\
\Pi_r(V) \ar@{=}[r] & V \otimes \Pi_r(\bk) }
\end{displaymath}
Here $\xi_r \colon \bk \to \Pi_r(\bk)=\bk[1]$ is the map $1 \mapsto \pi$. Since this is an odd map, we have $(\id \otimes \xi_r)(v \otimes 1) = (-1)^{\vert v \vert} v \otimes \pi$. The sign in $\xi_r$ also ensures that the canonical even degree isomorphism $\Pi_{\ell}(V) \to \Pi_r(V)$ is just the symmetry $\tau$ of the tensor product.

Suppose that $V$ is an s-representation of an s-group $G$. We define s-representations of $G$ on $\Pi_{\ell}(V)=\bk[1] \otimes V$ and $\Pi_r(V)=V \otimes \bk[1]$ by $g (\pi \otimes v) = (-1)^{\vert g \vert} \pi \otimes gv$ and $g (v \otimes \pi)=gv \otimes \pi$. One readily verifies that the maps $\xi_r \colon V \to \Pi_r(V)$ and $\xi_{\ell} \colon V \to \Pi_{\ell}(V)$ are odd degree isomorphisms of s-representations. Furthermore, the canonical even degree isomorphism $\Pi_{\ell}(V) \to \Pi_r(V)$ is one of s-representations. (We do not bother defining $\Pi_n$ for s-representations.) We thus have two $\Pi$-structures for s-representations: $\Pi_r$ and $\Pi_{\ell}$.

\subsection{Induction} \label{ss:tri-ind}

Let $H \subseteq G$ be s-groups and let $V$ be an s-representation of $H$. Consider the induced representation $W=\Ind_H^G(V)=\bk[G] \otimes_{\bk[H]} V$. We give $W$ the structure of a super vector space by declaring $g \otimes v$ to be homogeneous of degree $\vert g \vert + \vert v \vert$ whenever $v \in V$ is homogeneous. In this way, $W$ is an s-representation of $G$. As usual, induction is the left adjoint of restriction.

Consider the following situation:
\begin{itemize}
\item $H_1$ and $H_2$ are s-subgroups of $G$.
\item $V_1$ (resp.\ $V_2$) is an s-representation of $H_1$ (resp.\ $H_2$).
\item $\tau$ is an element of $G$ such that $\tau H_1 \tau^{-1}=H_2$.
\item $i \colon V_1 \to V_2^{\tau}$ is an even-degree isomorphism of s-representations of $H_1$, where $V_2^{\tau}$ is the s-representation of $H_1$ on $V_2$ defined by $h \cdot v=(\tau h \tau^{-1}) v$. In other words, we have $i(hv)=(\tau h \tau^{-1}) i(v)$ for $h \in H_1$.
\end{itemize}
Define 
\begin{displaymath}
\phi \colon \Ind_{H_1}^G(V_1) \to \Pi_{r}^{\vert \tau \vert} \Ind_{H_2}^G(V_2), \qquad
\phi(g \otimes v) = g \tau^{-1} \otimes i(v) \otimes \pi^{\vert \tau \vert}
\end{displaymath}
We then have:

\begin{proposition}
The map $\phi$ is an even degree isomorphism of s-representations of $G$. 
\end{proposition}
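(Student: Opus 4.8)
The plan is to verify directly that $\phi$ is well-defined, even, $G$-equivariant, and bijective, by unwinding the definitions of induction, the $\Pi_r$-structure, and the twisted representation $V_2^\tau$.

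\textbf{Well-definedness and parity.} First I would check that $\phi$ respects the tensor relation over $\bk[H_1]$: for $h \in H_1$ we must have $\phi(gh \otimes v) = \phi(g \otimes hv)$. Unwinding, $\phi(gh\otimes v) = gh\tau^{-1}\otimes i(v)\otimes\pi^{|\tau|}$, while $\phi(g\otimes hv) = g\tau^{-1}\otimes i(hv)\otimes\pi^{|\tau|} = g\tau^{-1}\otimes (\tau h\tau^{-1})i(v)\otimes\pi^{|\tau|}$ using the defining property of $i$. Since $\tau h\tau^{-1}\in H_2$, the element $g\tau^{-1}(\tau h\tau^{-1}) = gh\tau^{-1}$ can be moved across the tensor in $\Ind_{H_2}^G(V_2)$, so the two agree. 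One subtlety: when $|\tau|=1$ and we move an odd group element past $i(v)\otimes\pi$, signs can appear from the super structure on $\bk[G]\otimes_{\bk[H_2]}V_2\otimes\bk[1]$; I would track these and confirm they cancel (they do, because the relation $gh'\otimes w = g\otimes h'w$ in the induced module is sign-free by construction — the grading is declared so that $g\otimes w$ has degree $|g|+|w|$, and both sides get the same degree). For parity: $g\otimes v$ has degree $|g|+|v|$; its image $g\tau^{-1}\otimes i(v)\otimes\pi^{|\tau|}$ has degree $|g|+|\tau| + |i(v)| + |\tau| = |g|+|v|$ since $i$ is even and $2|\tau|=0$ in $\bZ/2$. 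So $\phi$ is even.

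\textbf{Equivariance.} Next I would check $\phi(g'\cdot(g\otimes v)) = g'\cdot\phi(g\otimes v)$ for $g'\in G$. The left side is $\phi(g'g\otimes v) = g'g\tau^{-1}\otimes i(v)\otimes\pi^{|\tau|}$. The right side: $g'$ acts on $\Pi_r^{|\tau|}\Ind_{H_2}^G(V_2)$; by the definition of the $G$-action on $\Pi_r(V)=V\otimes\bk[1]$ given in the excerpt, $g$ acts as $g(v\otimes\pi)=gv\otimes\pi$ (no sign), and on the induced module $g'(g\tau^{-1}\otimes i(v)) = g'g\tau^{-1}\otimes i(v)$. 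So the right side is also $g'g\tau^{-1}\otimes i(v)\otimes\pi^{|\tau|}$. These match, so $\phi$ is a morphism of $s$-representations of $G$ (the compatibility with $c$ is automatic since $c$ acts by $-1$ on every $s$-representation and $\phi$ is linear).

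\textbf{Bijectivity.} Finally I would exhibit the inverse explicitly: $\psi\colon \Pi_r^{|\tau|}\Ind_{H_2}^G(V_2)\to\Ind_{H_1}^G(V_1)$, $\psi(g\otimes w\otimes\pi^{|\tau|}) = g\tau\otimes i^{-1}(w)$, which is well-defined by the same computation with $H_1,H_2$ and $\tau,\tau^{-1}$ swapped (note $\tau H_1\tau^{-1}=H_2$ gives $\tau^{-1}H_2\tau=H_1$, and $i^{-1}\colon V_2\to V_1^{\tau^{-1}}$ is an even isomorphism). A direct check shows $\psi\circ\phi = \id$ and $\phi\circ\psi = \id$. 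I expect the bookkeeping of signs in the $|\tau|=1$ case — particularly making sure no spurious $(-1)^{|v||\tau|}$ or similar factors survive when relating $\phi$ and $\psi$ through the $\pi\otimes\pi$ cancellation in $\Pi_r^2\cong\id$ — to be the only genuinely delicate point; everything else is a routine unwinding of the definitions already set up in \S\ref{ss:trirep}, \S\ref{ss:tri-ind}, and the $\Pi$-structure discussion.
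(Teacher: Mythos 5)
Your proof is correct and takes essentially the same approach as the paper: verify well-definedness via $i(hv) = (\tau h \tau^{-1}) i(v)$, verify $G$-equivariance directly, and construct the inverse by swapping the roles of $\tau$ and $\tau^{-1}$. You include an explicit parity check and flag the potential sign issues around $\Pi_r$, which the paper leaves implicit, but the substance of the argument is identical.
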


\begin{proof}
We first check that $\phi$ is well-defined, i.e., that $\phi(gh \otimes v)=\phi(g \otimes hv)$ for $h \in H_1$. Omitting the factors of $\pi$ for space, we have
\begin{align*}
\phi(gh \otimes v)
  &= gh\tau^{-1} \otimes i(v) 
 = g \tau^{-1} \otimes \tau h \tau^{-1} i(v) 
  = g \tau^{-1} \otimes i(hv)
  = \phi(g \otimes hv).
\end{align*}
Thus $\phi$ is well-defined. For $g' \in G$ we have
\begin{displaymath}
  g' \phi(g \otimes v) = g'(g \tau^{-1} \otimes i(v) \otimes \pi^{\vert \tau \vert}) = \phi(g'g \otimes v),
\end{displaymath}
and so $\phi$ is a map of s-representations. Finally, interchanging the roles of $\tau$ and $\tau^{-1}$ gives a map in the opposite direction that is the inverse of $\phi$. Thus $\phi$ is an isomorphism.
\end{proof}

\subsection{The category of linear spin-species}

A {\bf representation of $\wt{\fS}_{\ast}$} is a sequence $(V_n)_{n \ge 0}$, where $V_n$ is an s-representation of the s-group $\wt{\fS}_n$ over $\bk$. A {\bf morphism} of $\wt{\fS}_{\ast}$-representations $V \to W$ is a sequence $f=(f_n)_{n \ge 0}$ where $f_n \colon V_n \to W_n$ is a (not necessarily homogeneous) morphism of $\wt{\fS}_n$-representations. We say that an $\wt{\fS}_{\ast}$-representation $V$ is {\bf even} (resp.\ {\bf odd}) if $V_n=0$ for all odd (resp.\ even) values of $n$. This gives $\Rep(\wt{\fS}_{\ast})$ the structure of a supercategory. For an integer $q$, we can regard $\Rep(\wt{\fS}_{\ast})$ to be an (inhomogeneous) $\bZ/q$-supercategory by declaring an object $(V_n)_{n \ge 0}$ to have degree $d \in \bZ/q$ if $V_n=0$ for all $n \not\equiv d \pmod{q}$. Every object decomposes as a direct sum of homogeneous objects, and this decomposition is finite if $q \ne 0$.

\subsection{Monoidal structure} \label{ss:spin-monoidal}

Let $n$ and $m$ be non-negative integers. Suppose that $V$ and $W$ are two representations of $\wt{\fS}_{\ast}$. We define $V \otimes W$ to be the representation of $\wt{\fS}_{\ast}$ given by
\begin{displaymath}
(V \otimes W)_n = \bigoplus_{i+j=n} \Ind_{\wt{\fS}_i \ttimes \wt{\fS}_j}^{\wt{\fS}_n} (V_i \otimes W_j),
\end{displaymath}
where here $\wt{\fS}_i \ttimes \wt{\fS}_j$ is regarded as a subgroup of $\wt{\fS}_n$ via the map $j_{n,m}$ defined in \eqref{eqn:spin-emb}. Given a third $\wt{\fS}_{\ast}$-representation $U$, we have a natural identification
\begin{displaymath}
((U \otimes V) \otimes W)_n = \bigoplus_{i+j+k=n} \Ind_{\wt{\fS}_i \ttimes \wt{\fS}_j \ttimes \wt{\fS}_k}^{\wt{\fS}_n}(U_i \otimes V_j \otimes W_k),
\end{displaymath}
and similarly for $U \otimes (V \otimes W)$. This yields a natural associativity isomorphism $(U \otimes V) \otimes W \to U \otimes (V \otimes W)$ that satisfies the pentagon axiom. The $\wt{\fS}_{\ast}$-representation that is $\bk$ in degree~0 and~0 in other degrees is naturally a unit object for $\otimes$. Thus $\Rep(\wt{\fS}_{\ast})$ has the structure of a monoidal $\bZ/q$-supercategory.

\subsection{Supersymmetry} \label{ss:spin-symmetry}

In this section, we assume $q$ is divisible by~4 and construct a supersymmetry on $\Rep(\wt{\fS}_*)$. We begin with the following observation:

\begin{proposition}
Let $V$ and $W$ be s-representations of $\wt{\fS}_n$ and $\wt{\fS}_m$. Regard $\wt{\fS}_n \ttimes \wt{\fS}_m$ as a subgroup of $\wt{\fS}_{n+m}$ via $j_{n,m}$, and similarly with $n$ and $m$ reversed. Then 
\begin{align*}
  i_{V,W} \colon V \otimes W &\to (W \otimes V)^{\tilde{\tau}_{n,m}} \\
  v \otimes w &\mapsto (-1)^{|v||w| + nm|v| + nm|w|} w \otimes v
\end{align*}
is an isomorphism of representations of $\wt{\fS}_n \ttimes \wt{\fS}_m$, where the superscript denotes the conjugate representation. Precisely, this means
  \[
    i_{V,W}(j_{n,m}(g,h) (v\otimes w)) = \tilde{\tau}_{n,m} j_{n,m}(g,h) \tilde{\tau}_{n,m}^{-1} i_{V,W}(v \otimes w)
  \]
  for all $v \in V$ and $w \in W$ and all $g \in \wt{\fS}_n$ and $h \in \wt{\fS}_m$.
\end{proposition}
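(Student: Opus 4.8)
The plan is to verify the stated identity by a direct computation on a homogeneous element $v \otimes w$, reducing everything to the conjugation formula for $\tilde\tau_{n,m}$ already established in Proposition~\ref{prop:spin-conj}. First I would spell out both sides of the claimed equation
\[
i_{V,W}\bigl(j_{n,m}(g,h)(v\otimes w)\bigr) = \tilde\tau_{n,m}\, j_{n,m}(g,h)\, \tilde\tau_{n,m}^{-1}\, i_{V,W}(v\otimes w)
\]
for homogeneous $v,w,g,h$. On the left, by the definition of the $\wt\fS_n\ttimes\wt\fS_m$-action on $V\otimes W$ given in \S\ref{ss:trirep}, we have $j_{n,m}(g,h)(v\otimes w) = (-1)^{|v||h|}\, gv\otimes hw$, and then applying $i_{V,W}$ introduces the sign $(-1)^{|gv||hw| + nm|gv| + nm|hw|}$ and swaps the factors to get a multiple of $hw\otimes gv$. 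Since $|gv| = |g|+|v|$ and $|hw| = |h|+|w|$, I would collect all the resulting sign exponents into a single exponent, call it $E_{\mathrm{left}}$, expand it, and simplify modulo $2$.

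On the right, I would first apply Proposition~\ref{prop:spin-conj}, which gives $\tilde\tau_{n,m}\, j_{n,m}(g,h)\, \tilde\tau_{n,m}^{-1} = c^{nm|g| + nm|h| + |g||h|}\, j_{m,n}(h,g)$; since $c$ acts as $-1$ in an s-representation, this contributes the sign $(-1)^{nm|g| + nm|h| + |g||h|}$. Next, $i_{V,W}(v\otimes w) = (-1)^{|v||w| + nm|v| + nm|w|}\, w\otimes v \in W\otimes V$, and then $j_{m,n}(h,g)$ acts on $w\otimes v$ (viewed in $W\otimes V$, a representation of $\wt\fS_m\ttimes\wt\fS_n$) by $(-1)^{|w||g|}\, hw\otimes gv$. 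Multiplying these three signs gives a single exponent $E_{\mathrm{right}}$, again a multiple of $hw\otimes gv$. The proof then reduces to checking $E_{\mathrm{left}} \equiv E_{\mathrm{right}} \pmod 2$.

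The remaining work is purely bookkeeping of exponents in $\bZ/2$. Expanding $E_{\mathrm{left}}$ using bilinearity of products in $\bZ/2$ yields terms $|v||h|$, $|g||h|$, $|g||w|$, $|v||h|$ (again), $|v||w|$, $nm|g|$, $nm|v|$, $nm|h|$, $nm|w|$; the two copies of $|v||h|$ cancel, leaving $|g||h| + |g||w| + |v||w| + nm(|g|+|v|+|h|+|w|)$. Expanding $E_{\mathrm{right}}$ gives $nm|g| + nm|h| + |g||h|$ from the conjugation factor, $|v||w| + nm|v| + nm|w|$ from $i_{V,W}$, and $|w||g|$ from the action of $j_{m,n}(h,g)$; summing these gives exactly the same total $|g||h| + |g||w| + |v||w| + nm(|g|+|v|+|h|+|w|)$. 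Hence $E_{\mathrm{left}} = E_{\mathrm{right}}$ in $\bZ/2$ and the two sides agree, proving that $i_{V,W}$ is an isomorphism of $\wt\fS_n\ttimes\wt\fS_m$-representations. (That $i_{V,W}$ is bijective and even is immediate, since up to a scalar it is the ordinary flip $v\otimes w\mapsto w\otimes v$.) I do not anticipate a genuine obstacle here; the only thing to be careful about is correctly tracking the Koszul-type sign $(-1)^{|v||h|}$ built into the $\ttimes$-action and the sign coming from re-expressing the action of $j_{m,n}(h,g)$ on the already-swapped element $w\otimes v$, since a slip there would break the $\bZ/2$ match.
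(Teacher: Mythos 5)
Your proof is correct and follows essentially the same route as the paper's: expand both sides of the intertwining identity into a scalar multiple of $hw \otimes gv$ using the definitions of the $\ttimes$-action and $i_{V,W}$, invoke Proposition~\ref{prop:spin-conj} with $c$ acting by $-1$, and check the exponents agree modulo $2$. The sign bookkeeping you carried out matches the paper's computation exactly, and your closing remark on bijectivity and evenness covers the remaining (easy) part of the claim.
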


\begin{proof}
We have
\begin{align*}
i_{V,W}(j_{n,m}(g,h) (v \otimes w)) &= (-1)^{|h||v|} i_{V,W}(gv \otimes hw)\\
&= (-1)^{|h||v| + (|g|+|v|)(|h|+|w|) + nm(|g|+|v|) + nm(|h|+|w|)} hw \otimes gv \\
&= (-1)^{|g||h| + nm|g| + nm|h| + |v||w| + nm|v| + nm|w| + |g||w|}  hw \otimes gv\\
&= \tilde{\tau}_{n,m} j_{n,m}(g,h) \tilde{\tau}_{n,m}^{-1} i_{V,W}(v \otimes w).
\end{align*}
In the last equality, we used Proposition~\ref{prop:spin-conj}, and that $c$ acts by $-1$.
\end{proof}

As discussed in \S \ref{ss:tri-ind}, we have an isomorphism
\begin{equation} \label{eq:ind-iso}
\Ind_{\wt{\fS}_n \ttimes \wt{\fS}_m}^{\wt{\fS}_{n+m}} (V \otimes W) \to \Pi_r^{nm} \Ind_{\wt{\fS}_m \ttimes \wt{\fS}_n}^{\wt{\fS}_{n+m}} (W \otimes V)
\end{equation}
coming from conjugation by $\tilde{\tau}_{n,m}$, and the isomorphism $i_{V,W}$ (maintaining the notation from the proposition).

\begin{definition}
Given homogeneous objects $V$ and $W$ of $\Rep(\wt{\fS}_*)$, we define
\begin{displaymath}
\beta_{V,W} \colon V \otimes W \to \Pi_{r}^{|V||W|}(W \otimes V)
\end{displaymath}
to be the isomorphism induced by the isomorphisms \eqref{eq:ind-iso} using the elements $\tilde{\tau}_{n,m}$ and the isomorphisms $i$. Explicitly, for $v \in V_n$ and $w \in W_m$ and $g \in \wt{\fS}_{n+m}$, we have
\begin{displaymath}
\beta_{V,W}(g \otimes v \otimes w) = (-1)^{nm(\vert v \vert+\vert w \vert)+\vert v \vert \vert w \vert} g\tilde{\tau}_{n,m}^{-1} \otimes w \otimes v \otimes \pi^{nm},
\end{displaymath}
where here $g \otimes v \otimes w$ denotes an element of $\Ind_{\wt{\fS}_n \ttimes \wt{\fS}_m}^{\wt{\fS}_{n+m}}(V \otimes W)$.
\end{definition}

\begin{proposition} \label{prop:spin-spec-braid}
$\beta$ defines a type~Ia supersymmetry on $\otimes$.
\end{proposition}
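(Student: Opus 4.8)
The plan is to check directly the three conditions defining a type~Ia supersymmetry for $\beta$: that it is natural, that the hexagons (H1$'$) and (H2$'$) hold with respect to the factor system $\fsA$ of Example~\ref{ex:theta}, and that the supersymmetry square holds with $\omega_{\sharp}=\fsA_{\sharp}$. Each of these reduces to one of the three identities for the $\tilde{\tau}$ elements proved in \S\ref{ss:calculations}, together with sign bookkeeping.

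For naturality, observe that by construction $\beta_{V,W}$ is exactly the induced map $\phi$ of \S\ref{ss:tri-ind}, applied with the subgroups $\wt{\fS}_n\ttimes\wt{\fS}_m$ and $\wt{\fS}_m\ttimes\wt{\fS}_n$ of $\wt{\fS}_{n+m}$, the element $\tilde{\tau}_{n,m}$ (whose conjugation action interchanges these subgroups by Proposition~\ref{prop:spin-conj}), and the even isomorphism $i_{V,W}$ of the preceding proposition. Hence the proposition of \S\ref{ss:tri-ind} already gives that each $\beta_{V,W}$ is a well-defined even isomorphism of $\wt{\fS}_{n+m}$-representations, hence an even isomorphism of functors on objects; it remains only to check that for homogeneous morphisms $f\colon V\to V'$ and $g\colon W\to W'$ the naturality square commutes up to $(-1)^{|f||g|}$, which from the explicit formulas for $\beta$ and $i_{V,W}$ reduces to a comparison of Koszul signs.

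For the hexagons and the square, I would argue one homogeneous graded summand $\Ind(V_i\otimes W_j\otimes X_k)$ at a time. All the maps involved are right multiplications by the elements $\tilde{\tau}_{i,j}$, $\tilde{\tau}_{j,k}$, $\tilde{\tau}_{i,k}$, $\tilde{\tau}_{i,j+k}$, $\tilde{\tau}_{i+j,k}$ (acting through conjugation on induced modules) together with the $\Pi_{r}$-insertions of $\pi$'s coming from $\xi_{r}$; since $4\mid q$, the values $\fsA_{1},\fsA_{2},\fsA_{\sharp}$ are constant on each residue class, so it suffices to match everything in the quotient $\bk[\wt{\fS}_{\bullet}]/(c+1)$. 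Hexagon (H1$'$) then comes from Proposition~\ref{prop:tauj}, $j_{n,m+p}(1,\tilde{\tau}_{m,p})\,j_{n+m,p}(\tilde{\tau}_{m,n},1)=\tilde{\tau}_{m,n+p}$; since this identity has no power of $c$, (H1$'$) commutes strictly, matching $\fsA_{1}\equiv 1$. The supersymmetry square comes from Proposition~\ref{prop:tau-symmetric}, $\tilde{\tau}_{n,m}\tilde{\tau}_{m,n}=c^{\binom n2\binom m2}$, which becomes $(-1)^{\binom n2\binom m2}=\fsA_{\sharp}(|V|,|W|)$ modulo $c+1$. Finally (H2$'$), with its factor $\fsA_{2}(|V|,|W|;|X|)=(-1)^{\binom{|X|}{2}|V||W|}$, can be deduced from (H1$'$) and the square via the remark that the two hexagons of a supersymmetry are equivalent (invert the arrows in (H1$'$) and relabel), or checked directly using a companion identity expressing $j(\tilde{\tau}_{i,k},1)\,j(1,\tilde{\tau}_{j,k})$ as $c^{\binom k2 ij}\tilde{\tau}_{i+j,k}$, itself a consequence of Propositions~\ref{prop:tauj} and~\ref{prop:tau-symmetric}.

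The main obstacle is not conceptual but is the sign bookkeeping: one must confirm that the \emph{ad hoc} sign $(-1)^{|v||w|+nm|v|+nm|w|}$ in $i_{V,W}$, the prefactor $(-1)^{nm(|v|+|w|)+|v||w|}$ in $\beta$, the sign in $\xi_{r}$ together with the use of the \emph{right} $\Pi$-structure, and the Koszul signs from the tensor product of s-representations and from composition in product categories all cancel except for precisely the $\fsA$-factors --- in particular that (H1$'$) acquires no extra sign. A secondary point is justifying the reduction to graded summands, which is exactly where the hypothesis $4\mid q$ enters (it is needed for $\fsA_{2}$ and $\fsA_{\sharp}$ to be constant on residue classes modulo $q$).
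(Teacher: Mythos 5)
Your proposal follows essentially the same route as the paper's proof: naturality from the well-definedness of the induced map of \S\ref{ss:tri-ind} together with a Koszul sign check, hexagon (H1$'$) from Proposition~\ref{prop:tauj}, the supersymmetry square from Proposition~\ref{prop:tau-symmetric}, and (H2$'$) deduced from (H1$'$) plus the symmetry condition. The key lemmas and the overall structure are exactly the ones the paper uses.

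That said, you should be aware that the ``sign bookkeeping'' you defer to the final paragraph is not merely clerical --- it is the bulk of the paper's actual argument. In particular, your claim that (H1$'$) ``commutes strictly'' because Proposition~\ref{prop:tauj} carries no power of $c$ is misleading: in the paper's computation each path through the hexagon acquires a nontrivial Koszul sign (the top path produces $(-1)^{m(n+p)(|u|+|v|+|w|)+|u|(|v|+|w|)}$ while the bottom path, after five steps, produces $(-1)^{mn(|u|+|v|)+|u||v|+mn|w|+mp(|u|+|w|)+|u||w|+mp|v|}$), and it is only after expanding and comparing these expressions that one sees they agree. The $c$-free statement of Proposition~\ref{prop:tauj} pins down the group element, but the scalar has to be extracted from the $i_{V,W}$ signs, the $\Pi_r$ insertions, and the composition signs. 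Similarly, your naturality check, left as ``a comparison of Koszul signs,'' is precisely the two displayed computations in the paper's first paragraph showing agreement up to $(-1)^{|f||g|}$. So while your plan is correct and you have correctly identified all the needed ingredients, your write-up stops exactly where the proof's genuine content begins.
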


\begin{proof}
  First, we check that $\beta$ is a natural transformation. Let $f \colon V_n \to V'_n$ and $g \colon W_m \to W'_m$ be nonzero homogeneous morphisms. Pick $v \in V_n$, $w \in W_m$ and $\sigma \in \wt{\fS}_{n+m}$. Then we have
  \begin{align*}
    &\qquad \Pi^{|V||W|}(g \otimes f)\beta_{V_n,W_m}(\sigma \otimes v \otimes w)\\
    &=(-1)^{nm(\vert v \vert+\vert w \vert)+\vert v \vert \vert w \vert + |f|(|\sigma|+nm+|w|) + |g|(|\sigma|+nm)} \sigma \tilde{\tau}_{n,m}^{-1} \otimes g(w) \otimes f(v) \otimes \pi^{nm}.
  \end{align*}
  On the other hand, we have
  \begin{align*}
    &\qquad \beta_{V'_n,W'_m}(f \otimes g)(\sigma \otimes v \otimes w) \\
    &= \beta_{V'_n,W'_m} ((-1)^{|g|(|\sigma|+|v|)+ |f||\sigma|} \sigma \otimes f(v) \otimes g(w))\\
    &= (-1)^{|g|(|\sigma|+|v|)+ |f||\sigma| + nm(|f|+|v|+|g|+|w|) + (|f|+|v|)(|g|+|w|)} \sigma \tilde{\tau}_{n,m}^{-1} \otimes g(w) \otimes f(v) \otimes \pi^{nm}.
  \end{align*}
  This agrees with the first expression up to $(-1)^{|f||g|}$, which is what we wanted to show.
  
Next, it follows from Proposition~\ref{prop:tau-symmetric} that $\beta_{V,W} \beta_{W,V} = (-1)^{\binom{n}{2}\binom{m}{2}}$. Hence we only need to check that one of the hexagon axioms hold, and we check (H1$'$). So pick non-negative integers $m,n,p$ and let $U,V,W$ be representations of $\wt{\fS}_m$, $\wt{\fS}_n$, and $\wt{\fS}_p$, respectively. We need to check that $(1_V \otimes \beta_{U,W})(\beta_{U,V} \otimes 1_W) = \beta_{U,V \otimes W}$. (Here we identify $U$ with the $\fS_*$-representation that is $U$ in degree $m$ and~0 in other degrees, and similarly for $V$ and $W$. We note that it suffices to check the axiom on such objects.)
 
The upper path in the hexagon is the following composition:
\begin{align*}
& \Ind_{m+n,p}^{m+n+p}(\Ind_{m,n}^{m+n}(U \otimes V) \otimes W) \\
\xrightarrow{\alpha_{U,V,W}^{-1}} & \Ind_{m,n+p}^{m+n+p}(U \otimes \Ind_{n,p}^{n+p}(V \otimes W)) \\
\xrightarrow{\beta_{U,V \otimes W}} & \Ind_{n+p,m}^{m+n+p}(\Ind_{n,p}^{n+p}(V \otimes W) \otimes U) \otimes \bk[m(n+p)] \\
\xrightarrow{\alpha_{V,W,U}^{-1}} & \Ind_{n,m+p}^{m+n+p}(V \otimes \Ind^{p+m}_{p,m}(W \otimes U)) \otimes \bk[m(n+p)]
\end{align*}
Here we have written $n$ in place of $\wt{\fS}_n$ in the inductions for readability. We compute the composition explicitly. Let $g \in \wt{\fS}_{m+n+p}$ and let $u \in U$, $v \in V$, and $w \in W$ be homogeneous elements. We have
\begin{align*}
& g \otimes (1 \otimes u \otimes v) \otimes w \\
\mapsto & g \otimes u \otimes (1 \otimes v \otimes w) \\
\mapsto & (-1)^{m(n+p)(\vert u \vert+\vert v \vert+\vert w \vert)+\vert u \vert(\vert v \vert + \vert w \vert)} g\tilde{\tau}_{m,n+p}^{-1} \otimes (1 \otimes v \otimes w) \otimes u \otimes \pi^{m(n+p)} \\
\mapsto & (-1)^{m(n+p)(\vert u \vert+\vert v \vert+\vert w \vert)+\vert u \vert(\vert v \vert + \vert w \vert)} g\tilde{\tau}_{m,n+p}^{-1} \otimes  v \otimes (1 \otimes w \otimes u) \otimes \pi^{m(n+p)}
\end{align*}
The lower path in the hexagon is the following composition:
\begin{align*}
& \Ind_{m+n,p}^{m+n+p}(\Ind_{m,n}^{m+n}(U \otimes V) \otimes W) \\
\xrightarrow{\beta_{U,V} \otimes \id_W} & \Ind_{m+n,p}^{m+n+p}(\Ind_{n,m}^{m+n}(V \otimes W) \otimes \bk[mn] \otimes U) \\
\xrightarrow{\sim} & \Ind_{m+n,p}^{m+n+p}(\Ind_{n,m}^{m+n}(V \otimes W) \otimes U) \otimes \bk[mn] \\
\xrightarrow{\alpha_{V,W,U}^{-1}} & \Ind_{n,m+p}^{m+n+p}(V \otimes \Ind_{n,p}^{n+p}(U \otimes W)) \otimes \bk[mn] \\
\xrightarrow{\id_V \otimes \beta_{U,W}} & \Ind_{n,m+p}^{m+n+p}(V \otimes \Ind_{p,m}^{m+p}(W \otimes U) \otimes \bk[mp]) \otimes \bk[mn] \\
\xrightarrow{\sim} & \Ind_{n,m+p}^{m+n+p}(V \otimes \Ind_{p,m}^{m+p}(W \otimes U)) \otimes \bk[m(n+p)]
\end{align*}
We have
\begin{align*}
& g \otimes (1 \otimes u \otimes v) \otimes w \\
\mapsto & (-1)^{mn(\vert u \vert+\vert v \vert) + \vert u \vert \vert v \vert} g \otimes (\tau_{m,n}^{-1} \otimes v \otimes u \otimes \pi^{mn}) \otimes w \\
\mapsto & (-1)^{mn(\vert u \vert+\vert v \vert) + \vert u \vert \vert v \vert + mn \vert w \vert} g\tilde{\tau}_{m,n}^{-1} \otimes (1 \otimes v \otimes u) \otimes w \otimes \pi^{mn} \\
\mapsto & (-1)^{mn(\vert u \vert+\vert v \vert) + \vert u \vert \vert v \vert + mn \vert w \vert} g\tilde{\tau}_{m,n}^{-1} \otimes v \otimes (1 \otimes u \otimes w) \otimes \pi^{mn} \\
\mapsto & (-1)^{mn(\vert u \vert+\vert v \vert) + \vert u \vert \vert v \vert + mn \vert w \vert+mp(\vert u \vert+\vert w \vert)+\vert u \vert \vert w \vert} g\tilde{\tau}_{m,n}^{-1} \otimes v \otimes (\tilde{\tau}_{m,p}^{-1} \otimes w \otimes u \otimes \pi^{mp}) \otimes \pi^{mn} \\
\mapsto & (-1)^{mn(\vert u \vert+\vert v \vert) + \vert u \vert \vert v \vert + mn \vert w \vert+mp(\vert u \vert+\vert w \vert)+\vert u \vert \vert w \vert+mp \vert v \vert} g\tilde{\tau}_{m,n}^{-1}j(1,\tilde{\tau}_{m,p}^{-1}) \otimes v \otimes (1 \otimes w \otimes u) \otimes \pi^{m(n+p)}
\end{align*}
where here $j=j_{n,m+p}$. In the final step, we used the identity
\begin{displaymath}
v \otimes (\tilde{\tau}^{-1}_{m,p} \otimes w \otimes u)
=(-1)^{mp \vert v \vert} (1,\tilde{\tau}^{-1}_{m,p})(v \otimes (1 \otimes w \otimes u))
\end{displaymath}
in $V \otimes \Ind_{p,m}^{m+p}(W \otimes U)$, and then moved $(1,\tilde{\tau}^{-1}_{m,p})$ into the first tensor factor via $j$. Since $\tilde{\tau}^{-1}_{m,n} j(1,\tilde{\tau}^{-1}_{m,p})=\tilde{\tau}^{-1}_{m,n+p}$ (Proposition~\ref{prop:tauj}) and the signs in the two compositions agree, the result follows.
\end{proof}

\subsection{Universal property}

Let $\Rep(\wt{\fS}_*)^{\rm pf}$ be the full subcategory of $\Rep(\wt{\fS}_*)$ on finite length projective objects. If $\chr(\bk)=0$ then the projective condition is automatic. The category $\Rep(\wt{\fS}_*)^{\rm pf}$ is ``nice'' in the sense defined in \S \ref{ss:nice}: it is additive, Karoubian, and admits a $\Pi$-structure. Let $\bV$ be the ``standard'' representation of $\wt{\fS}_{\ast}$: it is $\bk$ in degree~1 and~0 in other degrees. 

\begin{theorem} \label{thm:S-univ}
The pair $(\Rep(\wt{\fS}_*)^{\rm pf}, \bV)$ is the universal nice type Ia supersymmetric monoidal $\bZ$-supercategory with an object of degree~$1$, in the following sense. Suppose $\cC$ is a nice supersymmetric monoidal $\bZ$-supercategory and $M$ is an object of $\cC$ of degree~$1$. Then there exists a supersymmetric monoidal superfunctor $\Phi \colon \Rep(\wt{\fS}_*)^{\rm pf} \to \cC$ and an isomorphism $\iota \colon \Phi(\bV) \to M$. Furthermore, $(\Phi, \iota)$ is unique up to isomorphism.
\end{theorem}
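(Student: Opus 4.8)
The plan is to deduce this from the universal property of $\tilde{\cS}$ (Theorem~\ref{thm:Suniv}) by exhibiting $\Rep(\wt{\fS}_\ast)^{\rm pf}$ as the ``nice hull'' of $\tilde{\cS}$. First I would check that the evident $\bk$-linear superfunctor $\tilde{\cS} \to \Rep(\wt{\fS}_\ast)^{\rm pf}$, $[n] \mapsto \bV^{\otimes n}$, is fully faithful: $\bV^{\otimes n}$ is the regular module of $\bk[\wt{\fS}_n]/(c+1)$ (it is induced from the trivial s-subgroup), so its endomorphism algebra is $\bk[\wt{\fS}_n]/(c+1) = \End_{\tilde{\cS}}([n])$, and the $\Hom$'s between objects of different degrees vanish on both sides. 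Moreover every object of $\Rep(\wt{\fS}_\ast)^{\rm pf}$ is a direct summand of a finite direct sum of objects $\Pi_r^{\epsilon}(\bV^{\otimes n})$, so $\Rep(\wt{\fS}_\ast)^{\rm pf}$ is generated from the image of $\tilde{\cS}$ by finite direct sums, direct summands, and the functor $\Pi_r$. I would then verify that the monoidal structure, $\Pi$-structure, and type~Ia supersymmetry of \S\ref{ss:spin-symmetry} are the \emph{unique} extensions of the corresponding structures on $\tilde{\cS}$: $\otimes$ is additive and commutes with $\Pi_r$, hence determined by its restriction to the $\bV^{\otimes n}$, where it is the induction product of \S\ref{ss:catS}; and a sign computation combining Proposition~\ref{prop:type12}, Remark~\ref{rmk:convert}, and the explicit formula for $\xi_r$ identifies $\beta$ from \S\ref{ss:spin-symmetry} with the type~I conversion of the type~IIa superbraiding $\tilde{\tau}_{n,m}$ on $\tilde{\cS}$. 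The upshot is a universal property: $\Rep(\wt{\fS}_\ast)^{\rm pf}$ is the universal nice supersymmetric monoidal $\bZ$-supercategory equipped with a supersymmetric monoidal superfunctor from $\tilde{\cS}$. (Alternatively, one could try to read this off the universal property of the exterior algebra $\lw$ from \S\ref{ss:exterior}.)

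With this in hand, $\Phi$ is built as follows. Since $\cC$ is nice it has a $\Pi$-structure, so by Proposition~\ref{prop:type12} --- using Remark~\ref{rmk:convert} to absorb the intervening coboundary --- its type~Ia supersymmetry corresponds to a type~IIa supersymmetry on $\cC$. Applying Theorem~\ref{thm:Suniv} to the degree~$1$ object $M$ yields a supersymmetric monoidal superfunctor $\Psi \colon \tilde{\cS} \to \cC$ and an isomorphism $\Psi([1]) \cong M$. By the universal property of the nice hull, $\Psi$ extends, uniquely up to isomorphism, to a supersymmetric monoidal superfunctor $\Phi \colon \Rep(\wt{\fS}_\ast)^{\rm pf} \to \cC$, and I take $\iota \colon \Phi(\bV) = \Psi([1]) \to M$ to be the given isomorphism.

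For uniqueness of $(\Phi,\iota)$, I would note that any other such pair $(\Phi',\iota')$ restricts to a supersymmetric monoidal superfunctor $\tilde{\cS} \to \cC$ sending $[1]$ to an object isomorphic to $M$; Theorem~\ref{thm:Suniv} then provides a monoidal natural isomorphism $\Phi'|_{\tilde{\cS}} \cong \Psi$ compatible with the chosen isomorphisms to $M$. Since a monoidal superfunctor between nice supercategories is additive and commutes with $\Pi$, such a functor out of $\Rep(\wt{\fS}_\ast)^{\rm pf}$ is determined up to isomorphism by its restriction to $\tilde{\cS}$; so the isomorphism $\Phi'|_{\tilde{\cS}} \cong \Psi \cong \Phi|_{\tilde{\cS}}$ extends to $\Phi' \cong \Phi$ compatibly with $\iota$ and $\iota'$.

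The step I expect to be the main obstacle is the first paragraph: making precise the ``nice hull'' of a monoidal supercategory that is neither strict nor additive --- in particular checking that $\boxtimes$ and $\otimes$ pass to the additive/Karoubian/$\Pi$-completion, so that ``unique extension of the monoidal structure'' is a meaningful statement --- and carrying out the sign bookkeeping matching the superbraiding of \S\ref{ss:spin-symmetry} with the type~I conversion of $\tilde{\tau}$, while keeping track of the factor-system twists ($\fsC$, $\fsD$, and the coboundaries $\fnC$, $\fnD$) that relate the type~IIa, type~Ia, and type~IIb conventions. Once that infrastructure is set up, the remaining arguments are formal consequences of Theorem~\ref{thm:Suniv} and the universal properties of the various completions.
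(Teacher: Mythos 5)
Your proposal takes the same approach as the paper: realize $\Rep(\wt{\fS}_*)^{\rm pf}$ as the ``nice envelope'' of $\tilde{\cS}$ and transport the universal property from Theorem~\ref{thm:Suniv}. The paper's own proof is a one-sentence remark asserting exactly this, so you have in fact supplied substantially more detail (full faithfulness via the regular module, generation under direct sums, summands, and $\Pi$, and the type~I/type~II conversion with its factor-system bookkeeping) than the paper records.
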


\begin{proof}
One can prove this directly; however, we simply note that $\Rep(\wt{\fS}_*)^{\rm pf}$ is the ``nice envelope'' of $\tilde{\cS}$, and so inherits the universal property of $\tilde{\cS}$ (Theorem~\ref{thm:Suniv}).
\end{proof}

\subsection{Exterior powers of categories} \label{ss:exterior}

One can generalize the above discussion to use coefficients in a supercategory $\cA$. This leads to the notion of the exterior power of $\cA$, as defined in \cite{ganter}. The following discussion is intended as an extended comment, and so we omit many details.

Let $\cA$ be a nice supercategory (see \S \ref{ss:nice}), and let $\cA^{\hattimes n}$ be its $n$-fold self-product with respect to the tensor product $\hattimes$ defined in \S \ref{ss:envelopes} below. The symmetric group $\fS_n$ acts in a natural way on $\cA^{\hattimes n}$, and so $\wt{\fS}_n$ does too through the quotient map $\wt{\fS}_n \to \fS_n$. A {\bf $\Sym^n$-object} of $\cA$ is an object $M$ of $\cA^{\hattimes n}$ equipped with even isomorphisms $\alpha_{\sigma} \colon \sigma^*(M) \to M$ for $\sigma \in \fS_n$ satisfying the usual cocycle conditions. A {\bf $\lw^n$-object} of $\cA$ is an object $M$ of $\cA^{\hattimes n}$ equipped with isomorphisms $\beta_{\sigma} \colon \sigma^*(M) \to M$ for $\sigma \in \wt{\fS}_n$ satisfying the cocycle conditions and such that $\beta_c=-1$ and $\beta_{\sigma}$ is homogeneous of the same degree as $\sigma$. We let $\Sym^n(\cA)$ and $\lw^n(\cA)$ denote the categories of $\Sym^n$- and $\lw^n$-objects of $\cA$. We define $\Sym(\cA)$ and $\lw(\cA)$ to be the $\bZ$-supercategories with these homogeneous pieces.

One can endow $\Sym(\cA)$ with a symmetric monoidal structure and $\lw(\cA)$ with a supersymmetric monoidal structure, similar to the definitions used above for $\Rep(\wt{\fS}_*)$. We note that $\Rep(\wt{\fS}_*)$ itself can be viewed as $\lw(\SVec)$. These categories satisfy natural universal properties: e.g., given a nice supersymmetric monoidal $\bZ$-supercategory $\cB$, giving a supersymmetric monoidal $\bZ$-functor $\lw(\cA) \to \cB$ is the same as giving a superfunctor $\cA \to \cB$.

\section{Queer vector spaces} \label{s:queer}

\textit{We assume $\bk$ contains $\zeta_{16}$ throughout this section, and put $\sqrt{2} = \zeta_8(1-\zeta_4)$.}

\subsection{Queer vector spaces}

Let $V$ be a super vector space. A {\bf queer structure} on $V$ is an odd isomorphism $\nu \colon V \to V$ such that $\nu^2=1$. A {\bf queer vector space} is a vector space equipped with a queer structure. Suppose that $(U,\mu)$ and $(V,\nu)$ are queer vector spaces. A homogeneous morphism of queer spaces $(U,\mu) \to (V,\nu)$ is a homogeneous linear map $f \colon U \to V$ such that $f \mu = (-1)^{\vert f \vert} \nu f$. In this way, we have a supercategory $\QVec$ of queer vector spaces.

\subsection{The half tensor product} \label{ss:half}

Let $(U,\mu)$ and $(V,\nu)$ be two queer vector spaces. Then $\mu \otimes \nu$ defines an even automorphism of $U \otimes V$ squaring to $-1$. We define the {\bf half tensor product} of $U$ and $V$, denoted $2^{-1}(U \otimes V)$, to be the $\zeta_4$-eigenspace of $\mu \otimes \nu$. We note that the $(-\zeta_4)$-eigenspace is isomorphic to the $\zeta_4$-eigenspace, via $\mu \otimes 1$ or $1 \otimes \nu$. The half tensor product is an ordinary super vector space; it does not come with a preferred queer structure.

\subsection{The queer monoidal category}

Motivated by the above discussion, we define the {\bf queer category} $\Queer$ to be the $\bZ/2$-supercategory with $\Queer_0=\SVec$ and $\Queer_1=\QVec$. We now define a monoidal operation $\odot$ on $\Queer$. We define $\odot$ on objects as follows:
\begin{itemize}
\item Given $U,V \in \Queer_0$, we let $U \odot V=U \otimes V$.
\item Let $U \in \Queer_0$ and $V \in \Queer_1$, and let $\nu$ be the queer structure on $V$. Then $U \otimes V$ carries the queer structure $1 \otimes \nu$. We define $U \odot V$ to be the queer vector space $(U \otimes V, 1 \otimes \nu)$. We similarly define $V \odot U$.
\item Given $U,V \in \Queer_1$, we define $U \odot V$ to be the half tensor product $2^{-1}(U \otimes V)$.
\end{itemize}
The definition of $\odot$ on morphisms is straightforward. In this way, we have defined a superfunctor
\begin{displaymath}
\odot \colon \Queer \boxtimes \Queer \to \Queer.
\end{displaymath}
The unit object for this monoidal structure is the unit object of $\Queer_0$, i.e., the super vector space $\bk$. The isomorphisms $\lambda$ and $\rho$ are similar to $\SVec$, and we omit the details.

We now discuss the associativity constraint $\alpha$ for $\odot$. For this, we will require the following result:

\begin{proposition} \label{prop:queer-assoc}
Let $(U,\mu)$, $(V,\nu)$, and $(W,\sigma)$ be queer vector spaces. Then $\mu \sigma-1$ is an automorphism of $U \otimes V \otimes W$ that maps $U \otimes 2^{-1}(V \otimes W)$ isomorphically to $2^{-1}(U \otimes V) \otimes W$.
\end{proposition}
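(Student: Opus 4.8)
The plan is to rewrite everything in terms of three mutually anticommuting odd involutions. Put $\hat\mu = \mu\otimes 1_V\otimes 1_W$, $\hat\nu = 1_U\otimes\nu\otimes 1_W$, $\hat\sigma = 1_U\otimes 1_V\otimes\sigma$, regarded as endomorphisms of $U\otimes V\otimes W$. Each squares to the identity, each is odd, and by the Koszul sign rule governing composition of morphisms in $\SVec$ they pairwise anticommute (e.g.\ $\hat\mu\hat\sigma = -\hat\sigma\hat\mu$, since $\vert\hat\mu\vert = \vert\hat\sigma\vert = 1$). In this notation the operator ``$\mu\sigma$'' in the statement is $\hat\mu\hat\sigma = \hat\mu\circ\hat\sigma$, and the two subspaces to be compared are the $\zeta_4$-eigenspaces of
\[
P := \hat\nu\hat\sigma = 1_U\otimes(\nu\otimes\sigma), \qquad Q := \hat\mu\hat\nu = (\mu\otimes\nu)\otimes 1_W,
\]
that is, $U\otimes 2^{-1}(V\otimes W) = \ker(P-\zeta_4)$ and $2^{-1}(U\otimes V)\otimes W = \ker(Q-\zeta_4)$. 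Since $P$ and $Q$ are each a product of two anticommuting involutions we have $P^2 = Q^2 = -1$, so their eigenvalues are $\pm\zeta_4$; and $\hat\nu$ (invertible, anticommuting with $P$) interchanges $\ker(P-\zeta_4)$ with $\ker(P+\zeta_4)$, confirming that the half tensor product is genuinely the full $\zeta_4$-eigenspace, and similarly for $Q$ via $\hat\mu$.

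Next I would dispose of invertibility of $\mu\sigma - 1$. Because $\hat\mu$ and $\hat\sigma$ anticommute and square to $1$, $(\mu\sigma)^2 = \hat\mu\hat\sigma\hat\mu\hat\sigma = -\hat\mu^2\hat\sigma^2 = -1$, whence
\[
(\mu\sigma - 1)(\mu\sigma + 1) = (\mu\sigma)^2 - 1 = -2,
\]
so (as $\chr(\bk)\neq 2$) the operator $\mu\sigma - 1$ is an automorphism of $U\otimes V\otimes W$ with inverse $-\tfrac12(\mu\sigma + 1)$.

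The heart of the proof is two identities: $\mu\sigma = QP$ (from $\hat\nu^2 = 1$) and, using $Q^2 = -1$, $Q\cdot(\mu\sigma) = Q^2P = -P$; one also records $PQ = -QP$, read off directly from the anticommutation relations. Now take $x\in\ker(P-\zeta_4)$. Then $(\mu\sigma)x = QPx = \zeta_4 Qx$ while $Q(\mu\sigma)x = -Px = -\zeta_4 x$, so
\[
Q(\mu\sigma - 1)x = -\zeta_4 x - Qx = \zeta_4(\zeta_4 Qx - x) = \zeta_4(\mu\sigma - 1)x,
\]
i.e.\ $(\mu\sigma - 1)x\in\ker(Q-\zeta_4)$. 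Running the analogous computation for $\mu\sigma + 1$, using $P\cdot(\mu\sigma) = PQP = Q$ and $QPy = -PQy = -\zeta_4 Py$ for $y\in\ker(Q-\zeta_4)$, shows $(\mu\sigma + 1)y\in\ker(P-\zeta_4)$; hence the inverse $-\tfrac12(\mu\sigma+1)$ of $\mu\sigma - 1$ carries $\ker(Q-\zeta_4)$ into $\ker(P-\zeta_4)$. Thus $\mu\sigma - 1$ restricts to mutually inverse maps between $U\otimes 2^{-1}(V\otimes W)$ and $2^{-1}(U\otimes V)\otimes W$, which is the claimed isomorphism. (In the finite-dimensional case one can instead combine the containment $(\mu\sigma-1)\ker(P-\zeta_4)\subseteq\ker(Q-\zeta_4)$ with injectivity of $\mu\sigma-1$ and $\dim\ker(P-\zeta_4) = \dim\ker(Q-\zeta_4) = \tfrac12\dim(U\otimes V\otimes W)$, the latter coming from the anticommuting involutions $\hat\nu$ and $\hat\mu$.)

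I expect the only real friction to be the bookkeeping in the first paragraph: one must check with care that ``$\mu\sigma$'', ``$\nu\otimes\sigma$ on $V\otimes W$'', and ``$\mu\otimes\nu$ on $U\otimes V$'' are the operators $\hat\mu\hat\sigma$, $P$, $Q$ \emph{with the correct signs}, since a stray sign would swap the roles of $\mu\sigma - 1$ and $\mu\sigma + 1$ (and only $\mu\sigma - 1$ does the job). Once the three operators are correctly identified as pairwise anticommuting odd involutions, the remainder is the short eigenvalue computation above.
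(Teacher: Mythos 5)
Your proof is correct and takes essentially the same route as the paper: both invert $\mu\sigma - 1$ via $(\mu\sigma)^2 = -1$ and then verify the eigenspace containment by the same short computation. Your operator-level packaging using $P = \hat\nu\hat\sigma$, $Q = \hat\mu\hat\nu$ with $\mu\sigma = QP$, $PQ = -QP$, $P^2 = Q^2 = -1$ is just a slightly cleaner organization of the paper's direct manipulation with vectors (the paper derives $\nu x = -\zeta_4\sigma x$ from $\nu\sigma x = \zeta_4 x$, checks $\mu\nu(\mu\sigma-1)x = \zeta_4(\mu\sigma-1)x$, and handles the reverse inclusion by noting $\sigma\mu - 1$ works by symmetry rather than by your second explicit computation with $\mu\sigma + 1$).
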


\begin{proof}
Since $(\mu\sigma)^2=-1$, the inverse of $\mu \sigma-1$ is $-\tfrac{1}{2} (\mu \sigma+1)$. Now suppose that $x \in U \otimes 2^{-1}(V \otimes W)$. Thus $\nu \sigma x = \zeta_4 x$; applying $\sigma$ to each side, we find $\nu x = -\zeta_4 \sigma x$. We thus have
\begin{displaymath}
\mu \nu (\mu \sigma-1) x = -\nu \sigma x - \mu \nu x = -\zeta_4 x+\zeta_4 \mu \sigma x = \zeta_4 (\mu \sigma-1) x,
\end{displaymath}
which shows that $(\mu \sigma-1) x$ belongs to $2^{-1}(U \otimes V) \otimes W$. By symmetry, $\sigma \mu-1$ maps $2^{-1}(U \otimes V) \otimes W$ to $U \otimes 2^{-1}(V \otimes W)$. Since $\mu \sigma-1$ and $\sigma \mu-1$ are inverses (up to a factor of~2), these maps are isomorphisms.
\end{proof}

Let $U,V,W \in \Queer$ be homogeneous objects. We define an isomorphism
\begin{displaymath}
\alpha_{U,V,W} \colon U \odot (V \odot W) \to (U \odot V) \odot W
\end{displaymath}
as follows. We identify the domain and target above with subspaces of $U \otimes V \otimes W$. If at least one of $U$, $V$, or $W$ has degree~0, then the domain and target are identified with the same subspace, and we take $\alpha$ to be the identity map. Suppose now that $U$, $V$, and $W$ are each degree~1, and let $\mu$, $\nu$, and $\sigma$ be the queer structures on them. We then take $\alpha_{U,V,W}=\tfrac{1}{\sqrt{2}}(\mu \sigma-1)$, which is an appropriate isomorphism by Proposition~\ref{prop:queer-assoc}.

\begin{proposition}
The pentagon axiom holds.
\end{proposition}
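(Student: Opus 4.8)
The plan is to run a case analysis on the number of degree-$1$ objects among $A,B,C,D$, exploiting the fact that the associator $\alpha_{X,Y,Z}$ of $\odot$ is the identity map unless $X$, $Y$, and $Z$ all have degree~$1$. Since an object built from a subset of $\{A,B,C,D\}$ via $\odot$ has degree~$1$ only when it involves an odd number of odd objects, for a triple of arguments covering all four of $A,B,C,D$ to have all three arguments odd requires at least three of $A,B,C,D$ to be odd. Hence if at most two of $A,B,C,D$ are odd, \emph{all five} associators occurring in the pentagon are identity maps; one then checks routinely that the iterated $\odot$'s at each vertex name the same subspace of $A\otimes B\otimes C\otimes D$ (at most one half tensor product is present, built from a single fixed pair of odd objects, and it is carried along consistently), so the pentagon commutes.

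Next, suppose exactly one of $A,B,C,D$ is even. Inspecting the five associators shows that exactly two of them are nontrivial and that they lie on opposite edges of the pentagon, which therefore degenerates to a bigon. Write the queer structures of the three odd objects as operators on $A\otimes B\otimes C\otimes D$, each placed in its own tensor slot and understood with the Koszul sign rule, so that distinct ones anticommute and each squares to~$1$. By Proposition~\ref{prop:queer-assoc}, \emph{both} nontrivial associators equal $\tfrac{1}{\sqrt2}(\theta\theta'-1)$ for the \emph{same} pair $\theta,\theta'$, namely the queer structures of the first and last odd objects in the string $A,B,C,D$ (the even object is inert and merely tensors along). Hence the two edges of the bigon agree.

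The remaining case---$A$, $B$, $C$, $D$ all odd---carries the only real content. Let $\mu,\nu,\sigma,\rho$ be the queer structures on $A,B,C,D$, regarded as operators on $A\otimes B\otimes C\otimes D$ via the slots, so that they are pairwise anticommuting involutions and generate a copy of $\Cl_4$. Because the half tensor products $C\odot D$, $A\odot B$, $B\odot C$ are \emph{even}, one checks that the two associators on the top edge ($\alpha_{A,B,C\odot D}$ and $\alpha_{A\odot B,C,D}$) are identity maps and that the three objects $A\odot(B\odot(C\odot D))$, $(A\odot B)\odot(C\odot D)$, $((A\odot B)\odot C)\odot D$ all equal the subspace
\[
S=\{x\in A\otimes B\otimes C\otimes D:\ \mu\nu\,x=\zeta_4 x,\ \sigma\rho\,x=\zeta_4 x\};
\]
thus the top path of the pentagon is $\id_S$. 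On the bottom edge the middle associator $\alpha_{A,B\odot C,D}$ is again the identity, while $\alpha_{B,C,D}=\tfrac1{\sqrt2}(\nu\rho-1)$ and $\alpha_{A,B,C}=\tfrac1{\sqrt2}(\mu\sigma-1)$ by Proposition~\ref{prop:queer-assoc}; applying $\id_A\odot(-)$ and $(-)\odot\id_D$ only tensors these with identity operators, so the bottom path is the operator $\tfrac12(\mu\sigma-1)(\nu\rho-1)$ on $S$. It remains to prove $\tfrac12(\mu\sigma-1)(\nu\rho-1)x=x$ for $x\in S$. Expanding, $(\mu\sigma-1)(\nu\rho-1)=\mu\sigma\nu\rho-\mu\sigma-\nu\rho+1$. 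The operators $\mu\nu$ and $\sigma\rho$ commute and act on $S$ as $\zeta_4$, so $\mu\nu\sigma\rho$ acts as $\zeta_4^2=-1$ on $S$, whence $\mu\sigma\nu\rho=-\mu\nu\sigma\rho$ acts as $\id$ on $S$. Moreover $\mu\nu\,x=\zeta_4 x$ gives $\nu x=\zeta_4\mu x$ and $\sigma\rho\,x=\zeta_4 x$ gives $\rho x=\zeta_4\sigma x$, so $\nu\rho\,x=\zeta_4\nu\sigma x=-\zeta_4\sigma\nu x=-\zeta_4^2\sigma\mu x=\zeta_4^2\mu\sigma x=-\mu\sigma x$, i.e.\ $\mu\sigma x+\nu\rho x=0$. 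Therefore $(\mu\sigma-1)(\nu\rho-1)x=x-(\mu\sigma x+\nu\rho x)+x=2x$, and dividing by~$2$ completes the case.

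The only nontrivial computation is the short $\Cl_4$ identity just given; the genuine difficulty is bookkeeping. In each case one must keep track of which iterated $\odot$'s are identified with the same subspace of $A\otimes B\otimes C\otimes D$, which associators collapse to identity maps, and the Koszul signs entering the Clifford relations. I would therefore first record, once and for all, the dictionary between iterated half tensor products and joint eigenspaces of products of the slotwise queer structures; each of the cases above then becomes mechanical.
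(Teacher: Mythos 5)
Your proposal is correct and follows essentially the same route as the paper's proof: the same case split on how many of the four objects are odd, the observation that an associator is nontrivial only when all three of its arguments have degree~$1$ (which forces at least three odd objects), and the same short Clifford-algebra identity $(\mu\sigma-1)(\nu\rho-1)=2$ on the common eigenspace in the all-odd case. The only cosmetic difference is that you derive the identity via $\mu\sigma\nu\rho=1$ and $\mu\sigma+\nu\rho=0$ on $S$, whereas the paper substitutes $\nu\tau=-\mu\sigma$ directly; these are the same computation.
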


\begin{proof}
Let $U,V,W,X \in \Queer$ be homogeneous objects. We let $\mu, \nu, \sigma, \tau$ denote queer structures on these objects, respectively, whenever they have degree 1. We must show that the diagram
\begin{displaymath}
\xymatrix{
& (U \odot V) \odot (W \odot X) \ar[rd]^-{\alpha_{U \odot V,W,X}} & \\
U \odot (V \odot (W \odot X)) \ar[d]_-{\id_U \odot \alpha_{V,W,X}} \ar[ru]^-{\alpha_{U,V,W\odot X}} & &
((U \odot V) \odot W) \odot X \\
U \odot ((V \odot W) \odot X) \ar[rr]^-{\alpha_{U,V \odot W,X}} &&
(U \odot (V \odot W)) \odot X \ar[u]_-{\alpha_{U,V,W} \odot \id_X}
}
\end{displaymath}
commutes. We identify each space above with a subspace of $U \otimes V \otimes W \otimes X$. If at most two of $U$, $V$, $W$, and $X$ are degree~1 then all maps in the diagram are the identity map, and so it commutes. Now suppose exactly three of the objects have degree~1. Then in each of the two paths in the pentagon, exactly one $\alpha$ is not the identity, and these two $\alpha$'s are equal; for example, if $\vert U \vert=0$ then $\alpha_{U \odot V,W,X}=\id_U \odot \alpha_{V,W,X} = \tfrac{1}{\sqrt{2}} (\nu \tau-1)$. Finally, suppose that all four objects have degree~1. Then all edges in the pentagon except the bottom left and bottom right are the identity. We must show $(\mu\sigma-1)(\nu \tau-1)=2$ on $U \odot (V \odot (W \odot X))$. Suppose $x$ belongs to this space. Then we have $\mu \nu x = \zeta_4 x$, and so $\nu x=\zeta_4 \mu x$; similarly, $\sigma \tau x = \zeta_4 x$, and so $\tau x = \zeta_4 \sigma x$. We thus find
\begin{displaymath}
\nu \tau x = \zeta_4 \nu \sigma x = -\zeta_4 \sigma \nu x = \sigma \mu x = - \mu \sigma x.
\end{displaymath}
Thus
\begin{displaymath}
(\mu \sigma-1)(\nu \tau-1) x=(\mu \sigma-1)(-\mu \sigma-1)x = 2x,
\end{displaymath}
since $(\mu \sigma)^2=-1$. This completes the proof.
\end{proof}

\subsection{Supersymmetry}

We now aim to define a supersymmetry on $\odot$. We let $\tau$ be the standard symmetry on $\SVec$. We begin with the following observation.

\begin{proposition} \label{prop:queer-commute}
Let $(U,\mu)$ and $(V,\nu)$ be queer vector spaces. Then the diagram
\begin{displaymath}
\xymatrix@C=4em{
U \otimes V \ar[r]^{\mu \otimes \nu} \ar[d]_{\tau} & U \otimes V \ar[d]^{\tau} \\
V \otimes U \ar[r]^{\nu \otimes \mu} & V \otimes U }
\end{displaymath}
commutes up to $-1$. In particular, $\tau$ maps the $\zeta_4$-eigenspace of $\mu \otimes \nu$ on $U \otimes V$ to the $(-\zeta_4)$-eigenspace of $\nu \otimes \mu$ on $V \otimes U$.
\end{proposition}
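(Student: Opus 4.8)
The plan is to deduce the statement from a single sign computation. First I would record the general fact that for homogeneous linear maps $f \colon U \to U$ and $g \colon V \to V$ one has
\[
\tau_{U,V} \circ (f \otimes g) = (-1)^{|f||g|} \, (g \otimes f) \circ \tau_{U,V}
\]
as maps $U \otimes V \to V \otimes U$. This is immediate from the two conventions in \S\ref{ss:super-vector-spaces}: evaluating both sides on a homogeneous tensor $u \otimes v$, the left side picks up $(-1)^{|u||g|}$ from the Koszul rule for $f \otimes g$ and then $(-1)^{(|u|+|f|)(|v|+|g|)}$ from $\tau$, while the right side picks up $(-1)^{|u||v|}$ from $\tau$ and then $(-1)^{|v||f|}$ from $g \otimes f$; the two exponents differ by $|f||g|$ modulo $2$, and both composites land on a scalar times $g(v) \otimes f(u)$.

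Specializing to $f = \mu$ and $g = \nu$, both of which are odd, gives $\tau \circ (\mu \otimes \nu) = -\,(\nu \otimes \mu) \circ \tau$, which is exactly the assertion that the square commutes up to $-1$. For the eigenspace statement, I would then simply apply this identity to an element $x$ with $(\mu \otimes \nu)x = \zeta_4 x$: we get $(\nu \otimes \mu)(\tau x) = -\tau\big((\mu \otimes \nu)x\big) = -\zeta_4 \, \tau(x)$, so $\tau$ carries the $\zeta_4$-eigenspace of $\mu \otimes \nu$ into the $(-\zeta_4)$-eigenspace of $\nu \otimes \mu$; since $\tau$ is an isomorphism and the same argument with $U$ and $V$ interchanged gives the reverse inclusion, this map is in fact an isomorphism of eigenspaces.

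There is no genuine obstacle here — the whole proof is a parity count. The only thing to be careful about is not to forget that $\mu$ and $\nu$ \emph{shift} the degree of their arguments, so that the parities entering the $\tau$-sign on the output side are $|u|+1$ and $|v|+1$ rather than $|u|$ and $|v|$; this shift is precisely what upgrades the generic sign $(-1)^{|f||g|}$ to the nontrivial value $-1$ in our case. One can of course also just chase a homogeneous elementary tensor through both composites directly, which amounts to the same bookkeeping.
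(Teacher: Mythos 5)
Your proof is correct and uses the same idea as the paper, which deduces the first statement from the super-naturality of $\tau$ together with the oddness of $\mu$ and $\nu$ (citing \S\ref{ss:braiding} for the sign $(-1)^{|f||g|}$); you simply unwind that sign computation explicitly on homogeneous tensors rather than citing it. The eigenspace deduction is also the same one-line argument.
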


\begin{proof}
The first statement follows from the fact that $\tau$ is a natural transformation and $\mu$ and $\nu$ are odd (see the first paragraph of \S \ref{ss:braiding}). The second statement follows from the first.
\end{proof}

Let $U,V \in \Queer$. We define an isomorphism
\begin{displaymath}
\beta_{U,V} \colon U \odot V \to V \odot U
\end{displaymath}
as follows. If at least one of $U$ or $V$ has degree~0 then $\beta_{U,V}=\tau_{U,V}$. Now suppose that both $U$ and $V$ have degree~1, and let $\nu$ be the queer structure on $V$. Then $\beta_{U,V}=\zeta_{16}^{-1} (\nu \otimes 1) \tau_{U,V}$. This maps $U \odot V$ into $V \odot U$ by Proposition~\ref{prop:queer-commute} and properties of the half tensor product (see \S \ref{ss:half}).

\begin{proposition}
$\beta$ defines a type~IIb supersymmetry on $\odot$.
\end{proposition}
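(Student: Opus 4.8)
The plan is to verify the three defining properties of a type~IIb supersymmetry for $\beta$ on $\odot$: naturality (with the appropriate sign), the two hexagon axioms (H1) and (H2) with respect to the factor system $\fsB$, and the symmetry condition $\beta_{V,U}\circ\beta_{U,V}=\fsB_\sharp(|U|,|V|)\cdot\id$. Since $\fsB$ only involves nontrivial scalars when all relevant degrees are odd, and $\odot$ agrees with $\otimes$ (up to passing to an eigenspace or inserting a $\tfrac{1}{\sqrt2}$ associator) whenever some object has degree~$0$, the content is entirely in the all-degree-$1$ case; the mixed cases reduce to the known symmetry $\tau$ on $\SVec$ and are routine bookkeeping.

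First I would check naturality. For $f\colon U\to U'$ and $g\colon V\to V'$ homogeneous morphisms of queer spaces (so $f\mu=(-1)^{|f|}\mu' f$ etc.), one computes $\beta_{U',V'}\circ(f\odot g)$ versus $(g\odot f)\circ\beta_{U,V}$ directly from $\beta_{U,V}=\zeta_{16}^{-1}(\nu\otimes 1)\tau_{U,V}$ in the odd-odd case. Naturality of $\tau$ on $\SVec$ contributes a sign $(-1)^{|f||g|}$, and the factor $(\nu\otimes 1)$ needs to be commuted past $f\odot g$, which is where the relation $f\mu=(-1)^{|f|}\nu' f$ enters; the signs should combine to give exactly the $(-1)^{|U||V|(|f|+|g|)+|f||g|}$ required in \S\ref{ss:type2}. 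One must also confirm $\beta_{U,V}$ has the right degree $|U||V|$ on homogeneous objects: $\tau$ is even and $\nu\otimes 1$ is odd, so $\beta_{U,V}$ is odd when $|U|=|V|=1$ and even otherwise, as needed.

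Next, the symmetry condition. In the odd-odd case, $\beta_{V,U}\circ\beta_{U,V}=\zeta_{16}^{-2}(\mu\otimes 1)\tau_{V,U}(\nu\otimes 1)\tau_{U,V}$. Using naturality of $\tau$ to move $(\nu\otimes 1)$ through $\tau_{V,U}$ (picking up a sign since $\nu$ is odd) and $\tau_{V,U}\tau_{U,V}=\id$, this collapses to $\zeta_{16}^{-2}$ times $\pm(\mu\otimes 1)(1\otimes\nu)=\pm\mu\otimes\nu$; but on $U\odot V=2^{-1}(U\otimes V)$, the operator $\mu\otimes\nu$ acts by the scalar $\zeta_4$. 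Tracking the signs carefully, the product should come out to $\zeta_8=\fsB_\sharp(1,1)$, which is the content of our normalization $\sqrt2=\zeta_8(1-\zeta_4)$ and the choice of $\zeta_{16}^{-1}$ in $\beta$. In the mixed cases the composite is $\tau\tau=\id=\fsB_\sharp(|U|,|V|)$, consistent with $\fsB_\sharp$ being trivial there.

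The main obstacle is the hexagon axioms (H1) and (H2) in the cases where the associators $\alpha_{U,V,W}=\tfrac{1}{\sqrt2}(\mu\sigma-1)$ are nontrivial, i.e. when at least two (and especially all three) of $U,V,W$ have degree~$1$. Here one cannot ignore associators as in the $\SVec$ computation: each hexagon is a genuine six-term diagram mixing $\beta$'s (each contributing a $\zeta_{16}^{-1}$ and a $\nu\otimes 1$-type operator) with the eigenspace-swapping associators, and one must show the bottom path equals the top path times $\fsB_1(|U|;|V|,|W|)$ (resp.\ $\fsB_2(|U|,|V|;|W|)$). The strategy is to work inside $U\otimes V\otimes W$, express every edge as an explicit operator, use the eigenvalue relations (like $\nu\sigma x=\zeta_4 x$ on the relevant eigenspace, hence $\nu x=-\zeta_4\sigma x$, exactly as exploited in the proof of Proposition~\ref{prop:queer-assoc} and the pentagon) to rewrite compositions, and count powers of $\zeta_{16}$ together with the sign from Proposition~\ref{prop:queer-commute}. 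The cases where exactly two objects are odd give the $\fsB_1$-triviality and one component of $\fsB_2$; the all-odd case produces the $\zeta_4$ in $\fsB_2(1,1;1)$ from an uncancelled factor of $\mu\otimes\nu$ (or $\mu\sigma$) acting on a half tensor product eigenspace, paralleling the $(\mu\sigma-1)(\nu\tau-1)=2$ identity used in the pentagon proof. Once the explicit operator identities are in hand this is bookkeeping, but it is the most delicate part and the place where the specific value $\zeta_{16}^{-1}$ in the definition of $\beta$ is forced.
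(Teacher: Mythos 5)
Your overall strategy — verify naturality, the symmetry condition $\beta_{V,U}\beta_{U,V}=\fsB_\sharp$, and the hexagons, case-by-case on the degrees — matches the paper's proof, and your naturality and symmetry calculations are essentially right (in the symmetry check the claim that moving $\nu\otimes 1$ past $\tau_{V,U}$ picks up a sign is off — that square commutes exactly, since one of the two morphisms is even — but your final answer $\zeta_{16}^{-2}\zeta_4=\zeta_8$ is correct). You also miss a useful shortcut the paper exploits: once the symmetry condition is verified, the two hexagon axioms are equivalent (invert all arrows in one to get the other), so only (H1) needs to be checked.

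The genuine gap is that you never actually carry out the hexagon verification, and the phrase ``once the explicit operator identities are in hand this is bookkeeping'' is doing a lot of unearned work. This is the heart of the proof: in the all-odd case, $\fsB_1\equiv 1$ forces (H1) to commute \emph{exactly}, and after writing both paths as operators on $U\otimes V\otimes W$ and pulling the $\tau$'s out, one is left to show
\begin{displaymath}
\tfrac{1}{2}(\mu\nu-1)(\sigma\mu-1) \;=\; \tfrac{1}{\sqrt{2}}\,\zeta_8^{-1}\,\sigma(\sigma\nu-1)\nu
\end{displaymath}
on the double eigenspace $(U\odot V)\odot W$. Using $\mu\nu=\zeta_4$ there (so $\mu=-\zeta_4\nu$), the left side reduces to $(1-\zeta_4)(1-\sigma\nu)$, while the right side is $\sqrt{2}\zeta_8^{-1}(1-\sigma\nu)$; these agree precisely because of the normalization $1-\zeta_4=\sqrt{2}\zeta_8^{-1}$, which is where the specific $\zeta_{16}^{-1}$ in $\beta$ is finally used. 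This algebraic identity — not generic bookkeeping — is the real content, and your proposal leaves it unproved. You should also note that it is exactly here (not in the symmetry check, which only pins down $\zeta_{16}^{-1}$ up to sign) that the sign of the scalar in $\beta$ is forced.
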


\begin{proof}
One easily sees that $\beta$ is a natural transformation, and it is evident that $\beta_{U,V}$ has degree $\vert U \vert \vert V \vert$ when $U$ and $V$ are homogeneous objects. We will use the following convention in the remainder of the proof: for an endomorphism $\mu$ of a super vector space $U$, we write $\mu$ still for the induced endomorphism of $U \otimes V$ or $V \otimes U$. Note that, under this convention, $\mu$ commutes with $\tau_{U,V}$.

Now, let $U,V \in \Queer$ be homogeneous objects. If either $U$ or $V$ has degree~0 then $\beta_{V,U}\beta_{U,V}$ is the identity. Now suppose that both $U$ and $V$ have degree~1, and let $\mu$ and $\nu$ be the queer structures. Then
\begin{displaymath}
\beta_{V,U} \beta_{U,V} = \zeta_8^{-1} \mu \tau_{V,U} \nu \tau_{U,V} = \zeta_8^{-1}  \mu \nu \tau_{V,U} \tau_{U,V}=\zeta_8^{-1} \mu \nu = \zeta_8.
\end{displaymath}
In the final step, we used the fact that $\mu\nu=\zeta_4$ on $U \odot V$. We thus see that $\beta_{V,U}\beta_{U,V}=\fsB_{\sharp}(\vert U \vert, \vert V \vert) \id_{U \otimes V}$ in all cases, which verifies the requisite symmetry condition for $\beta$.

We now verify the hexagon axiom (H1). Let $U,V,W \in \Queer$ be homogeneous objects, and let $\mu,\nu,\sigma$ be the queer structures on the objects of degree~1. We must show that the diagram
\begin{displaymath}
\xymatrix@C=3em{
& U \odot (V \odot W) \ar[r]^-{\beta_{U,V\odot W}} &
(V \odot W) \odot U \ar[dr]^-{\alpha_{V,W,U}^{-1}} \\
(U \odot V) \odot W \ar[ur]^-{\alpha_{U,V,W}^{-1}} \ar[dr]_-{\beta_{U,V} \odot 1_C} & & &
V \odot (W \odot U)\\
& (V \odot U) \odot W \ar[r]_-{\alpha^{-1}_{V,U,W}} &
V \odot (U \odot W) \ar[ur]_-{1_B \odot \beta_{U,W}}
}
\end{displaymath}
commutes. (Recall that $\fsB_1=1$ identically, which is why the diagram must commute exactly.) If at most one of the objects has degree~1, then this is the usual hexagon in $\SVec$, which commutes. Now suppose exactly two objects have degree~1. Then all the associators are the identity (or, more precisely, induced by the standard associator on the ordinary tensor products). If $\vert U \vert=0$ then each of the $\beta$'s is just a $\tau$, and the diagram commutes. Otherwise, one of $\beta_{U,V}$ or $\beta_{U,W}$ is the usual $\tau$, and the other involves the same queer structure as $\beta_{U, V \odot W}$; for instance, if $\vert V \vert=0$ then
\begin{displaymath}
\beta_{U,V\odot W}=\zeta_{16}^{-1} \sigma \tau_{U,V \odot W}, \qquad \beta_{U,V}=\tau_{U,V}, \qquad \beta_{U,W}=\zeta_{16}^{-1} \sigma \tau_{U,W}.
\end{displaymath}
Thus the diagram commutes (the queer structure can be moved past all the $\tau$'s and then one has the ordinary hexagon axiom in $\SVec$).

Finally, suppose that all objects have degree~1. The top path is (induced by) the map
\begin{displaymath}
\tfrac{1}{2} (\mu\nu-1) \tau_{U,V \otimes W} (\sigma \mu-1) = \tfrac{1}{2} \tau_{U,V \otimes W} (\mu\nu-1) (\sigma \mu-1).
\end{displaymath}
The bottom path is (induced by) the map
\begin{displaymath}
\tfrac{1}{\sqrt{2}} \zeta_8^{-1} \sigma \tau_{U,W} (\sigma \nu-1) \nu \tau_{U,V}
=\tfrac{1}{\sqrt{2}} \zeta_8^{-1} \tau_{U,W} \tau_{U,V} \sigma (\sigma \nu-1) \nu
\end{displaymath}
Since $\tau_{U,V \otimes W} = \tau_{U,W} \tau_{U,V}$, it suffices to show that
\begin{displaymath}
\tfrac{1}{2} (\mu \nu-1)(\sigma \mu-1) = \tfrac{1}{\sqrt{2}} \zeta_8^{-1} \sigma (\sigma \nu-1) \nu
\end{displaymath}
as functions on $(U \odot V) \odot W$. This equation is equivalent to
\begin{displaymath}
\nu \sigma-\mu\nu-\sigma \mu +1=\sqrt{2} \zeta_8^{-1} (1-\sigma \nu).
\end{displaymath}
Since $\mu\nu=\zeta_4$ on the source, we have $\mu=-\zeta_4 \nu$, and so the left side of the above equation is
\begin{displaymath}
-\sigma \nu-\zeta_4+\zeta_4 \sigma \nu+1 = (1-\zeta_4)(1-\sigma \nu),
\end{displaymath}
and so the result follows, as $1-\zeta_4=\sqrt{2} \zeta_8^{-1}$.

Since $\beta$ satisfies the symmetry condition, the second hexagon axiom is a consequence of the first one. Thus the result is proved.
\end{proof}

\subsection{Further comments} \label{ss:qcom}

The pentagon and hexagon axioms in $\Queer$ are rather complicated. In fact, there is an alternate way of defining the category $\Queer$ that makes these axioms more transparent. Recall that the Clifford algebra $\Cl_n$ is generated by $n$ supercommuting odd elements that each square to~1. One formulation of Bott periodicity (discussed in Appendix~\ref{s:bott}) states that $\Cl_n$ and $\Cl_{n+2}$ are Morita equivalent. We can identify $\QVec$ with the category of $\Cl_1$-modules. The half tensor product is then the composition
\begin{displaymath}
\Mod_{\Cl_1} \times \Mod_{\Cl_1} \to \Mod_{\Cl_2} \to \SVec,
\end{displaymath}
where the first functor takes $(V,W)$ to $V \otimes W$, with obvious $\Cl_2$-structure, and the second functor is the Morita equivalence provided by Bott periodicity. It is the second step that introduces the complications.

To obtain a ``cleaner'' version of the queer category, we can simply omit the Morita equivalence above. More precisely, we could redefine $\Queer_i$ to be the 2-colimit of the categories $\Mod_{\Cl_n}$ taken over $n$ of the same parity as $i$. The tensor product would then be more natural and the axioms would be more transparent.

This new approach also points the way to a vast generalization: one can start with an arbitrary $\bZ/2$-supercategory $\cA$ and construct a ``queer version'' by considering $\Cl_{\rm even}$-modules in $\cA_0$ and $\Cl_{\rm odd}$-modules in $\cA_1$. In fact, one can also work over more general base fields where the periodicity in Clifford algebras may be~4 or~8 instead of~2. We call this generalization ``Clifford eversion,'' and carry out the details in the subsequent section.

\section{Clifford eversion} \label{s:clifford}

\subsection{Nice supercategories} \label{ss:nice}

Let $\cC$ be a supercategory. We say that $\cC$ is {\bf additive} if all finite direct sums exist. (The direct sum of two objects is an object equipped with \emph{even} inclusion and projection maps satisfying the usual conditions.) We say that $\cC$ is {\bf Karoubian} if every \emph{even} idempotent $e \colon A \to A$ in $\cC$ has an image. We say that $\cC$ is {\bf nice} if it is additive, Karoubian, and admits a $\Pi$-structure. We say that a $\Lambda$-supercategory is nice if each graded piece is. We will mostly be concerned with nice supercategories in this section. We note that if $M$ is an object in a nice supercategory and $V$ is a finite dimensional super vector space then we can make sense of $V \otimes M$, e.g., $\bk^{r \vert s} \otimes M=M^{\oplus r} \oplus \Pi(M)^{\oplus s}$. (In fact, this construction does not require the Karoubian condition.)

\subsection{The Clifford algebra}

The {\bf Clifford algebra} $\Cl_n$ is the $\bk$-superalgebra generated by odd-degree variables $\alpha_1, \ldots, \alpha_n$ subject to the relations $\alpha_i^2=2$ and $\alpha_i \alpha_j=-\alpha_j \alpha_i$ for $i \ne j$. If~2 is a square in $\bk$ then one can renormalize $\alpha_i$ so that it squares to~1 (as we did in \S \ref{ss:qcom}), which is a more common convention. We use our convention so that the following proposition works without need of $\sqrt{2}$:

\begin{proposition} \label{prop:spin-clifford}
There is a group homomorphism $\wt{\fS}_n \to \Cl_n^{\times}$ given by
\begin{displaymath}
\tilde{s}_i \mapsto \tfrac{1}{2} (\alpha_{i+1}-\alpha_i), \qquad
c \mapsto -1.
\end{displaymath}
\end{proposition}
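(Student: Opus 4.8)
The plan is to use the presentation of $\wt{\fS}_n$ from \S\ref{ss:spin}: it is generated by $\tilde{s}_1,\dots,\tilde{s}_{n-1}$ and $c$ subject to the relations that $c$ is central with $c^2=1$, that $\tilde{s}_i^2=1$, that $\tilde{s}_i\tilde{s}_{i+1}\tilde{s}_i=\tilde{s}_{i+1}\tilde{s}_i\tilde{s}_{i+1}$ for $1\le i\le n-2$, and that $\tilde{s}_i\tilde{s}_j=c\,\tilde{s}_j\tilde{s}_i$ for $\vert i-j\vert\ge 2$. Since this is a presentation, to obtain the asserted homomorphism it suffices to check that the elements $t_i:=\tfrac12(\alpha_{i+1}-\alpha_i)\in\Cl_n$ together with $-1\in\Cl_n$ satisfy these relations. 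The element $-1$ is central and squares to $1$ for free, so the content is three assertions: (i) $t_i^2=1$ (which also makes $t_i$ a unit, so the map lands in $\Cl_n^\times$); (ii) $t_i t_j=-t_j t_i$ for $\vert i-j\vert\ge 2$; and (iii) the braid relation $t_i t_{i+1} t_i=t_{i+1}t_i t_{i+1}$.

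For (i) I would expand $t_i^2=\tfrac14(\alpha_{i+1}-\alpha_i)^2$: the two cross terms cancel since $\alpha_i\alpha_{i+1}=-\alpha_{i+1}\alpha_i$, and $\alpha_{i+1}^2+\alpha_i^2=4$, so $t_i^2=1$. For (ii) I would note that $t_i$ is an odd element of the Clifford subalgebra generated by $\alpha_i,\alpha_{i+1}$ and $t_j$ is an odd element of the subalgebra generated by $\alpha_j,\alpha_{j+1}$; when $\vert i-j\vert\ge 2$ these index sets are disjoint, so the subalgebras supercommute, and two supercommuting odd elements anticommute, giving $t_i t_j=-t_j t_i$.

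Assertion (iii) is the only genuine computation. By the same disjointness remark it is an identity inside the subalgebra generated by $a:=\alpha_i$, $b:=\alpha_{i+1}$, $d:=\alpha_{i+2}$, a copy of $\Cl_3$, so I would expand $8\,t_i t_{i+1}t_i=(b-a)(d-b)(b-a)$ and normal-order each monomial using $a^2=b^2=d^2=2$ and anticommutativity; the result collapses to $4(a-d)$, i.e.\ $t_i t_{i+1}t_i=\tfrac12(\alpha_i-\alpha_{i+2})$. For the right-hand side one can either repeat the expansion of $(d-b)(b-a)(d-b)$, or apply the automorphism of $\Cl_3$ that swaps $a$ and $d$ and fixes $b$: it sends $t_i\mapsto-t_{i+1}$, $t_{i+1}\mapsto-t_i$, hence $t_i t_{i+1}t_i\mapsto-t_{i+1}t_i t_{i+1}$ and $\tfrac12(a-d)\mapsto-\tfrac12(a-d)$, which forces $t_{i+1}t_i t_{i+1}=\tfrac12(a-d)=t_i t_{i+1}t_i$. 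This verifies all the relations, so the assignment on generators extends to a group homomorphism $\wt{\fS}_n\to\Cl_n^\times$.

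The expected main obstacle is nothing conceptual but simply the careful bookkeeping in (iii): it is the one place where one manipulates a sum of noncommuting monomials, and it is exactly where the normalization $\alpha_i^2=2$ pays off by clearing the factors of $\sqrt2$ that would otherwise appear in $t_i$.
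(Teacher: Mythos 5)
Your proof is correct and follows essentially the same approach as the paper: verify that the elements $\tfrac12(\alpha_{i+1}-\alpha_i)$ and $-1$ satisfy the defining relations of $\wt{\fS}_n=\wt{\fS}_n^{(1,0)}$. The only wrinkle is your use of the automorphism of $\Cl_3$ swapping $\alpha_i$ and $\alpha_{i+2}$ to deduce the second half of the braid relation from the first, a tidy shortcut where the paper simply asserts the analogous expansion $\sigma_{i+1}\sigma_i\sigma_{i+1}=\tfrac12(\alpha_i-\alpha_{i+2})$.
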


\begin{proof}
Let $\sigma_i=\frac{1}{2}(\alpha_{i+1}-\alpha_i)$. We have
\begin{displaymath}
\sigma_i^2 = \tfrac{1}{4} (\alpha_{i+1}^2-\alpha_{i+1} \alpha_i-\alpha_i \alpha_{i+1}+ \alpha_i^2) = 1,
\end{displaymath}
as the middle terms cancel and the outer terms are each equal to~2. It is clear that $\sigma_i$ and $\sigma_j$ anti-commute if $\vert i-j \vert>1$. Finally, we have
\[
  \sigma_i\sigma_{i+1} \sigma_i = \tfrac{1}{2}(\alpha_i - \alpha_{i+2}) = \sigma_{i+1} \sigma_i \sigma_{i+1},
\]
so the braid relation holds. We thus see that the $\sigma$'s satisfy the relations defining the spin-symmetric group, and the claim follows.
\end{proof}

\subsection{Clifford modules}

Let $\cA$ be a supercategory. A {\bf $\Cl_n$-module} in $\cA$ is a pair $(M, \rho)$ consisting of an object $M$ of $\cA$ and a morphism of $\bk$-superalgebras $\rho \colon \Cl_n \to \End(M)$. Explicitly, a $\Cl_n$-module consists of an object $M$ together with odd-degree supermorphisms $\alpha_1, \ldots, \alpha_n \colon M \to M$ such that $\alpha_i^2=2$ and $\alpha_i \alpha_j=-\alpha_j \alpha_i$ for $i \ne j$. Suppose that $M$ and $N$ are two $\Cl_n$-modules in $\cA$. A {\bf morphism} $f \colon M \to N$ of parity $p$ is a morphism in $\cA$ of parity $p$ compatible with the $\Cl_n$-structures, i.e., such that $f \alpha_i=(-1)^p \alpha_i f$. We let $\Cl_n(\cA)$ be the supercategory of $\Cl_n$-modules in $\cA$.

One easily sees that if $\cA$ is additive or Karoubian then so is $\Cl_n(\cA)$. Moreover, if $\cA$ admits a $\Pi$-structure then so does $\Cl_n(\cA)$, via
\begin{displaymath}
\Pi(M; \alpha_1, \ldots, \alpha_n)=(\Pi(M); \Pi(\alpha_1), \ldots, \Pi(\alpha_n)).
\end{displaymath}
We thus see that if $\cA$ is nice then so is $\Cl_n(\cA)$. The following statement is clear, and we omit its proof.

\begin{proposition} \label{prop:cliff-iter}
  We have an equivalence $\Cl_m(\Cl_n(\cA)) \to \Cl_{n+m}(\cA)$ defined by
\begin{displaymath}
((M; \alpha_1, \ldots, \alpha_n); \alpha'_1, \ldots, \alpha'_m) \mapsto (M; \alpha_1, \ldots, \alpha_n, \alpha'_1, \ldots, \alpha'_m).
\end{displaymath}
\end{proposition}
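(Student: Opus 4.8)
The plan is to verify that the stated assignment is an isomorphism of supercategories by directly unwinding the definitions; both full faithfulness and (bijectivity on objects, hence) essential surjectivity will be immediate once the relevant parity conventions are matched up. First I would check that the functor is well-defined on objects. An object of $\Cl_m(\Cl_n(\cA))$ is a pair consisting of an object $(M;\alpha_1,\dots,\alpha_n)$ of $\Cl_n(\cA)$ together with odd-degree morphisms $\alpha'_1,\dots,\alpha'_m$ \emph{in the category} $\Cl_n(\cA)$ satisfying $(\alpha'_j)^2=2$ and $\alpha'_j\alpha'_k=-\alpha'_k\alpha'_j$ for $j\ne k$. By the definition of a morphism of odd parity in $\Cl_n(\cA)$, each $\alpha'_j$ is an odd morphism $M\to M$ in $\cA$ satisfying $\alpha'_j\alpha_i=-\alpha_i\alpha'_j$ for all $i$. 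Hence $\alpha_1,\dots,\alpha_n,\alpha'_1,\dots,\alpha'_m$ are odd morphisms of $M$ in $\cA$ that pairwise anticommute (within each batch by hypothesis, and across the two batches precisely by the $\Cl_n(\cA)$-morphism condition on the $\alpha'_j$) and each square to $2$; that is, $(M;\alpha_1,\dots,\alpha_n,\alpha'_1,\dots,\alpha'_m)$ is a $\Cl_{n+m}$-module in $\cA$.

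Next I would treat morphisms. A morphism of parity $p$ from $((M;\alpha),\alpha')$ to $((N;\beta),\beta')$ in $\Cl_m(\Cl_n(\cA))$ is a morphism $f$ of parity $p$ in $\Cl_n(\cA)$ with $f\alpha'_j=(-1)^p\alpha'_j f$, and being a morphism of parity $p$ in $\Cl_n(\cA)$ means $f$ is a morphism of parity $p$ in $\cA$ with $f\alpha_i=(-1)^p\alpha_i f$. So $f$ is exactly a morphism of $\Cl_{n+m}$-modules in $\cA$; the assignment on $\Hom$-superspaces is therefore literally the identity, and in particular an even linear bijection compatible with composition and identities. Thus the functor is full and faithful. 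Finally, essential surjectivity --- in fact bijectivity on objects --- runs in reverse: given a $\Cl_{n+m}$-module $(P;\gamma_1,\dots,\gamma_{n+m})$ in $\cA$, the morphisms $\gamma_{n+1},\dots,\gamma_{n+m}$ are odd in $\cA$ and anticommute with $\gamma_1,\dots,\gamma_n$, hence define odd endomorphisms of the object $(P;\gamma_1,\dots,\gamma_n)$ of $\Cl_n(\cA)$; they pairwise anticommute and square to $2$, so they equip $(P;\gamma_1,\dots,\gamma_n)$ with the structure of a $\Cl_m$-module in $\Cl_n(\cA)$ that is sent to $(P;\gamma_1,\dots,\gamma_{n+m})$ on the nose.

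Combining the three steps, the functor is an isomorphism of supercategories, hence an equivalence. There is no real obstacle here: the only thing one must be attentive to is the bookkeeping of parities in the nested construction --- specifically, recognizing that the cross-anticommutation relations $\alpha_i\alpha'_j=-\alpha'_j\alpha_i$ are exactly the content of ``$\alpha'_j$ is an odd morphism in $\Cl_n(\cA)$'' --- and I would remark in passing that the same reasoning identifies the $\Pi$-structures, as well as the additive and Karoubian structures when present, so that the equivalence is one of nice supercategories whenever $\cA$ is nice.
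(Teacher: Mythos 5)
Your proof is correct and is precisely the definitional unwinding the paper deems ``clear'' and therefore omits: the key observation, that the cross-relations $\alpha_i\alpha'_j=-\alpha'_j\alpha_i$ are exactly the statement that the $\alpha'_j$ are odd endomorphisms in $\Cl_n(\cA)$, is the whole content. (Minor slip: in the morphism step you should write $f\alpha'_j=(-1)^p\beta'_j f$, with $\beta'_j$ from the target object, rather than $\alpha'_j$ on both sides; this does not affect the argument.)
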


Given a $\Cl_n$-module $(M; \alpha_1, \ldots, \alpha_n)$ in $\cA$ and an element $\sigma \in \fS_n$, put
\begin{displaymath}
\sigma_*(M; \alpha_1, \ldots, \alpha_n) = (M; \alpha_{\sigma(1)}, \ldots, \alpha_{\sigma(n)}).
\end{displaymath}
This construction defines a left action of $\fS_n$ on the category $\Cl_n(\cA)$. We let $\wt{\fS}_n$ act via the homomorphism $\wt{\fS}_n \to \fS_n$. The following proposition shows that every object of $\Cl_n(\cA)$ is canonically equivariant for the $\wt{\fS}_n$ action:

\begin{proposition} \label{prop:cliff-perm}
Let $(M; \alpha_1, \ldots, \alpha_n)$ be a $\Cl_n$-module. Then $\wt{\fS}_n$ naturally acts on $M$, and permutes the $\alpha$'s via the surjection $\wt{\fS}_n \to \fS_n$. In particular, for $\sigma \in \wt{\fS}_n$, we have a natural isomorphism of $\Cl_n$-modules
\begin{displaymath}
\sigma \colon (M; \alpha_1, \ldots, \alpha_n) \to (M; \alpha_{\sigma(1)}, \ldots, \alpha_{\sigma(n)})
\end{displaymath}
that has the same parity as $\sigma$.
\end{proposition}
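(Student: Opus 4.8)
The plan is to build the action directly from Proposition~\ref{prop:spin-clifford}. Write $\rho \colon \Cl_n \to \End(M)$ for the superalgebra homomorphism encoding the $\Cl_n$-module structure on $M$. Composing the homomorphism $\wt{\fS}_n \to \Cl_n^{\times}$ of Proposition~\ref{prop:spin-clifford} with $\rho$ produces a group homomorphism from $\wt{\fS}_n$ into the group of invertible elements of $\End(M)$, and this is the asserted action (there is no cocycle condition to check, since an honest group homomorphism is automatically a group action). By construction $c$ acts by $-1$. The generator $\tilde{s}_i$ acts by $\tfrac12(\alpha_{i+1}-\alpha_i)$, which is homogeneous of odd degree, so any element represented by a word of length $k$ in the $\tilde{s}_i$ acts by a homogeneous endomorphism of degree $k \bmod 2$; as the degree function on $\wt{\fS}_n$ is by definition the length-parity of the image permutation, $\sigma$ acts homogeneously of degree $|\sigma|$. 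The remaining content is to identify how this action interacts with the $\alpha_i$.

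Next I would verify that, viewed as a map $M \to M$, the operator $\rho(\sigma)$ is a morphism of $\Cl_n$-modules $(M;\alpha_1,\dots,\alpha_n) \to \sigma_*(M;\alpha_1,\dots,\alpha_n) = (M;\alpha_{\sigma(1)},\dots,\alpha_{\sigma(n)})$ of parity $|\sigma|$. Unwinding the definition of a morphism in $\Cl_n(\cA)$, this is exactly the identity
\[
\rho(\sigma)\,\alpha_i\,\rho(\sigma)^{-1} = (-1)^{|\sigma|}\,\alpha_{\sigma(i)} \qquad\text{for all } i,
\]
which it suffices to prove already in $\Cl_n$ and then apply $\rho$. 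The set of $\sigma \in \wt{\fS}_n$ satisfying this identity is a subgroup: closure under products uses $|\sigma\tau| = |\sigma|+|\tau|$ and $(\sigma\tau)(i) = \sigma(\tau(i))$, and it plainly contains $c$. So I only need the generators: with $\sigma_j = \tfrac12(\alpha_{j+1}-\alpha_j)$, and $\sigma_j^{-1} = \sigma_j$ since $\sigma_j^2 = 1$, the relations $\alpha_k^2 = 2$ and $\alpha_k\alpha_\ell = -\alpha_\ell\alpha_k$ give $\sigma_j\alpha_i\sigma_j = -\alpha_i$ for $i \notin \{j,j+1\}$, $\sigma_j\alpha_j\sigma_j = -\alpha_{j+1}$, and $\sigma_j\alpha_{j+1}\sigma_j = -\alpha_j$, i.e. $\sigma_j\alpha_i\sigma_j^{-1} = -\alpha_{s_j(i)} = (-1)^{|\tilde{s}_j|}\alpha_{s_j(i)}$. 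This establishes the displayed identity and hence produces the claimed isomorphism $\sigma \colon (M;\vec{\alpha}) \to \sigma_*(M;\vec{\alpha})$ of the correct parity.

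Finally I would check naturality in $M$: for a morphism $f \colon M \to N$ of $\Cl_n$-modules of parity $p$, the square relating $f$, $\sigma_* f$, and the operators $\rho_M(\sigma)$, $\rho_N(\sigma)$ must commute up to $(-1)^{p|\sigma|}$, i.e. $f \circ \rho_M(\sigma) = (-1)^{p|\sigma|}\,\rho_N(\sigma) \circ f$. By $\Cl_n$-linearity, $f \circ \tfrac12(\alpha_{j+1}-\alpha_j) = (-1)^{p}\,\tfrac12(\alpha_{j+1}-\alpha_j) \circ f$, so the relation holds for each $\tilde{s}_j$, and it then propagates along a word of length $k$ with sign $(-1)^{pk} = (-1)^{p|\sigma|}$.

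The only genuine computation is the generator case $\sigma_j\alpha_i\sigma_j^{-1} = -\alpha_{s_j(i)}$, and this is the same type of manipulation already carried out in the proof of Proposition~\ref{prop:spin-clifford} to check $\sigma_j^2 = 1$ and the braid relation; everything else is formal bookkeeping of signs in a supercategory. So I do not expect any serious obstacle.
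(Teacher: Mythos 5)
Your proof is correct and follows the same strategy as the paper's: build the $\wt{\fS}_n$-action on $M$ by composing the homomorphism of Proposition~\ref{prop:spin-clifford} with the module structure, verify the conjugation relation by reducing to the generators $\tilde{s}_j$, and record the parity and naturality. One refinement worth noting: the form you prove, $\rho(\sigma)\alpha_i\rho(\sigma)^{-1}=(-1)^{|\sigma|}\alpha_{\sigma(i)}$, is the one that is actually closed under products in $\wt{\fS}_n$ (the index composes as $\sigma(\tau(i))=(\sigma\tau)(i)$), and it is the form that makes $\rho(\sigma)$ a $\Cl_n$-morphism into $(M;\alpha_{\sigma(1)},\dots,\alpha_{\sigma(n)})$ as the proposition asserts; the paper's proof writes the relation with $\alpha_{\sigma^{-1}(i)}$, which coincides with yours on the generators (since $\tilde{s}_j$ maps to an involution) but under multiplication would produce $\alpha_{(\tau\sigma)^{-1}(i)}$ rather than $\alpha_{(\sigma\tau)^{-1}(i)}$, so the reduction to generators only goes through in your formulation. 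Your write-up also makes explicit the subgroup-closure argument and the naturality check, both of which the paper leaves implicit.
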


\begin{proof}
  The action comes from Proposition~\ref{prop:spin-clifford} using the homomorphism $\phi \colon \wt{\fS}_n \to \Cl_n^\times$. We claim that $\phi(\sigma) \alpha_i = (-1)^{|\sigma|} \alpha_{\sigma^{-1}(i)} \phi(\sigma)$ for all $\sigma \in \wt{\fS}_n$. It is enough to check this when $\sigma = \tilde{s}_j$. Then we have (for the third line, $k \notin \{j,j+1\}$)
  \begin{align*}
    \phi(\tilde{s}_j) \alpha_j &= \tfrac12 (\alpha_{j+1} - \alpha_j) \alpha_j = \tfrac12 \alpha_{j+1} \alpha_j - 1 = -\tfrac12 \alpha_{j+1}(\alpha_{j+1} - \alpha_j) = -\alpha_{j+1} \phi(\tilde{s}_j),\\
    \phi(\tilde{s}_j) \alpha_{j+1} &= \tfrac12 (\alpha_{j+1} - \alpha_j) \alpha_{j+1} = 1 - \tfrac12 \alpha_j \alpha_{j+1}  = -\tfrac12 \alpha_{j}(\alpha_{j+1} - \alpha_j) = -\alpha_{j} \phi(\tilde{s}_j),\\
    \phi(\tilde{s}_j) \alpha_{k} &= \tfrac12 (\alpha_{j+1} - \alpha_j) \alpha_{k} = - \tfrac12 \alpha_k(\alpha_{j+1} - \alpha_j) = -\alpha_{k} \phi(\tilde{s}_j). \qedhere
  \end{align*}
\end{proof}

\subsection{Periodicity}

Throughout this section, we fix a value $p$ such that $\Cl_p$ is isomorphic as a superalgebra to a matrix superalgebra. (By Theorem~\ref{thm:bott}, we know that $p=8$ is possible for any $\bk$ and that we can take $p=2$ if $\zeta_4 \in \bk$.) Let $U_p$ be a $\Cl_p$-module such that the functor $\SVec \to \Mod_{\Cl_p}$ given by $V \mapsto V \otimes U_p$ is an equivalence, and let $\epsilon_p \in \Cl_p$ be an even idempotent such that the functor $\Mod_{\Cl_p} \to \SVec$ given by $M \mapsto \epsilon_p M$ is a quasi-inverse.

Let $\cA$ be a nice supercategory. The following is clear.

\begin{proposition} \label{prop:morita}
  The functor $\cA \to \Cl_p(\cA)$ given by $M \mapsto M \otimes U_p$ is an equivalence.
\end{proposition}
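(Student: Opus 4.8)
The plan is to reduce the statement to the already-established facts about the module $U_p$ over the honest superalgebra $\Cl_p$ and the way Clifford modules in $\cA$ relate to modules over $\Cl_p$. The functor in question sends an object $M$ of $\cA$ to the $\Cl_p$-module $M \otimes U_p$ in $\cA$, where $M \otimes U_p$ makes sense because $\cA$ is nice and $U_p$ is a finite-dimensional super vector space (see \S\ref{ss:nice}), and the $\Cl_p$-action on $M \otimes U_p$ comes from the action of $\Cl_p$ on the tensor factor $U_p$. The key external input is that $\epsilon_p \in \Cl_p$ is an even idempotent such that $M \mapsto \epsilon_p M$ is quasi-inverse to $V \mapsto V \otimes U_p$ on super vector spaces; equivalently, $\epsilon_p U_p \cong \bk$ and $U_p \otimes_{\bk} \epsilon_p(-) \cong \id$ on $\Cl_p$-modules.

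First I would write down the candidate quasi-inverse $\Cl_p(\cA) \to \cA$, sending a $\Cl_p$-module $(N; \alpha_1, \dots, \alpha_p)$ in $\cA$ to $\epsilon_p N$, the image of the even idempotent $\epsilon_p$ acting on $N$ via $\rho$; this exists because $\cA$ is Karoubian. Then I would exhibit the two natural isomorphisms. For one composite, starting from $M \in \cA$, we get $\epsilon_p(M \otimes U_p) = M \otimes (\epsilon_p U_p) \cong M \otimes \bk \cong M$, using that $\epsilon_p$ acts only on the $U_p$ factor and that $\epsilon_p U_p \cong \bk$ as super vector spaces; naturality in $M$ is immediate since everything is functorial in the first tensor slot. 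For the other composite, starting from a $\Cl_p$-module $N$ in $\cA$, we want $(\epsilon_p N) \otimes U_p \cong N$ as $\Cl_p$-modules in $\cA$; this is the categorified version of the statement $\epsilon_p(-) \otimes U_p \cong \id$ on $\Mod_{\Cl_p}$. The cleanest way to see it is to note that the isomorphism $\epsilon_p U_p \otimes (\text{something}) \to \Cl_p$ of $(\Cl_p,\Cl_p)$-bimodules underlying the Morita equivalence is a statement purely about finite-dimensional super vector spaces with $\Cl_p$-actions, and applying $N \otimes_{\Cl_p} (-)$ — which makes sense in $\cA$ because $N$ is a $\Cl_p$-module object and $\cA$ is additive and Karoubian, so finite colimits of the relevant shape (coequalizers computing $\otimes_{\Cl_p}$ as images of idempotents) exist — transports it into $\cA$.

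The main obstacle, and the reason the paper can afford to say ``the following is clear,'' is purely bookkeeping: one must check that these manipulations with $\epsilon_p$, $U_p$, and $\otimes_{\Cl_p}$ genuinely make sense inside the not-necessarily-abelian but nice category $\cA$, i.e., that all the relevant sub- and quotient-objects are images of even idempotents and hence exist by the Karoubian hypothesis, and that the resulting isomorphisms are even and natural. Concretely, the facts to verify are: (i) $V \otimes M$ for a finite-dimensional super vector space $V$ is functorial and commutes with taking images of idempotents; (ii) the action of $\epsilon_p$ on $M \otimes U_p$ is ``$\id_M \otimes (\epsilon_p|_{U_p})$,'' so its image is $M \otimes (\epsilon_p U_p)$; and (iii) the bimodule isomorphism witnessing the Morita equivalence of $\Cl_p$ with $\bk$ can be applied to an object of $\cA$. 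None of these requires more than unwinding definitions, so I would present the argument as: define the quasi-inverse $N \mapsto \epsilon_p N$, produce the unit and counit isomorphisms as above by reducing each to the corresponding statement in $\SVec$ and tensoring into $\cA$, and remark that naturality and evenness are inherited from the $\SVec$-level statements. I expect the whole proof to occupy at most a paragraph once these conventions are fixed, consistent with it being labelled ``clear.''
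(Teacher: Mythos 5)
The paper offers no proof of this proposition, dismissing it as ``clear,'' and your argument correctly supplies the intended details: define the quasi-inverse $N \mapsto \epsilon_p N$ (which exists by the Karoubian hypothesis, $\epsilon_p$ being even), check $\epsilon_p(M\otimes U_p) \cong M\otimes \epsilon_p U_p \cong M$, and transport the $(\Cl_p,\Cl_p)$-bimodule isomorphism $\Cl_p\epsilon_p \otimes_\bk \epsilon_p\Cl_p \cong \Cl_p$ along the action of $\Cl_p$ on $N$ to obtain $U_p \otimes \epsilon_p N \cong N$ naturally and $\Cl_p$-equivariantly. The only small imprecision is the phrase ``applying $N \otimes_{\Cl_p}(-)$'' (the left/right module conventions are slightly off), but the underlying idea --- that the Morita isomorphism in $\SVec$ can be evaluated against the $\Cl_p$-action $\rho\colon \Cl_p \to \End_\cA(N)$ because everything in sight is a finite direct sum and an image of an even idempotent --- is exactly right.
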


We let $\Phi_n \colon \Cl_n(\cA) \to \Cl_{n+p}(\cA)$ be the equivalence induced by the identification $\Cl_{n+p}(\cA)=\Cl_p(\Cl_n(\cA))$. For a non-negative integer $i$, define $\Cl_{(i;p)}(\cA)$ to be the 2-colimit of the system
\begin{displaymath}
\xymatrix{
\Cl_i(\cA) \ar[r]^-{\Phi_i} & \Cl_{i+p}(\cA) \ar[r]^-{\Phi_{i+p}} & \Cl_{i+2p}(\cA) \ar[r] & \cdots }
\end{displaymath}
Explicitly, an object of $\Cl_{(i; p)}(\cA)$ is a $\Cl_n$-module in $\cA$, for some integer $n \ge 0$ such that $n \equiv i \pmod p$. If $M$ is a $\Cl_m$-module and $N$ is a $\Cl_n$-module, with $m \equiv n \equiv i \pmod p$, then
\begin{displaymath}
\Hom_{\Cl_{(i; p)}(\cA)}(M, N) = \varinjlim \Hom_{\Cl_r(\cA)}(\Phi^{(r-m)/p}(M), \Phi^{(r-n)/p}(N)),
\end{displaymath}
where the colimit is taken as $r \to \infty$ over integers $r$ such that $r\equiv i \pmod p$. It is clear that the natural functor $\Cl_{i}(\cA) \to \Cl_{(i; p)}(\cA)$ is an equivalence, and so, in a sense, these limit categories are nothing new. However, the categories $\Cl_{(i; p)}(\cA)$  will sometimes be easier to work with, especially when dealing with products.

\subsection{Clifford eversion}

As before, let $p$ be such that $\Cl_p$ is a matrix superalgebra, and fix an integer $q$ divisible by $p$ (we allow $q=0$). Note that $q$ is necessarily even.

\begin{definition}
The {\bf Clifford eversion} of a nice $\bZ/q$-supercategory $\cA$, denoted $\Cl(\cA)$, is the nice $\bZ/q$-supercategory given by $\Cl(\cA)_i=\Cl_{(i;p)}(\cA_i)$.
\end{definition}

\begin{remark}
We defined $\Cl(\cA)_i$ to be $\Cl_{(i;p)}(\cA_i)$. In some ways, it is more natural to use $\Cl_{(i;q)}(\cA_i)$, but there is one problem with this: it is not defined if $q=0$ and $i<0$.
\end{remark}

\begin{remark}
The motivation for the definition of Clifford eversion comes from the queer category; see \S \ref{ss:qcom}.
\end{remark}

We let $\Cl^k$ be the $k$-fold iterate of $\Cl$. To study this, it is convenient to introduce a slight variant. Define $\Cl^{(k)}(\cA)$ to be the $\bZ/q$-supercategory given by $\Cl^{(k)}(\cA)_i=\Cl_{(ki;p)}(\cA)$. Thus $\Cl^{(1)}=\Cl$. The equivalence in Proposition~\ref{prop:cliff-iter} induces an equivalence $\Cl_{(i;p)}(\Cl_{(j;p)}(\cB)) \to \Cl_{(i+j;p)}(\cB)$ for any nice supercategory $\cB$. It follows that there is a natural equivalence $\Cl^{(k)}(\Cl^{(\ell)}(\cA)) \to \Cl^{(k+\ell)}(\cA)$. In particular, we see that there is a natural equivalence $\Cl^k(\cA) \to \Cl^{(k)}(\cA)$ for any $k \ge 0$.

\begin{proposition} \label{prop:evert-fin-order}
We have a canonical equivalence $\cA \cong \Cl^p(\cA)$.
\end{proposition}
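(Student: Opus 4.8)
The statement to prove is that $\Cl^p(\cA) \cong \cA$ canonically, where $p$ is such that $\Cl_p$ is a matrix superalgebra and $q$ is divisible by $p$. The plan is to reduce to the already-established equivalence $\Cl^k(\cA) \cong \Cl^{(k)}(\cA)$ and then compute $\Cl^{(p)}(\cA)$ directly.

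First I would recall from the paragraph preceding the statement that for each $k \ge 0$ there is a natural equivalence $\Cl^k(\cA) \to \Cl^{(k)}(\cA)$, where $\Cl^{(k)}(\cA)$ is the $\bZ/q$-supercategory with $\Cl^{(k)}(\cA)_i = \Cl_{(ki;p)}(\cA_i)$. Taking $k = p$, we get $\Cl^p(\cA)_i \cong \Cl_{(pi;p)}(\cA_i)$. Now $pi \equiv 0 \pmod{p}$, so $\Cl_{(pi;p)}(\cA_i) = \Cl_{(0;p)}(\cA_i)$, and as noted in the discussion of periodicity, the natural functor $\cA_i = \Cl_0(\cA_i) \to \Cl_{(0;p)}(\cA_i)$ is an equivalence. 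Composing, we obtain an equivalence $\cA_i \to \Cl^p(\cA)_i$ for each $i$, and these assemble into a $\bZ/q$-superfunctor since all the constituent equivalences are natural and compatible with the grading.

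Slightly more carefully: the cleanest route is to use Proposition~\ref{prop:morita}, which says $M \mapsto M \otimes U_p$ is an equivalence $\cA \to \Cl_p(\cA)$ for nice $\cA$ (applied graded-piece by graded-piece). Iterating via Proposition~\ref{prop:cliff-iter}, one gets $\cA \cong \Cl_p(\cA) \cong \Cl_{2p}(\cA) \cong \cdots$, and then one must check that the composite of these Morita equivalences is compatible with the colimit structure defining $\Cl_{(i;p)}$ — i.e., that passing from $\Cl_i(\cA_i)$ to $\Cl_{i+p}(\cA_i)$ via $M \mapsto M \otimes U_p$ is precisely the transition functor $\Phi_i$ in the colimit system. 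That is essentially built into the definition of $\Phi_n$ (it is "the equivalence induced by the identification $\Cl_{n+p}(\cA) = \Cl_p(\Cl_n(\cA))$"), so the required compatibility is more a matter of unwinding definitions than of real content.

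The main obstacle, such as it is, lies in bookkeeping rather than mathematics: one must make sure the equivalence respects the $\bZ/q$-grading (which it does because $p \mid q$, so the degree-$i$ piece of $\Cl^p(\cA)$ genuinely involves only $\cA_i$ and Clifford modules whose number of generators is $\equiv 0 \pmod p$), and one should confirm that "canonical" here means the equivalence is natural in $\cA$ with respect to nice superfunctors — this follows because every step ($\Cl_n(-)$, the colimit, and the Morita equivalence $M \mapsto M \otimes U_p$) is functorial in $\cA$. Since the paper explicitly says "The following statement is clear" about Proposition~\ref{prop:cliff-iter} and builds $\Cl^{(k)}$ precisely to make this iteration transparent, I would present the proof as: invoke $\Cl^p(\cA) \cong \Cl^{(p)}(\cA)$, observe $\Cl^{(p)}(\cA)_i = \Cl_{(pi;p)}(\cA_i) = \Cl_{(0;p)}(\cA_i) \cong \cA_i$, and note all equivalences are natural. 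This is a two- or three-sentence proof once the preceding machinery is in place.
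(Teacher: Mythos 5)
Your proposal is correct and matches the paper's own proof, which simply combines the natural equivalences $\cA \to \Cl^{(p)}(\cA)$ and $\Cl^p(\cA) \to \Cl^{(p)}(\cA)$. Your unwinding of why $\cA \to \Cl^{(p)}(\cA)$ is an equivalence (via $\Cl_{(pi;p)}(\cA_i) = \Cl_{(0;p)}(\cA_i)$) is exactly the content the paper leaves implicit, so this is the same argument at a slightly finer level of detail.
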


\begin{proof}
Combine the natural equivalences $\cA \to \Cl^{(p)}(\cA)$ and $\Cl^p(\cA) \to \Cl^{(p)}(\cA)$.
\end{proof}

Suppose $\cA$ is a nice monoidal $\bZ/q$-supercategory. We define a natural monoidal structure on $\Cl(\cA)$ as follows. Let $r,s \in \bZ/q$ and let $r'$ and $s'$ be non-negative integers congruent to $r$ and $s$ modulo $p$. We have a functor $\Cl_{r'}(\cA_r) \boxtimes \Cl_{s'}(\cA_s) \to \Cl_{r'+s'}(\cA_{r+s})$ given by
\begin{displaymath}
(M; \mu_1, \ldots, \mu_{r'}) \otimes (N; \nu_1, \ldots, \nu_{s'}) \mapsto (M \otimes N; \mu_1, \ldots, \mu_{r'}, \nu_1, \ldots, \nu_{s'}).
\end{displaymath}
Here $\mu_i$ is shorthand for $\mu_i \otimes 1$, and similarly for $\nu_j$. Note that $M$ is an object of $\cA_r$ and $N$ is an object of $\cA_s$, so $M \otimes N$ is an object of $\cA_{r+s}$. Thus the above object does indeed belong to $\Cl(\cA)_{r+s}$. The diagram
\begin{displaymath}
\xymatrix@C=4em{
\Cl_{r'}(\cA_r) \boxtimes \Cl_{s'}(\cA_s) \ar[r]^{1 \otimes \Phi_{s'}} \ar[d]
& \Cl_{r'}(\cA_r) \boxtimes \Cl_{s'+p}(\cA_s) \ar[d] \\
\Cl_{r'+s'}(\cA_{r+s}) \ar[r]^{\Phi_{r'+s'}} & \Cl_{r'+s'+p}(\cA_{r+s}) }
\end{displaymath}
commutes, since in both cases the extra endomorphisms for $\Cl_p$ are appended to the end of the list. The diagram
\begin{displaymath}
\xymatrix@C=4em{
\Cl_{r'}(\cA_r) \boxtimes \Cl_{s'}(\cA_s) \ar[r]^{\Phi_{s'} \otimes 1} \ar[d]
& \Cl_{r'+p}(\cA_r) \boxtimes \Cl_{s'}(\cA_s) \ar[d] \\
\Cl_{r'+s'}(\cA_{r+s}) \ar[r]^{\Phi_{r'+s'}} & \Cl_{r'+s'+p}(\cA_{r+s}) }
\end{displaymath}
does not commute, but does commute up to canonical even isomorphism. Precisely, let $(M; \mu_{\bullet})$ and $(N; \nu_{\bullet})$ be objects of $\Cl_{r'}(\cA_r)$ and $\Cl_{s'}(\cA_s)$, and let $(U_q; \lambda_1, \ldots, \lambda_p)$ be the $\Cl_p$-module inducing periodicity. Then the top path in the above diagram yields $(M \otimes N; \mu_{\bullet}, \lambda_{\bullet}, \nu_{\bullet})$, while the bottom path yields $(M \otimes N; \mu_{\bullet}, \nu_{\bullet}, \lambda_{\bullet})$. These two objects are isomorphic via Proposition~\ref{prop:cliff-perm}, using $j_{r',p+s'}(1,\tilde{\tau}_{p,s'}) \in \wt{\fS}_{r'+s'+p}$. Note that this isomorphism is even since $p$ is even.

It follows from the above discussion that we have a well-defined functor $\Cl(\cA) \boxtimes \Cl(\cA) \to \Cl(\cA)$. The associativity and unit constraints on $\Cl(\cA)$ are defined in the obvious manner, and one readily verifies that the axioms of a monoidal $\bZ/q$-supercategory are satisfied.

We now investigate how the above construction interacts with braidings. In what follows, we assume $4 \mid q$. Suppose we have an isomorphism $\beta \colon \otimes \to \otimes \circ \Sigma$. Given objects $(M;\alpha_{\bullet})$ and $(N;\alpha'_{\bullet})$ as above, we define
\begin{displaymath}
\beta'_{(M;\alpha_\bullet),(N;\alpha'_\bullet)} \colon (M;\alpha_{\bullet}) \otimes (N;\alpha'_{\bullet}) \to (N;\alpha'_{\bullet}) \otimes (M;\alpha_{\bullet})
\end{displaymath}
to be the composition
\begin{displaymath}
(M \otimes N; \alpha_{\bullet}, \alpha'_{\bullet}) \xrightarrow{\beta_{M,N}}
(N \otimes M; \alpha_{\bullet}, \alpha'_{\bullet}) \xrightarrow{\tilde{\tau}^{-1}_{r,s}}
(N \otimes M; \alpha'_{\bullet}, \alpha_{\bullet}).
\end{displaymath}
The second map uses the action of $\wt{\fS}_{r+s}$ on $N \otimes M$, as in Proposition~\ref{prop:cliff-perm}. We note that $\vert \beta' \vert=\vert \beta \vert+rs$. For the following, we use the notation from \S\ref{ss:factor}.

\begin{proposition} \label{prop:evert-monoidal}
Maintain the above notation.
\begin{enumerate}[\rm \indent (a)]
\item If $\beta$ is a symmetry (or braiding) with respect to $\omega \in \Gamma_0$ then $\beta'$ is a type~II supersymmetry (or superbraiding) with respect to $\fsD \fsA \omega$.
\item If $\beta$ is a type~II supersymmetry (or superbraiding) with respect to $\omega \in \Gamma_1$ then $\beta'$ is a symmetry (or braiding) with respect to $\fsA \omega$.
\end{enumerate}
\end{proposition}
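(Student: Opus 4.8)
The plan is to check the three defining conditions for $\beta'$ — naturality with the correct parity, the two hexagons, and the (super)symmetry relation — by isolating in each condition the contribution of $\beta$ from that of the reordering isomorphisms. Throughout I would fix homogeneous objects of $\Cl(\cA)$ of degrees $r,s,t$, choose representatives that are genuine $\Cl_{r'}$-modules (with $r'$ congruent to the degree mod $p$), and carry out every computation inside $\Cl_{r'+s'+t'}(\cA)$. By construction the associativity and unit constraints of $\Cl(\cA)$ are those of $\cA$ acting on underlying objects with the Clifford generators left in a fixed order, so no ``hidden'' reordering (such as the $j_{r',p+s'}(1,\tilde\tau_{p,s'})$ appearing in the well-definedness of $\otimes$ on $\Cl(\cA)$) intrudes into the hexagons once representatives are fixed.

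First I would record that $\vert\beta'\vert=\vert\beta\vert+rs$, which gives parity $rs$ in case (a) (as required of a type~II superbraiding) and parity $0$ in case (b) (as required of a braiding). For naturality, given $\Cl$-module morphisms $f,g$ I paste the naturality square of $\beta$ in $\cA$ — which commutes up to the sign prescribed by the type of $\beta$ — with the fact, supplied by Proposition~\ref{prop:cliff-perm}, that every morphism of $\Cl_n$-modules supercommutes with the canonical $\wt\fS_n$-action; after the sign rule of \S\ref{ss:svec-mon} the two signs multiply to $(-1)^{\vert f\vert\vert g\vert}$. For the hexagons, each edge of the hexagon for $\beta'$ factors as an edge built from $\beta$ (which does not touch the Clifford generators, hence is an honest morphism of $\Cl(\cA)$) followed by a reordering built from the homomorphism $\phi\colon\wt\fS_n\to\Cl_n^\times$ of Proposition~\ref{prop:spin-clifford}. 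Sliding all reordering maps to the outside — picking up signs from the bifunctoriality of $\otimes$ and, when a $\tilde\tau$ must be commuted past an odd $\beta$-edge, from Proposition~\ref{prop:spin-conj} — reduces the hexagon for $\beta'$ to two hexagons: the hexagon for $\beta$, which commutes up to $\omega_1(r;s,t)$ resp.\ $\omega_2(r,s;t)$ by hypothesis; and the ``reordering hexagon'', which commutes \emph{exactly}, since both of its composite paths equal a single $\tilde\tau$ by Proposition~\ref{prop:tauj} (pushed forward along $\phi$). Collecting the signs accumulated along the way — quadratic expressions in $r,s,t$ coming from Clifford parities and from the $c$-exponents of Proposition~\ref{prop:spin-conj}, together with the $\binom{\,\cdot\,}{2}$ that the Clifford action $\phi$ produces on blocks of odd generators — yields exactly $\fsA_1=1$ and $\fsA_2(r,s;t)$, plus the additional $\fsD_1=\fsD_2=(-1)^{rst}$ in case (a); the latter is the sign recording that the $\beta$-edges themselves are odd, i.e.\ the $p$-dependent term of \eqref{eq:condition3}. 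Hence $\beta'$ is a superbraiding (resp.\ braiding) with respect to $\fsD\fsA\omega$ (resp.\ $\fsA\omega$).

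Finally, for the (super)symmetry case one checks $\beta'_{N,M}\circ\beta'_{M,N}=\omega'_\sharp(r,s)\cdot\id$. Writing each $\beta'$ as a reordering composed with $\beta$ and sliding the two reorderings together, the $\beta$-part contributes $\omega_\sharp(r,s)$ by hypothesis, while the reordering part contributes $\phi(\tilde\tau_{r',s'})^{-1}\phi(\tilde\tau_{s',r'})^{-1}=(-1)^{\binom{r}{2}\binom{s}{2}}=\fsA_\sharp(r,s)$ by Proposition~\ref{prop:tau-symmetric}; in case (a) the supercategory composition sign contributes the further $\fsD_\sharp(r,s)=(-1)^{rs}$, giving $\omega'_\sharp=\fsD_\sharp\fsA_\sharp\omega_\sharp$ resp.\ $\fsA_\sharp\omega_\sharp$ in accordance with the hexagon step. (The braiding-only statements require nothing beyond that step.) The main obstacle is precisely this sign accounting in the hexagon step: there are several independent sources of signs — the composition law of a supercategory, the bifunctoriality of $\otimes$, the action of $\phi$ on odd morphisms, and the $c$-exponents of Propositions~\ref{prop:spin-conj} and~\ref{prop:tauj} — and one must organize the bookkeeping, most cleanly by first establishing the purely group-theoretic reordering identities in the spin groups $\wt\fS_n$ and only then pushing forward along $\phi$, so as to verify that the net correction is \emph{exactly} $\fsA$ (resp.\ $\fsD\fsA$) and not merely a cohomologous factor system.
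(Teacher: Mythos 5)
Your overall strategy — factor each $\beta'$ into its $\beta$-part followed by a $\tilde\tau$-reordering, slide the reorderings together using the group identities in $\wt\fS_{r+s+t}$, and collect the signs — is exactly the route the paper takes. You cite the same key lemmas (Propositions~\ref{prop:cliff-perm}, \ref{prop:spin-clifford}, \ref{prop:tauj}, \ref{prop:tau-symmetric}, \ref{prop:spin-conj}) and arrive at the same factor systems. The proposal is correct in structure.

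However, two of your sign attributions are off, and it is worth being precise because the bookkeeping here is the whole content of the proposition. First, the $(-1)^{rst}$ in case (a) does \emph{not} ``record that the $\beta$-edges themselves are odd'': in case (a), $\beta$ is a braiding of $\cA$ and its components are all \emph{even}; it is the $\beta'$-edges that are odd, and even so, that oddness is not the mechanism. The actual source of $(-1)^{rst}$ is Proposition~\ref{prop:cliff-perm}: conjugating $\phi(\tilde\tau_{r,t})$ (a word of parity $rt$ in the Clifford generators) by the earlier reordering of parity $rs$ gives $(-1)^{rs\cdot rt}=(-1)^{rst}$, not a contribution from $\beta$. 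Indeed, in case (b), where $\beta$ genuinely is odd, an \emph{additional} $(-1)^{rst}$ appears when commuting $\beta_{M,P}$ past the reordering, which is precisely why $\fsD$ \emph{cancels} and one is left with $\fsA\omega$ rather than $\fsD\fsA\omega$; conflating the two sources makes this cancellation invisible. Second, the claim that ``both composite paths of the reordering hexagon equal a single $\tilde\tau$ by Proposition~\ref{prop:tauj}'' holds verbatim only for (H1). For (H2) the two reorderings sit in the wrong order for Proposition~\ref{prop:tauj}, and one must invert and use Proposition~\ref{prop:tau-symmetric}; the discrepancy $\binom{r+s}{2}-\binom{r}{2}-\binom{s}{2}=rs$ times $\binom{t}{2}$ is exactly what produces $\fsA_2(r,s;t)=(-1)^{\binom{t}{2}rs}$. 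So $\binom{\cdot}{2}$ enters through the inversion relation $\tilde\tau_{m,n}\tilde\tau_{n,m}=c^{\binom{m}{2}\binom{n}{2}}$ rather than from ``blocks of odd generators'' under $\phi$. With those two sources correctly identified the computation closes, and your overall plan matches the paper's.
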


\addtocounter{equation}{-1}
\begin{subequations}
\begin{proof} 
Suppose that $\beta$ is a braiding with respect to $\omega=(\omega_1, \omega_2)$. We show that $\beta'$ is a type~II superbraiding with respect to $\fsA \omega$. We have already observed that $\beta'_{X,Y}$ is homogeneous of degree $\vert X \vert \vert Y \vert$ if $X$ and $Y$ are homogeneous objects. We now verify that $\beta'$ is a natural transformation. Suppose that $f \colon (M;\mu_1, \ldots, \mu_r) \to (M'; \mu'_1, \ldots, \mu'_r)$ and $g \colon (N;\nu_1, \ldots, \nu_s) \to (N'; \nu'_1, \ldots, \nu'_s)$ are homogeneous morphisms of homogeneous objects of $\Cl(\cA)$. Consider the diagram:
\begin{displaymath}
\xymatrix@C=4em{
(M \otimes N; \mu_{\bullet}, \nu_{\bullet}) \ar[r]^{\beta_{M,N}} \ar[d]_{f \otimes g} &
(N \otimes M; \mu_{\bullet}, \nu_{\bullet}) \ar[r]^{\tilde{\tau}_{r,s}^{-1}} \ar[d]^{g \otimes f} &
(N \otimes M; \nu_{\bullet}, \mu_{\bullet}) \ar[d]^{g \otimes f} \\
(M' \otimes N'; \mu'_{\bullet}, \nu'_{\bullet}) \ar[r]^{\beta_{M',N'}} &
(N' \otimes M'; \mu'_{\bullet}, \nu'_{\bullet}) \ar[r]^{\tilde{\tau}^{-1}_{r,s}} &
(N' \otimes M'; \nu'_{\bullet}, \mu'_{\bullet}) }
\end{displaymath}
The first square commutes up to $(-1)^{\vert f \vert \vert g \vert}$, as this is simply what it means that $\beta$ is a natural transformation, while the second square commutes up to $(-1)^{(\vert f \vert+\vert g \vert) rs}$. This shows that $\beta'$ is natural.

We now look at the hexagon axioms. Let $(M;\mu_{\bullet})$, $(N;\nu_{\bullet})$, and $(P, \pi_{\bullet})$ be homogeneous objects of $\Cl(\cA)$ of degrees $r$, $s$, and $t$. Consider the following diagram:
\begin{displaymath}
\xymatrix@C=4em{
(MNP; \mu, \nu, \pi) \ar[r]^{\beta_{M,NP}} \ar[d]_{\beta_{N,M}} &
(NPM; \mu, \nu, \pi) \ar[r]^{x} &
(NPM; \nu, \pi, \mu) \\
(NMP; \mu, \nu, \pi) \ar[rd]_{y} &&
(NPM; \nu, \mu, \pi) \ar[u]_{z} \\
& (NMP; \nu, \mu, \pi) \ar[ru]_{\beta_{M,P}} }
\end{displaymath}
Here we have omitted $\otimes$ and $\bullet$ symbols, and $x$, $y$, and $z$ are the appropriate elements of $\wt{\fS}_{\ast}$ appearing in the definition of $\beta'$. This is the hexagon (H1) for $\beta'$, where we have omitted the associators. Put
\begin{displaymath}
(\alpha_1, \ldots, \alpha_{r+s+t})=(\mu_1, \ldots, \mu_r, \nu_1, \ldots, \nu_s, \pi_1, \ldots, \pi_t).
\end{displaymath}
We regard each $\alpha_i$ as acting on $MNP$, or any of the other tensor products in the above diagram, in the natural manner. We let $\wt{\fS}_{r+s+t}$ act on $MNP$ via the $\alpha$'s (as in Proposition~\ref{prop:spin-clifford}). Then
\begin{displaymath}
x=\tilde{\tau}^{-1}_{r,s+t}, \qquad
y=j_{r+s,t}(\tilde{\tau}^{-1}_{r,s},1), \qquad
z=(-1)^{rst} \cdot y \cdot j_{s,r+t}(1,\tilde{\tau}^{-1}_{r,t}) \cdot y^{-1}.
\end{displaymath}
The expressions for $x$ and $y$ are clear. By definition, $z$ is $\id_N \otimes \tilde{\tau}_{r,t}^{-1}$, but where this $\tilde{\tau}_{r,t}$ element is built from the $\mu$'s and $\pi$'s, i.e., the elements $\alpha_1, \ldots, \alpha_r$ and $\alpha_{r+s+1}, \ldots, \alpha_{r+s+t}$. This agrees with the above formula; note that
\begin{displaymath}
y \alpha_i y^{-1} = (-1)^{rs} \alpha_{\tau^{-1}_{r,s}(i)}
\end{displaymath}
by Proposition~\ref{prop:cliff-perm}. One readily verifies that $\beta_{M,P}$ commutes with the $\alpha_i$'s (in the evident sense), and thus with $y$ as well. Since $\beta_{M,P} \beta_{N,M} = \omega_1(r; s,t) \beta_{M,NP}$ (this is the first hexagon axiom for $\beta$) and $j_{s,r+t}(1,\tilde{\tau}_{r,t}) \cdot j_{r+s,t}(\tilde{\tau}_{r,s},1) = \tilde{\tau}_{r,s+t}$ (Proposition~\ref{prop:tauj}), we see that the above diagram commutes up to $(-1)^{rst} \omega_1(r; s,t)$, which verifies the first hexagon axiom for $\beta'$ with respect to $\fsD \fsA \omega$. The second hexagon axiom is similar. We thus see that $\beta'$ is a type~II superbraiding.

Now suppose that $\beta$ is symmetric, and let us show that $\beta'$ is supersymmetric. Maintaining the above notation, we have
\begin{displaymath}
\beta'_{(N;\nu),(M;\mu)} \beta'_{(M;\mu),(N;\nu)}
= y \beta_{N,M} x \beta_{M,N}
\end{displaymath}
where $x=\tilde{\tau}^{-1}_{r,s}$, and $y$ is defined like $\tilde{\tau}^{-1}_{s,r}$ but with respect to $\alpha_{r+1}, \ldots, \alpha_{r+s}, \alpha_1, \ldots, \alpha_r$; in other words, $y=(-1)^{rs} x\tilde{\tau}^{-1}_{s,r}x^{-1}=(-1)^{rs} \tilde{\tau}^{-1}_{s,r}$. Since $\beta_{N,M}$ and $x$ commute, we have
\begin{displaymath}
\beta'_{(N;\nu),(M;\mu)} \beta'_{(M;\mu),(N;\nu)}
= (-1)^{rs} \tilde{\tau}^{-1}_{s,r}  \tilde{\tau}^{-1}_{r,s} \beta_{N,M} \beta_{M,N}
=(-1)^{\binom{r}{2} \binom{s}{2} +rs} \omega_{\sharp}(\vert M \vert, \vert N \vert) \id_{M \otimes N},
\end{displaymath}
where we have used Proposition~\ref{prop:tau-symmetric}. We thus see that $\beta'$ is a supersymmetry with respect to $\fsD \fsA \omega$.

We have thus proved statement (a). Statement (b) is similar, though some additional signs come in: in the hexagon axiom, we pick up an additional $(-1)^{rst}$ when commuting $\beta_{M,P}$ and $y$, and in the symmetry axiom we pick up an additional $(-1)^{rs}$ when commuting $\beta_{N,M}$ and $\tilde{\tau}_{r,s}$.
\end{proof}
\end{subequations}

\begin{remark}
If one only cares about (super)braidings, one can phrase the above proposition using B-factor systems instead of S-factor systems.
\end{remark}

The above proposition can be summarized pictorially as follows: for $\omega \in \Gamma_0$, Clifford eversion acts by
\begin{displaymath}
\omega \to \fsD \fsA \omega \to \fsD \omega \to \fsA \omega \to \omega \to \cdots.
\end{displaymath}
That is, given a (super)symmetric monoidal category for one factor system, the Clifford eversion will be a (super)symmetric monoidal category for the next factor system. The list of factor system repeats with periodicity~4, while Clifford eversion itself repeats with period $p$ (which is a divisor of~8).

We have seen that there is a canonical equivalence $\cA \to \Cl^p(\cA)$. If $\cA$ is (super)braided or (super)symmetric then so is $\Cl^p(\cA)$, and with respect to the same factor system (recall that $q$ is divisible by~$4$). We now show that this equivalence is appropriately monoidal.

\begin{proposition}
The equivalence $\cA \to \Cl^p(\cA)$ is monoidal, (super)braided monoidal, or (super)symmetric monoidal, depending on what structure $\cA$ has.
\end{proposition}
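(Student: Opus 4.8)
The plan is to deduce the statement from the factorization of the canonical equivalence $\cA \cong \Cl^p(\cA)$ (Proposition~\ref{prop:evert-fin-order}) as the composite of $\cA \to \Cl^{(p)}(\cA)$ with the inverse of $\Cl^p(\cA) \to \Cl^{(p)}(\cA)$, by showing that each of these two equivalences carries the monoidal structure and, where present, the braiding or symmetry; composing the resulting data then gives the proposition. Since the two equivalences are of quite different natures, I would treat them separately.

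First consider $\Cl^p(\cA) \to \Cl^{(p)}(\cA)$. This is the iterate of the concatenation equivalence $\Cl_m(\Cl_n(\cB)) \to \Cl_{m+n}(\cB)$ of Proposition~\ref{prop:cliff-iter}, so on objects it merely appends one list of Clifford generators to another. The monoidal product on an iterated eversion was defined by appending Clifford generators, the coherence $2$-cell for the transition functors by the element $j_{r',p+s'}(1,\tilde{\tau}_{p,s'})$ acting through Proposition~\ref{prop:cliff-perm}, and the superbraiding $\beta'$ by $\tilde{\tau}^{-1}$ composed with the original $\beta$. Hence compatibility of $\Cl^p(\cA) \to \Cl^{(p)}(\cA)$ with the monoidal product is precisely the associativity identity $j_{n,m+p}(1,\tilde{\tau}_{m,p})\,j_{n+m,p}(\tilde{\tau}_{m,n},1) = \tilde{\tau}_{m,n+p}$ of Proposition~\ref{prop:tauj} together with the conjugation rule of Proposition~\ref{prop:spin-conj}, and compatibility with $\beta'$ additionally uses the symmetry relation of Proposition~\ref{prop:tau-symmetric}. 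These are mechanical checks: one lines the concatenated generators up in the prescribed order and reads off that the relevant $\tilde{\tau}$-elements multiply as required.

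Next consider $\cA \to \Cl^{(p)}(\cA)$. On the degree-$i$ piece this is, up to the canonical colimit identification, the Morita equivalence $\cA_i \xrightarrow{\sim} \Cl_{pi}(\cA_i) \xrightarrow{\sim} \Cl_{(pi;p)}(\cA_i) = \Cl^{(p)}(\cA)_i$ of Proposition~\ref{prop:morita} (valid since $p \mid pi$), namely $M \mapsto M \otimes U$ for a progenerator $U$ of $\Cl_{pi}$. Monoidality requires the progenerators to be multiplicative: $U_{pi} \otimes U_{pj}$ and $U_{p(i+j)}$ must agree as $\Cl_{p(i+j)}$-modules compatibly with the Morita functors, which holds because $\Cl_{p(i+j)} \cong \Cl_{pi} \hattimes \Cl_{pj}$ is a matrix superalgebra whose progenerator is unique up to isomorphism and parity shift; the unit and associativity constraints then match by inspection, the parity ambiguity being absorbed into the $\Pi$-structure. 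For the braided (resp.\ symmetric) case I would track the factor system through $p$ successive eversions via Proposition~\ref{prop:evert-monoidal}: from $\omega \in \Gamma_0$ the sequence is $\omega \to \fsD\fsA\omega \to \fsD\omega \to \fsA\omega \to \omega$, of period $4$. If $4 \mid p$ the factor system literally returns to $\omega$ after $p$ steps, so $\cA \to \Cl^{(p)}(\cA)$ is braided (resp.\ symmetric) with respect to $\omega$ on the nose; if $p=2$ one instead lands on $\fsD\omega$, but then $\zeta_4 \in \bk$, so $\fsD = \partial(\fnD)$ is a coboundary, and Remark~\ref{rmk:convert} supplies the rescaling of the braiding identifying the two structures, just as in \S\ref{ss:abcoh}.

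The main obstacle is the braided case of this last step. Beyond checking that the factor systems agree after $p$ eversions, one must verify that the braiding $\beta^{(p)}$ produced on $\Cl^{(p)}(\cA)$ is actually carried to $\beta$ on $\cA$ by the Morita equivalence, i.e.\ that the $\tilde{\tau}$-elements and signs accumulated over the $p$ iterations of $\beta'_{X,Y} = \tilde{\tau}^{-1}\circ\beta_{X,Y}$ telescope to the predicted scalar: the identity when $4 \mid p$, and a coboundary factor such as $\fnD(\vert X \vert,\vert Y \vert)^{\pm1}$ when $p=2$. This is a finite but error-prone computation, and I expect the most efficient route is to reorganize the whole argument around proving this telescoping identity directly, applying Propositions~\ref{prop:tau-symmetric} and~\ref{prop:tauj} to collapse the nested $\tilde{\tau}$'s, rather than literally splitting the equivalence into two halves.
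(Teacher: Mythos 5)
Your proposal follows the same decomposition as the paper's (quite terse) proof: factor $\cA \cong \Cl^p(\cA)$ through the intermediate category $\Cl^{(p)}(\cA)$ and check each half separately, using Propositions~\ref{prop:cliff-iter}, \ref{prop:tauj}, \ref{prop:tau-symmetric}, \ref{prop:spin-conj}, \ref{prop:cliff-perm} for the concatenation half and Clifford periodicity for the other. Two remarks on your handling of the equivalence $\cA \to \Cl^{(p)}(\cA)$. First, you route it through the Morita equivalence $\cA_i \to \Cl_{pi}(\cA_i)$ and thus need the ``multiplicativity of progenerators''; but since $pi \equiv 0 \pmod p$, the colimit $\Cl_{(pi;p)}(\cA_i)$ is canonically equivalent to $\Cl_{(0;p)}(\cA_i)$ (the two colimit systems differ by a cofinal shift), so the canonical equivalence is just the colimit inclusion from $\Cl_0(\cA_i) = \cA_i$. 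At Clifford rank zero there are no generators to rearrange: the monoidal product is literally that of $\cA$, and the braiding formula $\tilde{\tau}^{-1}_{0,0} \circ \beta = \beta$ is tautologically the original one, so the compatibility is immediate and the progenerator argument is unnecessary. The Morita transitions do need to interact well with the structures, but that belongs to the prior assertion that the monoidal/braiding data on $\Cl^{(p)}(\cA)$ descend to the colimit, not to the comparison with $\cA$. Second, your observation about $p=2$ is legitimate and worth making explicit: the factor system of $\Cl^2(\cA)$ is $\fsD\omega$, and the paper's parenthetical ``recall that $q$ is divisible by $4$'' does not address this --- what matters is divisibility of $p$, not of $q$ --- so the coboundary rescaling by $\fnD$ (available since $p=2$ forces $\zeta_4\in\bk$, cf.\ Example~\ref{ex:cobound2} and Remark~\ref{rmk:convert}) that you propose is the right repair. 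Finally, the telescoping of $\tilde{\tau}$-elements that you flag as the ``main obstacle'' is also left implicit in the paper's proof (``easily seen''); your instinct to attack it head-on is sound, although the $\Cl_0$-inclusion observation above reduces substantially what actually has to be computed.
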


\begin{proof}
The definition of the monoidal structure and (super)braiding on $\Cl(\cA)$ generalizes in the obvious manner to $\Cl^{(k)}(\cA)$ for any $k$. The equivalences $\Cl^{(k)}(\Cl^{(\ell)}(\cA)) \to \Cl^{(k+\ell)}(\cA)$ are easily seen to be compatible with these structures; it follows that the equivalence $\Cl^k(\cA) \to \Cl^{(k)}(\cA)$ is as well. The equivalence $\cA \to \Cl^{(p)}(\cA)$ induced by Clifford periodicity is also compatible with these structures.
\end{proof}

Combining the above results, we find:

\begin{proposition} \label{prop:cliff-eversion}
Suppose $q$ is divisible by~$4$ and let $\omega \in \Gamma_0$. Then the $2$-category of symmetric monoidal $\bZ/q$-supercategories with respect to $\omega$ is equivalent to the $2$-category of type~II supersymmetric monoidal $\bZ/q$-supercategories with respect to $\fsA \omega$.
\end{proposition}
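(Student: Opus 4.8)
The plan is to exploit the fact that Clifford eversion is an \emph{invertible} operation, so that a suitable power of it provides the asserted equivalence of $2$-categories; throughout let $p$ be the fixed even integer with $\Cl_p$ a matrix superalgebra, and put $N=\mathrm{lcm}(p,4)$, so that $N\mid q$ and $4\mid N$. First I would promote Clifford eversion to an honest $2$-functor. On objects this is Proposition~\ref{prop:evert-monoidal}, which says that $\Cl$ carries a nice symmetric monoidal $\bZ/q$-supercategory with respect to $\omega\in\Gamma_0$ to a nice type~II supersymmetric one with respect to $\fsD\fsA\omega$; iterating, and using that $\Gamma$ is abelian with $\fsA^2$ and $\fsD^2$ trivial, the orbit of $\omega$ under eversion is
\begin{displaymath}
\omega \ \longmapsto\ \fsD\fsA\omega \ \longmapsto\ \fsD\omega \ \longmapsto\ \fsA\omega \ \longmapsto\ \omega,
\end{displaymath}
alternating between the symmetric and the type~II supersymmetric worlds, so that $\Cl^3$ lands the $\omega$-side in the $\fsA\omega$-side. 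The point not treated in the excerpt is that $\Cl$ must also act on $1$- and $2$-morphisms: a (super)braided monoidal superfunctor $F\colon\cA\to\cB$ everts to the superfunctor $(M;\alpha_\bullet)\mapsto(F(M);F(\alpha_\bullet))$, and one checks that the braided-monoidal-functor diagram for $\Cl(F)$ is obtained from that for $F$ by conjugating with the same $\tilde{\tau}$ elements used to build the everted braiding $\beta'$; this is a diagram chase of the same flavour as in the proof of Proposition~\ref{prop:evert-monoidal}, using Propositions~\ref{prop:cliff-perm} and~\ref{prop:tauj}. Compatibility with monoidal natural transformations, and with composition of superfunctors, is then immediate from the formulas, so $\Cl$, and hence $\Cl^3$, is a $2$-functor between the relevant $2$-categories.

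Next I would record that $\Cl$ is an equivalence. By Proposition~\ref{prop:evert-fin-order} together with the (unlabelled) proposition just before Proposition~\ref{prop:cliff-eversion}, there is a canonical equivalence $\cA\simeq\Cl^p(\cA)$ that is monoidal and respects whatever (super)braiding or (super)symmetry $\cA$ carries. When $4\mid p$ this already gives $\Cl^p\cong\mathrm{id}$ as $2$-functors; when $p=2$ one has $\zeta_4\in\bk$, so $\fsD=\partial(\fnD)$ is a coboundary (Example~\ref{ex:cobound2}), and the rescaling of Remark~\ref{rmk:convert} (with its evident type~II analogue) absorbs the residual $\fsD$-shift, whence $\Cl^4=(\Cl^2)^2\cong\mathrm{id}$. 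In every case $\Cl^N\cong\mathrm{id}$ as $2$-functors.

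Since $4\mid N$, going once around the orbit then shows that $\Cl^3$ restricts to a $2$-functor from the $2$-category of nice symmetric monoidal $\bZ/q$-supercategories with respect to $\omega$ to the $2$-category of nice type~II supersymmetric monoidal $\bZ/q$-supercategories with respect to $\fsA\omega$, and that $\Cl^{N-3}$ (note $N-3\equiv1\pmod4$) restricts to a $2$-functor the other way; the isomorphisms $\Cl^{N-3}\circ\Cl^3\cong\Cl^N\cong\mathrm{id}$ and $\Cl^3\circ\Cl^{N-3}\cong\Cl^N\cong\mathrm{id}$ exhibit these as mutually quasi-inverse. This is precisely the claimed equivalence of $2$-categories.

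The main obstacle is the $2$-functoriality of $\Cl$ on $1$-morphisms: Proposition~\ref{prop:evert-monoidal} is stated only for a single category together with its (super)braiding, so one genuinely has to verify that eversion sends (super)braided monoidal superfunctors to (super)braided monoidal superfunctors with the factor-system labels shifting consistently, and that it respects composition; this is the real computational work here. A lesser nuisance is the $p=2$ bookkeeping above, which is needed so that the period $N$ of eversion is a multiple of $4$ and the quasi-inverse lands on the correct factor system. Both are routine, but must be carried out carefully for the two composite $2$-functors to be naturally $2$-isomorphic to the identity.
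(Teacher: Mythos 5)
Your proposal is correct and follows the same route the paper intends: iterate Clifford eversion, using the periodicity $\Cl^p \cong \mathrm{id}$ of Proposition~\ref{prop:evert-fin-order} (together with the unlabelled proposition that this equivalence respects the monoidal and (super)symmetric structures) and the factor-system orbit from Proposition~\ref{prop:evert-monoidal}. The paper's own proof consists only of the words ``combining the above results,'' and you have correctly identified the two points the paper leaves implicit but which genuinely need verification, namely the $2$-functoriality of $\Cl$ on $1$- and $2$-morphisms between (super)symmetric monoidal supercategories and the $p=2$ bookkeeping ensuring the period of eversion is a multiple of $4$.
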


Specializing to the canonical choice $\omega=1$, we find:

\begin{corollary} \label{cor:sym-super}
The $2$-category of symmetric monoidal $\bZ/q$-supercategories is equivalent to the $2$-category of type~IIa supersymmetric monoidal $\bZ/q$-supercategories, via Clifford eversion.
\end{corollary}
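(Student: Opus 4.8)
The plan is to deduce the corollary directly from Proposition~\ref{prop:cliff-eversion} by specializing to the trivial factor system. The first step is purely a matter of unwinding definitions: by the remark following the definition of symmetry in \S\ref{ss:braiding}, a ``symmetric monoidal $\bZ/q$-supercategory'' is exactly a symmetric monoidal $\bZ/q$-supercategory with respect to the trivial factor system of Example~\ref{ex:trivial}, which lies in $\Gamma_0$; similarly, by the type~II analogue of Example~\ref{ex:type1a}, a ``type~IIa supersymmetric monoidal $\bZ/q$-supercategory'' is a type~II supersymmetric monoidal $\bZ/q$-supercategory with respect to $\fsA$. Since $\fsA\cdot 1 = \fsA$ as S-factor systems, taking $\omega = 1$ in Proposition~\ref{prop:cliff-eversion} yields an equivalence between precisely these two $2$-categories.

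It then remains only to identify that equivalence with Clifford eversion, which is implicit in the proof of Proposition~\ref{prop:cliff-eversion} and which I would spell out as follows. Because $\fsA$ is odd (Example~\ref{ex:theta}) we have $\fsA\in\Gamma_1$, so Proposition~\ref{prop:evert-monoidal}(b) applied with $\omega = \fsA$ shows that $\Cl$ sends a type~IIa supersymmetric monoidal $\bZ/q$-supercategory to a symmetric monoidal $\bZ/q$-supercategory, using $\fsA\cdot\fsA=\fsA^2=1$. A quasi-inverse is $\Cl^{p-1}$: Proposition~\ref{prop:evert-fin-order} gives a canonical equivalence $\cA\cong\Cl^p(\cA)$, which is monoidal, (super)braided, and (super)symmetric compatible by the proposition preceding Proposition~\ref{prop:cliff-eversion}, so both $\Cl^{p-1}\circ\Cl$ and $\Cl\circ\Cl^{p-1}$ are naturally equivalent to the identity; and tracing the period-$4$ cycle $\omega\to\fsD\fsA\omega\to\fsD\omega\to\fsA\omega\to\omega$ on factor systems (legitimate since $\fsA^2=\fsD^2=1$) shows that $\Cl^{p-1}$ indeed lands in the type~IIa supersymmetric $2$-category when applied to a symmetric one. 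For $p=8$ this is a seven-step trip ending at $\fsA$; for $p=2$ a single step lands at $\fsD\fsA$ instead, but then $\zeta_4\in\bk$ forces $\fsD$ to be the coboundary of Example~\ref{ex:cobound2}, so Remark~\ref{rmk:convert} converts a type~II supersymmetry with respect to $\fsD\fsA$ into one with respect to $\fsA$ in a canonical way.

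I do not expect a real obstacle, since all the substantive content -- that Clifford eversion intertwines the monoidal, braiding, and symmetry data with the prescribed shift of factor system, and that $\Cl^p\cong\mathrm{id}$ compatibly with this structure -- is already established in Propositions~\ref{prop:evert-monoidal} and~\ref{prop:evert-fin-order} together with the monoidality of $\cA\cong\Cl^p(\cA)$. The only points needing a little care are bookkeeping: checking that the functor so obtained is essentially surjective on objects and fully faithful on $1$- and $2$-morphisms (which is exactly what the finite-order statement and Proposition~\ref{prop:evert-monoidal} supply between them), and keeping straight the mild discrepancy between the $p=8$ and $p=2$ cases at the level of intermediate factor systems, which is reconciled by Remark~\ref{rmk:convert}. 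Granting these, the corollary is immediate.
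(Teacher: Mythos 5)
Your first paragraph is exactly the paper's proof: specialize $\omega = 1$ in Proposition~\ref{prop:cliff-eversion}, noting $\fsA \cdot 1 = \fsA$, and observe that the trivial factor system and $\fsA$ recover the unqualified notions of ``symmetric'' and ``type~IIa supersymmetric'' respectively. The remaining two paragraphs re-derive the internals of that proposition --- the direction of $\Cl$ as a functor, its quasi-inverse $\Cl^{p-1}$, and the $p=2$ versus $p=8$ bookkeeping via Remark~\ref{rmk:convert} and Example~\ref{ex:cobound2} --- which is sound but superfluous here, since ``via Clifford eversion'' in the corollary simply names the equivalence already built in \S\ref{s:clifford} and carried through Proposition~\ref{prop:cliff-eversion}.
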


\begin{remark}
We have assumed $4 \mid q$ throughout the above discussion. There are analogous results if $p=2$ and $2 \mid q$, assuming one has sufficiently many roots of unity in $\bk$.
\end{remark}

\subsection{Envelopes} \label{ss:envelopes}

Let $\cC$ be a supercategory. We define its {\bf additive envelope} $\cC^\add$ as follows. The objects are symbols (formal direct sums) $A_1 \oplus \cdots \oplus A_r$ where the $A_i$ are objects of $\cC$. A morphism $f \colon A_1 \oplus \cdots \oplus A_r \to B_1 \oplus \cdots \oplus B_s$ is an element $(f_{i,j}) \in \prod_{i,j} \Hom_\cC(A_i, B_j)$, thought of as a matrix. We say that $f$ is homogeneous of parity $p$ if each $f_{i,j}$ is homogeneous of parity $p$. Given another morphism $g \colon B_1 \oplus \cdots \oplus B_s \to C_1 \oplus \cdots \oplus C_t$, the composition $gf$ is defined by $(gf)_{i,j} = \sum_k g_{k,j} f_{i,k}$. One readily verifies that $\cC^\add$ is an additive supercategory. We have a fully faithful functor $i \colon \cC \to \cC^\add$ that satisfies a universal property: given a superfunctor $j \colon \cC \to \cA$ to an additive supercategory $\cA$, there is a unique (up to isomorphism) superfunctor $j' \colon \cC^\add \to \cA$ such that $j = j'i$. In particular, $i$ is an equivalence if $\cC$ is additive. 

The {\bf Karoubian envelope} of a supercategory $\cA$, denoted $\cA^\kar$, consists of objects $(A,e)$ where $A$ is an object of $\cC$ and $e$ is an even idempotent of $A$. A morphism $(A,e) \to (B,f)$ in $\cA^\kar$ is a morphism $\phi \colon A \to B$ such that $f\phi = \phi = \phi e$. (The identity of $(A,e)$ is given by $e \colon A \to A$.) One easily sees that $\cA^{\kar}$ is a Karoubian supercategory. We have a fully faithful superfunctor $i \colon \cA \to \cA^\kar$ defined by $i(A) = (A,1)$ that satisfies a universal property: given an superfunctor $j \colon \cA \to \cB$ to a Karoubian supercategory $\cB$, there is a unique (up to isomorphism) superfunctor $j' \colon \cA^\kar \to \cB$ such that $j=j'i$. In particular, $i$ is an equivalence if $\cA$ is Karoubian.

\begin{lemma} \label{lem:fully-faithful}
Let $F \colon \cA \to \cB$ be a fully faithful superfunctor of supercategories. Then $F^\add \colon \cA^\add \to \cB^\add$ and $F^\kar \colon \cA^\kar \to \cB^\kar$ are also fully faithful.
\end{lemma}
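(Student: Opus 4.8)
The plan is to observe that in each envelope the Hom-spaces are assembled from the Hom-spaces of the underlying supercategory by two elementary operations — taking a finite product, and cutting down by an even idempotent — and that each of these operations preserves the property that a map of super vector spaces is a bijection. So the whole argument is a routine unwinding of the definitions of $(-)^\add$ and $(-)^\kar$.

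First I would pin down $F^\add$: it is the superfunctor produced by the universal property of the additive envelope applied to $i_{\cB}\circ F\colon\cA\to\cB^\add$, and concretely it sends a formal sum $A_1\oplus\cdots\oplus A_r$ to $F(A_1)\oplus\cdots\oplus F(A_r)$ and a matrix $(f_{ij})$ to $(F(f_{ij}))$. For objects $X=A_1\oplus\cdots\oplus A_r$ and $Y=B_1\oplus\cdots\oplus B_s$ of $\cA^\add$ one has the canonical identification $\Hom_{\cA^\add}(X,Y)=\prod_{i,j}\Hom_\cA(A_i,B_j)$, and similarly on the $\cB$ side; under these identifications $F^\add$ on Hom-spaces is literally the product over $(i,j)$ of the maps $\Hom_\cA(A_i,B_j)\to\Hom_\cB(F(A_i),F(B_j))$. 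Each factor is bijective because $F$ is fully faithful, hence so is the product, so $F^\add$ is fully faithful.

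Next I would treat $F^\kar$. Since $F$ is a superfunctor it preserves parity, so it carries the even idempotent $e$ of an object $(A,e)\in\cA^\kar$ to an even idempotent $F(e)$ of $F(A)$; thus $F^\kar(A,e):=(F(A),F(e))$ is well-defined, and on morphisms $F^\kar$ acts simply as $F$. For objects $(A,e)$ and $(B,f)$, the space $\Hom_{\cA^\kar}((A,e),(B,f))$ is the set of $\phi\in\Hom_\cA(A,B)$ with $f\phi=\phi=\phi e$, i.e.\ the image of the even idempotent $\phi\mapsto f\phi e$ on $\Hom_\cA(A,B)$, and likewise for $\cB$. Faithfulness of $F^\kar$ is then immediate, being a restriction of the injective map $F$. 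For fullness, given $\psi\in\Hom_\cB(F(A),F(B))$ with $F(f)\psi=\psi=\psi F(e)$, I would use fullness of $F$ to write $\psi=F(\phi)$, and then faithfulness of $F$: from $F(f\phi)=F(f)\psi=\psi=F(\phi)$ and $F(\phi e)=\psi F(e)=\psi=F(\phi)$ we get $f\phi=\phi=\phi e$, so $\phi$ is a morphism $(A,e)\to(B,f)$ in $\cA^\kar$ with $F^\kar(\phi)=\psi$.

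I do not anticipate any genuine obstacle: the only points that deserve an explicit word are that $F^\kar$ is well-defined (which rests on superfunctors preserving evenness of idempotents) and the backward direction of the fullness check for $F^\kar$, where faithfulness of $F$ is what lets one transport the idempotent relations $f\phi=\phi=\phi e$ back from $\cB$ to $\cA$. Everything else is bookkeeping with the explicit descriptions of the two envelopes.
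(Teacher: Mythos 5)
Your proof is correct and follows the same route as the paper: unwind the explicit descriptions of $\Hom$-spaces in both envelopes (finite products over index pairs for $(-)^\add$, a subspace cut out by idempotent conditions preserved by $F$ for $(-)^\kar$) and observe that full faithfulness is preserved under each operation. You supply a bit more detail than the paper does — in particular spelling out how faithfulness of $F$ transports the relations $f\phi=\phi=\phi e$ back from $\cB$ to $\cA$ in the fullness check for $F^\kar$ — but the argument is the same.
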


\begin{proof}
This is clear for $F^\add$ by the explicit description for morphisms given above. Similarly, a morphism $(A,e) \to (A',e')$ in the Karoubian envelope is just a morphism $A \to A'$ satisfying some conditions which are preserved by $F$, so $F^\kar$ is also fully faithful.
\end{proof}

Let $\cA$ and $\cB$ be nice $\bZ/q$-supercategories. The product $\cA \boxtimes \cB$ will typically not be additive or Karoubian, and thus not nice. We define $\cA \hattimes \cB$ to be the Karoubian envelope of the additive envelope of $\cA \boxtimes \cB$. This is again a nice $\bZ/q$-supercategory, and $\hattimes$ endows the $2$-category of nice $\bZ/q$-supercategories with a monoidal structure. We note that if $\cA$ is a nice monoidal $\bZ/q$-supercategory then the monoidal product $\cA \boxtimes \cA \to \cA$ canonically extends to a superfunctor $\cA \hattimes \cA \to \cA$, by formal properties of envelopes.

\subsection{2-categorical aspects}

We now explain some 2-categorical aspects of Clifford eversion. As in \S \ref{s:2cat}, we do not intend this discussion to be completely rigorous. Fix a positive integer $q$ divisible by~4, and let $\sC$ be the 2-category of nice $\bZ/q$-supercategories, which we regard as monoidal under the product $\hattimes$. Let $\omega \in \Gamma_0$ be an even S-factor system. Then $\Sigma$ and $\omega$ induce a symmetric monoidal structure on $\sC$, just as $\rT$ and an odd S-factor system do (see \S \ref{s:2cat}). We write $(\sC, \Sigma, \omega)$ to denote the resulting symmetric monoidal 2-category. Of course, we similarly get a symmetric monoidal 2-category from $\rT$ and an odd S-factor system. The following is the main statement we want to explain, though we do not give a complete proof:

\begin{theorem} \label{thm:evert-2cat}
  Clifford eversion defines an equivalence of symmetric monoidal $2$-categories
  \[
    \Cl \colon (\sC, \Sigma, \omega) \to (\sC, \rT, \fsA \omega).
  \]
\end{theorem}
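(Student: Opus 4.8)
The plan is to unwind the data that makes up an equivalence of symmetric monoidal $2$-categories in the sense of \cite{crans} and assemble each piece from constructions already in hand, deferring verification of the (numerous) coherence cells. First I would record that $\Cl$ is a $2$-functor $\sC \to \sC$: niceness of each graded piece $\Cl_{(i;p)}(\cA_i)$ is built into the definition; a superfunctor $F \colon \cA \to \cB$ transports Clifford actions to give $\Cl_n(F) \colon \Cl_n(\cA) \to \Cl_n(\cB)$ compatibly with the periodicity functors $\Phi_n$, hence assembles into $\Cl(F)$; and likewise for $2$-morphisms. That $\Cl$ is a biequivalence is then immediate from Proposition~\ref{prop:evert-fin-order}: the canonical equivalence $\cA \simeq \Cl^p(\cA)$ exhibits $\Cl^{p-1}$ as a two-sided quasi-inverse, so $\Cl$ is essentially surjective and an equivalence on each hom-category.

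Next I would equip $\Cl$ with its monoidal structure. The pseudonatural equivalence
\[
  \chi_{\cA,\cB} \colon \Cl(\cA) \hattimes \Cl(\cB) \longrightarrow \Cl(\cA \hattimes \cB)
\]
is defined exactly as the monoidal product on $\Cl(\cA)$ was: a $\Cl_{r'}$-module $(M;\mu_\bullet)$ in $\cA_r$ and a $\Cl_{s'}$-module $(N;\nu_\bullet)$ in $\cB_s$ are sent to the $\Cl_{r'+s'}$-module $(M \otimes N;\mu_\bullet,\nu_\bullet)$ in $(\cA\hattimes\cB)_{r+s}$, after passing to additive and Karoubian envelopes. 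The $2$-cells expressing compatibility of $\chi$ with the periodicity equivalences, together with the monoidal-functor coherence data for $\chi$, are supplied (where not already strict) by the interleaving isomorphisms $j_{r',p+s'}(1,\tilde{\tau}_{p,s'})$ of Proposition~\ref{prop:cliff-perm} — the very $2$-cells already used to make $\Cl(\cA)$ monoidal — and these are even because $p$ is even. The bulk of the monoidal-functor axioms for $(\Cl,\chi)$ then reduce, after unwinding, to the braid-group identities among the $\tilde{\tau}$ elements recorded in Propositions~\ref{prop:tau-symmetric}, \ref{prop:spin-conj}, and~\ref{prop:tauj}.

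For the symmetry compatibility I would produce the invertible modification filling the square comparing $\chi_{\cB,\cA}\circ\rT_{\Cl(\cA),\Cl(\cB)}$ with $\Cl(\Sigma_{\cA,\cB})\circ\chi_{\cA,\cB}$. On objects this comparison is forced: the first composite sends $(M;\mu_\bullet),(N;\nu_\bullet)$ to $(\Pi^{rs}(N)\otimes M;\nu_\bullet,\mu_\bullet)$ and the second to $(N\otimes M;\mu_\bullet,\nu_\bullet)$, so the modification must be the $\Pi$-shift $\xi^{0,rs}_N$ together with the reordering of Clifford generators given by the $\tilde{\tau}^{-1}_{r',s'}$-action of Proposition~\ref{prop:cliff-perm} — precisely the recipe defining $\beta'$ in the paragraph preceding Proposition~\ref{prop:evert-monoidal}. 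Proposition~\ref{prop:evert-monoidal}(a) is then exactly the assertion that, under this identification, a symmetry for $\omega\in\Gamma_0$ on the $\Sigma$-side is carried to a type~II supersymmetry for $\fsD\fsA\omega$; translating to the $\rT$-side via Proposition~\ref{prop:type12} (whose $\fsD$ cancels the one here, since $\fsD^2$ is trivial) shows the resulting braiding and syllepsis data on $\Cl(\cA)$ match the $\rT$-structure with factor system $\fsA\omega$, the symmetry condition (as opposed to a mere syllepsis) being ensured by the symmetry of $\fsA$ (Example~\ref{ex:theta}).

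The hard part — and the reason this proof is only a sketch — is the coherence bookkeeping. An equivalence of symmetric monoidal $2$-categories in the sense of \cite{crans} carries a long list of further modifications (for the interaction of $\chi$ with the monoidal structure of $\sC$, and of $\chi$ with the braiding and with the syllepsis $v$) subject to a still longer list of pasting-diagram axioms, and each must be checked against the explicit formulas for $\Cl$, $\chi$, the interleaving $2$-cells, and the $\tilde{\tau}$ elements. Individually each axiom collapses to a sign identity or a braid-word identity of the kind proven above, but there are many of them and organizing the verification cleanly is the genuine obstacle; I do not carry it out in full. What is completely pinned down — and so constitutes the essential content of the statement — is the object-level data: which symmetry ($\Sigma$ versus $\rT$) and which factor system ($\omega$ versus $\fsA\omega$) the eversion lands in, by Propositions~\ref{prop:evert-fin-order} and~\ref{prop:evert-monoidal}.
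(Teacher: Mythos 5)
Your proposal follows the same blueprint as the paper: establish that $\Cl$ is a biequivalence via Proposition~\ref{prop:evert-fin-order}, define the comparison functor $\Cl(\cA)\hattimes\Cl(\cB)\to\Cl(\cA\hattimes\cB)$ by concatenating Clifford generators, produce the modification for the symmetry square via the $\tilde{\tau}$-action of Proposition~\ref{prop:cliff-perm}, invoke Proposition~\ref{prop:evert-monoidal} (composed with Proposition~\ref{prop:type12}) to pin down the output factor system as $\fsA\omega$, and acknowledge that the remaining coherence cells and pasting axioms are left unverified — which is exactly the honest gap the paper itself flags in its Open Problems list.

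There is one genuine gap in your sketch that the paper fills and you do not: you assert that $\chi_{\cA,\cB}\colon\Cl(\cA)\hattimes\Cl(\cB)\to\Cl(\cA\hattimes\cB)$ is an equivalence, but this is not obvious and requires proof. The domain has passed through additive and Karoubian envelopes of $\Cl(\cA)\boxtimes\Cl(\cB)$, while the codomain is $\Cl$ of the envelope of $\cA\boxtimes\cB$, and it is not formal that these agree. The paper's first lemma handles this in two steps: first a direct computation shows that the functor $j^{n,m}_{\cA,\cB}\colon\Cl_n(\cA)\boxtimes\Cl_m(\cB)\to\Cl_{n+m}(\cA\hattimes\cB)$ is fully faithful (by decomposing a morphism $\sum_k g_k\otimes h_k$ along linearly independent tensor factors and checking compatibility with each $\mu_i$ and $\nu_j$ separately), whence $i_{\cA,\cB}$ is fully faithful by Lemma~\ref{lem:fully-faithful}; then essential surjectivity is extracted by a two-out-of-three argument using the diagram identifying $i^p_{\cA,\cB}$ with the identity under the periodicity equivalence $\Cl^p\simeq\id$ of Proposition~\ref{prop:evert-fin-order}. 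Without this, "equivalence" in your first sentence on $\chi$ is an unproven assertion, and the theorem's statement about an equivalence of monoidal $2$-categories cannot get off the ground.
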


\begin{lemma}
Let $\cA$ and $\cB$ be nice $\bZ/q$-supercategories. We have a canonical equivalence
\[
  i_{\cA,\cB} \colon \Cl(\cA) \hattimes \Cl(\cB) \to \Cl(\cA \hattimes \cB).
\]
\end{lemma}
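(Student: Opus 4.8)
The plan is to build the equivalence $i_{\cA,\cB}$ from the definitions of both sides and then check it is an equivalence using the envelope machinery. First I would unwind what each side is at the level of a fixed homogeneous degree $i \in \bZ/q$. On the right, $\Cl(\cA \hattimes \cB)_i = \Cl_{(i;p)}((\cA \hattimes \cB)_i)$, and $(\cA \hattimes \cB)_i$ is the Karoubian envelope of the additive envelope of $\bigsqcup_{j+k=i} \cA_j \veeotimes \cB_k$. On the left, $(\Cl(\cA) \hattimes \Cl(\cB))_i$ is the Karoubian envelope of the additive envelope of $\bigsqcup_{j+k=i} \Cl(\cA)_j \veeotimes \Cl(\cB)_k = \bigsqcup_{j+k=i} \Cl_{(j;p)}(\cA_j) \veeotimes \Cl_{(k;p)}(\cB_k)$. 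So it suffices to produce, for each pair $(j,k)$, a fully faithful and essentially surjective-up-to-envelopes functor $\Cl_{(j;p)}(\cA_j) \veeotimes \Cl_{(k;p)}(\cB_k) \to \Cl_{(i;p)}((\cA_j \veeotimes \cB_k))$, and then apply the functoriality of the additive and Karoubian envelopes (Lemma~\ref{lem:fully-faithful}) to conclude.

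The core functor is the obvious one: given a $\Cl_m$-module $(M; \mu_\bullet)$ in $\cA_j$ with $m \equiv j \pmod p$ and a $\Cl_n$-module $(N; \nu_\bullet)$ in $\cB_k$ with $n \equiv k \pmod p$, send $(M;\mu_\bullet) \boxtimes (N;\nu_\bullet)$ to the $\Cl_{m+n}$-module $(M \boxtimes N; \mu_1 \otimes 1, \ldots, \mu_m \otimes 1, 1 \otimes \nu_1, \ldots, 1 \otimes \nu_n)$ in $\cA_j \veeotimes \cB_k$; note $m+n \equiv i \pmod p$ since $p$ divides $q$ and $j+k=i$. This is well-defined on the colimit categories because concatenating the periodicity generators $\lambda_\bullet$ of $\Cl_p$ onto either list commutes (up to even isomorphism, by Proposition~\ref{prop:cliff-perm} and the fact that $p$ is even) with the structure maps $\Phi$; this is essentially the same compatibility already verified in the construction of the monoidal structure on $\Cl(\cA)$. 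The key point making the functor fully faithful is that $\Hom$ in $\Cl_r(\cA_j \veeotimes \cB_k)$ between two such induced modules, after passing far enough in the colimit, is computed inside $\Hom_{\cA_j \veeotimes \cB_k}(M \boxtimes N, M' \boxtimes N') = \Hom_{\cA_j}(M,M') \otimes \Hom_{\cB_k}(N,N')$, with the $\Cl$-equivariance condition factoring as equivariance in each variable separately — exactly matching $\Hom$ in the product category $\Cl_{(j;p)}(\cA_j) \veeotimes \Cl_{(k;p)}(\cB_k)$. For essential surjectivity one notes any $\Cl_r$-module in $\cA_j \veeotimes \cB_k$ is built on an object of the form $M \boxtimes N$ (objects of $\cA_j \veeotimes \cB_k$ are literally pairs), and using Proposition~\ref{prop:morita} one may, after replacing $r$ by $r + p\ell$ for large $\ell$, split the Clifford action into a part acting through $M$ and a part acting through $N$; this is the standard argument that $\Cl_{(a+b;p)}$ of a product is the product of $\Cl_{(a;p)}$ and $\Cl_{(b;p)}$.

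Then I would assemble these over all $(j,k)$ with $j+k=i$ to get a fully faithful functor $\bigsqcup_{j+k=i} \Cl_{(j;p)}(\cA_j) \veeotimes \Cl_{(k;p)}(\cB_k) \to \Cl_{(i;p)}\!\left(\bigsqcup_{j+k=i} \cA_j \veeotimes \cB_k\right)$, using that $\Cl_n$ of a disjoint union of linear categories is the disjoint union of the $\Cl_n$'s, and that the colimit defining $\Cl_{(i;p)}$ commutes with this disjoint union. Applying $(-)^{\add}$ and then $(-)^{\kar}$, which preserve full faithfulness by Lemma~\ref{lem:fully-faithful} and send equivalences to equivalences, and using that both envelope operations are idempotent, yields the desired equivalence $i_{\cA,\cB}$ after taking the union over $i \in \bZ/q$. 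Canonicity is automatic since every functor in sight was defined by an explicit formula with no choices (the only would-be choice, the $\Cl_p$-module $U_p$ inducing periodicity, was fixed once and for all in \S\ref{s:clifford}).

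The main obstacle I expect is essential surjectivity onto the Karoubian envelope: one must argue that an arbitrary idempotent-summand of an additive direct sum of induced $\Cl$-modules in $\cA_j \veeotimes \cB_k$ already lies (up to isomorphism) in the essential image, which requires knowing that the essential image is closed under direct sums and retracts — the former is easy, the latter follows formally once one has full faithfulness onto a Karoubian target, but one must be careful that $\Cl_{(i;p)}(\cA_j \veeotimes \cB_k)$ is itself not yet Karoubian, so the retract-closure argument has to be deferred until after applying $(-)^{\kar}$. A secondary nuisance is bookkeeping the various colimits: one must check the functor respects the transition maps $\Phi$ strictly enough (or up to coherent even isomorphism) that passing to the $2$-colimit is legitimate; this is routine given that all the relevant comparison isomorphisms come from Proposition~\ref{prop:cliff-perm} applied to even permutations, hence are even, but it is the sort of thing that is easy to state and tedious to verify in full.
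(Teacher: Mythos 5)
Your construction of the core functor, and the full-faithfulness check, match the paper's. The gap is in essential surjectivity, and the paper takes a fundamentally different route there that you should be aware of: it never attempts a direct argument. Instead it uses a bootstrap. It defines $j^{n,m}_{\cA,\cB} \colon \Cl_n(\cA) \boxtimes \Cl_m(\cB) \to \Cl_{n+m}(\cA \hattimes \cB)$ (note the target is the already-enveloped $\cA \hattimes \cB$, not $\cA_j \veeotimes \cB_k$), shows this is fully faithful, and then passes to envelopes \emph{only on the source} to get $i_{\cA,\cB}$ fully faithful. Essential surjectivity is then extracted from Proposition~\ref{prop:evert-fin-order}: the $p$-fold iterate $i^p_{\cA,\cB}$ is, under the identification $\Cl^p \cong \id$, the identity functor, and it factors as $\Cl^{p-1}(i_{\cA,\cB}) \circ i^{p-1}_{\Cl(\cA),\Cl(\cB)}$ with both factors fully faithful; since their composite is an equivalence, each factor is an equivalence, and since $\Cl^{p-1}$ is a $2$-equivalence, $i_{\cA,\cB}$ is one too.

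Your proposed direct proof of essential surjectivity has two real problems. First, the reduction to the $\veeotimes$-pieces is unsound: you want to show a functor into $\Cl_{(i;p)}\bigl(\bigsqcup_{j+k=i}\cA_j \veeotimes \cB_k\bigr)$ is an equivalence-after-envelopes, but $\Cl_n(-)$ does not commute with $(-)^{\add}$ or $(-)^{\kar}$. A Clifford structure on a formal direct sum $A_1 \oplus \cdots \oplus A_r$ in $\cC^{\add}$ is a matrix of odd morphisms and need not be block-diagonal, and a Clifford structure on a retract need not extend to the ambient object; so $\Cl_n(\cC)^{\add,\kar}$ and $\Cl_n(\cC^{\add,\kar})$ are genuinely different, and applying envelopes at the end does not recover $\Cl(\cA \hattimes \cB)_i$. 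Second, even at the $\veeotimes$-piece level, the assertion that one can ``split the Clifford action into a part acting through $M$ and a part acting through $N$'' is not the standard argument and is not true as stated: an arbitrary $\Cl_r$-structure on $M \boxtimes N$ in $\cA_j \veeotimes \cB_k$ is a map $\Cl_r \to \End(M) \otimes \End(N)$ with no reason to factor. Your appeal to Proposition~\ref{prop:morita} does not rescue this, because Morita periodicity requires the ambient category to be Karoubian and $\cA_j \veeotimes \cB_k$ is not; and even after Karoubianizing, one is left with a residual $\Cl_{n_0}$-structure ($0 < n_0 < p$) on a retract of a direct sum of $M_i \boxtimes N_i$'s, for which no splitting is available. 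The bootstrap via $\Cl^p \cong \id$ is precisely what circumvents this difficulty, and I would recommend adopting it.
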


\begin{proof}
Let
\begin{displaymath}
j^{n,m}_{\cA,\cB} \colon \Cl_n(\cA) \boxtimes \Cl_m(\cB) \to \Cl_{n+m}(\cA \hattimes \cB)
\end{displaymath}
be the functor defined by
\begin{displaymath}
(M; \mu_1, \ldots, \mu_n) \boxtimes (N; \nu_1, \ldots, \nu_m) \mapsto (M \boxtimes N; \mu_1, \ldots, \mu_n, \nu_1, \ldots, \nu_m),
\end{displaymath}
where here we write $\mu_i$ in place of $\mu_i \otimes \id$ on the right side, and similarly for $\nu_j$.

We claim that $j^{n,m}_{\cA,\cB}$ is fully faithful. Faithfullness is clear. Let $A,A'$ be objects of $\Cl_n(\cA)$ and let $B,B'$ be objects of $\Cl_m(\cB)$. A homogeneous morphism $A \otimes B \to A' \otimes B'$ in $\Cl_{n+m}(\cA \hattimes \cB)$ is an element $f = \sum_k g_k \otimes h_k \in \Hom_\cA(A,A') \otimes \Hom_\cB(B,B')$, where the $g_k$ are homogeneous and linearly independent, and similarly for the $h_k$, such that
\[
  f (\mu_i \otimes 1) = (-1)^{|f|} (\mu'_i \otimes 1) f \qquad \text{and} \qquad f(1 \otimes \nu_j) = (-1)^{|f|} (1 \otimes \nu'_j) f
\]
for all $i,j$. The first condition becomes $\sum_k (-1)^{|h_k|} g_k \mu_i \otimes h_k = \sum_k (-1)^{|f|} \mu'_i g_k \otimes h_k$, so by linear independence of the $h_k$, we deduce that $g_k\mu_i = (-1)^{|g_k|} \mu'_i g_k$ for all $i,k$. (Note: $|f|=|g_k|+|h_k|$.) Similarly, we deduce that $h_k \nu_j = (-1)^{|h_k|} \nu'_j h_k$ for all $j,k$. Hence $j^{n,m}_{\cA,\cB}$ is full. Thus each $g_k$ defines a morphism $A \to A'$ in $\Cl_n(\cA)$ and each $h_k$ defines a morphism $B \to B'$ in $\Cl_m(\cB)$, and so $f$ is in the image of $j^{n,m}_{\cA,\cB}$. This proves the claim.

One easily sees (using an argument we have already made) that the $j^{n,m}_{\cA,\cB}$ are compatible with the periodicity equivalences. They therefore induce a superfunctor
\begin{displaymath}
j_{\cA,\cB} \colon \Cl(\cA) \boxtimes \Cl(\cB) \to \Cl(\cA \hattimes \cB)
\end{displaymath}
that is also fully faithful. Let
\begin{displaymath}
i_{\cA,\cB} \colon \Cl(\cA) \hattimes \Cl(\cB) \to \Cl(\cA \hattimes \cB)
\end{displaymath}
be the functor induced by $j_{\cA,\cB}$ by taking envelopes; the target does not change since it is already additive and Karoubian. By Lemma~\ref{lem:fully-faithful}, we see that $i_{\cA,\cB}$ is fully faithful. Iterating this construction, we obtain a fully faithful functor
\begin{displaymath}
i^k_{\cA,\cB} \colon \Cl^k(\cA) \hattimes \Cl^k(\cB) \to \Cl^k(\cA \hattimes \cB)
\end{displaymath}
for any $k \ge 0$. Consider the diagram
\begin{displaymath}
\xymatrix{
\Cl^{p-1}(\Cl(\cA)) \hattimes \Cl^{p-1}(\Cl(\cB)) \ar[rd]_-{i^{p-1}_{\Cl(\cA),\Cl(\cB)}} \ar[rr]^{i^p_{\cA,\cB}} &&
\Cl^{p-1}(\Cl(\cA \hattimes \cB)) \\
& \Cl^{p-1}(\Cl(\cA) \hattimes \Cl(\cB)) \ar[ru]_-{\Cl^{p-1}(i_{\cA,\cB})} }.
\end{displaymath}
This diagram commutes, essentially by definition. The two categories in the top row are each identified with $\cA \hattimes \cB$ by Proposition~\ref{prop:evert-fin-order}, and $i^p_{\cA,\cB}$ is easily seen to be identified with the identity functor; thus the top line is an equivalence. Since the two diagonal functors are fully faithful and their composite is an equivalence, each is an equivalence. Since $\Cl$ is an equivalence of 2-categories, again by Proposition~\ref{prop:evert-fin-order}, and $\Cl^{p-1}(i_{\cA,\cB})$ is an equivalence, it follows that $i_{\cA,\cB}$ is an equivalence.
\end{proof}

\begin{lemma}
The diagram
\begin{displaymath}
\xymatrix@C=6em{
\Cl(\cA) \hattimes \Cl(\cB) \ar[r]^{i_{\cA,\cB}} \ar[d]_{\rT_{\Cl(\cA),\Cl(\cB)}} & \Cl(\cA \hattimes \cB) \ar[d]^{\Cl(\Sigma_{\cA,\cB})} \\
\Cl(\cB) \hattimes \Cl(\cA) \ar[r]^{i_{\cB,\cA}} & \Cl(\cB \hattimes \cA) }
\end{displaymath}
commutes up to a canonical even isomorphism. 
\end{lemma}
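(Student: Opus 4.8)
The plan is to check the square on objects and morphisms lying in the image of the functors $j^{n,m}_{\cA,\cB}\colon \Cl_n(\cA)\boxtimes\Cl_m(\cB)\to\Cl_{n+m}(\cA\hattimes\cB)$ out of which $i_{\cA,\cB}$ is assembled, and then let the formal behaviour of the additive and Karoubian envelopes and of the periodicity $2$-colimits do the rest. So fix homogeneous objects $(M;\mu_1,\dots,\mu_n)$ of $\Cl(\cA)$ and $(N;\nu_1,\dots,\nu_m)$ of $\Cl(\cB)$ of degrees $r,s\in\bZ/q$, with $n\equiv r$ and $m\equiv s\pmod p$. Chasing the diagram, the composite $\Cl(\Sigma_{\cA,\cB})\circ i_{\cA,\cB}$ carries $(M;\mu_\bullet)\boxtimes(N;\nu_\bullet)$ to
\[
(N\boxtimes M;\ \id_N\otimes\mu_1,\dots,\id_N\otimes\mu_n,\ \nu_1\otimes\id_M,\dots,\nu_m\otimes\id_M),
\]
the sign in $\Sigma(f\otimes g)=(-1)^{|f||g|}g\otimes f$ contributing nothing on the generators since one tensor factor is an identity, whereas $i_{\cB,\cA}\circ\rT_{\Cl(\cA),\Cl(\cB)}$ carries it to
\[
(\Pi^{rs}(N)\boxtimes M;\ \Pi^{rs}(\nu_1)\otimes\id_M,\dots,\Pi^{rs}(\nu_m)\otimes\id_M,\ \id_{\Pi^{rs}(N)}\otimes\mu_1,\dots,\id_{\Pi^{rs}(N)}\otimes\mu_n),
\]
using the $\Pi$-structure $\Pi(N;\nu_\bullet)=(\Pi(N);\Pi(\nu_\bullet))$ on $\Cl(\cB)$ and the $\Pi$-structure $\xi_{N\boxtimes M}=\xi_N\boxtimes\id_M$ on $\cB\hattimes\cA$.

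Next I would build the comparison isomorphism between these two $\Cl_{n+m}$-modules as a two-step composite. First, Proposition~\ref{prop:cliff-perm} applied to $\tilde{\tau}_{m,n}\in\wt{\fS}_{m+n}$, acting on $N\boxtimes M$ through the displayed list of $n+m$ generators, gives a canonical isomorphism
\[
(N\boxtimes M;\ \id_N\otimes\mu_\bullet,\ \nu_\bullet\otimes\id_M)\ \xrightarrow{\sim}\ (N\boxtimes M;\ \nu_\bullet\otimes\id_M,\ \id_N\otimes\mu_\bullet)
\]
of parity $\ell(\tau_{m,n})\equiv mn\equiv rs\pmod 2$, which moves the $\nu$-block in front of the $\mu$-block. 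Second, the map $\xi^{0,rs}_N\boxtimes\id_M$, of parity $rs$ (here $\xi^{0,rs}$ is the iterated $\Pi$-isomorphism of \S\ref{ss:pi}), is a morphism of $\Cl_{n+m}$-modules from the reordered module onto the target above; this is a short check using the composition sign rule in product categories together with $\xi^{0,rs}_N\nu_j=(-1)^{rs}\Pi^{rs}(\nu_j)\xi^{0,rs}_N$, after which the required compatibilities with both the $\nu\otimes\id$- and the $\id\otimes\mu$-generators hold on the nose. The composite of the two steps has parity $2rs\equiv 0$, so it is an \emph{even} isomorphism, as the lemma demands, and it is invertible because each step is.

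Then I would verify naturality of this comparison isomorphism in $(M;\mu_\bullet)$ and $(N;\nu_\bullet)$: for homogeneous $f\colon(M;\mu_\bullet)\to(M';\mu'_\bullet)$ and $g\colon(N;\nu_\bullet)\to(N';\nu'_\bullet)$, the two images of $f\otimes g$ must intertwine the isomorphisms just constructed. Here the factor $(-1)^{|f||g|}$ in $\rT(f\otimes g)=(-1)^{|f||g|}\Pi^{rs}(g)\otimes f$, the corresponding one in $\Sigma(f\otimes g)$, the sign from $\Pi$ acting on morphisms, the composition sign in $\boxtimes$, and the parity of the $\wt{\fS}$-action all intervene, but they cancel; it is the same style of bookkeeping as in the naturality checks of Proposition~\ref{prop:evert-monoidal} and in the construction of $\otimes$ on $\Cl(\cA)$. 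Finally I would note that $j^{n,m}_{\cA,\cB}$, the permutation isomorphisms of Proposition~\ref{prop:cliff-perm}, and the maps $\xi^{0,rs}_N\boxtimes\id_M$ are all compatible with the periodicity equivalences $\Phi$ (the $\Cl_p$-generators are always appended at the end, just as in the definition of the monoidal structure on $\Cl(\cA)$), so the comparison isomorphism descends to the $2$-colimits defining $\Cl$ and then extends through the additive and Karoubian envelopes by their universal properties, giving the claimed $2$-isomorphism of $2$-functors $\Cl(\cA)\hattimes\Cl(\cB)\to\Cl(\cB\hattimes\cA)$.

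I expect the genuine obstacle to be precisely this last round of naturality and coherence bookkeeping: checking that $(\xi^{0,rs}_N\boxtimes\id_M)\circ\tilde{\tau}_{m,n}$ is natural in the Clifford-module arguments, and that the resulting family of $2$-isomorphisms is ``canonical'' in whatever sense the (deliberately informal) framework of \S\ref{s:2cat} requires; the object-level identification and the descent through envelopes and $2$-colimits are routine given the results already cited.
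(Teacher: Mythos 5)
Your proof is correct and follows essentially the same route as the paper: identify the two images on the building blocks $j^{n,m}_{\cA,\cB}(\Cl_n(\cA)\boxtimes\Cl_m(\cB))$, construct the comparison isomorphism via a $\tilde{\tau}$ element from Proposition~\ref{prop:cliff-perm} composed with the iterated $\xi$, note that the parities cancel to give an even isomorphism, and then let naturality of the $\wt{\fS}$-action and the formal properties of envelopes and $2$-colimits carry the statement through. The paper phrases the comparison more compactly, observing only that $\tilde{\tau}^{-1}_{m,n}$ has parity $mn$ and that this feeds into the displayed $\Pi^{mn}$ to produce an even map; you spell out the $\xi^{0,rs}_N\boxtimes\id_M$ step and explicitly verify the intertwining identities, which is a slightly more transparent presentation of the same idea. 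The only substantive divergence is the choice of lift: after accounting for the $(m,n)$-labeling conventions, you use $\tilde{\tau}_{m,n}$ where the paper uses $\tilde{\tau}^{-1}_{m,n}$, and these differ by the central sign $c^{\binom{m}{2}\binom{n}{2}}$ by Proposition~\ref{prop:tau-symmetric}; either choice produces a canonical even isomorphism as the lemma requires, though if one wished to pin down the exact factor system in Theorem~\ref{thm:evert-2cat} this sign would become relevant and should be fixed consistently with Proposition~\ref{prop:evert-monoidal}.
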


\begin{proof}
Let $(M; \mu_{\bullet})$ be $(N; \nu_{\bullet})$ be objects of $\Cl_m(\cA)$ and $\Cl_n(\cB)$. Then
\begin{displaymath}
i_{\cA,\cB}((M;\mu_{\bullet}) \otimes (N; \nu_{\bullet})) = (M \otimes N; \mu_{\bullet}, \nu_{\bullet})
\end{displaymath}
where here we have written $\mu_i$ in place of $\mu_i \otimes 1$ and $\nu_j$ in place of $1 \otimes \nu_j$. The functor $\Sigma_{\cA,\cB} \colon \cA \hattimes \cB \to \cB \hattimes \cA$ induces a functor $\Cl_{m+n}(\Sigma_{\cA,\cB}) \colon \Cl_{m+n}(\cA \hattimes \cB) \to \Cl_{m+n}(\cB \hattimes \cA)$ that takes the above object to $(N \otimes M; \mu_{\bullet}, \nu_{\bullet})$, where now $\mu_i$ means $1 \otimes \mu_i$ and $\nu_j$ means $\nu_j \otimes 1$. Note that the order of the $\mu$ and $\nu$ does not change, since $\Cl_{m+n}(\Sigma_{\cA,\cB})$ simply applies $\Sigma_{\cA,\cB}$ to all components of the input. On the other hand,
\begin{displaymath}
\rT((M; \mu_{\bullet}) \otimes (N; \nu_{\bullet})) = \Pi^{mn}(N; \nu_{\bullet}) \otimes (M; \mu_{\bullet}).
\end{displaymath}
Applying $i_{\cB,\cA}$ to this yields
\begin{displaymath}
\Pi^{mn}(N \otimes M; \nu_{\bullet}, \mu_{\bullet}).
\end{displaymath}
We have a natural even degree isomorphism
\begin{displaymath}
(N \otimes M; \mu_{\bullet}, \nu_{\bullet}) \to \Pi^{mn}(N \otimes M; \nu_{\bullet}, \mu_{\bullet})
\end{displaymath}
coming from Proposition~\ref{prop:cliff-perm} using the element $\tilde{\tau}^{-1}_{m,n}$ defined in \S\ref{ss:calculations}: note that $\tilde{\tau}^{-1}_{m,n}$ has parity $mn$.
\end{proof}

We know that Clifford eversion is an equivalence of 2-categories $\sC \to \sC$. The main content of Theorem~\ref{thm:evert-2cat} is that it admits a symmetric monoidal structure. A definition of braided monoidal 2-functor is given in \cite[\S 4]{baez}, though this definition is not the most general possible, and only concerns braidings (not symmetries or syllepses). The above lemmas define much of the structure required for $\Cl$ to be a braided monoidal 2-functor (though we have not discussed the modification $\alpha$ in \cite[Definition~16]{baez}). To complete the proof that $\Cl$ is braided monoidal, we would need to verify the quite complicated conditions in loc.\ cit. We leave this (together with the problem of upgrading from braided monoidal to symmetric monoidal) as an open problem. We note that the particular factor systems appearing in Theorem~\ref{thm:evert-2cat} have not appeared in the above discussion; however, we know these factor systems must be the correct ones due to Proposition~\ref{prop:evert-monoidal}: precisely, if $\cA$ is a symmetric monoidal object in $(\sC, \Sigma, \omega)$, i.e., a symmetric monoidal supercategory with respect to $\omega$, then by Proposition~\ref{prop:evert-monoidal}, $\Cl(\cA)$ is a type~II supersymmetric monoidal supercategory with respect to $\fsA \fsD \omega$, and therefore (can be converted to) a type~I supersymmetric monoidal supercategory with respect to $\fsA \omega$ (via Proposition~\ref{prop:type12}), i.e., a symmetric monoidal object of $(\sC, \rT, \fsA \omega)$.

\subsection{Grothendieck groups and eversion}

Fix a $\Cl_1$-module $(\bk^{1|1}, \xi)$ in $\SVec$. Explicitly, let $e_0$ be a basis vector for $\bk^{1|0}$ and let $e_1$ be a basis vector for $\bk^{0|1}$ and let $\xi$ be the map given by $e_0 \mapsto e_1$ and $e_1 \mapsto 2e_0$. 

Let $\cA$ be a nice supercategory. For an object $M$ of $\cA$, let $F(M)$ be the $\Cl_1$-module $(M \otimes \bk^{1 \vert 1}, 1 \otimes \xi)$ in $\cA$; this defines a functor $F \colon \cA \to \Cl_1(\cA)$. Let $G \colon \Cl_1(\cA) \to \cA$ be the forgetful functor, i.e., $G(M, \alpha) = M$. 

\begin{proposition} \label{prop:Cl1}
We have natural even isomorphisms $G \circ F = \id \oplus \Pi$ and $F \circ G = \id \oplus \Pi$.
\end{proposition}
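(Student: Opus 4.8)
The claim is essentially the categorification of the observation that $\Cl_1 \otimes_{\bk} \bk^{1|1} \cong \Cl_1 \oplus \Pi(\Cl_1)$ as $\Cl_1$-modules (where the right $\Cl_1$-action comes from multiplication), so the plan is to exhibit the two isomorphisms concretely and check naturality. I will treat $F \circ G$ first, as the other follows from it.

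For $F \circ G$: given a $\Cl_1$-module $(M,\alpha)$ in $\cA$, we have $(F\circ G)(M,\alpha) = (M \otimes \bk^{1|1}, 1 \otimes \xi)$. I want an even isomorphism $(M \otimes \bk^{1|1}, 1\otimes\xi) \cong (M,\alpha) \oplus \Pi(M,\alpha)$. Write $M \otimes \bk^{1|1} = (M \otimes \bk e_0) \oplus (M \otimes \bk e_1)$; the first summand is an even copy of $M$ and the second is $\Pi(M)$ (up to the canonical even identification $\Pi(M) \cong M \otimes \Pi(\bk)$ from \S\ref{ss:pi}, noting $\bk e_1 = \bk^{0|1} = \Pi(\bk)$). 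The operator $1 \otimes \xi$ sends $M\otimes\bk e_0$ to $M \otimes \bk e_1$ and vice versa (with a factor of $2$). This is \emph{not} the same as the $\alpha$-action, but I can twist it: define $\theta \colon M \otimes \bk^{1|1} \to M \otimes \bk^{1|1}$ to be the identity on $M \otimes \bk e_0$ and $\tfrac12 \alpha \otimes 1$ (suitably interpreted, landing back in $M \otimes \bk e_1$) on $M \otimes \bk e_1$ — i.e., an even automorphism built from $\alpha$ and $\xi$. Then one checks $\theta \circ (1 \otimes \xi) \circ \theta^{-1}$ agrees with the $\Cl_1$-structure that is $\alpha$ on the $M \otimes \bk e_0$ part and $\Pi(\alpha)$ on the $M \otimes \bk e_1$ part. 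Concretely, the isomorphism $(M \otimes \bk^{1|1}, 1\otimes\xi) \to (M,\alpha) \oplus \Pi(M,\alpha)$ is: on $m \otimes e_0$ send it to $(m, 0)$, and on $m \otimes e_1$ send it to $(0, \xi_M(\tfrac12 \alpha m))$ where I use $\xi_M \colon M \to \Pi(M)$ and the $\tfrac12$ makes it compatible with $\alpha^2 = 2$. I would verify directly that this intertwines $1 \otimes \xi$ with the direct-sum $\Cl_1$-structure (using $\Pi(\alpha)\circ\xi_M = -\xi_M \circ \alpha$ from \S\ref{ss:pi}, which accounts for signs), that it is even, and that it is natural in $(M,\alpha)$ — naturality is immediate since every morphism of $\Cl_1$-modules commutes (up to sign) with $\alpha$ and is built tensor-functorially.

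For $G \circ F$: here $(G \circ F)(M) = M \otimes \bk^{1|1} = M \oplus \Pi(M)$ directly, using $\bk^{1|1} = \bk \oplus \Pi(\bk)$ and the canonical even isomorphisms $M \otimes \bk \cong M$, $M \otimes \Pi(\bk) \cong \Pi(M)$ from \S\ref{ss:pi} (the convention on $\xi_r$ recalled in \S\ref{ss:pi}/\S 6.2 is precisely set up so these are canonical and natural). So $G \circ F \cong \id \oplus \Pi$ with essentially no work.

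I do not anticipate a genuine obstacle here — the statement is labelled "clear" in spirit — but the one place to be careful is the bookkeeping of the factor $\tfrac12$ and the signs in the $F \circ G$ isomorphism: one must use the normalization $\alpha^2 = 2$ in $\Cl_1$ (as fixed in \S\ref{ss:envelopes}'s Clifford algebra convention), the relation $\Pi(\xi_X) = -\xi_{\Pi(X)}$, and the rule for how $\Pi$ acts on morphisms, and confirm that these conspire to make $\theta$-conjugation of $1 \otimes \xi$ land on exactly $\alpha \oplus \Pi(\alpha)$ rather than, say, $\alpha \oplus (-\Pi(\alpha))$ (which would still be an isomorphic $\Cl_1$-module but would make the statement read less cleanly). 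That sign check is the only thing requiring attention; everything else is formal.
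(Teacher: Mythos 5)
Your treatment of $G\circ F$ is fine and matches the paper's. The $F\circ G$ part, however, has a genuine gap, and it is a parity error with a structural root. The explicit map you write down, $m\otimes e_0 \mapsto (m,0)$ and $m\otimes e_1 \mapsto (0, \xi_M(\tfrac12\alpha m))$, is \emph{not} even: $m\otimes e_1$ has parity $|m|+1$, whereas $\xi_M(\tfrac12\alpha m)$ has parity $|m|$ (both $\alpha$ and $\xi_M$ are odd), so this component of the map is odd. Your twisting operator $\theta$ has the same defect --- the identity on $M\otimes\bk e_0$ is even, but $\tfrac12\alpha\otimes 1$ restricted to $M\otimes\bk e_1$ is odd --- so $\theta$ is an inhomogeneous morphism, not the claimed even automorphism. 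Dropping the $\alpha$ to restore even parity does not help: no ``diagonal'' map sending $M\otimes\bk e_0$ into the $(M,\alpha)$ summand and $M\otimes\bk e_1$ into the $\Pi(M,\alpha)$ summand can intertwine $1\otimes\xi$, which \emph{swaps} the two pieces of the source, with $\alpha\oplus\Pi(\alpha)$, which \emph{preserves} the two pieces of the target. The sought even isomorphism must have nonzero off-diagonal entries.

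The paper gets around this as follows. On $M\otimes\bk^{1|1}$, the operators $\alpha := \alpha\otimes 1$ and $\xi := 1\otimes\xi$ are anticommuting odd endomorphisms each squaring to $2$, so $M\otimes\bk^{1|1}$ is a $\Cl_2$-module; a variant of Proposition~\ref{prop:cliff-perm} then shows that the even unit $2-\xi\alpha$ (even because $\xi\alpha$ is a product of two odd elements) gives an even isomorphism of $\Cl_1$-modules between $(M\otimes\bk^{1|1}, 1\otimes\xi)$ and $(M\otimes\bk^{1|1}, \alpha\otimes 1)$, and this map genuinely mixes the $e_0$- and $e_1$-pieces. Since $\alpha\otimes 1$ \emph{does} preserve each summand $M\otimes\bk e_i$, the module $(M\otimes\bk^{1|1}, \alpha\otimes 1)$ decomposes evenly and naturally as $(M,\alpha)\oplus\Pi(M,\alpha)$, and composing gives the desired even natural isomorphism $F\circ G \cong \id\oplus\Pi$. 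The lesson is that one must pass through the \emph{other} $\Cl_1$-structure on $M\otimes\bk^{1|1}$ rather than decompose $(M\otimes\bk^{1|1}, 1\otimes\xi)$ directly.
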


\begin{proof}
The statement for $G \circ F$ is clear. Suppose that $(M,\alpha)$ is a $\Cl_1$-module in $\cA$. Then $M \otimes \bk^{1|1}$ is naturally a $\Cl_2$-module via $\alpha=\alpha \otimes 1$ and $\xi=1 \otimes \xi$. One easily verifies that $2-\xi \alpha$ defines an even isomorphism $(M \otimes \bk^{1|1}, \alpha) \to (M \otimes \bk^{1|1}, \xi)$ of $\Cl_1$-modules; this is a variant of Proposition~\ref{prop:cliff-perm}. Since $(M \otimes \bk^{1|1}, \alpha)$ is naturally isomorphic to $(M, \alpha) \oplus \Pi(M,\alpha)$ in $\Cl_1(\cA)$ by an even degree isomorphism, the result follows.
\end{proof}

Now suppose that $\cA$ is equipped with an exact structure. Then $\Cl_n(\cA)$ has a natural exact structure as well.

\begin{proposition} \label{prop:K-2}
  The forgetful functor $\Cl_1(\cA) \to \cA$ induces an isomorphism
  \[
    \rK_+(\Cl_1(\cA))[\tfrac12] \to \rK_+(\cA)[\tfrac12].
  \]
\end{proposition}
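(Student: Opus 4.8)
The plan is a standard ``transfer'' argument using the two functors between $\cA$ and $\Cl_1(\cA)$ introduced before the proposition, together with Proposition~\ref{prop:Cl1} and the fact that $\Pi$ acts as the identity on $\rK_+$. Write $G \colon \Cl_1(\cA) \to \cA$ for the forgetful functor $(M,\alpha) \mapsto M$ and $F \colon \cA \to \Cl_1(\cA)$ for the functor $M \mapsto (M \otimes \bk^{1|1}, 1 \otimes \xi)$. First I would check that both $F$ and $G$ are exact superfunctors. For $G$ this is immediate, since the exact structure on $\Cl_1(\cA)$ is defined by declaring a sequence exact iff it is exact after applying $G$. For $F$ it is also immediate: on underlying objects $F$ is $M \mapsto M \oplus \Pi(M)$ (recall $\bk^{1|1} \otimes M = M \oplus \Pi(M)$), and $F$ sends a short exact sequence in $\cA^{\circ}$ to the direct sum of that sequence with its $\Pi$-shift, which is exact. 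Since $F$ and $G$ are superfunctors they preserve odd isomorphisms, so the induced maps on $\rK(\cA^{\circ})$ and $\rK(\Cl_1(\cA)^{\circ})$ descend to group homomorphisms $F_* \colon \rK_+(\cA) \to \rK_+(\Cl_1(\cA))$ and $G_* \colon \rK_+(\Cl_1(\cA)) \to \rK_+(\cA)$ (in fact they respect $\rK_-$ as well, but we only need $\rK_+$).

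Next I would invoke Proposition~\ref{prop:Cl1}, which supplies natural even isomorphisms $G \circ F \cong \id \oplus \Pi$ and $F \circ G \cong \id \oplus \Pi$. Naturally isomorphic exact functors induce the same map on Grothendieck groups, so $G_* \circ F_* = \id + \Pi_*$ as endomorphisms of $\rK_+(\cA)$, and $F_* \circ G_* = \id + \Pi_*$ as endomorphisms of $\rK_+(\Cl_1(\cA))$. By definition of $\rK_+$ we have $[\Pi(X)] = [X]$, i.e.\ $\Pi_* = \id$ on $\rK_+$, so these identities become $G_* \circ F_* = 2 \cdot \id$ and $F_* \circ G_* = 2 \cdot \id$.

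Finally, inverting $2$: after applying $-\otimes_{\bZ} \bZ[\tfrac{1}{2}]$ the endomorphisms $F_*$ and $G_*$ extend to the localizations, and the relations above say that $\tfrac{1}{2} F_*$ is a two-sided inverse to $G_* \colon \rK_+(\Cl_1(\cA))[\tfrac{1}{2}] \to \rK_+(\cA)[\tfrac{1}{2}]$; hence $G_*$ is an isomorphism, which is exactly the assertion. I do not expect a genuine obstacle here: the only point that deserves a moment's care is confirming that $F$ is exact for the natural exact structure on $\Cl_1(\cA)$ and that the isomorphisms of Proposition~\ref{prop:Cl1} are isomorphisms of exact functors, but both follow directly from the explicit description of $\Cl_1(\cA)$ and its exact structure recalled just before the proposition.
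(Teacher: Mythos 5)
Your argument is exactly the paper's: the paper's proof is the one-line observation that Proposition~\ref{prop:Cl1} gives $G\circ F\cong\id\oplus\Pi$ and $F\circ G\cong\id\oplus\Pi$, and $\id\oplus\Pi$ induces multiplication by $2$ on $\rK_+$, so $G_*$ becomes an isomorphism after inverting $2$. You have simply filled in the routine details (exactness of $F$ and $G$, and that $\tfrac{1}{2}F_*$ is a two-sided inverse), which are all correct.
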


\begin{proof}
This follows from Proposition~\ref{prop:Cl1} since $\id \oplus \Pi$ induces multiplication by~2 on $\rK_+$.
\end{proof}

Now suppose that $\cA$ is a $\bZ/2$-supercategory with an exact structure and that $\zeta_4 \in \bk$. We take $p=2$ in Clifford periodicity, so that the Clifford eversion $\Cl(\cA)$ is also a $\bZ/2$-supercategory. We define a $\bZ[1/\sqrt{2}]$-linear map
\[
  i \colon \rK_+(\Cl(\cA)) \otimes \bZ[1/\sqrt{2}] \to \rK_+(\cA) \otimes \bZ[1/\sqrt{2}]
\]
as follows. If $A$ is an object of $\Cl(\cA)_0=\cA_0$, then $i([A]) = [A]$. If $(A,\alpha)$ is an object of $\Cl_1(\cA)=\Cl_1(\cA_1)$, then we define $i([(A,\alpha)]) = [A] / \sqrt{2}$. Thus, on the odd piece we are just scaling the map induced by the forgetful functor by $1/\sqrt{2}$. It follows from Proposition~\ref{prop:K-2} that $i$ is an isomorphism of abelian groups. The following proposition explains the reason for the $\sqrt{2}$ factor:

\begin{proposition} \label{prop:K-ring}
If $\cA$ has a monoidal structure then $i$ is a ring isomorphism.
\end{proposition}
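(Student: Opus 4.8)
Since $i$ is already known to be an isomorphism of abelian groups (Proposition~\ref{prop:K-2} together with the preceding discussion), it suffices to check that $i$ preserves the unit and is multiplicative. The unit of $\Cl(\cA)$ is the unit $\bone$ of $\cA$, which lies in degree $0$, and $i$ is the identity on degree-$0$ classes, so $i([\bone]) = [\bone]$. For multiplicativity: the multiplication on $\rK_+(\cA)\otimes\bZ[1/\sqrt2]$ and on $\rK_+(\Cl(\cA))\otimes\bZ[1/\sqrt2]$ is bilinear and these groups are $\bZ/2$-graded and spanned over $\bZ[1/\sqrt2]$ by classes of homogeneous objects, so it is enough to verify $i([X]\cdot[Y]) = i([X])\cdot i([Y])$ when $X,Y$ are homogeneous objects of $\Cl(\cA)$. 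There are three cases according to $(\lvert X\rvert,\lvert Y\rvert)$.

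If $\lvert X\rvert = \lvert Y\rvert = 0$, then $X,Y$ are objects of $\cA_0 = \Cl(\cA)_0$, the product $X\otimes Y$ in $\Cl(\cA)$ agrees with $X\otimes Y$ in $\cA$ (no Clifford generators are introduced), and $i$ is the identity on degree-$0$ classes, so the claim is immediate. If $\lvert X\rvert = 0$ and $\lvert Y\rvert = 1$, write $Y = (B,\beta)$ with $B$ an object of $\cA_1$ and $\beta$ a $\Cl_1$-structure. Then $X\otimes Y = (X\otimes B;\ 1\otimes\beta)$ as a $\Cl_1$-module in $\cA_1$, and since $i$ on degree-$1$ classes is $\tfrac1{\sqrt2}$ times the map induced by the forgetful functor, $i([X][Y]) = [X\otimes B]/\sqrt2$. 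As $\otimes$ is exact on $\cA$ we have $[X\otimes B] = [X][B]$ in $\rK_+(\cA)$, so $i([X][Y]) = [X]\cdot([B]/\sqrt2) = i([X])i([Y])$; the case $\lvert X\rvert = 1$, $\lvert Y\rvert = 0$ is symmetric.

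The main case is $\lvert X\rvert = \lvert Y\rvert = 1$: write $X = (A,\alpha)$, $Y = (B,\beta)$ with $A,B$ objects of $\cA_1$. By the definition of the monoidal structure on $\Cl(\cA)$, the product $X\otimes Y$ is the $\Cl_2$-module $(A\otimes B;\ \alpha\otimes 1,\ 1\otimes\beta)$ in $\cA_0$, and its class in $\rK_+(\Cl(\cA)_0) = \rK_+(\cA_0)$ is obtained by pushing it down along the periodicity equivalence $\Cl_2(\cA_0)\xrightarrow{\sim}\cA_0$. The key point is that the $\Cl_2$-module $U_2$ inducing periodicity has underlying object $\bk^{1\vert1}$, so the inverse equivalence $\cA_0\to\Cl_2(\cA_0)$, $N\mapsto N\otimes U_2$, sends $N$ to an object whose underlying object in $\cA_0$ is $N\oplus\Pi(N)$; hence for any $\Cl_2$-module $(P;\gamma_\bullet)$ in $\cA_0$, its image $\epsilon_2 P$ under the periodicity equivalence satisfies $\epsilon_2 P\oplus\Pi(\epsilon_2 P)\cong P$ in $\cA_0$, so $[\epsilon_2 P] = [P]/2$ in $\rK_+(\cA_0)\otimes\bZ[1/2]$. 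Taking $P = A\otimes B$, the class of $X\otimes Y$ is $[A\otimes B]/2 = [A][B]/2$, and applying $i$ (the identity in degree $0$) gives $i([X][Y]) = [A][B]/2$. On the other hand $i([X])i([Y]) = ([A]/\sqrt2)([B]/\sqrt2) = [A][B]/2$, so the two agree.

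Having verified multiplicativity in all three cases and preservation of the unit, $i$ is a ring homomorphism, hence — being a bijection — a ring isomorphism. The only real obstacle is the degree-$1\times$degree-$1$ case, where one must unwind the periodicity equivalence $\Cl_2(\cA_0)\simeq\cA_0$ and observe that, because $U_2$ has underlying object $\bk^{1\vert1}$, this equivalence halves Grothendieck classes in $\rK_+$; equivalently, one can identify $X\otimes Y$ with the half tensor product $2^{-1}(A\otimes B)$ and run the eigenspace argument of Proposition~\ref{prop:queer-assoc} (using that the odd map $\alpha$ interchanges the two eigenspaces of $\alpha\beta$), which makes the two factors of $1/\sqrt2$ combine into the factor of $1/2$ transparently.
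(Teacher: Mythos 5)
Your proof is correct and follows the paper's structure closely: reduce to classes of homogeneous objects, dispose of the $(0,0)$ and $(0,1)$ cases trivially, and in the $(1,1)$ case show that passing through the Morita equivalence $\Cl_2(\cA_0)\simeq\cA_0$ halves classes in $\rK_+$. The only difference is how you argue the halving: the paper identifies $\epsilon_2(M\otimes M')$ with the $\zeta_4$-eigenspace of $\alpha\otimes\alpha'$ and uses the odd isomorphism $\alpha\otimes 1$ to show the two eigenspaces are $N$ and $\Pi(N)$, whereas you argue structurally from $U_2 \cong \bk^{1|1}$ that $\epsilon_2 P\oplus\Pi(\epsilon_2 P)\cong P$ for any $\Cl_2$-module $P$. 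The two arguments express the same underlying fact (and you note the eigenspace version as an equivalent), but your primary formulation is a bit cleaner in that it avoids committing to a particular choice of idempotent $\epsilon_2$: any Morita inverse to $-\otimes U_2$ will do, since only the superdimension $(1|1)$ of $U_2$ enters. Either way the key computation $[M][M']/2$ drops out on both sides, so the argument is sound.
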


\begin{proof}
  Let $A$, $A'$ be objects of $\cA_0$ and let $(M,\alpha)$, $(M',\alpha')$ be objects of $\Cl_1(\cA_1)$. Then $A \otimes A'$ is the same when computed in $\Cl(\cA)$ or $\cA$, so $i([A][A']) = i([A])i([A'])$. Similarly, $A \otimes (M,\alpha) = (A \otimes M, 1 \otimes \alpha)$, so again $i([A][(M,\alpha)]) = i([A])i([(M,\alpha)])$.

  Finally, via $\Cl_2(\cA) \cong \cA$, $(M,\alpha) \otimes (M',\alpha')$ is identified with the $\zeta_4$-eigenspace $N$ of $\alpha \otimes \alpha'$. The $\zeta_4$-eigenspace and $-\zeta_4$-eigenspace of $M \otimes M'$ are interchanged by $\alpha \otimes 1$, and hence we see that $(M, \alpha) \otimes (M', \alpha') \cong N \oplus \Pi(N)$, so
  \[
i([(M,\alpha)][(M',\alpha')]) = [N] = \frac{1}{2} [M][M'] =    i([(M,\alpha)]) i([(M',\alpha')]). \qedhere
\]
\end{proof}

\section{Schur--Sergeev duality} \label{s:sergeev}

{\it We work over $\bk=\bC$ in this section for simplicity.}

\subsection{Polynomial representations}

For an integer $n \ge 0$, we let $\fq(n) \subset \fgl(n|n)$ denote the super Lie subalgebra of matrices of the form $\begin{bmatrix} A & B \\ B & A \end{bmatrix}$ where $A,B$ are $n \times n$ matrices. This is the {\bf queer Lie superalgebra}. Alternatively, if $V_n$ is a vector space of dimension $n$, $\fq(V_n)$ is the commutator of the $\Cl_1$-module structure on $V_{n|n}=V_n \otimes \bk^{1|1}$ in $\fgl(V_{n|n})$. There are natural embeddings $V_{n|n} \to V_{n+1|n+1}$ which induce embeddings $\fq(n) \to \fq(n+1)$, and we define $\fq(\infty)$ to be the direct limit.

Let $\bV = \varinjlim_n V_{n|n}$; this is the {\bf standard representation} of $\fq(\infty)$. The tensor powers $V_{n|n}^{\otimes d}$ are semisimple, and the centralizer of $\fq(n)$ is the Hecke--Clifford algebra $\cH_d$ (defined below) when $n \gg d$ \cite[Theorem 4.7]{WW}. The inclusion maps $V_{n|n}^{\otimes d} \to V_{n+1|n+1}^{\otimes d}$ are $\cH_d$-linear and hence the tensor powers $\bV^{\otimes d}$ are semisimple. The simple constituents of this representation are naturally indexed by strict partitions of $d$; we let $L_{\lambda}$ denote the simple corresponding to the partition $\lambda$ \cite[Theorem 4.8]{WW}. We say that a representation of $\fq(\infty)$ is {\bf polynomial} if it is a direct sum of $L_{\lambda}$'s. The category of polynomial representations is denoted $\Rep^\pol(\fq)$. It is naturally a symmetric monoidal $\bZ$-supercategory. The object $L_{\lambda}$ is homogeneous of degree $\vert \lambda \vert$.

\begin{remark}
Even though $\Pi(\bV) \cong \bV$, we do not have $\Pi(V) \cong V$ for all objects $V$ in $\Rep^\pol(\fq)$. More precisely, the endomorphism algebra of the irreducible $L_\lambda$, where $\lambda$ is a strict partition, is $\Cl_1$ if and only if $\ell(\lambda)$ is odd \cite[Lemma 1.40]{chengwang}.
\end{remark}

\subsection{The Hecke--Clifford algebra}

The symmetric group $\fS_n$ acts on the algebra $\Cl_n$ by $g \cdot \alpha_i = \alpha_{g(i)}$. The {\bf Hecke--Clifford algebra} is $\cH_n = \bC[\fS_n] \ltimes \Cl_n$. This is a superalgebra via the grading $\deg(g) = 0$ for $g \in \fS_n$ and $\deg(\alpha_i)=1$. We define a category $\Rep(\cH_\ast)$ whose objects are sequences $(V_n)_{n \ge 0}$ where $V_n$ is a finite-dimensional super vector space with a homogeneous action of $\cH_n$ such that $V_n=0$ for $n \gg 0$. We define $\Pi \colon \Rep(\cH_\ast) \to \Rep(\cH_\ast)$ by $(\Pi V)_n = \Pi(V_n)$. There is a natural identification of $\cH_{n,m}=\cH_n \otimes \cH_m$ with a subalgebra of $\cH_{n+m}$, and we define a monoidal structure on $\Rep(\cH_\ast)$ via
\[
  (V \otimes W)_n = \bigoplus_{i=0}^n \cH_n \otimes_{\cH_{i,n-i}} (V_i \boxtimes W_{n-i}).
\]
We define a symmetry $\beta'$ on this tensor product by
\begin{displaymath}
a \otimes v \otimes w \mapsto (-1)^{|v| |w|} a \tau_{i,n-i}^{-1} \otimes w \otimes v
\end{displaymath}
where $a \in \cH_n$, $v \in V_i$, and $w \in W_{n-i}$. We omit the straightforward check that this is well-defined.

The Hecke--Clifford algebra is closely related to the queer superalgebra. Let $\xi \colon \bV \to \bV$ be the given odd endomorphism. The algebra $\Cl_n$ has a right action on $\bV^{\otimes n}$ via
\[
(v_1 \otimes \cdots \otimes v_n)   \alpha_i= (-1)^{|v_{i+1}|+\cdots + |v_{n}|} (v_1 \otimes \cdots \xi(v_i) \cdots \otimes v_n),
\]
where the $v_i$ are homogeneous elements of $\bV$. The symmetric group $\fS_n$ also has a right action on $\bV^{\otimes n}$, in the usual manner. Together, these define a right action of the Hecke--Clifford algebra $\cH_n$ on $\bV^{\otimes n}$. The following is a typical formulation of Schur--Sergeev duality:

\begin{proposition} \label{prop:old-sergeev}
  The functor $\Rep(\cH_\ast) \to \Rep^{\pol}(\fq)$ given by $(N_n)_n \mapsto \bigoplus_n \bV^{\otimes n} \otimes_{\cH_n} N_n$ is an equivalence of symmetric monoidal $\bZ$-supercategories.
\end{proposition}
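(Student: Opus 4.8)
The plan is to verify three things in turn: (i) the functor $\Phi \colon \Rep(\cH_\ast) \to \Rep^{\pol}(\fq)$, $(N_n)_n \mapsto \bigoplus_n \bV^{\otimes n} \otimes_{\cH_n} N_n$, is an equivalence of $\bk$-linear supercategories; (ii) $\Phi$ is monoidal, i.e.\ it carries the induction tensor product on $\Rep(\cH_\ast)$ to $\otimes$ on $\Rep^{\pol}(\fq)$; and (iii) this monoidal structure intertwines the symmetry $\beta'$ on the source with the standard symmetry $\tau$ on the target. The genuinely hard input — classical Schur--Sergeev duality — will be cited, and the remaining work is organizational and sign bookkeeping.

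For (i) I would invoke the double-centralizer form of Sergeev duality: by \cite[Theorems 4.7 and 4.8]{WW}, together with semisimplicity of $\bV^{\otimes n}$ (which holds since $\bk=\bC$), the actions of $\fq(\infty)$ and $\cH_n$ on $\bV^{\otimes n}$ are each the full centralizer of the other, both act semisimply, and $\bV^{\otimes n} \cong \bigoplus_\lambda L_\lambda \otimes S_\lambda$ as $(\fq,\cH_n)$-bimodules, where $\lambda$ runs over strict partitions of $n$ and $S_\lambda$ is the simple $\cH_n$-module attached to $\lambda$; the text preceding the proposition records that the stabilization maps $\bV^{\otimes n} \hookrightarrow V_{N|N}^{\otimes n}$ are $\cH_n$-linear, which is what makes the infinite-rank statement work. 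A standard double-centralizer argument then shows $\bV^{\otimes n} \otimes_{\cH_n}(-)$ is an exact, fully faithful functor from finite-dimensional $\cH_n$-modules onto the degree-$n$ block of $\Rep^{\pol}(\fq)$, with the super (type Q) subtleties absorbed into the cited theorems. Summing over $n$, and using that objects on both sides are finite sums of objects concentrated in a single degree, gives the equivalence; that $\Phi$ is a superfunctor is clear since $1 \otimes f$ has the same parity as $f$.

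For (ii) the key point is that $\bV^{\otimes n} \cong \bV^{\otimes i}\otimes \bV^{\otimes (n-i)}$ as a $(\fq,\cH_{i,n-i})$-bimodule, where $\cH_{i,n-i}=\cH_i\otimes\cH_{n-i}\subseteq\cH_n$. Hence
\[
\bV^{\otimes n} \otimes_{\cH_n}\bigl(\cH_n\otimes_{\cH_{i,n-i}}(V_i\boxtimes W_{n-i})\bigr)\;\cong\;\bV^{\otimes n}\otimes_{\cH_{i,n-i}}(V_i\boxtimes W_{n-i})\;\cong\;(\bV^{\otimes i}\otimes_{\cH_i}V_i)\otimes(\bV^{\otimes(n-i)}\otimes_{\cH_{n-i}}W_{n-i}),
\]
and summing over $i$ and $n$ produces a natural even isomorphism $\Phi(V\otimes W)\cong\Phi(V)\otimes\Phi(W)$; the unit isomorphism is obvious. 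Compatibility with the associators — the pentagon in Definition~\ref{defn:mon-func} — reduces after unwinding to associativity of $\otimes$ on tensor powers of $\bV$ and is therefore automatic, and likewise for the triangle axioms.

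For (iii), recall $\beta'$ sends $a\otimes v\otimes w \mapsto (-1)^{|v||w|}a\tau_{i,n-i}^{-1}\otimes w\otimes v$. Under the identification from (ii), applying $\Phi$ replaces the right action of $\tau_{i,n-i}^{-1}\in\fS_n$ by its permutation action on $\bV^{\otimes n}$, and the permutation action of $\tau_{n,m}$ on $\bV^{\otimes(n+m)}=\bV^{\otimes n}\otimes\bV^{\otimes m}$ is the block transposition $x\otimes y\mapsto(-1)^{|x||y|}y\otimes x$: this is exactly the standard symmetry $\tau$ of $\SVec$, because $\tau_{n,m}$ is a product of adjacent transpositions realizing the set map $[n]\amalg[m]\to[m]\amalg[n]$ (cf.\ \S\ref{ss:calculations}) and each adjacent transposition acts with its Koszul sign. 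Combining this with the explicit $(-1)^{|v||w|}$ in $\beta'$ gives $\Phi(\beta'_{V,W})=\tau_{\Phi(V),\Phi(W)}$, so $\Phi$ is symmetric monoidal (that $\tau$ is a symmetry on $\Rep^{\pol}(\fq)$ is inherited from $\SVec$). The main obstacle, beyond taking \cite{WW} as a black box, is precisely this last step: keeping track of Koszul signs and of the left/right module conventions when passing through $\otimes_{\cH_n}$, and verifying along the way the (routine, and already disclaimed in the text) well-definedness of $\beta'$ on the induction product.
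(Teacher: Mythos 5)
Your proposal is correct and follows essentially the same route as the paper: citing \cite[Theorems 4.7, 4.8]{WW} for the equivalence of abelian categories, the identical chain of isomorphisms through $\bV^{\otimes m}\otimes\bV^{\otimes n}$ over $\cH_{m,n}$ for monoidality, and the same two key facts (the right action of $\tau_{m,n}^{-1}$ being the Koszul block transposition, plus the explicit $(-1)^{|v||w|}$ in $\beta'$) for the symmetry. The paper does the last step as an explicit element-wise sign computation, also tracking the $(-1)^{|v'||x|}$ Koszul sign from the monoidal isomorphism itself, which you compress into ``keeping track of Koszul signs when passing through $\otimes_{\cH_n}$''; the content is the same.
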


\begin{proof}
Let $\Phi$ denote the functor under discussion. The explicit decomposition of $\bV^{\otimes n}$ given in \cite[Theorem 4.8]{WW} shows that $\Phi$ is an equivalence of abelian categories. Let $M$ and $N$ be $\cH_m$ and $\cH_n$-modules, respectively. Then
  \begin{align*}
    \Phi(M \otimes N) &= \bV^{\otimes (m+n)} \otimes_{\cH_{m+n}} (\cH_{m+n} \otimes_{\cH_{m,n}} (M \otimes N) )  \\
                      &\cong (\bV^{\otimes m} \otimes \bV^{\otimes n}) \otimes_{\cH_{m,n}} (M \otimes N)\\
                      &\cong (\bV^{\otimes m} \otimes_{\cH_m} M) \otimes (\bV^{\otimes n} \otimes_{\cH_n} N) \\
    &= \Phi(M) \otimes \Phi(N),
  \end{align*}
  which shows that $\Phi$ is monoidal. Finally, we verify the symmetry. Let $\beta$ be the symmetry on $\Rep^\pol(\fq)$. Pick $v \otimes x \in \Phi(M)$ and $v' \otimes y \in \Phi(N)$ where $x \in M$, $y \in N$, $v \in \bV^{\otimes m}$, and $v' \in \bV^{\otimes n}$ are homogeneous. Then
  \begin{align*}
    \beta((v \otimes x)  \otimes (v' \otimes y)) &= (-1)^{(|v|+|x|)(|v'|+|y|)} (v' \otimes y) \otimes (v \otimes x)\\
    &= (-1)^{|v||v'| + |x||v'| + |x||y|} (v' \otimes v) \otimes (y \otimes x).
  \end{align*}
  On the other hand, under the identification $\Phi(M) \otimes \Phi(N) \cong \Phi(M \otimes N)$, $(v \otimes x) \otimes (v' \otimes y)$ maps to $(-1)^{|v'||x|} (v \otimes v') \otimes (1 \otimes x \otimes y)$ where $1$ is the unit of $\cH_{m+n}$. If we apply the symmetry $\beta'$ in $\Rep(\cH_*)$ to the second factor, then we get
  \begin{align*}
    (-1)^{|v'||x|+|x||y|} (v \otimes v') \otimes (\tau_{m,n}^{-1} \otimes y \otimes x) =     (-1)^{|v'||x|+|x||y|+|v||v'|} (v' \otimes v) \otimes (1 \otimes y \otimes x) ,
  \end{align*}
  which agrees with the previous expression, so $\Phi$ is symmetric.
\end{proof}

\subsection{Sergeev--Schur duality}

Let $\beta$ be the type~II supersymmetry on $\Rep(\wt{\fS}_*)$ obtained from the type~I supersymmetry described in \S \ref{ss:spin-symmetry} by the general process in \S \ref{ss:equivalence-type}. Thus, explicitly, if $M$ is a representation of $\wt{\fS}_m$ and $N$ is a representation of $\wt{\fS}_n$ then
\begin{displaymath}
\beta_{M,N} \colon \Ind_{\wt{\fS}_m \ttimes \wt{\fS}_n}^{\wt{\fS}_{n+m}}(M \otimes N) \to \Ind_{\wt{\fS}_n \ttimes \wt{\fS}_m}^{\wt{\fS}_{n+m}}(N \otimes M)
\end{displaymath}
is given by
\begin{displaymath}
g \otimes v \otimes w \mapsto (-1)^{\vert v \vert \vert w \vert+mn \vert g \vert + mn} g\tilde{\tau}^{-1}_{m,n} \otimes w \otimes v.
\end{displaymath}
By results in the aforementioned sections, $\beta$ is a type~II supersymmetry with respect to the factor system $\fsD \fsA$. Define $\beta^*$ by
\begin{displaymath}
\beta^*_{M,N}=(-1)^{\binom{mn}{2}} \zeta_4^{mn} \beta_{M,N}
\end{displaymath}
where $m=\vert M \vert$ and $n=\vert N \vert$. Note that $(-1)^{\binom{mn}{2}} \zeta_4^{mn} = \fnD(m,n)$ where $\fnD$ is as defined in Example~\ref{ex:cobound2}, and so $\beta^*$ is a type~II supersymmetry with respect to $\fsA$.

The following is our reformulation of Sergeev--Schur duality, and the main result of this section:

\begin{theorem} \label{thm:sergeev}
We have a natural equivalence of symmetric monoidal $\bZ$-supercategories
\begin{displaymath}
\Rep^{\pol}(\fq) \cong \Cl(\Rep(\wt{\fS}_*)),
\end{displaymath}
where here $\Rep(\wt{\fS}_*)$ is equipped with the supersymmetry $\beta^*$.
\end{theorem}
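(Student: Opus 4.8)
The plan is to factor the desired equivalence through the Hecke--Clifford description of $\Rep^{\pol}(\fq)$. By Proposition~\ref{prop:old-sergeev} there is an equivalence of symmetric monoidal $\bZ$-supercategories $\Rep(\cH_*) \cong \Rep^{\pol}(\fq)$, where $\Rep(\cH_*)$ carries the symmetry $\beta'$ of \S\ref{s:sergeev}; so it suffices to produce an equivalence of symmetric monoidal $\bZ$-supercategories $\Rep(\cH_*) \cong \Cl(\Rep(\wt{\fS}_*))$ carrying $\beta'$ to the Clifford eversion of $\beta^*$. Throughout one uses Clifford periodicity with $p=2$ (available since $\bk=\bC$ contains $\zeta_4$) in the $\bZ$-graded setting $q=0$; all categories in sight are semisimple, hence nice.

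The basic algebraic input is the well-known isomorphism of $\bk$-superalgebras
\[
  \cH_n \cong (\bk[\wt{\fS}_n]/(c+1)) \uotimes \Cl_n,
\]
under which the two tensor factors supercommute, the copy of $\Cl_n$ is the evident one, and the copy of $\bk[\wt{\fS}_n]/(c+1)$ is generated by the twisted elements $s_i\,\phi(\tilde{s}_i)^{-1}$, where $\phi\colon\wt{\fS}_n\to\Cl_n^{\times}$ is the homomorphism of Proposition~\ref{prop:spin-clifford}. Granting this, an $\cH_n$-module is exactly a $\Cl_n$-module object in the supercategory $\Rep(\wt{\fS}_n)$ of s-representations (the relation $c\mapsto-1$ matches, the $\alpha_i$ act by odd morphisms, and the parity conventions agree), so $\Rep(\cH_n)\simeq\Cl_n(\Rep(\wt{\fS}_n))$. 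Since the natural functor $\Cl_n(-)\to\Cl_{(n;2)}(-)$ is an equivalence and $\Cl(\Rep(\wt{\fS}_*))_n$ is by definition $\Cl_{(n;2)}(\Rep(\wt{\fS}_n))$, assembling over $n$ gives a degree-preserving equivalence of $\bZ$-supercategories $\Rep(\cH_*)\simeq\Cl(\Rep(\wt{\fS}_*))$. To see it is monoidal, note that under the isomorphism above the parabolic $\cH_{i,n-i}\subset\cH_n$ corresponds to $(\bk[\wt{\fS}_i\ttimes\wt{\fS}_{n-i}]/(c+1))\uotimes\Cl_n$, with the spin part embedded via $j_{i,n-i}$, so the induction product $\cH_n\otimes_{\cH_{i,n-i}}(-)$ splits as induction along $j_{i,n-i}$ on the $\wt{\fS}$-part together with ``appending the extra Clifford generators'' -- precisely the monoidal structure put on $\Cl(\Rep(\wt{\fS}_*))$ in \S\ref{s:clifford}, with the coherence data (reorderings of Clifford generators, governed by the $\tilde{\tau}$ elements of Proposition~\ref{prop:cliff-perm}) matching by construction.

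Finally -- and this is the crux -- one verifies that the symmetries are intertwined. Recall $\beta^*=\fnD\cdot\beta$ is a type~II supersymmetry on $\Rep(\wt{\fS}_*)$ with respect to $\fsA$; by Proposition~\ref{prop:evert-monoidal}(b) its Clifford eversion is a symmetry with respect to $\fsA\cdot\fsA$, which is the trivial factor system (every component of $\fsA$ squares to $1$), hence an honest even symmetry, just like $\beta'$ (the $\fS_n$ generators being even in $\cH_n$). It then remains to chase a generator $a\otimes v\otimes w$: the Clifford eversion of $\beta^*$ is $\beta^*_{M,N}$ followed by $\tilde{\tau}^{-1}_{m,n}$ acting through the Clifford permutation action of Proposition~\ref{prop:cliff-perm}; under $\phi$ this second $\tilde{\tau}^{-1}_{m,n}$ combines with the $\tilde{\tau}^{-1}_{m,n}$ already present in $\beta^*_{M,N}$, and together with the signs $(-1)^{|v||w|+mn|g|+mn}$ and the scalar $\fnD(m,n)=(-1)^{\binom{mn}{2}}\zeta_4^{mn}$ these collapse to exactly $a\otimes v\otimes w\mapsto(-1)^{|v||w|}\,a\tau_{i,n-i}^{-1}\otimes w\otimes v$, the defining formula for $\beta'$. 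Composing with Proposition~\ref{prop:old-sergeev} then proves the theorem.

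I expect the main obstacle to be bookkeeping: one must pin down the isomorphism $\cH_n\cong(\bk[\wt{\fS}_n]/(c+1))\uotimes\Cl_n$ (there is a $\bZ/2$ worth of choices coming from $\sqrt{-1}$, so normalizations matter) in such a way that a \emph{single} identification is simultaneously compatible with the block embeddings $j_{i,n-i}$, with the Clifford-generator-appending in the eversion monoidal product, and with the symmetry. Reconciling the two independent sources of $\tilde{\tau}$ elements and checking that every sign and every power of $\zeta_4$ cancels is exactly where the somewhat elaborate definitions of $\beta^*$ and of $\fnD$ earn their keep.
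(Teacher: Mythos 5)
Your proposal follows essentially the same route as the paper: factor through Proposition~\ref{prop:old-sergeev}, use the superalgebra isomorphism $\cH_n \cong (\bk[\wt{\fS}_n]/(c+1)) \otimes \Cl_n$ (the paper's Lemma~\ref{lem:hecke-sym}) to identify $\Rep(\cH_*)$ with $\Cl(\Rep(\wt{\fS}_*))$, and then match the symmetries via the interaction of the two $\tilde{\tau}$ elements. One small correction to the normalization you propose: the elements $s_i\,\phi(\tilde{s}_i)^{-1}=\tfrac12 s_i(\alpha_{i+1}-\alpha_i)$ square to $-1$, not $+1$, so they generate a copy of $\bk[\wt{\fS}_n^{(1,1)}]/(c+1)$ rather than $\bk[\wt{\fS}_n]/(c+1)$; the required generator is $\zeta_4 s_i \phi(\tilde{s}_i)^{-1}$, which is exactly the $\tfrac{\zeta_4}{2}s_i(\alpha_i-\alpha_{i+1})$ of Lemma~\ref{lem:hecke-sym}, and this $\zeta_4$ is what eventually produces the $\zeta_4^{mn}$ in $\fnD$ that your symmetry computation needs.
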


The proof will take the rest of the section. We omit the proof for the following fact, which is a straightforward calculation.

\begin{lemma} \label{lem:hecke-sym}
  We have an isomorphism of superalgebras
  \[
    \bC[\wt{\fS}_n]/(c+1) \otimes \Cl_n \to \cH_n
  \]
  given by $1 \otimes \alpha_i \mapsto \alpha_i$ and $\tilde{s}_j \otimes 1 \mapsto \frac{\zeta_4}{2} s_j(\alpha_j - \alpha_{j+1})$.
\end{lemma}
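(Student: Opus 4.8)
The plan is to verify by hand that the stated assignment on generators extends to a superalgebra homomorphism, and then to deduce that it is an isomorphism from surjectivity together with a dimension count.

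First I would record a presentation of the source algebra. Setting $c=-1$ in the presentation of $\wt{\fS}_n=\wt{\fS}^{(1,0)}_n$ from \S\ref{ss:spin}, the algebra $\bC[\wt{\fS}_n]/(c+1)$ is the $\bC$-algebra with odd generators $\tilde{s}_1,\dots,\tilde{s}_{n-1}$ subject to $\tilde{s}_j^2=1$, the braid relations $\tilde{s}_j\tilde{s}_{j+1}\tilde{s}_j=\tilde{s}_{j+1}\tilde{s}_j\tilde{s}_{j+1}$, and $\tilde{s}_i\tilde{s}_j=-\tilde{s}_j\tilde{s}_i$ for $\vert i-j\vert\ge 2$. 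Hence $\bC[\wt{\fS}_n]/(c+1)\otimes\Cl_n$, with its super tensor product structure, is generated by the $\tilde{s}_j\otimes 1$ and the $1\otimes\alpha_i$ subject to the relations just listed, the Clifford relations $\alpha_i^2=2$ and $\alpha_i\alpha_j=-\alpha_j\alpha_i$ ($i\ne j$), and the cross relations $(\tilde{s}_j\otimes 1)(1\otimes\alpha_i)=-(1\otimes\alpha_i)(\tilde{s}_j\otimes 1)$ (anticommutation, as both generators are odd). So it suffices to check that the elements $t_j:=\tfrac{\zeta_4}{2}s_j(\alpha_j-\alpha_{j+1})$ and $\alpha_i$ of $\cH_n$ — all of which are odd, so that the grading is respected — satisfy each of these relations.

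The Clifford relations among the $\alpha_i$ are immediate. Using the straightening rule $s_j\alpha_i=\alpha_{s_j(i)}s_j$ in $\cH_n$ together with $\alpha_k^2=2$, one computes $t_j^2=-\tfrac{\zeta_4^2}{4}(\alpha_j-\alpha_{j+1})^2=1$; the far commutation $t_it_j=-t_jt_i$ for $\vert i-j\vert\ge 2$ by sliding the symmetric-group factors past the disjoint Clifford factors; and the cross relation $t_j\alpha_i=-\alpha_it_j$ by splitting into the case $i\notin\{j,j+1\}$ (where $\alpha_i$ anticommutes with $\alpha_j-\alpha_{j+1}$) and the cases $i\in\{j,j+1\}$ (where it reduces once more to $\alpha_k^2=2$). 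The one substantive computation is the braid relation. Pushing every symmetric-group factor to the left via $s_j\alpha_i=\alpha_{s_j(i)}s_j$ turns $t_jt_{j+1}t_j=t_{j+1}t_jt_{j+1}$ into an identity of the form $s_1s_2s_1\cdot w=s_2s_1s_2\cdot w'$ with $w,w'$ explicit Clifford words in $\alpha_1,\alpha_2,\alpha_3$; since $s_1s_2s_1=s_2s_1s_2$ this collapses to the $\Cl_3$-identity
\[
(\alpha_2-\alpha_3)(\alpha_1-\alpha_3)(\alpha_1-\alpha_2)=(\alpha_1-\alpha_2)(\alpha_1-\alpha_3)(\alpha_2-\alpha_3)=4(\alpha_1-\alpha_3),
\]
which follows by expanding and repeatedly applying the Clifford relations. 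This produces the superalgebra homomorphism $F$ of the statement.

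Finally, $F$ is surjective: one has $F\bigl(\tfrac{\zeta_4^{-1}}{2}(\tilde{s}_j\otimes 1)(1\otimes(\alpha_j-\alpha_{j+1}))\bigr)=\tfrac{\zeta_4^{-1}}{2}t_j(\alpha_j-\alpha_{j+1})=\tfrac14 s_j(\alpha_j-\alpha_{j+1})^2=s_j$, so the image of $F$ contains all the $s_j$ and all the $\alpha_i$, hence is all of $\cH_n=\bC[\fS_n]\ltimes\Cl_n$. Since $c\ne 1$ in $\wt{\fS}_n$, the quotient $\bC[\wt{\fS}_n]/(c+1)$ has dimension $\tfrac12\vert\wt{\fS}_n\vert=n!$, so the source of $F$ has dimension $n!\,2^n=\dim_{\bC}\cH_n$; thus the surjection $F$ is an isomorphism. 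The only step requiring genuine work is the braid relation, and even that reduces to the three-variable Clifford computation displayed above.
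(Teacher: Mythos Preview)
Your proof is correct. The paper actually omits the proof of this lemma entirely, calling it ``a straightforward calculation,'' and what you have written is precisely that calculation carried out in full: verify the relations of the presentation on the proposed images (the braid relation being the only nontrivial one, and your reduction to the $\Cl_3$ identity is clean), then conclude by surjectivity plus the dimension count.
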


\begin{proposition} \label{prop:SH-CT}
  We have an equivalence of symmetric monoidal $\bZ$-supercategories
  \[
    \Cl(\Rep(\wt{\fS}_*)) \cong \Rep(\cH_\ast).
  \]
\end{proposition}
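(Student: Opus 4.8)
The plan is to build the equivalence level by level using the isomorphism of Lemma~\ref{lem:hecke-sym}, then check compatibility with the monoidal and symmetric structures.

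First I would set up the equivalence of underlying $\bZ$-supercategories. By definition, $\Cl(\Rep(\wt{\fS}_*))_i = \Cl_{(i;p)}(\Rep(\wt{\fS}_*)_i)$ with $p=2$ (using $\zeta_4 \in \bC$), and an object of this category is a $\Cl_n$-module in $\Rep(\wt{\fS}_*)_i$ for some $n \equiv i \pmod 2$. Using the natural equivalence $\Cl_i(\cA) \to \Cl_{(i;2)}(\cA)$, it suffices to work with $\Cl_i(\Rep(\wt{\fS}_*)_i)$, i.e., with sequences $(V_n)$ concentrated in degrees $\equiv i \pmod 2$ together with a $\Cl_i$-action; but the cleanest bookkeeping is to observe that in each fixed total degree $n$, an object of $\Cl(\Rep(\wt{\fS}_*))$ over the summand indexed by $n$ is precisely an $\wt{\fS}_n$-representation on which $c$ acts by $-1$, equipped with commuting odd operators $\alpha_1,\dots,\alpha_n$ satisfying the Clifford relations and the equivariance forced by Proposition~\ref{prop:cliff-perm}; equivalently, after applying the periodicity equivalence $\Phi$ to line up the number of Clifford generators with the total degree, this is exactly a $\bC[\wt{\fS}_n]/(c+1) \otimes \Cl_n$-module, hence, via Lemma~\ref{lem:hecke-sym}, an $\cH_n$-module. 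I would package this as a functor $\Rep(\cH_*) \to \Cl(\Rep(\wt{\fS}_*))$, restricting an $\cH_n$-module along the isomorphism of Lemma~\ref{lem:hecke-sym} to get an $\wt{\fS}_n$-representation with a $\Cl_n$-action, and check it is fully faithful and essentially surjective in each degree (both are immediate from the explicit description of $\cH_n$).

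Next I would verify the monoidal structures match. The monoidal product on $\Cl(\Rep(\wt{\fS}_*))$ is, by the recipe in \S\ref{s:clifford}, induced by $(M;\mu_\bullet) \otimes (N;\nu_\bullet) \mapsto (M \otimes N; \mu_\bullet, \nu_\bullet)$ where the outer $\otimes$ is the induction product of \S\ref{ss:spin-monoidal}; on the other side the product on $\Rep(\cH_*)$ is $\cH_n \otimes_{\cH_{i,n-i}}(-)$. The identification $\cH_{n,m} = \cH_n \otimes \cH_m \hookrightarrow \cH_{n+m}$ is compatible, under Lemma~\ref{lem:hecke-sym}, with $j_{n,m} \colon \wt{\fS}_n \ttimes \wt{\fS}_m \to \wt{\fS}_{n+m}$ together with the concatenation of Clifford generators — this is a direct check on the generators $\tilde s_j$ and $\alpha_i$ — and induction along $\wt{\fS}_{n,m} \to \wt{\fS}_{n+m}$ together with the Morita step that appends the extra $\Cl_p$ generators reassembles to induction along $\cH_{n,m} \to \cH_{n+m}$; I would also note the associativity and unit constraints correspond. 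Finally, for the symmetry, one compares $\beta'$ on $\Rep(\cH_*)$ (the formula $a \otimes v \otimes w \mapsto (-1)^{|v||w|} a\tau_{i,n-i}^{-1} \otimes w \otimes v$) with the superbraiding $\beta'$ produced on $\Cl(\Rep(\wt{\fS}_*))$ by Proposition~\ref{prop:evert-monoidal} from $\beta^*$ on $\Rep(\wt{\fS}_*)$; the latter is explicitly $\beta_{M,N}$ followed by the $\tilde\tau^{-1}$-relabelling of Clifford generators, and under Lemma~\ref{lem:hecke-sym} the lift $\tilde\tau_{i,n-i}$ times the relabelling is exactly multiplication by $\tau_{i,n-i}^{-1} \in \fS_{n+m} \subset \cH_{n+m}$, with the scalar $\fnD$ correction in $\beta^*$ precisely accounting for the passage from the $\fsA\fsD$-factor system to the trivial one. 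Since both $\Rep^{\pol}(\fq)$ and the statement of Theorem~\ref{thm:sergeev} go through $\Rep(\cH_*)$, combining with Proposition~\ref{prop:old-sergeev} gives the theorem, but Proposition~\ref{prop:SH-CT} itself is self-contained.

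The main obstacle I anticipate is the bookkeeping of signs and the roots of unity: matching the $\zeta_4$ in Lemma~\ref{lem:hecke-sym} and the $\fnD$-twist in $\beta^*$ against the $\fsA$, $\fsD$ factor systems appearing in Proposition~\ref{prop:evert-monoidal}, and keeping straight the Koszul signs coming from the products in $\cA \boxtimes \cA$ versus those in the induction product, is where errors are easy to make. A secondary, more structural subtlety is that the periodicity/Morita step in the definition of $\Cl$ appends Clifford generators "at the end" and so involves a $\tilde\tau_{p,\bullet}$-type relabelling that must be shown to be invisible after the identification with $\cH_*$ — i.e., that the choice of $U_p$ and $\epsilon_p$ does not affect the resulting symmetric monoidal structure up to the canonical isomorphisms. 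Both issues are routine in principle but deserve care; everything else (full faithfulness, essential surjectivity, the pentagon/triangle and hexagon checks reduced to generators) is straightforward.
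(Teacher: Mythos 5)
Your proposal follows the paper's proof closely: Lemma~\ref{lem:hecke-sym} gives the equivalence of underlying $\bZ$-supercategories by identifying $\cH_n$-modules with $\Cl_n$-modules in $\Rep(\wt{\fS}_n)$, the monoidal structures match via the compatibility of $\cH_{m,n}\hookrightarrow\cH_{m+n}$ with $j_{m,n}$ and concatenation of Clifford generators, and the symmetry check reduces to the relation between $\tau_{m,n}^{-1}\in\cH_{m+n}$, the $\wt{\fS}$-lift $\tilde\tau_{m,n}$, and the Clifford permutation; your bookkeeping of the $\fnD$-twist and the factor systems $\fsA$, $\fsD$ is the same content the paper encodes in the identity $\tau_{m,n}=(-1)^{\binom{mn}{2}}\zeta_4^{mn}\phi(\tilde\tau_{m,n})\psi(\tilde\tau_{m,n})$ and the subsequent calculation. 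The only difference is stylistic: you invoke Proposition~\ref{prop:evert-monoidal} and factor-system arithmetic to predict that the evertedsupersymmetry is a genuine symmetry, while the paper introduces the two homomorphisms $\phi,\psi\colon\wt{\fS}_{m+n}\to\cH_{m+n}$ and verifies the match directly on elements; the paper's route has the advantage of producing the explicit intertwining formula, but the factor-system check you sketch is a useful sanity test.
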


\begin{proof}
Via the isomorphism in Lemma~\ref{lem:hecke-sym}, an $\cH_n$-module $M$ is the same thing as a $\Cl_n$-module in $\Rep(\wt{\fS}_n)$. This gives an equivalence $\Phi \colon \Rep(\cH_*) \to \Cl(\Rep(\wt{\fS}_*))$ which preserves the $\bZ$-grading. We claim that this is a monoidal equivalence. Let $M$ be an $\cH_m$-module and let $N$ be an $\cH_n$-module. We have $\Cl_m \otimes \Cl_n \cong \Cl_{m+n}$, so that $\cH_{m+n} \otimes_{\cH_{m,n}} (M \otimes N)$ is isomorphic, as a vector space, to $\Ind^{\wt{\fS}_{m+n}}_{\wt{\fS}_m \ttimes \wt{\fS}_n}(M \otimes N)$. The action of $\Cl_m$ is via $M$ and the action of $\Cl_n$ is via $N$, so that the action of $\Cl_{m+n}$ is via the tensor product action, and hence $\Phi$ is a monoidal equivalence.

We now check symmetry. We have homomorphisms
\begin{align*}
\phi &\colon \wt{\fS}_{n+m} \to \cH_{n+m}, & \tilde{s}_i &\mapsto \tfrac{\zeta_4}{2} s_i (\alpha_{i+1}-\alpha_i) \\
\psi &\colon \wt{\fS}_{n+m} \to \cH_{n+m}, & \tilde{s}_i &\mapsto \tfrac{1}{2} (\alpha_{i+1}-\alpha_{i}).
\end{align*}
The existence of $\psi$ comes from Proposition~\ref{prop:spin-clifford}, while the existence of $\phi$ comes from Lemma~\ref{lem:hecke-sym}. Observe that $\phi(\tilde{s}_i)$ skew-commutes with $\alpha_j$, for all $i$ and $j$, and so $\phi(\tilde{s}_i)$ and $\psi(\tilde{s}_j)$ skew-commute for all $i$ and $j$. We also have $\phi(\tilde{s}_i)= \zeta_4 s_i \psi(\tilde{s}_i)$, and so $\phi(\tilde{s}_i) \psi(\tilde{s}_i) = \zeta_4 s_i$. It follows that we have
\begin{displaymath}
\tau_{m,n} = (-1)^{\binom{mn}{2}} \zeta_4^{mn} \phi(\tilde{\tau}_{m,n}) \psi(\tilde{\tau}_{m,n}).
\end{displaymath}
The vector space isomorphism
\[
  \Ind^{\wt{\fS}_{m+n}}_{\wt{\fS}_m \ttimes \wt{\fS}_n}(M \otimes N) \to \cH_{m+n} \otimes_{\cH_{m,n}} (M \otimes N)
\]
is given by $g \otimes v \otimes w \mapsto \phi(g) \otimes v \otimes w$. We thus have
\begin{align*}
\beta'_{M,N}(\phi(g) \otimes v \otimes w)
  &= (-1)^{\vert v \vert \vert w \vert} \phi(g) \tau_{m,n}^{-1} \otimes w \otimes v \\
  &= (-1)^{\vert v \vert \vert w \vert+\binom{mn}{2}} \zeta_4^{-mn} \phi(g) \psi(\tilde{\tau}_{m,n})^{-1} \phi(\tilde{\tau}_{m,n})^{-1} \otimes w \otimes v \\
  &= (-1)^{\vert v \vert \vert w \vert+\binom{mn}{2}+mn \vert g \vert} \zeta_4^{-mn} \psi(\tilde{\tau}_{m,n})^{-1} \phi(g \tilde{\tau}_{m,n}^{-1}) \otimes w \otimes v \\
  &= \psi(\tilde{\tau}_{m,n})^{-1} \beta^*(g \otimes v \otimes w).
\end{align*}
The expression on the final line is exactly the symmetry on $\Cl(\Rep(\wt{\fS}_*))$, and so the result follows.
\end{proof}

Theorem~\ref{thm:sergeev} now follows from Propositions~\ref{prop:SH-CT} and~\ref{prop:old-sergeev}.

\subsection{Symmetric functions}

In this section, $\lambda = (\lambda_1 \ge \cdots \ge \lambda_r)$ denotes an integer partition, $\ell(\lambda)$ denotes the number of nonzero entries of $\lambda$, and $|\lambda| = \sum_i \lambda_i$.

Let $\Lambda$ be the ring of symmetric functions in an alphabet $x_1, x_2, \dots$. Define symmetric functions $q_k(x)$ by the identity
\[
\sum_{k \ge 0} q_k(x) t^k = \prod_{i \ge 1} \frac{1 + x_i t}{1 - x_i t}.
\]
Let $\Gamma$ be the subring of $\Lambda$ generated by $q_1, q_2, \dots$. They satisfy the relations, for $n>0$,
\[
\sum_{i=0}^n (-1)^i q_i q_{n-i} = 0.
\]
So $\Gamma \otimes \bZ[\frac{1}{2}]$ is generated by $q_1, q_3, q_5, \dots$, which are algebraically independent. For $a,b$, define
\[
Q_{(a,b)} = q_aq_b + 2 \sum_{i=1}^b (-1)^i q_{a+i} q_{b-i}
\]
(with the convention that $q_i=0$ if $i<0$). Note that $Q_{(b,a)} = -Q_{(a,b)}$. Let $\lambda = (\lambda_1 > \cdots > \lambda_{2r} \ge 0)$ be a strict partition (here $\ell(\lambda)=2r$ if $\lambda_{2r}>0$ and otherwise $\ell(\lambda)=2r-1$) we define
\[
Q_\lambda = \Pf(Q_{(\lambda_i, \lambda_j)})_{1 \le i, j \le 2r}
\]
where $\Pf$ denotes the Pfaffian of a skew-symmetric matrix. The $Q_\lambda$ form a $\bZ$-basis for $\Gamma$ \cite[(8.9)]{macdonald} and $\Gamma \otimes \bZ[1/\sqrt{2}]$ is a model for the Grothendieck groups $\Rep^\pol(\fq)$, $\Rep(\wt{\fS}_*)$, and $\Rep(\cH_*)$ (in each case, the ring structure comes from the monoidal product) as we now explain. In particular, in each case, the simple objects are indexed by strict partitions.

For $\Rep^\pol(\fq)$ and $\Rep(\cH_\ast)$, the classes of the simple objects $L_\lambda$ are $2^{-\lfloor \ell(\lambda)/2 \rfloor} Q_\lambda$ \cite[Theorem 3.48]{chengwang} and for $\Rep(\wt{\fS}_\ast)$, the classes of the simple objects $\bN_\lambda$ are $2^{(\eps(\lambda)-\ell(\lambda))/2} Q_\lambda$ where $\eps(\lambda) \in \{0,1\}$ is the reduction modulo 2 of $|\lambda|-\ell(\lambda)$ \cite[Theorem 8.6]{HH}.

In particular, when $|\lambda|$ is even, we have $[L_\lambda] = [\bN_\lambda]$, and when $|\lambda|$ is odd, we have
\[
  [L_\lambda] = \begin{cases} \frac{1}{\sqrt{2}} [\bN_\lambda] & \text{if $\ell(\lambda)$ is even} \\
    \sqrt{2} [\bN_\lambda] & \text{if $\ell(\lambda)$ is odd} 
  \end{cases}.
\]
Recall from Proposition~\ref{prop:SH-CT} that we have an equivalence $\Rep(\cH_n) \simeq \Cl_1(\Rep(\wt{\fS}_n))$ when $n$ is odd. Hence, from Proposition~\ref{prop:K-ring}, we have an isomorphism
\begin{align*}
  \rK_+(\Cl_1(\Rep(\wt{\fS}_n))) \otimes \bZ[1/\sqrt{2}] &\to \rK_+(\Rep(\wt{\fS}_n)) \otimes \bZ[1/\sqrt{2}]\\
  [(A,\alpha)] &\mapsto [A]/\sqrt{2}.
\end{align*}

To match this with the Q-functions, we note that $\bN_\lambda$ is a queer object if and only if $|\lambda|-\ell(\lambda)$ is odd \cite[Corollary 4.19]{jozefiak}. Hence when $|\lambda|$ is odd, the simple objects of $\Cl_1(\Rep(\wt{\fS}_n))$ are
\[
  \{\bN_\lambda \mid \ell(\lambda) \text{ is even}\} \cup \{\bN_\lambda \otimes \bk^{1|1} \mid \ell(\lambda) \text{ is odd}\}
\]
and the correspondence becomes
\[
  [L_\lambda] \mapsto \begin{cases} \frac{1}{\sqrt{2}} [\bN_\lambda] & \text{if $\ell(\lambda)$ is even} \\
    \frac{1}{\sqrt{2}} [\bN_\lambda \otimes \bk^{1|1}] & \text{if $\ell(\lambda)$ is odd} 
  \end{cases}.
\]

\appendix

\section{Bott periodicity} \label{s:bott}

Let $\bk$ be a field of characteristic different from $2$. In this paper, the Clifford algebra $\Cl_k$ is the $\bk$-superalgebra generated by odd elements $\alpha_1,\dots,\alpha_k$ subject to the relations $\alpha_i^2 = 2$ for all $i$ and $\alpha_i \alpha_j = -\alpha_j\alpha_i$ for all $i \ne j$. Our goal here is to establish a version of Bott periodicity (Theorem~\ref{thm:bott}) for these Clifford algebras. While this is well-known, we have provided the details since most references work over $\bk = \bR$ and our definition of the algebra is not standard. Here we follow the proof in \cite{trimble}.

Given a superalgebra $A$, the opposite superalgebra $A^\op$ has the same underlying vector space but multiplication of homogeneous elements is given by $a \cdot_\op b = (-1)^{|a||b|}b \cdot a$. In particular, the presentation of $\Cl^\op_k$ is the same as $\Cl_k$ except that the $\alpha_i$ square to $-2$ instead of 2. Also, note that $\Cl_k \otimes \Cl_\ell \cong \Cl_{k+\ell}$ where we emphasize that we are use the tensor product of superalgebras, so that on homogeneous elements $(x_1 \otimes y_1)(x_2 \otimes y_2) = (-1)^{|x_2||y_1|} x_1x_2 \otimes y_1y_2$.

\begin{lemma} \label{lem:cliff1}
We have an isomorphism of superalgebras  $\Cl_1 \otimes \Cl^\op_1 \cong \End(\bk^{1|1})$.
\end{lemma}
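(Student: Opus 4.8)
The plan is to write down an explicit algebra homomorphism $\Cl_1 \otimes \Cl_1^{\op} \to \End(\bk^{1|1})$ and check it is an isomorphism by a dimension count. Write $\alpha$ for the odd generator of $\Cl_1$ (so $\alpha^2 = 2$) and $\beta$ for the odd generator of $\Cl_1^{\op}$ (so $\beta^2 = -2$, as noted in the discussion of $A^{\op}$ preceding the lemma). Since $\End(\bk^{1|1})$ is $4$-dimensional with a $2$-dimensional even part and a $2$-dimensional odd part, and $\Cl_1 \otimes \Cl_1^{\op} \cong \Cl_2$ (as superalgebras, using the graded tensor product) is also $4$-dimensional with basis $1, \alpha\otimes 1, 1\otimes\beta, \alpha\otimes\beta$ of parities $0,1,1,0$, it suffices to produce a surjective (equivalently, injective) homomorphism.

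First I would fix the $\Cl_1$-module $(\bk^{1|1},\xi)$ exactly as in \S\ref{ss:qcom}/Proposition~\ref{prop:K-ring}: $\xi(e_0) = e_1$, $\xi(e_1) = 2e_0$, so $\xi$ is odd and $\xi^2 = 2$. This gives a unital homomorphism $\Cl_1 \to \End(\bk^{1|1})$ sending $\alpha \mapsto \xi$. Next I need an odd endomorphism $\eta$ of $\bk^{1|1}$ with $\eta^2 = -2$ that supercommutes with $\xi$ in the $\op$-twisted sense required for the tensor product map to be an algebra map; concretely, since the two tensor factors commute in the graded sense, I want $\eta$ odd with $\eta\xi = -\xi\eta$ (graded commutation of the images of $\alpha\otimes 1$ and $1\otimes\beta$) and $\eta^2 = -2$. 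A natural choice is $\eta(e_0) = e_1$, $\eta(e_1) = -2e_0$ (or $\eta = \zeta_4\xi\cdot(\text{parity operator})$ if one prefers; but one can stay over any $\bk$ by using the sign version). Then I would verify $\eta^2 = -2$ and $\xi\eta + \eta\xi = 0$ by a direct $2\times 2$ computation, which establishes that $\alpha \mapsto \xi$, $\beta \mapsto \eta$ extends to a superalgebra homomorphism $F \colon \Cl_1\otimes\Cl_1^{\op} \to \End(\bk^{1|1})$.

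It then remains to check $F$ is an isomorphism. I would compute $F$ on the basis $1, \alpha\otimes 1, 1\otimes\beta, \alpha\otimes\beta$, getting $\id, \xi, \eta, \xi\eta$, and observe these four operators are linearly independent in $\End(\bk^{1|1})$: $\id$ and $\xi\eta$ span the even part (as $\xi\eta$ acts by distinct scalars on $e_0$ and $e_1$, hence is not a multiple of $\id$), and $\xi, \eta$ span the odd part (again they are visibly independent). Since both sides have dimension $4$, $F$ is bijective, hence an isomorphism of superalgebras.

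The computations here are entirely routine $2\times 2$ matrix arithmetic, so there is no real obstacle; the only point requiring a moment's care is getting the signs right in the graded-commutation condition $\xi\eta = -\eta\xi$ (as opposed to $+$), which is exactly what forces the use of $\Cl_1^{\op}$ rather than $\Cl_1$ in the second factor — this is the conceptual content of the lemma and matches the remark before the lemma that $\Cl_1^{\op}$ has generator squaring to $-2$. One should also double-check that the chosen $\eta$ genuinely defines an endomorphism of the super vector space $\bk^{1|1}$ (i.e., is odd), which it is by construction.
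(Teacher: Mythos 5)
Your proof is correct and is the same construction as the paper's: take $e_0,e_1$ a homogeneous basis of $\bk^{1|1}$, send $\alpha\otimes 1$ to the odd operator $e_0\mapsto e_1$, $e_1\mapsto 2e_0$ and $1\otimes\beta$ to the odd operator $e_0\mapsto e_1$, $e_1\mapsto -2e_0$, and note these generate the whole $4$-dimensional algebra. One inessential slip: the parenthetical claim that $\Cl_1\otimes\Cl_1^{\op}\cong\Cl_2$ is not true over a general $\bk$ (it requires $\zeta_4\in\bk$ to rescale a generator so that it squares to $+2$, precisely as the paper uses in the proof of Theorem~\ref{thm:bott}); you only needed the dimension count, which of course holds without this identification.
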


\begin{proof}
  Let $e_0,e_1$ be a homogeneous basis for $\bk^{1|1}$. We represent $\alpha_1 \otimes 1$ by the operator $e_0 \mapsto e_1$, $e_1 \mapsto 2e_0$ and $1 \otimes \alpha_1$ by the operator $e_0 \mapsto e_1$, $e_1 \mapsto -2e_0$. 
\end{proof}

Let $H$ denote the quaternion algebra concentrated in degree $0$, i.e., $H$ is the associative $\bk$-algebra generated by even elements $\bi$ and $\bj$ such that $\bi\bj=-\bj\bi$ and $\bi^2=\bj^2=-1$.

\begin{lemma} \label{lem:cliff2}
  We have an isomorphism of superalgebras $\Cl_3 \cong H \otimes \Cl^\op_1$.
\end{lemma}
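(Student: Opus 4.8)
The plan is to write down an explicit isomorphism and check it directly. Let $\gamma$ denote the odd generator of $\Cl_1^{\op}$, so that $\gamma^2=-2$, and consider the assignment
\[
\alpha_1 \mapsto \bi \otimes \gamma, \qquad \alpha_2 \mapsto \bj \otimes \gamma, \qquad \alpha_3 \mapsto \bi\bj \otimes \gamma.
\]
Each of these elements is odd, being of the form (even)$\otimes$(odd) in $H \otimes \Cl_1^{\op}$, so to see that this extends to a homomorphism of $\bk$-superalgebras $\phi \colon \Cl_3 \to H \otimes \Cl_1^{\op}$ it is enough to verify the defining relations of $\Cl_3$: that each image squares to $2$ and that the three images pairwise anticommute. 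I would do this using the super tensor product rule $(x_1 \otimes y_1)(x_2 \otimes y_2) = (-1)^{|x_2||y_1|} x_1x_2 \otimes y_1y_2$; since $\bi$, $\bj$, and $\bi\bj$ are all even, the Koszul sign is always $+1$, and the verification reduces to the quaternion identities $\bi^2 = \bj^2 = (\bi\bj)^2 = -1$ together with $\bi\bj=-\bj\bi$, $\bi(\bi\bj) = -(\bi\bj)\bi$, $\bj(\bi\bj) = -(\bi\bj)\bj$, and $\gamma^2=-2$. The only computation worth singling out is $(\bi\bj)^2 = \bi\bj\bi\bj = -\bi^2\bj^2 = -1$; one can also note that the choice of $\bi\bj$ for the third generator is essentially forced, since $\bk\,\bi\bj$ is exactly the set of elements of $H$ anticommuting with both $\bi$ and $\bj$.

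Next I would show that $\phi$ is surjective. The products $\alpha_1\alpha_2$, $\alpha_1\alpha_3$, $\alpha_2\alpha_3$ map, up to the nonzero scalar $\pm 2$, to $\bi\bj\otimes 1$, $\bj\otimes 1$, $\bi\otimes 1$ respectively (using $\bi(\bi\bj)=\bj$ and $\bj(\bi\bj) = \bi$ inside $H$), and then $1 \otimes \gamma = -(\bi\otimes 1)(\bi\otimes\gamma) = -(\bi\otimes 1)\phi(\alpha_1)$ since $\bi^{-1}=-\bi$. Hence $\bi\otimes 1$, $\bj\otimes 1$, and $1\otimes\gamma$ all lie in the image of $\phi$, and these generate $H\otimes\Cl_1^{\op}$ as a $\bk$-algebra, so $\phi$ is onto.

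Finally I would conclude by a dimension count: $\dim_\bk \Cl_3 = 2^3 = 8 = 4\cdot 2 = (\dim_\bk H)(\dim_\bk \Cl_1^{\op}) = \dim_\bk (H\otimes\Cl_1^{\op})$, so a surjective linear map between $\bk$-vector spaces of equal finite dimension is an isomorphism, and therefore $\phi$ is an isomorphism of superalgebras. There is no genuine obstacle in this argument; the only point requiring care is bookkeeping the (here trivial) super signs and the quaternion arithmetic. One could alternatively tensor both sides with $\Cl_1$ and invoke Lemma~\ref{lem:cliff1}, but that route replaces the elementary check above by the heavier input $\Cl_4 \cong \End(H\otimes\bk^{1|1})$, so the direct construction is the cleaner path.
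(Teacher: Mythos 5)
Your proof is correct and takes essentially the same approach as the paper: write down an explicit superalgebra map, verify the defining relations, and conclude by a dimension count (the paper just happens to define the map in the opposite direction, $H \otimes \Cl_1^\op \to \Cl_3$, via $\bi \otimes 1 \mapsto \tfrac12\alpha_1\alpha_2$, $\bj \otimes 1 \mapsto \tfrac12\alpha_1\alpha_3$, $1\otimes\gamma \mapsto \tfrac12\alpha_1\alpha_2\alpha_3$). One small slip: in your surjectivity step you write $\bi(\bi\bj)=\bj$, but in fact $\bi(\bi\bj)=\bi^2\bj=-\bj$; this is harmless since you only claim the image is $\bj\otimes 1$ up to the scalar $\pm 2$.
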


\begin{proof}
  We define $H \otimes \Cl^\op_1 \to \Cl_3$ by $\bi \otimes 1 \mapsto \frac12 \alpha_1 \alpha_2$, $\bj \otimes 1 \mapsto \frac12 \alpha_1\alpha_3$, and $1 \otimes \alpha_1 \mapsto \frac12 \alpha_1 \alpha_2 \alpha_3$. It is readily verified to be surjective and both algebras have dimension $8$ and hence it is an isomorphism.
\end{proof}

\begin{lemma} \label{lem:cliff3}
We have an isomorphism of superalgebras  $\Cl_4 \cong \Cl_4^\op$.
\end{lemma}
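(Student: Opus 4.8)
The plan is to build the isomorphism explicitly using the ``volume element'' of $\Cl_4$. Set $\omega = \alpha_1\alpha_2\alpha_3\alpha_4 \in \Cl_4$. First I would record two elementary computations. Since $\omega$ is a product of four odd generators it is even, and moving each repeated generator to the left (each move past one other generator contributing a sign $-1$, and using $\alpha_i^2 = 2$ each time) gives $\omega^2 = 2^4 = 16$. Similarly $\omega\alpha_i = -\alpha_i\omega$ for every $i$: the generator $\alpha_i$ must pass the three \emph{other} generators in $\omega$, contributing $(-1)^3 = -1$; this is exactly the place where the parity of $4$ enters.

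Next, define $\gamma_i = \tfrac14\,\alpha_i\omega$ for $1 \le i \le 4$ (legitimate since $\chr(\bk) \ne 2$). Each $\gamma_i$ is odd, being odd times even. Using $\omega\alpha_j = -\alpha_j\omega$ and $\omega^2 = 16$ one gets
\[
\gamma_i^2 \;=\; \tfrac1{16}\,\alpha_i\omega\alpha_i\omega \;=\; -\tfrac1{16}\,\alpha_i^2\omega^2 \;=\; -2,
\qquad
\gamma_i\gamma_j \;=\; -\tfrac1{16}\,\alpha_i\alpha_j\omega^2 \;=\; -\alpha_i\alpha_j \quad (i \ne j),
\]
so that $\gamma_i\gamma_j = -\gamma_j\gamma_i$. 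Thus $\gamma_1,\dots,\gamma_4$ are odd, pairwise anticommuting, and each squares to $-2$, which is precisely the defining presentation of $\Cl_4^{\op}$. Hence there is a homomorphism of superalgebras $\varphi\colon \Cl_4^{\op} \to \Cl_4$ with $\varphi(\alpha_i) = \gamma_i$.

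To finish I would check surjectivity and conclude by counting dimensions. From $\gamma_i\gamma_j = -\alpha_i\alpha_j$ we get $\gamma_1\gamma_2\gamma_3\gamma_4 = (\alpha_1\alpha_2)(\alpha_3\alpha_4) = \omega$, so $\omega$ lies in the image of $\varphi$; and since $\gamma_i\omega = \tfrac14\alpha_i\omega^2 = 4\alpha_i$, each $\alpha_i = \tfrac14\gamma_i\omega$ lies in the image too. Therefore $\varphi$ is onto, and as $\dim_{\bk}\Cl_4^{\op} = \dim_{\bk}\Cl_4 = 16$ it is an isomorphism; equivalently $\Cl_4 \cong \Cl_4^{\op}$.

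There is no genuine obstacle; the only care needed is in tracking signs in the two computations $\omega^2 = 16$ and $\omega\alpha_i = -\alpha_i\omega$. An alternative, slightly longer route assembles the isomorphism from the earlier lemmas: $\Cl_4 \cong \Cl_1 \otimes \Cl_3 \cong \Cl_1 \otimes H \otimes \Cl_1^{\op} \cong H \otimes \End(\bk^{1|1})$ by Lemmas~\ref{lem:cliff1} and~\ref{lem:cliff2} (using that $H$ is in degree $0$ and super-commutativity of $\otimes$), while applying the same chain to $\Cl_4^{\op}$, together with $(A \otimes B)^{\op} \cong A^{\op} \otimes B^{\op}$ and $H^{\op} \cong H$, again yields $H \otimes \End(\bk^{1|1})$; but the direct argument above is shorter, so that is the one I would write out.
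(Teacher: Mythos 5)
Your proof is correct, and it takes a genuinely different route from the paper's. The paper assembles the chain $\Cl_4 \cong \Cl_3 \otimes \Cl_1 \cong H \otimes \Cl_1^\op \otimes \Cl_1 \cong H \otimes \End(\bk^{1|1})$ from Lemmas~\ref{lem:cliff1} and~\ref{lem:cliff2}, and then separately observes that $H$ and $\End(\bk^{1|1})$ are each isomorphic to their own opposites (the latter via a signed transpose). You instead write down an explicit isomorphism $\Cl_4^\op \to \Cl_4$ by conjugation against the volume element $\omega$, verifying the defining relations $\gamma_i^2 = -2$ and $\gamma_i\gamma_j = -\gamma_j\gamma_i$ directly, and then closing by surjectivity and a dimension count. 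Both arguments are sound; the paper's is efficient in context because the $H \otimes \End(\bk^{1|1})$ decomposition is reused immediately in the proof of Theorem~\ref{thm:bott}, while yours is self-contained, more elementary, and makes it visible exactly where the residue of $4$ enters (through the sign in $\omega\alpha_i = -\alpha_i\omega$ and the positivity of $\omega^2 = 16$). One cosmetic remark: strictly $\gamma_1\gamma_2 = -\alpha_1\alpha_2$ and $\gamma_3\gamma_4 = -\alpha_3\alpha_4$, so the step $\gamma_1\gamma_2\gamma_3\gamma_4 = (\alpha_1\alpha_2)(\alpha_3\alpha_4) = \omega$ is correct only after the two minus signs cancel; it is worth displaying them. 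You also correctly identify the paper's argument as the ``alternative, slightly longer route'' at the end, so you have in effect described both proofs.
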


\begin{proof}
  By Lemma~\ref{lem:cliff1} and \ref{lem:cliff2}, we have
  \[
    \Cl_4 \cong \Cl_3 \otimes \Cl_1 \cong H \otimes \Cl^\op_1 \otimes \Cl_1 \cong H \otimes \End(\bk^{1|1}).
  \]
  Next, we have $H \cong H^\op$ via $\bi \mapsto -\bi$ and $\bj \mapsto -\bj$ and $\End(\bk^{1|1}) \cong \End(\bk^{1|1})^\op$ via $\begin{bmatrix} a & b \\ c & d \end{bmatrix} \mapsto \begin{bmatrix} a & -c \\ b & d \end{bmatrix}$ where the matrices are given with respect to a homogeneous basis of $\bk^{1|1}$.
\end{proof}

\begin{theorem}[Bott periodicity] \label{thm:bott}
  We have an isomorphism of superalgebras $\Cl_8 \cong \End(\bk^{8|8})$.

  Furthermore, if $\bk$ contains a primitive $4$th root of unity $\zeta_4$, then $\Cl_2 \cong \End(\bk^{1|1})$.
\end{theorem}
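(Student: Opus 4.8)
The plan is to deduce both assertions from Lemmas~\ref{lem:cliff1}--\ref{lem:cliff3} by pure algebra, following \cite{trimble}. For the main statement I would start from $\Cl_8 \cong \Cl_4 \otimes \Cl_4$ (the general isomorphism $\Cl_k \otimes \Cl_\ell \cong \Cl_{k+\ell}$) and then substitute the identification $\Cl_4 \cong H \otimes \End(\bk^{1|1})$ produced in the proof of Lemma~\ref{lem:cliff3}. Since $H$ is concentrated in even degree, commuting the two copies of $H$ past the copies of $\End(\bk^{1|1})$ in $(H \otimes \End(\bk^{1|1})) \otimes (H \otimes \End(\bk^{1|1}))$ introduces no Koszul signs, so this superalgebra is isomorphic to $(H \otimes H) \otimes (\End(\bk^{1|1}) \otimes \End(\bk^{1|1}))$.

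Next I would simplify the two factors separately. Using the isomorphism $H \cong H^\op$ exhibited in the proof of Lemma~\ref{lem:cliff3} (sending $\bi \mapsto -\bi$, $\bj \mapsto -\bj$), together with the classical identity $A \otimes_\bk A^\op \cong \End_\bk(A)$ for a central simple $\bk$-algebra $A$ (applied to the quaternion algebra $H$, which is central simple of dimension $4$), one gets $H \otimes H \cong H \otimes H^\op \cong M_4(\bk) = \End(\bk^{4|0})$. For the other factor, $\End(\bk^{1|1}) \otimes \End(\bk^{1|1}) \cong \End(\bk^{1|1} \otimes \bk^{1|1}) = \End(\bk^{2|2})$ by the standard isomorphism $\End(V) \otimes \End(W) \cong \End(V \otimes W)$. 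Combining, $\Cl_8 \cong \End(\bk^{4|0}) \otimes \End(\bk^{2|2}) \cong \End(\bk^{4|0} \otimes \bk^{2|2})$, and computing the $\bZ/2$-grading of $\bk^{4|0} \otimes \bk^{2|2}$ gives $\bk^{8|8}$, which proves the first claim. (Alternatively, one can write $\Cl_8 \cong \Cl_4 \otimes \Cl_4^\op$ by Lemma~\ref{lem:cliff3} and invoke a super version of $A \otimes A^\op \cong \End_\bk(A)$; the route above has the advantage of only using the purely ungraded form of that fact.)

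For the ``furthermore'' part, assume $\zeta_4 \in \bk$. The assignment $\alpha_1 \mapsto \zeta_4 \alpha_1$ then defines an isomorphism of superalgebras $\Cl_1 \cong \Cl_1^\op$: the only relation that is affected is the square of the generator, and $(\zeta_4 \alpha_1)^2 = \zeta_4^2 \cdot (-2) = 2$ matches the relation $\alpha_1^2 = 2$ in $\Cl_1$. Hence $\Cl_2 \cong \Cl_1 \otimes \Cl_1 \cong \Cl_1 \otimes \Cl_1^\op \cong \End(\bk^{1|1})$ by Lemma~\ref{lem:cliff1}.

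The argument is essentially bookkeeping once the lemmas are available, and the only ingredient imported from outside is the classical fact $A \otimes A^\op \cong \End_\bk(A)$ for central simple algebras. The one place that calls for a little care is tracking the even and odd dimensions through the tensor products, so that the final matrix superalgebra is pinned down as $\End(\bk^{8|8})$ rather than some other $\End(\bk^{r|s})$ with $r + s = 16$.
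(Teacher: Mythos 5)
Your argument is correct, and the second (``furthermore'') part is identical to the paper's. For the main statement the paper takes a more economical route: after writing $\Cl_8 \cong \Cl_4 \otimes \Cl_4 \cong \Cl_4 \otimes \Cl_4^\op$ (Lemma~\ref{lem:cliff3}), it factors both sides as $\Cl_1^{\otimes 4}$ and $(\Cl_1^\op)^{\otimes 4}$, reorders, and applies Lemma~\ref{lem:cliff1} four times to obtain $\End(\bk^{1|1})^{\otimes 4} \cong \End(\bk^{8|8})$, using no facts beyond the three lemmas. You instead unwind the decomposition $\Cl_4 \cong H \otimes \End(\bk^{1|1})$ established inside the proof of Lemma~\ref{lem:cliff3}, and then invoke the classical fact that $A \otimes A^\op \cong \End_\bk(A)$ for a central simple $\bk$-algebra $A$, applied to the quaternion algebra $H$. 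This is a legitimate alternative: it buys you the convenience that $H$ is purely even, so the reshuffling of tensor factors is manifestly Koszul-sign-free, whereas the paper's route shuffles odd Clifford generators (harmless, since the symmetry $A \otimes B \cong B \otimes A$ of super tensor products handles the signs, but less transparent). The trade-off is that you import an extra piece of classical algebra (the central simplicity of $(-1,-1)$-quaternions in characteristic $\ne 2$ and the $A \otimes A^\op$ identity) that the paper manages to avoid. Both are fine proofs; yours is a little heavier on prerequisites and a little lighter on superalgebraic bookkeeping.
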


\begin{proof}
  By Lemma~\ref{lem:cliff1} and \ref{lem:cliff3}, we have
  \[
  \Cl_8 \cong \Cl_4 \otimes \Cl_4 \cong \Cl_4 \otimes \Cl_4^\op \cong \End(\bk^{1|1})^{\otimes 4} \cong \End(\bk^{8|8}). 
\]
The second statement follows from  Lemma~\ref{lem:cliff1} since we have $\Cl_1 \cong \Cl^\op_1$ via $\alpha_1 \mapsto \zeta_4 \alpha_1$.
\end{proof}

The periodicity comes when we consider Morita equivalence. Since $\End(\bk^{m|n})$ is a simple superalgebra for any $m,n$, its category of (super)modules is equivalent to the category of super vector spaces, and hence we get $\Mod_{\Cl_{k+8}} \simeq \Mod_{\Cl_k}$ for any $k$ and $\Mod_{\Cl_{k+2}}\simeq \Mod_{\Cl_k}$ when $\zeta_4 \in \bk$.

\end{document}